\newtheorem{theorem}{Theorem}[section]
\newtheorem{lemma}[theorem]{Lemma}
\newtheorem{corollary}[theorem]{Corollary}
\newtheorem{proposition}[theorem]{Proposition}
\theoremstyle{definition}
\newtheorem{remark}[theorem]{Remark}
\newtheorem{definition}[theorem]{Definition}
\numberwithin{equation}{section}
\begin{document}

\title{\bf\Large Matrix-Weighted Besov-Type and Triebel--Lizorkin-Type Spaces I:
$A_p$-Dimensions of Matrix Weights and $\varphi$-Transform Characterizations
\footnotetext{\hspace{-0.35cm} 2020 {\it Mathematics Subject Classification}.
Primary 46E35; Secondary 47A56, 42B25, 42C40, 42B35.\endgraf
{\it Key words and phrases.}
matrix weight, Besov-type space,
Triebel--Lizorkin-type space, $A_p$-dimension, $\varphi$-transform.\endgraf
This project is supported by
the National Key Research and Development Program of China
(Grant No.\ 2020YFA0712900),
the National Natural Science Foundation of China
(Grant Nos.\ 12371093, 12071197, and 12122102),
the Fundamental Research Funds
for the Central Universities (Grant No. 2233300008),
and the Academy of Finland (Grant Nos.\ 314829 and 346314).}}
\date{}
\author{Fan Bu, Tuomas Hyt\"onen,
Dachun Yang and Wen Yuan\footnote{Corresponding author, E-mail:
\texttt{wenyuan@bnu.edu.cn}/{\color{red} December 24, 2023}/Final version.}}

\maketitle

\vspace{-0.8cm}

\begin{center}
\begin{minipage}{13cm}
{\small {\bf Abstract}\quad Let $s\in{\mathbb R}$, $q\in (0,\infty]$,
and $\tau\in[0,\infty)$. It is well known that
Besov-type spaces $\dot B^{s,\tau}_{p,q}$
with $p\in (0,\infty]$ and Triebel--Lizorkin-type spaces $\dot F^{s,\tau}_{p,q}$ with
$p\in (0,\infty)$ when $\tau\in [0,\infty)$ or
with $p\in (0,\infty]$ when $\tau=0$ on $\mathbb{R}^n$ consist of
a general family of function spaces that cover not only
the well-known Besov and Triebel--Lizorkin spaces $\dot B^{s}_{p,q}$
and $\dot F^{s}_{p,q}$ (when $\tau=0$)
but also several other function spaces of interest,
such as Morrey spaces and $Q$ spaces.
In three successive articles, the authors develop a complete
real-variable theory of matrix-weighted Besov-type spaces
$\dot B^{s,\tau}_{p,q}(W)$ and matrix-weighted Triebel--Lizorkin-type spaces
$\dot F^{s,\tau}_{p,q}(W)$ on $\mathbb{R}^n$,
where $W$ is a matrix-valued Muckenhoupt $A_p$ weight.
This article is the first one, whose main novelty exists in that
the authors introduce the new concept,
$A_p$-dimensions of matrix weights,
and intensively study their properties, especially
those elaborate properties expressed via reducing operators. The authors
then introduce the spaces
$\dot B^{s,\tau}_{p,q}(W)$ and $\dot F^{s,\tau}_{p,q}(W)$
and, using $A_p$-dimensions and their nice properties,
the authors establish the $\varphi$-transform characterization
of $\dot B^{s,\tau}_{p,q}(W)$ and $\dot F^{s,\tau}_{p,q}(W)$.
The $A_p$-dimensions of matrix weights and their properties
also enable the authors
to obtain the sharp boundedness of almost diagonal operators on
related sequence spaces in the subsequent second article and
the optimal characterizations of molecules and wavelets, trace theorems,
and the optimal boundedness of pseudo-differential operators and Calder\'on--Zygmund operators
in the subsequent third article.
}
\end{minipage}
\end{center}


\tableofcontents


\section{Introduction}

In three successive articles, we will develop a complete
real-variable theory of matrix-weighted Besov-type spaces
and matrix-weighted Triebel--Lizorkin-type spaces
of $\mathbb C^m$-valued distributions on $\mathbb{R}^n$.
We consistently denote by $m$ the dimension of the target space
of our distributions and hence our matrix weights always take values
in the space of $m\times m$ complex matrices.
This article is the first one.

The study of Besov spaces $B_{p,q}^s$ on the Euclidean space $\mathbb{R}^n$
was started in the 1950s.
In 1951, Nikol'ski\u{\i} \cite{n51}
introduced the Nikol'ski\u{\i}--Besov spaces
which are nowadays denoted by $B^s_{p,\infty}(\mathbb{R}^n)$,
but he mentioned that his work was based on earlier works of
Bern\v{s}te\u{\i}n \cite{b47} and Zygmund \cite{z45}.
By introducing the third index $q$,
Besov \cite{b59, b61} complemented this scale.
Around 1970, Lizorkin \cite{li72,li74} and Triebel \cite{t73}
independently began to investigate the scale $F_{p,q}^s(\mathbb{R}^n)$.
Furthermore, we mention the contributions \cite{p73,p75,p76} of Peetre,
which extend the ranges of admissible parameters $p$ and $q$ to values less than one.
Besov spaces and Triebel--Lizorkin spaces have been widely applied to
various branches of analysis and we refer to monographs
\cite{t83,t92,t06} of Triebel as well as \cite{Sa18,Sa20} of Sawano for more studies on these spaces.
Nowadays, Besov spaces and Triebel--Lizorkin spaces have been
generalized and developed in various different settings
(see, for instance, \cite{b07,bh06,bdy12,bbd22,cgn17,cgn18,cgn19,gjn17,gn16,whhy21}).
We specifically mention extensive recent studies of versions of these spaces associated with different operators
(see, for instance, \cite{bbd20,b20a,b20b,bd15,bd21a,bd21b,bd21c,gkkp17,gkkp19,zy20}).

In recent decades,  there exists an increasing interest in developing generalized Besov
and Triebel--Lizorkin spaces built on Morrey spaces.
Recall that the Besov--Morrey spaces
were introduced by Kozono and Yamazaki \cite{KY94} and Mazzucato \cite{M03} in order to study
Navier--Stokes equations.
Later on  Tang and Xu \cite{TX05} introduced and studied the
Triebel--Lizorkin--Morrey spaces.
From then on, these spaces received a lot of attention and were further  intensively developed  by
Sawano \cite{S08,S09,S10,S10b} and Sawano and Tanaka \cite{ST07,ST09}.
Around 2010, to clarify the relations among Besov spaces, Triebel--Lizorkin spaces,
and $Q$ spaces on $\mathbb{R}^n$, Yang et al. \cite{yy08,yy10,ysy10}
introduced another scale of generalized Besov and Triebel--Lizorkin spaces related to Morrey spaces, which are called Besov-type and Triebel--Lizorkin-type spaces, denoted by $A^{s,\tau}_{p,q}\in\{B^{s,\tau}_{p,q},F^{s,\tau}_{p,q}\}$.
These spaces consist of a general family of function spaces that cover not only
the well-known Besov and Triebel--Lizorkin spaces
but also several other function spaces of interest, such as $Q$ spaces,   Morrey spaces,
and Triebel--Lizorkin--Morrey spaces (see \cite[Section 1.4]{ysy10}).
Various properties and characterizations of
Besov-type and Triebel--Lizorkin-type spaces were later studied in
\cite{HT23,wyyz,yhmsy,yhsy2,yhsy,ysy13,ysy20,zcy19,zyy14}.
Some of these spaces have also been used to study the existence and
the regularity of the solutions of some
partial differential equations such as heat and Navier--Stokes equations;
see, for instance, \cite{le16,lxy14,ly13,t13a,t14,x07,zyz23}.
We also refer to the articles \cite{HS12,HS13,HS14,HS17,HS20} of Haroske and Skrzypczak,
\cite{ghs21,ghs13,HMS16,HMS20,HMS22} of Haroske et al.,
the surveys \cite{si12,si13} of Sickel, as well as the monographs \cite{t13a,t14}
of Triebel for more studies on these spaces and their applications.
A further generalisation $A^{s,\varphi}_{p,q}$, with a function parameter $\varphi$,
is recently due to Haroske et al. \cite{HL23,HLMS23,HMS23}.

The study of the space $L^2(W)$ with a \emph{matrix weight} $W$ on $\mathbb{R}^n$ goes back to
Wiener and Masani \cite[\S 4]{wm58} in their development on the prediction theory
for multivariate stochastic processes.
To solve the problem about the angle between past and future of the multivariate random stationary process
and the problem about the boundedness of the inverse of Toeplitz operators,
Treil and Volberg \cite{tv97} found the right analogue
(in the sense of being necessary and sufficient for operator norm estimates
of interest in these spaces) of the Muckenhoupt $A_2$ weight condition
in this matrix-valued context.
Extensions to $L^p(W)$ with $W\in A_p$ for general $p\in(1,\infty)$
were later found by Nazarov and Treil \cite{nt96} and with a different approach by Volberg \cite{v97}.
The extent to which the classical self-improvement property of $A_p$ weights remains
(or not) valid for matrix weights was investigated by Bownik \cite{b01}
and versions of maximal function estimates appropriate for this setting
were brought to this theory by Christ and Goldberg \cite{cg01,g03}.

After these developments in $L^p(W)$, matrix-weighted Besov spaces $\dot B^s_{p,q}(W)$
were introduced by Roudenko \cite{ro03} for any $p\in(1,\infty)$
and by Frazier and Roudenko \cite{fr04} for any $p\in(0,1]$,
which were further studied by these authors in \cite{fr08,ro04}.
Versions of the classical identification of $L^p$ with a Triebel--Lizorkin space $\dot F^{0}_{p,2}$
in the matrix-valued setting were already obtained in \cite{nt96,v97}
and another approach to these results is due to Isralowitz \cite{isr21},
but a systematic study on the full scale of matrix-weighted Triebel--Lizorkin spaces
$\dot F^s_{p,q}(W)$ is only recently due to Frazier and Roudenko \cite{fr21}.
Soon afterwards, Wang et al. \cite{wyz}
studied the Littlewood--Paley characterization of $\dot F^s_{p,q}(W)$.
Around the same time, Bu et al. \cite{byy} introduced
the homogeneous matrix-weighted Besov spaces on spaces of homogeneous type
and established various real-variable characterizations of these spaces.

Our goal in this article   and two subsequent articles \cite{bhyy2} and \cite{bhyy3}
is to consolidate the existing theories of both
(unweighted) Besov-type and Triebel--Lizorkin-type spaces
$\dot A^{s,\tau}_{p,q}\in\{\dot B^{s,\tau}_{p,q},\dot F^{s,\tau}_{p,q}\}$ over $\mathbb{R}^n$ on the one hand
and matrix-weighted Besov and Triebel--Lizorkin spaces
$\dot A^s_{p,q}(W)\in\{\dot B^{s}_{p,q}(W),\dot F^{s}_{p,q}(W)\}$ over $\mathbb{R}^n$ on the other hand
into a coherent theory of \emph{matrix-weighted Besov-type and Triebel--Lizorkin-type spaces}
$\dot A^{s,\tau}_{p,q}(W)
\in\{\dot B^{s,\tau}_{p,q}(W),\dot F^{s,\tau}_{p,q}(W)\}$.
On this level of generality, we wish to prove results that naturally extend
and reproduce the existing ones in $\dot A^{s,\tau}_{p,q}$ when specialised to a constant weight
and in $\dot A^{s}_{p,q}(W)$ when specialised to $\tau=0$.
However, we actually achieve more: In several cases, our general results
turn out to improve the existing theory
even in the aforementioned special cases already treated in the literature.

For completeness, let us briefly mention some other recent topics in matrix-weighted function spaces
that we will not develop here. There is quite an extensive and growing literature
on the sharp dependence of operator norms on $L^p(W)$ on the weight constant $[W]_{A_p}$;
see \cite{cim18,dly21,hpv,isr20,im19,ipr21,nptv}.
We mostly ignore these quantitative aspects here,
which is at least partially justified by the following point:
whereas in $L^p(W)$ the norm of a function and hence the norm of an operator
are pretty much canonical, a major aspect of the theory of spaces $\dot A^{s,\tau}_{p,q}(W)$
is the equivalence of various different norms, among which there exists no obvious preferred choice,
and the quantitative bounds for the norms may significantly depend on the particular choice of the norm.
Of course, it might still be of interest to quantify these bounds,
possibly in several different versions depending on the chosen norms,
but this aspect is mostly not addressed in the present treatment.
Recently, Bownik and Cruz-Uribe \cite{bc22} proved both the Jones factorization theorem
and the Rubio de Francia extrapolation theorem for matrix $A_p$ weights,
which are two very important and useful tools of analysis.

Another very recent line of investigation is the theory of spaces $L^p(W)$
on \emph{product domains} $\mathbb R^{n_1}\times\mathbb R^{n_2}$
with corresponding matrix-valued \emph{strong $A_p$ weights};
this has been only lately initiated in \cite{dkps}.
A real-variable theory of scalar-weighted Besov and Triebel--Lizorkin spaces on product domains
has been developed, for instance, in \cite{lz13}, and extending this to the matrix-weighted case
(perhaps combining our present techniques with those of \cite{dkps}
to deal with matrix weights on product domains)
is a possible topic for future investigation.

In this article, we first introduce a new concept of
$A_p$-dimensions for matrix weights
and intensively study their properties, especially
those elaborate properties expressed via reducing operators.
For any $s\in\mathbb R$, $\tau\in[0,\infty)$,
$p\in(0,\infty)$, and $q\in(0,\infty]$, we then introduce
the matrix-weighted Besov-type space $\dot B^{s,\tau}_{p,q}(W)$ and
the matrix-weighted Triebel--Lizorkin-type space $\dot F^{s,\tau}_{p,q}(W)$ on $\mathbb{R}^n$,
where $W$ is a matrix-valued Muckenhoupt $A_p$ weight and, using $A_p$-dimensions
and their nice properties, we establish the $\varphi$-transform characterization
of $\dot B^{s,\tau}_{p,q}(W)$ and $\dot F^{s,\tau}_{p,q}(W)$.
As applications, we find that $\dot B^{s,\tau}_{p,q}(W)$ and $\dot F^{s,\tau}_{p,q}(W)$
are well-defined and obtain their lifting property.
The $\varphi$-transform characterization
establishes the relations between function spaces $\dot B^{s,\tau}_{p,q}(W)$ and $\dot F^{s,\tau}_{p,q}(W)$
and corresponding sequence spaces $\dot b^{s,\tau}_{p,q}(W)$ and $\dot f^{s,\tau}_{p,q}(W)$.
These relations are extensively utilized in the subsequent articles \cite{bhyy2,bhyy3}
to give further properties and applications of these matrix-weighted
Besov--Triebel--Lizorkin-type spaces.

It is worth mentioning that, as one main novelty of this article,
the newly introduced $A_p$-dimension
for matrix weights has been proved to play an irreplaceable role in all these
three successive articles. Indeed, the $A_p$-dimension quantitatively describes
the doubling property of matrix weights via reducing operators
and enables us to achieve several sharp or optimal results later.
To be precise, using $A_p$-dimensions, we obtain in the subsequent article \cite{bhyy2}
the sharp boundedness of almost diagonal operators on
related sequence spaces $\dot b^{s,\tau}_{p,q}(W)$ and $\dot f^{s,\tau}_{p,q}(W)$
and also establish in the subsequent third article \cite{bhyy3} the optimal
characterizations of molecules and wavelets, trace theorems,
and the optimal boundedness of pseudo-differential operators and Calder\'on--Zygmund operators on
function spaces $\dot B^{s,\tau}_{p,q}(W)$ and $\dot F^{s,\tau}_{p,q}(W)$.
Based on these, we have reasons to believe that this concept may also
be useful in other studies related to matrix weights.

The organization of the remainder of this article is as follows.

In Section \ref{Ap dimension},
we introduce a new concept of $A_p$-dimensions for matrix weights
and study their elaborate properties. One of the important results is that,
for any given matrix weight $W\in A_p$ on $\mathbb R^n$,
we provide a method to calculate the critical point $d_p(W)\in[0,n)$
for the $A_p$-dimension of $W$, that is, for any $\varepsilon\in(0,\infty)$,
$d_p(W)+\varepsilon$ is an $A_p$-dimension of $W$ but
$d_p(W)-\varepsilon$ is not (see Proposition \ref{critical point} below).
The other is the sharp estimate via reducing operators (see Lemma \ref{22 precise} below)
which plays a key role in characterizing the minimal almost
diagonal conditions in \cite{bhyy2}.
Moreover, several of our results are conveniently stated in terms of the concept of $A_p$-dimensions
and, in some cases, the obtained estimates are shown to be sharp.

In Section \ref{BF type spaces},
we introduce matrix-weighted Besov-type
and Triebel--Lizorkin-type spaces
$\dot A^{s,\tau}_{p,q}(W)\in\{\dot B^{s,\tau}_{p,q}(W),\dot F^{s,\tau}_{p,q}(W)\}$
and corresponding averaging spaces $\dot A^{s,\tau}_{p,q}(\mathbb{A})\in\{\dot B^{s,\tau}_{p,q}(\mathbb{A}),\dot F^{s,\tau}_{p,q}(\mathbb{A})\}$.
Using the properties of matrix $A_p$-weights from Section \ref{Ap dimension},
we prove the equality $\dot A^{s,\tau}_{p,q}(W)=\dot A^{s,\tau}_{p,q}(\mathbb{A})$,
which allows us to choose, in our subsequent considerations,
whichever definition of these spaces that is most convenient for a particular purpose.
Moreover, we introduce matrix-weighted Besov-type
and Triebel--Lizorkin-type sequence spaces
$\dot a^{s,\tau}_{p,q}(W)\in\{\dot b^{s,\tau}_{p,q}(W),\dot f^{s,\tau}_{p,q}(W)\}$
and corresponding averaging spaces $\dot a^{s,\tau}_{p,q}(\mathbb{A})\in\{\dot b^{s,\tau}_{p,q}(\mathbb{A}),\dot f^{s,\tau}_{p,q}(\mathbb{A})\}$,
for which we again obtain the equality
$\dot a^{s,\tau}_{p,q}(W)=\dot a^{s,\tau}_{p,q}(\mathbb{A})$.
Finally, we establish the $\varphi$-transform characterization
of $\dot A^{s,\tau}_{p,q}(W)$ and use it to prove that both
$\dot A^{s,\tau}_{p,q}(W)$ and $\dot A^{s,\tau}_{p,q}(\mathbb{A})$
are independent of the choice of $\varphi$.
As an application, we obtain the lifting property of these spaces.

In Section \ref{F_infty},
we introduce the averaging matrix-weighted Triebel--Lizorkin space  $\dot F_{\infty,q}^s(\mathbb{A})$
and the corresponding sequence space $\dot f_{\infty,q}^s(\mathbb{A})$
and obtain the $\varphi$-transform characterization of $\dot F_{\infty,q}^s(\mathbb{A})$.
Using this characterization and the relation that $\dot f_{\infty,q}^s(\mathbb{A})
=\dot f_{p,q}^{s,\frac{1}{p}}(\mathbb{A})$ which is a simple application of \cite[Corollary 5.7]{fj90},
we obtain $\dot F_{\infty,q}^s(\mathbb{A})=\dot F_{p,q}^{s,\frac{1}{p}}(\mathbb{A})$.

At the end of this introduction, we make some conventions on notation.
The \emph{ball} $B$ of $\mathbb{R}^n$,
centered at $x\in\mathbb{R}^n$ with radius $r\in(0,\infty)$,
is defined by setting
$$
B:=\{y\in\mathbb{R}^n:\ |x-y|<r\}=:B(x,r);
$$
moreover, for any $\lambda\in(0,\infty)$, $\lambda B:=B(x,\lambda r)$.
A \emph{cube} $Q$ of $\mathbb{R}^n$ always has finite edge length
and edges of cubes are always assumed to be parallel to coordinate axes,
but $Q$ is not necessary to be open or closed.
For any cube $Q$ of $\mathbb{R}^n$,
let $c_Q$ be its center and $\ell(Q)$ its edge length.
For any $\lambda\in(0,\infty)$ and any cube $Q$ of $\mathbb{R}^n$,
let $\lambda Q$ be the cube with the same center of $Q$ and the edge length $\lambda\ell(Q)$.
For any $r\in\mathbb{R}$, $r_+$ is defined as $r_+:=\max\{0,r\}$
and $r_-$ is defined as $r_-:=\max\{0,-r\}$.
For any $a,b\in\mathbb{R}$, $a\wedge b:=\min\{a,b\}$ and $a\vee b:=\max\{a,b\}$.
The symbol $C$ denotes a positive constant which is independent
of the main parameters involved, but may vary from line to line.
The symbol $A\lesssim B$ means that $A\leq CB$ for some positive constant $C$,
while $A\sim B$ means $A\lesssim B\lesssim A$.
Let $\mathbb N:=\{1,2,\ldots\}$, $\mathbb Z_+:=\mathbb N\cup\{0\}$, and $\mathbb Z_+^n:=(\mathbb Z_+)^n$.
For any multi-index $\gamma:=(\gamma_1,\ldots,\gamma_n)\in\mathbb Z_+^n$
and any $x:=(x_1,\ldots,x_n)\in\mathbb R^n$,
let $|\gamma|:=\gamma_1+\ldots+\gamma_n$,
$x^\gamma:=x_1^{\gamma_1}\cdots x_n^{\gamma_n}$,
and $\partial^\gamma:=(\frac{\partial}{\partial x_1})^{\gamma_1}
\cdots(\frac{\partial}{\partial x_n})^{\gamma_n}$.
We use $\mathbf{0}$ to denote the \emph{origin} of $\mathbb{R}^n$.
For any set $E\subset\mathbb{R}^n$,
we use $\mathbf 1_E$ to denote its \emph{characteristic function}.
The \emph{Lebesgue space} $L^p(\mathbb{R}^n)$
is defined to be the set of all measurable functions
$f$ on $\mathbb{R}^n$ such that $\|f\|_{L^p(\mathbb{R}^n)}<\infty$, where
$$
\|f\|_{L^p(\mathbb{R}^n)}:=\left\{\begin{aligned}
&\left[\int_{\mathbb{R}^n}|f(x)|^p\,dx\right]^{\frac{1}{p}}
&&\text{if }p\in(0,\infty),\\
&\mathop{\mathrm{\,ess\,sup\,}}_{x\in\mathbb{R}^n}|f(x)|
&&\text{if }p=\infty.
\end{aligned}\right.
$$
The \emph{locally integrable Lebesgue space}
$L^p_{\mathrm{loc}}(\mathbb{R}^n)$ is defined to be the set of
all measurable functions $f$ on $\mathbb{R}^n$ such that,
for any bounded measurable set $E$,
$$
\|f\|_{L^p(E)}:=\|f\mathbf{1}_E\|_{L^p(\mathbb{R}^n)}<\infty.
$$
In what follows, we denote $L^p(\mathbb{R}^n)$ and $L^p_{\mathrm{loc}}(\mathbb{R}^n)$
simply, respectively, by $L^p$ and $L^p_{\mathrm{loc}}$.
For any measurable function $w$ on $\mathbb{R}^n$
and any measurable set $E\subset\mathbb{R}^n$, let
$$
w(E):=\int_Ew(x)\,dx.
$$
For any measurable function $f$ on $\mathbb{R}^n$
and any measurable set $E\subset\mathbb{R}^n$ with $|E|\in(0,\infty)$, let
$$
\fint_Ef(x)\,dx:=\frac{1}{|E|}\int_Ef(x)\,dx.
$$
The \emph{Hardy--Littlewood maximal operator} $\mathcal{M}$ is defined by setting,
for any $f\in L^1_{\mathrm{loc}}(\mathbb{R}^n)$ and $x\in\mathbb{R}^n$,
\begin{equation}\label{maximal}
\mathcal{M}(f)(x):=\sup_{\mathrm{ball}\,B\ni x}\fint_B|f(y)|\,dy.
\end{equation}
For any space $X$, the product space $X^m$ with $m\in\mathbb{N}$
is defined by setting
\begin{equation*}
X^m:=\left\{\vec f:=(f_1,\ldots,f_m)^{\mathrm{T}}:\
\text{for any}\ i\in\{1,\ldots,m\},\ f_i\in X\right\}.
\end{equation*}
Also, when we prove a theorem
(and the like), in its proof we always use the same
symbols as those appearing in
the statement itself of the theorem (and the like).

\section{Preliminaries on Matrix Weights}
\label{Ap dimension}

In this section, we recall the definition and several known facts about matrix $A_p$-weights,
and we then introduce a new concept of the $A_p$-dimension for matrix weights
and study its properties. We first recall some basic concepts of matrices.

For any $m,n\in\mathbb{N}$,
the set of all $m\times n$ complex-valued matrices is denoted by $M_{m,n}(\mathbb{C})$,
and $M_{m,m}(\mathbb{C})$ is simply denoted by $M_{m}(\mathbb{C})$.
For any $A:=[a_{ij}]\in M_{m,n}(\mathbb{C})$,
the \emph{conjugate} of $A$,
denoted by $\overline{A}$, is the matrix in $M_{m, n}(\mathbb{C})$
whose $(i,j)$ entry is the conjugate of $a_{ij}$,
the \emph{transpose} of $A$,
denoted by $A^\mathrm{T}$, is the matrix in $M_{n,m}(\mathbb{C})$
whose $(i,j)$ entry is $a_{ji}$, and
the \emph{conjugate transpose} of $A$
is denoted by $A^*:=\overline{A^\mathrm{T}}$.

For any $A\in M_m(\mathbb{C})$, let
\begin{equation}\label{n1}
\|A\|:=\sup_{\vec z\in\mathbb{C}^m,\,|\vec z|=1}|A\vec z|.
\end{equation}

In what follows, we regard $\mathbb{C}^m$ as $M_{m,1}(\mathbb{C})$
and let $\vec{\mathbf{0}}:=(0,\ldots,0)^\mathrm{T}\in\mathbb{C}^m$.
Moreover, for any $\vec z:=(z_1,\ldots,z_m)^\mathrm{T}\in\mathbb{C}^m$,
let $|\vec z|:=(\sum_{i=1}^m|z_i|^2)^{\frac12}$.

Let $A:=[a_{ij}]\in M_m(\mathbb{C})$.
The matrix $A$ is called a \emph{Hermitian matrix} if $A^*=A$
and called a \emph{unitary matrix} if $A^*A=I_m$,
where the \emph{identity matrix} $I_m$ is defined by setting
\begin{equation}\label{Im}
I_m:=\left[\begin{matrix}
1&0&\cdots&0&0\\
0&1&\cdots&0&0\\
\vdots&\vdots&\ddots&\vdots&\vdots\\
0&0&\cdots&1&0\\
0&0&\cdots&0&1
\end{matrix}\right]\in M_m(\mathbb{C}).
\end{equation}
The matrix $A$ is called a \emph{diagonal matrix} if,
for any $i,j\in\{1,\ldots,m\}$ and $i\neq j$, $a_{ij}=0$
and called a \emph{real diagonal matrix}
if it is a diagonal matrix and,
for any $i\in\{1,\ldots,m\}$, $a_{ii}\in\mathbb{R}$.
For any $\{\lambda_i\}_{i=1}^m\subset\mathbb{C}$,
$$
\operatorname{diag}\,(\lambda_1,\ldots,\lambda_m)
:=\left[\begin{matrix}
\lambda_1&0&\cdots&0&0\\
0&\lambda_2&\cdots&0&0\\
\vdots&\vdots&\ddots&\vdots&\vdots\\
0&0&\cdots&\lambda_{m-1}&0\\
0&0&\cdots&0&\lambda_m
\end{matrix}\right]
$$
is called the \emph{diagonal matrix generated by} $\{\lambda_i\}_{i=1}^m$.
If there exist $\lambda\in\mathbb{C}$ and
$\vec z\in\mathbb{C}^m\setminus\{\vec{\mathbf{0}}\}$
such that $A\vec z=\lambda\vec z$, then $\lambda$ is called an \emph{eigenvalue} of $A$
and $\vec z$ an \emph{eigenvector} of $A$ associated with $\lambda$.
The matrix $A$ is said to be \emph{invertible} if
there exists a matrix $A^{-1}\in M_m(\mathbb{C})$ such that $A^{-1}A=I_m$.

Now, we recall the concepts of positive definite matrices
and nonnegative definite matrices (see, for instance, \cite[(7.1.1a) and (7.1.1b)]{hj13}).

\begin{definition}
A matrix $A\in M_m(\mathbb{C})$ is said to be \emph{positive definite}
if, for any $\vec z\in\mathbb{C}^m\setminus\{\vec{\mathbf{0}}\}$, $\vec z^*A\vec z>0$,
and $A$ is said to be \emph{nonnegative definite} if,
for any $\vec z\in\mathbb{C}^m$, $\vec z^*A\vec z\geq0$.
\end{definition}

\begin{remark}\label{1001}
It is well known that any nonnegative definite matrix is always Hermitian
(see, for instance, \cite[Theorem 4.1.4]{hj13}).
\end{remark}

From Remark \ref{1001} and \cite[Theorem 5.6.2(d)]{hj13},
we immediately deduce the following conclusion; we omit the details.

\begin{lemma}\label{exchange}
Let $A,B\in M_m(\mathbb{C})$ be two nonnegative definite matrices.
Then $\|AB\|=\|BA\|$ with the same norm $\|\cdot\|$ as in \eqref{n1}.
\end{lemma}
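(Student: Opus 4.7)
The plan is to reduce the statement to the well-known fact that for any $X,Y\in M_m(\mathbb{C})$ the products $XY$ and $YX$ have the same nonzero eigenvalues (this is the likely content of \cite[Theorem 5.6.2(d)]{hj13}), combined with the identification, for nonnegative definite matrices, of the operator norm $\|\cdot\|$ with the spectral radius.

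First, I would exploit the $C^*$-identity $\|M\|^2=\|M^*M\|$, valid for the operator norm in \eqref{n1}. Since $A$ and $B$ are nonnegative definite, Remark \ref{1001} gives $A^*=A$ and $B^*=B$, so $(AB)^*=BA$ and $(BA)^*=AB$. Therefore
\begin{equation*}
\|AB\|^2=\|(AB)^*(AB)\|=\|BA^2B\|,\qquad
\|BA\|^2=\|(BA)^*(BA)\|=\|AB^2A\|.
\end{equation*}

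Next, I would observe the algebraic identities $BA^2B=(BA)(AB)$ and $AB^2A=(AB)(BA)$. By the classical result that, for any two square matrices $X,Y$, the products $XY$ and $YX$ share the same nonzero spectrum (with multiplicities), it follows that $BA^2B$ and $AB^2A$ have the same nonzero eigenvalues, and in particular the same spectral radius.

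Finally, both $BA^2B=(AB)^*(AB)$ and $AB^2A=(BA)^*(BA)$ are nonnegative definite, hence Hermitian with nonnegative real eigenvalues; for any such matrix $M$, the operator norm in \eqref{n1} coincides with the largest eigenvalue (equivalently, the spectral radius). Thus $\|BA^2B\|=\|AB^2A\|$, and combining this with the two displayed identities yields $\|AB\|^2=\|BA\|^2$, which gives $\|AB\|=\|BA\|$. No step is a serious obstacle here; the only point requiring a little care is verifying that each intermediate matrix is indeed nonnegative definite (so that spectral radius equals operator norm), which follows directly from writing it as $X^*X$ for an appropriate $X$.
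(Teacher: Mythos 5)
Your argument is correct: every step checks out, and the one point you flag (nonnegative definiteness of the intermediate matrices $BA^2B=(AB)^*(AB)$ and $AB^2A=(BA)^*(BA)$, so that operator norm equals spectral radius) is indeed satisfied. The paper itself omits the proof, deducing the lemma ``from Remark \ref{1001} and \cite[Theorem 5.6.2(d)]{hj13}''; the intended argument is the one-line version of what you do: since $A$ and $B$ are Hermitian by Remark \ref{1001}, $(AB)^*=B^*A^*=BA$, and the operator norm \eqref{n1} is invariant under taking adjoints, so $\|AB\|=\|(AB)^*\|=\|BA\|$ at once. Your route is longer because the adjoint-invariance $\|M\|=\|M^*\|$ is already implicit in (indeed, is the cheapest way to justify) the $C^*$-identity $\|M\|^2=\|M^*M\|$ that you start from; once you grant that identity, you could stop after your first display, since $BA^2B$ and $AB^2A$ need not be compared at all --- adjoint invariance applied to $AB$ directly closes the argument. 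That said, your detour through the fact that $XY$ and $YX$ share their nonzero spectrum, combined with ``norm equals spectral radius for nonnegative definite matrices,'' is a perfectly valid alternative and has the minor virtue of not needing $\|M\|=\|M^*\|$ if one proves the $C^*$-identity via the Rayleigh quotient; it is just more machinery than the statement requires.
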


Let $A\in M_m(\mathbb{C})$ be a positive definite matrix
and have eigenvalues $\{\lambda_i\}_{i=1}^m$.
Due to \cite[Theorem 2.5.6(c)]{hj13},
we find that there exists a unitary matrix $U\in M_m(\mathbb{C})$ such that
\begin{equation}\label{500}
A=U\operatorname{diag}\,(\lambda_1,\ldots,\lambda_m)U^*.
\end{equation}
Moreover, by \cite[Theorem 4.1.8]{hj13},
we find $\{\lambda_i\}_{i=1}^m\subset(0,\infty)$.
The following definition is based on these conclusions
and can be found in \cite[p.\,407]{hj94} (see also \cite[Definition 1.2]{h08}).

\begin{definition}
Let $A\in M_m(\mathbb{C})$ be a positive definite matrix
and have eigenvalues $\{\lambda_i\}_{i=1}^m$.
For any $\alpha\in\mathbb{R}$, define
$$
A^\alpha:=U\operatorname{diag}\left(\lambda_1^\alpha,\ldots,\lambda_m^\alpha\right)U^*,
$$
where $U$ is the same as in \eqref{500}.
\end{definition}

\begin{remark}
From \cite[p.\,408]{hj94}, we infer that $A^\alpha$
is independent of the choices of both the order of $\{\lambda_i\}_{i=1}^m$ and $U$,
and hence $A^\alpha$ is well defined.
\end{remark}

A \emph{scalar weight} is a nonnegative locally integrable function on $\mathbb{R}^n$
that takes values in $(0,\infty)$ almost everywhere.
Next, we recall the concept of scalar $A_p$-weights
(see, for instance, \cite[Definitions 7.1.1 and 7.1.3]{g14c}).
\begin{definition}\label{def Ap}
Let $p\in[1,\infty)$.
A scalar weight $w$ is called an \emph{scalar $A_p(\mathbb{R}^n)$-weight}
if $w$ satisfies that
$$
[w]_{A_1(\mathbb{R}^n)}:=\sup_{\mathrm{cube}\,Q}
\fint_Qw(x)\,dx\left\|w^{-1}\right\|_{L^\infty(Q)}<\infty
$$
or that, when $p\in(1,\infty)$,
$$
[w]_{A_p(\mathbb{R}^n)}:=\sup_{\mathrm{cube}\,Q}
\fint_Qw(x)\,dx\left\{\fint_Q[w(x)]^{-\frac{p'}{p}}\,dx\right\}^{\frac{p}{p'}}<\infty,
$$
where $\frac{1}{p}+\frac{1}{p'}=1$.
Define $A_\infty(\mathbb{R}^n):=\bigcup_{p\in[1,\infty)}A_p(\mathbb{R}^n)$.
\end{definition}

Now, we recall the concept of matrix weights (see, for instance, \cite{nt96,tv97,v97}).

\begin{definition}\label{MatrixWeight}
A matrix-valued function $W:\ \mathbb{R}^n\to M_m(\mathbb{C})$ is called
a \emph{matrix weight} if $W$ satisfies that
\begin{enumerate}[\rm(i)]
\item for any $x\in\mathbb{R}^n$, $W(x)$ is nonnegative definite;
\item for almost every $x\in\mathbb{R}^n$, $W(x)$ is invertible;
\item the entries of $W$ are all locally integrable.
\end{enumerate}
\end{definition}

Corresponding to Definition \ref{def Ap}, we have the following concept of $A_p$-matrix weights
(see, for instance, \cite[p.\,490]{fr21}).

\begin{definition}
Let $p\in(0,\infty)$. A matrix weight $W$ on $\mathbb{R}^n$
is called an $A_p(\mathbb{R}^n,\mathbb{C}^m)$-\emph{matrix weight}
if $W$ satisfies that, when $p\in(0,1]$,
$$
[W]_{A_p(\mathbb{R}^n,\mathbb{C}^m)}
:=\sup_{\mathrm{cube}\,Q}\mathop{\mathrm{\,ess\,sup\,}}_{y\in Q}
\fint_Q\left\|W^{\frac{1}{p}}(x)W^{-\frac{1}{p}}(y)\right\|^p\,dx
<\infty
$$
or that, when $p\in(1,\infty)$,
$$
[W]_{A_p(\mathbb{R}^n,\mathbb{C}^m)}
:=\sup_{\mathrm{cube}\,Q}
\fint_Q\left[\fint_Q\left\|W^{\frac{1}{p}}(x)W^{-\frac{1}{p}}(y)\right\|^{p'}
\,dy\right]^{\frac{p}{p'}}\,dx
<\infty,
$$
where $\frac{1}{p}+\frac{1}{p'}=1$.
\end{definition}

In what follows, if there exists no confusion,
we denote $A_p(\mathbb{R}^n,\mathbb{C}^m)$ simply by $A_p$.
Next, we recall the concept of reducing operators (see, for instance, \cite[(3.1)]{v97}).

\begin{definition}\label{reduce}
Let $p\in(0,\infty)$, $W$ be a matrix weight,
and $E\subset\mathbb{R}^n$ a bounded measurable set satisfying $|E|\in(0,\infty)$.
The matrix $A_E\in M_m(\mathbb{C})$ is called a \emph{reducing operator} of order $p$ for $W$
if $A_E$ is positive definite and,
for any $\vec z\in\mathbb{C}^m$,
\begin{equation}\label{equ_reduce}
\left|A_E\vec z\right|
\sim\left[\fint_E\left|W^{\frac{1}{p}}(x)\vec z\right|^p\,dx\right]^{\frac{1}{p}},
\end{equation}
where the positive equivalence constants depend only on $m$ and $p$.
\end{definition}

\begin{remark}
In Definition \ref{reduce}, the existence of $A_E$ is guaranteed by
\cite[Proposition 1.2]{g03} and \cite[p.\,1237]{fr04}; we omit the details.
\end{remark}

It is useful to know that the relation \eqref{equ_reduce}
also extends to any $M\in M_m(\mathbb{C})$ in place of any vector $\vec z$ as follows.

\begin{lemma}\label{reduceM}
Let $p\in(0,\infty)$, $W$ be a matrix weight,
and $E\subset\mathbb{R}^n$ a bounded measurable set satisfying $|E|\in(0,\infty)$.
If $A_E$ is a reducing operator of order $p$ for $W$,
then, for any matrix $M\in M_m(\mathbb{C})$,
\begin{equation*}
\|A_EM\|\sim\left[\fint_E\left\|W^{\frac{1}{p}}(x)M\right\|^p\,dx\right]^{\frac{1}{p}},
\end{equation*}
where the positive equivalence constants depend only on $m$ and $p$.
\end{lemma}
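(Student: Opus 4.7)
The plan is to pass from vectors to matrices by taking a supremum on the left-hand side of \eqref{equ_reduce} and applying a finite-dimensional norm-equivalence argument on the right-hand side. Recalling that $\|A_E M\|=\sup_{|\vec z|=1}|A_E M\vec z|=\sup_{|\vec z|=1}|A_E(M\vec z)|$, for each unit vector $\vec z\in\mathbb{C}^m$ I will apply \eqref{equ_reduce} with $M\vec z$ in place of $\vec z$, which already yields the two-sided equivalence
$$
\|A_EM\|\sim\sup_{|\vec z|=1}\left[\fint_E\left|W^{\frac1p}(x)M\vec z\right|^p\,dx\right]^{\frac1p}
$$
with constants depending only on $m$ and $p$. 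The task then reduces to comparing this supremum with $[\fint_E\|W^{1/p}(x)M\|^p\,dx]^{1/p}$.

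For the upper bound $\lesssim$ of the lemma, I would simply dominate $|W^{1/p}(x)M\vec z|\le\|W^{1/p}(x)M\||\vec z|$ pointwise in $x$ and take the $L^p$ average followed by the supremum over unit $\vec z$; this gives $\|A_EM\|\lesssim[\fint_E\|W^{1/p}(x)M\|^p\,dx]^{1/p}$ immediately.

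The reverse direction is the only step that needs care. The idea is to reduce the operator norm $\|W^{1/p}(x)M\|$ to a finite maximum $\max_{1\le i\le m}|W^{1/p}(x)M\vec e_i|$, where $\{\vec e_i\}_{i=1}^m$ is the standard basis of $\mathbb{C}^m$; by equivalence of norms on the finite-dimensional space $M_m(\mathbb{C})$, there exists a constant $C_m\in(0,\infty)$ such that $\|B\|\le C_m\max_{1\le i\le m}|B\vec e_i|$ for every $B\in M_m(\mathbb{C})$. Using the elementary bounds $\max_i a_i^p\le\sum_i a_i^p\le m\max_i a_i^p$ to trade the maximum for a sum, integrating over $E$, and extracting the $p$-th root will allow me to pass to a sum of $m$ terms of the form $[\fint_E|W^{1/p}(x)M\vec e_i|^p\,dx]^{1/p}$. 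Finally, \eqref{equ_reduce} applied to each $M\vec e_i$ converts this into a sum of $|A_EM\vec e_i|$, each of which is bounded by $\|A_EM\||\vec e_i|=\|A_EM\|$; accumulating the finitely many constants (all depending only on $m$ and $p$) yields the desired bound.

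The only mildly subtle step is this last one, namely justifying the reduction from an operator norm of $W^{1/p}(x)M$ to its action on the standard basis with a constant independent of $x$; once it is granted by finite-dimensional norm equivalence on $M_m(\mathbb{C})$, the argument is a routine chain of elementary inequalities together with a single application of \eqref{equ_reduce}.
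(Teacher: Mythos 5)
Your proposal is correct and is essentially the same argument as the paper's: the paper proves both directions at once by invoking the equivalence $\|B\|\sim(\sum_{i=1}^m|B\vec e_i|^p)^{1/p}$ for an orthonormal basis (quoted from Roudenko's Lemma 3.2), applying it to both $A_EM$ and $W^{1/p}(x)M$, and interchanging the sum with the integral; your version merely derives that same finite-dimensional norm equivalence ad hoc and treats the two inequalities asymmetrically. No gap.
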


\begin{proof}
Let $\{\vec{e}_i\}_{i=1}^m$ be any orthonormal basis of $\mathbb C^m$.
By \cite[Lemma 3.2]{ro03}, we find that, for any matrix $M\in M_m(\mathbb{C})$,
$$
\|M\|\sim\left(\sum_{i=1}^m\left|M\vec{e}_i\right|^p\right)^{\frac{1}{p}},
$$
where the positive equivalence constants depend only on $m$ and $p$.
From this and \eqref{equ_reduce}, we deduce that, for any matrix $M\in M_m(\mathbb{C})$,
\begin{align*}
\|A_E M\|^p
&\sim\sum_{i=1}^m\left|A_EM\vec{e}_i\right|^p
\sim\sum_{i=1}^m\fint_E\left|W^{\frac{1}{p}}(x)M\vec{e}_i\right|^p\,dx \\
&=\fint_E\sum_{i=1}^m\left|W^{\frac{1}{p}}(x)M\vec{e}_i\right|^p\,dx
\sim\fint_E\left\|W^{\frac{1}{p}}(x) M \right\|^p\,dx.
\end{align*}
This finishes the proof of Lemma \ref{reduceM}.
\end{proof}

Using Lemma \ref{reduceM}, we obtain an equivalent characterization of $A_p$-matrix weights.

\begin{proposition}\label{equivalent}
Let $p\in(0,1]$.
Then there exists a positive constant $C$, depending only on $m$ and $p$,
such that, for any matrix weight $W$,
$$
[W]_{A_p}\leq[W]_{A_p}^*\leq C[W]_{A_p},
$$
where
$$
[W]_{A_p}^*:=\sup_{\mathrm{cube}\,Q}
\fint_Q\mathop{\mathrm{\,ess\,sup\,}}_{y\in Q}
\left\|W^{\frac{1}{p}}(x)W^{-\frac{1}{p}}(y)\right\|^p\,dx.
$$
\end{proposition}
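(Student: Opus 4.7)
The first inequality $[W]_{A_p}\le [W]_{A_p}^*$ is essentially free: for a fixed cube $Q$ and almost every $y\in Q$, we have
$$
\bigl\|W^{\frac{1}{p}}(x)W^{-\frac{1}{p}}(y)\bigr\|^p
\le\mathop{\mathrm{\,ess\,sup\,}}_{y'\in Q}\bigl\|W^{\frac{1}{p}}(x)W^{-\frac{1}{p}}(y')\bigr\|^p
$$
for a.e.\ $x\in Q$. Averaging over $x$, then taking $\mathop{\mathrm{\,ess\,sup\,}}_{y\in Q}$, and finally $\sup_Q$, yields the inequality with constant $1$.

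For the substantive direction $[W]_{A_p}^*\le C[W]_{A_p}$, the plan is to use the reducing operator $A_Q$ of order $p$ for $W$ on $Q$ to decouple the $x$- and $y$-dependencies. Inserting $A_Q^{-1}A_Q=I_m$ between $W^{\frac{1}{p}}(x)$ and $W^{-\frac{1}{p}}(y)$, the submultiplicativity of the operator norm $\|\cdot\|$ from \eqref{n1} together with $p\in(0,1]$ gives
$$
\bigl\|W^{\frac{1}{p}}(x)W^{-\frac{1}{p}}(y)\bigr\|^p
\le\bigl\|W^{\frac{1}{p}}(x)A_Q^{-1}\bigr\|^p\,\bigl\|A_QW^{-\frac{1}{p}}(y)\bigr\|^p.
$$
Since the second factor no longer depends on $x$, I can pull it outside the $x$-average, take $\mathop{\mathrm{\,ess\,sup\,}}_{y\in Q}$ inside, and obtain
$$
\fint_Q\mathop{\mathrm{\,ess\,sup\,}}_{y\in Q}\bigl\|W^{\frac{1}{p}}(x)W^{-\frac{1}{p}}(y)\bigr\|^p\,dx
\le\left[\fint_Q\bigl\|W^{\frac{1}{p}}(x)A_Q^{-1}\bigr\|^p\,dx\right]\mathop{\mathrm{\,ess\,sup\,}}_{y\in Q}\bigl\|A_QW^{-\frac{1}{p}}(y)\bigr\|^p.
$$

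The two resulting factors are then controlled using Lemma \ref{reduceM}. Applied with $M:=A_Q^{-1}$, it yields
$$
\fint_Q\bigl\|W^{\frac{1}{p}}(x)A_Q^{-1}\bigr\|^p\,dx\sim\bigl\|A_QA_Q^{-1}\bigr\|^p=\|I_m\|^p=1,
$$
so this factor is an absolute constant depending only on $m$ and $p$. Applied pointwise in $y$ with $M:=W^{-\frac{1}{p}}(y)$, it also gives
$$
\bigl\|A_QW^{-\frac{1}{p}}(y)\bigr\|^p
\sim\fint_Q\bigl\|W^{\frac{1}{p}}(x)W^{-\frac{1}{p}}(y)\bigr\|^p\,dx,
$$
whose essential supremum in $y\in Q$ is, by definition, at most $[W]_{A_p}$. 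Taking the supremum over all cubes $Q$ then yields $[W]_{A_p}^*\le C[W]_{A_p}$ with $C$ depending only on $m$ and $p$.

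The only delicate point is bookkeeping the equivalence constants from Lemma \ref{reduceM}, which are dimensional constants depending only on $m$ and $p$; no other obstacle arises because the decoupling via $A_Q^{-1}A_Q$ and the submultiplicativity step are clean, and the passage through $\mathop{\mathrm{\,ess\,sup\,}}_{y\in Q}$ is legitimate since it acts only on a factor not depending on $x$. I do not foresee a genuine difficulty beyond choosing the reducing operator appropriately on each $Q$ and invoking Lemma \ref{reduceM} twice.
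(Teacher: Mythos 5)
Your proposal is correct and follows essentially the same route as the paper: both proofs insert $A_Q^{-1}A_Q$ between $W^{\frac1p}(x)$ and $W^{-\frac1p}(y)$, use submultiplicativity of the norm, and invoke Lemma \ref{reduceM} twice — once to bound $\mathop{\mathrm{\,ess\,sup\,}}_{y\in Q}\|A_QW^{-\frac1p}(y)\|^p$ by $[W]_{A_p}$ and once to show $\fint_Q\|W^{\frac1p}(x)A_Q^{-1}\|^p\,dx\sim 1$. The only cosmetic difference is the order in which the two factors are estimated.
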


\begin{proof}
Let $W$ be a matrix weight.
Obviously, $[W]_{A_p}\leq[W]_{A_p}^*$.
Now, we show that $[W]_{A_p}^*\lesssim[W]_{A_p}$.
Let $\{A_Q\}_{\mathrm{cube}\,Q}$ be a family of
reducing operators of order $p$ for $W$.
By Lemma \ref{reduceM}, we find that,
for any cube $Q\subset\mathbb{R}^n$, any $x\in Q$, and almost every $y\in Q$,
\begin{align*}
\left\|W^{\frac{1}{p}}(x)W^{-\frac{1}{p}}(y)\right\|^p
&\leq\left\|W^{\frac{1}{p}}(x)A_Q^{-1}\right\|^p\left\|A_QW^{-\frac{1}{p}}(y)\right\|^p \\
&\sim\left\|W^{\frac{1}{p}}(x)A_Q^{-1}\right\|^p\fint_Q\left\|W^{\frac{1}{p}}(t) W^{-\frac{1}{p}}(y)\right\|^p\,dt\\
&\leq\left\|W^{\frac{1}{p}}(x)A_Q^{-1}\right\|^p[W]_{A_p},
\end{align*}
and hence
$$
[W]_{A_p}^*
\lesssim[W]_{A_p}\sup_{\mathrm{cube}\,Q}
\fint_Q\left\|W^{\frac{1}{p}}(x)A_Q^{-1}\right\|^p\,dx
\sim[W]_{A_p}.
$$
This finishes the proof of Proposition \ref{equivalent}.
\end{proof}

Next, we give a result that is well known in the scalar case.

\begin{proposition}\label{178}
Let $0<p<q<\infty$. Then $A_p\subset A_q$.
Moreover, there exists a positive constant $C$,
depending only on $m$, $p$, and $q$, such that, for any matrix weight $W$,
\begin{equation}\label{232}
[W]_{A_q}\leq C[W]_{A_p}.
\end{equation}
\end{proposition}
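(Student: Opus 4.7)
The plan is to first establish a pointwise inequality comparing $\|W^{1/p}(x)W^{-1/p}(y)\|^p$ to $\|W^{1/q}(x)W^{-1/q}(y)\|^q$ for $p\leq q$, and then to split into three cases based on the positions of $p$ and $q$ relative to $1$, so as to bridge the two functional forms of the definition of $[W]_{A_p}$.

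The key pointwise fact I would establish is that, for any $x,y\in\mathbb{R}^n$,
\begin{equation*}
\bigl\|W^{1/q}(x)W^{-1/q}(y)\bigr\|^q
\leq\bigl\|W^{1/p}(x)W^{-1/p}(y)\bigr\|^p.
\end{equation*}
This is a consequence of the Cordes (L\"owner--Heinz) inequality $\|A^sB^s\|\leq\|AB\|^s$, valid for positive definite matrices $A,B\in M_m(\mathbb{C})$ and $s\in[0,1]$, applied with $A:=W^{1/p}(x)$, $B:=W^{-1/p}(y)$, and $s:=p/q\in(0,1]$. This matrix inequality is classical; it can be derived from the operator monotonicity of $t\mapsto t^s$ for $s\in[0,1]$ combined with the identity $\|X\|^2=\|X^*X\|$ and Lemma \ref{exchange}.

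Given this pointwise estimate, the three cases are routine. In the case $0<p<q\leq1$, both $[W]_{A_p}$ and $[W]_{A_q}$ have the form $\sup_Q\operatorname{ess\,sup}_{y\in Q}\fint_Q(\cdot)\,dx$, and the pointwise inequality directly yields $[W]_{A_q}\leq[W]_{A_p}$. In the case $0<p\leq1<q<\infty$, the pointwise inequality lets me dominate the inner $L^{q'}$-average in $y$ by $\operatorname{ess\,sup}_{y\in Q}\|W^{1/p}(x)W^{-1/p}(y)\|^p$; integrating in $x$ produces the quantity $[W]_{A_p}^*$ from Proposition \ref{equivalent}, which is comparable to $[W]_{A_p}$. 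In the case $1<p<q<\infty$, after invoking the pointwise inequality I am left with bounding $\fint_Q\|W^{1/p}(x)W^{-1/p}(y)\|^{pq'/q}\,dy$; since $pq'/q\leq p'$ (an inequality equivalent to $p\leq q$ via $\frac{1}{q-1}\leq\frac{1}{p-1}$), Jensen's inequality on the probability space $(Q,\fint\,dy)$ allows me to replace the $pq'/q$-average by the $p'$-average raised to the power $p/p'$, giving exactly $[W]_{A_p}$ after the outer $x$-integration.

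The main obstacle is the pointwise matrix inequality. Its scalar analogue is an equality, since $(w^{1/q}(x)w^{-1/q}(y))^q=w(x)/w(y)$ is independent of the exponent, so no matrix-analytic input is needed in the commutative case. The matrix version, by contrast, requires the noncommutative Cordes inequality as the essential ingredient; once this is in hand, the rest of the proof reduces to clean applications of Jensen's inequality and Proposition \ref{equivalent}.
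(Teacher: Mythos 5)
Your proposal is correct and follows essentially the same route as the paper: the same pointwise estimate $\|W^{1/q}(x)W^{-1/q}(y)\|^q\lesssim\|W^{1/p}(x)W^{-1/p}(y)\|^p$ (which the paper obtains by quoting the proof of \cite[Lemma 2]{ipr21}, while you derive it, with constant $1$, from the Cordes inequality), followed by the identical three-case split and the same uses of Jensen's inequality and Proposition \ref{equivalent}. The only difference is the sourcing of the pointwise matrix inequality, and your constant-free version is, if anything, slightly cleaner.
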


\begin{proof}
Let $0<p<q<\infty$ and $W$ be a matrix weight.
We first establish a useful estimate.
By the proof of \cite[Lemma 2]{ipr21}
(in which the symbols $p$ and $q$ are used in the opposite roles,
and it is asumed that $p,q\geq 1$, but the proof works verbatim for any $p,q>0$),
we find that, for any $x\in\mathbb{R}^n$ and almost every $y\in\mathbb{R}^n$,
\begin{equation}\label{176}
\left\|W^{\frac{1}{q}}(x)W^{-\frac{1}{q}}(y)\right\|^q
\lesssim\left\|W^{\frac{1}{p}}(x)W^{-\frac{1}{p}}(y)\right\|^p,
\end{equation}
where the implicit positive constant depends only on $m$, $p$, and $q$.
Now, to prove \eqref{232}, we consider the following three cases on both $p$ and $q$.

\emph{Case 1)} $0<p<q\leq1$.
In this case, using \eqref{176},
we conclude that, for any cube $Q\subset\mathbb{R}^n$
and almost every $y\in\mathbb{R}^n$,
$$
\fint_Q\left\|W^{\frac{1}{q}}(x)W^{-\frac{1}{q}}(y)\right\|^q\,dx
\lesssim\fint_Q\left\|W^{\frac{1}{p}}(x)W^{-\frac{1}{p}}(y)\right\|^p\,dx,
$$
and hence $[W]_{A_q}\lesssim[W]_{A_p}$ in this case.

\emph{Case 2)} $1<p<q<\infty$.
In this case, let $\frac{1}{p}+\frac{1}{p'}=1$ and $\frac{1}{q}+\frac{1}{q'}=1$.
By \eqref{176}, we find that,
for any $x\in\mathbb{R}^n$ and almost every $y\in\mathbb{R}^n$,
\begin{equation}\label{177}
\left\|W^{\frac{1}{q}}(x)W^{-\frac{1}{q}}(y)\right\|^{q'}
\lesssim\left\|W^{\frac{1}{p}}(x)W^{-\frac{1}{p}}(y)\right\|^{p'r},
\end{equation}
where $r:=\frac{p}{p'}\frac{q'}{q}$. Observe that
$r=\frac{p-1}{q-1}\in(0,1).$
From this, \eqref{177}, and H\"older's inequality, we infer that,
for any cube $Q\subset\mathbb{R}^n$,
\begin{align*}
\fint_Q\left[\fint_Q
\left\|W^{\frac{1}{q}}(x)W^{-\frac{1}{q}}(y)\right\|^{q'}\,dy\right]^{\frac{q}{q'}}\,dx
&\lesssim\fint_Q\left[\fint_Q \left\|W^{\frac{1}{p}}(x)W^{-\frac{1}{p}}(y)\right\|^{p'r}\,dy\right]^{\frac{1}{p'r}p}\,dx\\
&\leq\fint_Q\left[\fint_Q
\left\|W^{\frac{1}{p}}(x)W^{-\frac{1}{p}}(y)\right\|^{p'}\,dy\right]^{\frac{1}{p'}p}\,dx,
\end{align*}
and hence $[W]_{A_q}\lesssim[W]_{A_p}$ in this case.

\emph{Case 3)} $0<p\leq1<q<\infty$.
In this case, using \eqref{176}, we obtain,
for any cube $Q\subset\mathbb{R}^n$ and any $x\in Q$,
\begin{align*}
\left[\fint_Q
\left\|W^{\frac{1}{q}}(x)W^{-\frac{1}{q}}(y)\right\|^{q'}\,dy\right]^{\frac{q}{q'}}
&\lesssim\left[\fint_Q
\left\|W^{\frac{1}{p}}(x)W^{-\frac{1}{p}}(y)\right\|^{p\frac{q'}{q}}\,dy\right]^{\frac{q}{q'}}\\
&\leq\mathop{\mathrm{\,ess\,sup\,}}_{y\in Q}
\left\|W^{\frac{1}{p}}(x)W^{-\frac{1}{p}}(y)\right\|^p,
\end{align*}
which, together with Proposition \ref{equivalent}, further implies that
$[W]_{A_q}\lesssim[W]_{A_p}^*\sim[W]_{A_p}.$
This finishes the proof of Proposition \ref{178}.
\end{proof}

\begin{remark}
\begin{enumerate}[\rm(i)]
\item In \cite[Theorem 2.5]{b01},
Bownik showed that $A_p\subset A_q$ when $1<p<q<\infty$ by a different method.
\item Unlike scalar weights, matrix weights have no open property.
Indeed, Bownik \cite[Corollary 4.3]{b01} proved that there exists $W\in A_2$ such that,
for any $p\in(1,2)$, $W\notin A_p$.
\end{enumerate}
\end{remark}

The fundamental facts stated in the following lemma are essentially
contained in \cite[Section 3.3 and Lemma 3.6]{dkps}.

\begin{lemma}\label{Ap dual}
Let $p\in(1,\infty)$, $\frac{1}{p}+\frac{1}{p'}=1$, and $W\in A_p$.
Then $\widetilde W:=W^{-\frac{1}{p-1}}$ satisfies $\widetilde W\in A_{p'}$.
If $A_Q$ and $\widetilde A_Q$ denote the reducing operators,
respectively, of order $p$ for $W$ and of order $p'$ for $\widetilde W$, then
\begin{equation*}
[W]_{A_p}^{\frac{1}{p}}
\sim\left[\widetilde W\right]_{A_{p'}}^{\frac{1}{p'}}
\sim\sup_{\mathrm{cube}\,Q}\left\|A_Q\widetilde A_Q\right\|,
\end{equation*}
where the positive equivalence constants depend only on $m$ and $p$.
Moreover, for any $\vec z\in\mathbb C^m$,
\begin{equation}\label{AQ inv}
\left|A_Q^{-1}\vec z\right|
\sim\left|\widetilde A_Q\vec z\right|
\sim\left[\fint_Q\left|W^{-\frac{1}{p}}(x)\vec z\right|^{p'}\,dx\right]^{\frac{1}{p'}},
\end{equation}
where the positive equivalence constants
depend only on $m$, $p$, and $[W]_{A_p}$.
\end{lemma}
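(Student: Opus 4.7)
My plan hinges on the identity $\widetilde W^{1/p'} = W^{-1/p}$, which follows from $\widetilde W := W^{-1/(p-1)}$ together with $\frac{1}{p-1} = \frac{p'}{p}$. Once this is noted, the defining property of the reducing operator $\widetilde A_Q$ of order $p'$ for $\widetilde W$ reads
$$
|\widetilde A_Q \vec z| \sim \left[\fint_Q \left|W^{-1/p}(y)\vec z\right|^{p'}\,dy\right]^{1/p'},
$$
so the third quantity in \eqref{AQ inv} is delivered for free by the definition of $\widetilde A_Q$. Local integrability of $\widetilde W$ (needed so that $\widetilde A_Q$ is well defined) can be extracted from $W\in A_p$ by fixing, on each cube $Q$, a point $x$ at which the inner integral in the definition of $[W]_{A_p}$ is finite.

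The next step is to establish the three-way equivalence $[W]_{A_p}^{1/p}\sim\sup_{\mathrm{cube}\,Q}\|A_Q\widetilde A_Q\|\sim[\widetilde W]_{A_{p'}}^{1/p'}$ by two successive applications of Lemma \ref{reduceM}, combined with Lemma \ref{exchange} for swapping the order of nonnegative definite factors. Fixing $x\in Q$ and applying Lemma \ref{reduceM} to $\widetilde W$ with $M = W^{1/p}(x)$ (after a swap via Lemma \ref{exchange}) rewrites
$$
\left[\fint_Q \left\|W^{1/p}(x)W^{-1/p}(y)\right\|^{p'}\,dy\right]^{1/p'} \sim \left\|\widetilde A_Q W^{1/p}(x)\right\|;
$$
raising this to the $p$-th power, averaging over $x\in Q$, and then applying Lemma \ref{reduceM} to $W$ with $M=\widetilde A_Q$ yields $\|A_Q\widetilde A_Q\|^p$ up to constants, so the supremum over cubes gives $[W]_{A_p}^{1/p}\sim\sup_{\mathrm{cube}\,Q}\|A_Q\widetilde A_Q\|$. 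Interchanging the roles of $(W,p,A_Q)$ and $(\widetilde W,p',\widetilde A_Q)$ and using Lemma \ref{exchange} to identify $\|\widetilde A_Q A_Q\|$ with $\|A_Q\widetilde A_Q\|$ gives the remaining equivalence; in particular $\widetilde W\in A_{p'}$.

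For the final equivalence $|A_Q^{-1}\vec z|\sim|\widetilde A_Q\vec z|$, the upper bound follows from H\"older duality: using that $W^{1/p}(x)$ is Hermitian, for any $\vec w\in\mathbb{C}^m$,
$$
|\vec w^*\vec z| = \left|\fint_Q\left(W^{1/p}(x)\vec w\right)^*W^{-1/p}(x)\vec z\,dx\right| \lesssim |A_Q\vec w|\,|\widetilde A_Q\vec z|,
$$
and taking the supremum over $\vec w$ with $|A_Q\vec w|=1$ (together with $A_Q^{-*}=A_Q^{-1}$ since $A_Q$ is Hermitian) gives $|A_Q^{-1}\vec z|\lesssim|\widetilde A_Q\vec z|$. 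Conversely, writing $\vec z = A_Q(A_Q^{-1}\vec z)$ produces
$$
|\widetilde A_Q\vec z| \leq \|\widetilde A_Q A_Q\|\,|A_Q^{-1}\vec z| \lesssim [W]_{A_p}^{1/p}\,|A_Q^{-1}\vec z|,
$$
which explains the additional dependence of the constants on $[W]_{A_p}$. The main obstacle throughout is not analytic but organizational: carefully tracking which matrices are nonnegative definite in order to legally invoke Lemma \ref{exchange}, and keeping the positive and negative exponents on $W$ straight.
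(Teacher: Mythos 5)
Your proposal is correct, but it is worth noting that the paper does not actually prove this lemma: it simply cites \cite[Section 3.3 and Lemma 3.6]{dkps} for both the three-way equivalence and \eqref{AQ inv}. You instead supply a complete self-contained argument, and it holds up. The key identity $\widetilde W^{1/p'}=W^{-1/p}$ (from $(p-1)p'=p$) correctly reduces the second equivalence in \eqref{AQ inv} to the definition of $\widetilde A_Q$; your local-integrability remark is the right way to justify that $\widetilde W$ is a matrix weight so that $\widetilde A_Q$ exists at all (note this does not presuppose $\widetilde W\in A_{p'}$, so there is no circularity). The two-step application of Lemma \ref{reduceM} — first to $\widetilde W$ with $M=W^{1/p}(x)$, then to $W$ with $M=\widetilde A_Q$, with Lemma \ref{exchange} mediating the order of the nonnegative definite factors — correctly identifies the cube-wise $A_p$ quantity with $\|A_Q\widetilde A_Q\|^p$ up to constants depending only on $m$ and $p$, and the symmetry you invoke is legitimate because the dual weight of $\widetilde W$ of order $p'$ is $W$ itself. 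Finally, the duality/Cauchy--Schwarz argument for $|A_Q^{-1}\vec z|\lesssim|\widetilde A_Q\vec z|$ and the trivial reverse bound via $\|\widetilde A_QA_Q\|\lesssim[W]_{A_p}^{1/p}$ are both sound and correctly account for why the constants in \eqref{AQ inv}, unlike those in the three-way equivalence, depend on $[W]_{A_p}$. What your approach buys is transparency: the reader sees exactly which structural facts (Hermitian symmetry, the reducing-operator characterization, H\"older duality) drive the lemma, rather than having to consult an external source.
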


\begin{proof}
By \cite[Section 3.3]{dkps}, we find that $\widetilde W\in A_{p'}$ and
$$
[W]_{A_p}^{\frac{1}{p}}
\sim\left[\widetilde W\right]_{A_{p'}}^{\frac{1}{p'}}
\sim\sup_{\mathrm{cube}\,Q}\left\|A_Q\widetilde A_Q\right\|.
$$
From \cite[Lemma 3.6]{dkps} and \eqref{equ_reduce} with $W$ and $p$
replaced, respectively, by $\widetilde W$ and $p'$,
we deduce that, for any $\vec z\in\mathbb C^m$,
$$
\left|A_Q^{-1}\vec z\right|
\sim\left|\widetilde A_Q\vec z\right|
\sim\left[\fint_Q\left|W^{-\frac{1}{p}}(x)\vec z\right|^{p'}\,dx\right]^{\frac{1}{p'}}.
$$
This finishes the proof of Lemma \ref{Ap dual}.
\end{proof}

Using Lemma \ref{Ap dual}, we obtain the following conclusion immediately.

\begin{corollary}\label{Ap dual corollary}
Let $p\in(1,\infty)$, $\frac{1}{p}+\frac{1}{p'}=1$, $W\in A_p$,
and $\widetilde W:=W^{-\frac{1}{p-1}}$.
Let $Q$ be a cube of $\mathbb{R}^n$ and $A_Q$ and $\widetilde A_Q$
the reducing operators, respectively, of order $p$ for $W$ and of order $p'$ for $\widetilde W$.
Then, for any $M\in M_m(\mathbb C)$,
$$
\left\|A_Q^{-1}M\right\|
\sim\left\|\widetilde A_QM\right\|
\sim\left[\fint_Q\left\|W^{-\frac{1}{p}}(x)M\right\|^{p'}\,dx\right]^{\frac{1}{p'}},
$$
where the positive equivalence constants depend only on $m$, $p$, and $[W]_{A_p}$.
\end{corollary}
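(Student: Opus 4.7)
The claim packages two distinct equivalences, and my plan is to obtain each by reducing to an already established result in the excerpt.

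For the right-most equivalence $\|\widetilde A_Q M\|\sim[\fint_Q\|W^{-1/p}(x)M\|^{p'}\,dx]^{1/p'}$, the plan is to invoke Lemma \ref{reduceM} applied to the dual weight $\widetilde W:=W^{-1/(p-1)}$ with the exponent $p'$ in place of $p$. By Lemma \ref{Ap dual}, $\widetilde W\in A_{p'}$ and $\widetilde A_Q$ is a reducing operator of order $p'$ for $\widetilde W$, so Lemma \ref{reduceM} gives
\begin{equation*}
\bigl\|\widetilde A_Q M\bigr\|
\sim\left[\fint_Q\bigl\|\widetilde W^{1/p'}(x)M\bigr\|^{p'}\,dx\right]^{1/p'}.
\end{equation*}
A brief exponent computation $(p-1)p'=p$ yields $\widetilde W^{1/p'}=W^{-1/[(p-1)p']}=W^{-1/p}$, and the desired equivalence follows. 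This step is essentially a bookkeeping check that the two reducing-operator formalisms in Lemma \ref{Ap dual} are consistent, and it should be painless.

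For the left-most equivalence $\|A_Q^{-1}M\|\sim\|\widetilde A_Q M\|$, the plan is to lift the vector-level equivalence $|A_Q^{-1}\vec z|\sim|\widetilde A_Q\vec z|$ from \eqref{AQ inv} to a matrix-level statement via the operator-norm definition \eqref{n1}. Because the equivalence in \eqref{AQ inv} holds with constants independent of $\vec z$, for any $M\in M_m(\mathbb C)$ and any unit $\vec z\in\mathbb C^m$ we may substitute $M\vec z$ to obtain $|A_Q^{-1}M\vec z|\sim|\widetilde A_Q M\vec z|$ with uniform constants, and then take the supremum over unit $\vec z$ using the definition \eqref{n1} of $\|\cdot\|$. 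Uniform equivalence is preserved under suprema, which delivers the claim.

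I do not expect any serious obstacle: the corollary is an essentially formal upgrade of Lemma \ref{Ap dual} from vectors to matrices, parallel to how Lemma \ref{reduceM} upgrades Definition \ref{reduce}. The only point requiring a moment's care is verifying that the constants in \eqref{AQ inv} are independent of $\vec z$ (they are, since they depend only on $m$, $p$, and $[W]_{A_p}$), so that taking the supremum is legitimate and yields the stated dependence of the equivalence constants. No separate argument is needed for the matrix case beyond assembling these ingredients, and the proof should be only a few lines.
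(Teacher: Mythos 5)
Your proposal is correct and matches the paper's proof essentially verbatim: the paper likewise obtains $\|A_Q^{-1}M\|\sim\|\widetilde A_QM\|$ by writing the operator norm as a supremum of $|A_Q^{-1}M\vec e|$ over unit vectors $\vec e$ and applying \eqref{AQ inv}, and then gets the second equivalence from Lemma \ref{Ap dual} (which gives $\widetilde W\in A_{p'}$) together with Lemma \ref{reduceM} applied to $\widetilde W$ and $p'$. No gaps; the exponent bookkeeping $\widetilde W^{1/p'}=W^{-1/p}$ is exactly the point the paper leaves implicit.
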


\begin{proof}
Using \eqref{AQ inv}, we conclude that, for any $M\in M_m(\mathbb C)$,
\begin{equation*}
\left\|A_Q^{-1}M\right\|
=\sup_{\vec e\in\mathbb C^m,\,|\vec{e}|=1}\left|A_Q^{-1}M\vec e\right|
\sim\sup_{\vec e\in\mathbb C^m,\,|\vec{e}|=1}\left|\widetilde A_QM\vec e\right|
=\left\|\widetilde A_QM\right\|.
\end{equation*}
From Lemma \ref{Ap dual}, we infer that $\widetilde W\in A_{p'}$.
Applying this and Lemma \ref{reduceM} with $W$ and $p$ replaced,
respectively, by $\widetilde W$ and $p'$, we find that,
for any $M\in M_m(\mathbb C)$,
$$
\left\|\widetilde A_QM\right\|
\sim\left[\fint_Q\left\|W^{-\frac{1}{p}}(x)M\right\|^{p'}\,dx\right]^{\frac{1}{p'}}.
$$
This finishes the proof of Corollary \ref{Ap dual corollary}.
\end{proof}

The following lemma is just \cite[Lemma 5.4]{fr04}.

\begin{lemma}\label{AQ inv p<1}
Let $p\in(0,1]$, $W\in A_p$, $Q$ be a cube of $\mathbb{R}^n$,
and $A_Q$ a reducing operator of order $p$ for $W$.
Then, for any $\vec z\in\mathbb{C}^m$,
$$
\left|A_Q^{-1}\vec z\right|
\sim\mathop{\mathrm{\,ess\,sup\,}}_{x\in Q}\left|W^{-\frac{1}{p}}(x)\vec z\right|,
$$
where the positive equivalence constants depend only on $m$, $p$, and $[W]_{A_p}$.
\end{lemma}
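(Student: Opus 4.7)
My plan is to establish each half of the equivalence separately, using Lemma \ref{reduceM} (the matrix-valued form of the reducing operator relation) and Lemma \ref{exchange} ($\|AB\|=\|BA\|$ for nonnegative definite $A$ and $B$) as the two main tools. For the direction $|A_Q^{-1}\vec z|\lesssim \mathop{\mathrm{\,ess\,sup\,}}_{y\in Q}|W^{-\frac{1}{p}}(y)\vec z|$, I would insert the identity $\vec z = W^{\frac{1}{p}}(x)W^{-\frac{1}{p}}(x)\vec z$ for a.e.\ $x\in Q$ to obtain $|A_Q^{-1}\vec z|^p \le \|A_Q^{-1}W^{\frac{1}{p}}(x)\|^p\,|W^{-\frac{1}{p}}(x)\vec z|^p$, then bound the second factor pointwise by $\mathop{\mathrm{\,ess\,sup\,}}_{y\in Q}|W^{-\frac{1}{p}}(y)\vec z|^p$. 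Averaging in $x$ and applying Lemma \ref{exchange} followed by Lemma \ref{reduceM} with $M=A_Q^{-1}$, the averaged matrix factor simplifies to $\fint_Q\|W^{\frac{1}{p}}(x)A_Q^{-1}\|^p\,dx\sim \|A_QA_Q^{-1}\|^p=1$, which gives the desired bound. Notably, this direction does not use $W\in A_p$ at all; only the reducing-operator axiom is needed.

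For the reverse direction $\mathop{\mathrm{\,ess\,sup\,}}_{y\in Q}|W^{-\frac{1}{p}}(y)\vec z|\lesssim |A_Q^{-1}\vec z|$, for a.e.\ $y\in Q$ I would insert $I_m=A_QA_Q^{-1}$ to obtain $|W^{-\frac{1}{p}}(y)\vec z|\le \|W^{-\frac{1}{p}}(y)A_Q\|\cdot|A_Q^{-1}\vec z|$. Lemma \ref{exchange} rewrites this norm as $\|A_QW^{-\frac{1}{p}}(y)\|$, and Lemma \ref{reduceM} (with $M=W^{-\frac{1}{p}}(y)$) yields $\|A_QW^{-\frac{1}{p}}(y)\|^p \sim \fint_Q\|W^{\frac{1}{p}}(x)W^{-\frac{1}{p}}(y)\|^p\,dx$. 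Because $p\in(0,1]$, the definition of $[W]_{A_p}$ has the essential supremum over $y$ already built in, so for a.e.\ $y\in Q$ the average above is bounded by $[W]_{A_p}$. Taking the essential supremum in $y$ and absorbing the resulting constant completes the argument.

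The main point to highlight (rather than a genuine technical obstacle) is the asymmetry between the two halves: only the lower bound invokes the $A_p$ hypothesis, and the argument works cleanly precisely because for $p\in(0,1]$ the essential supremum in $y$ is internal to the $A_p$ definition itself, yielding a pointwise (in $y$) control of $\|A_QW^{-\frac{1}{p}}(y)\|$ for a.e.\ $y\in Q$. For $p>1$ no such pointwise bound is directly available, which is exactly why the dual-weight framework encoded in Lemma \ref{Ap dual} and Corollary \ref{Ap dual corollary} is needed to treat the analogous statement in that range.
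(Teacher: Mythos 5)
Your proof is correct and complete: the upper bound for $|A_Q^{-1}\vec z|$ follows exactly as you say from Lemma \ref{exchange} and Lemma \ref{reduceM} applied to $M=A_Q^{-1}$, and the lower bound from Lemma \ref{reduceM} applied to $M=W^{-\frac1p}(y)$ together with the fact that for $p\in(0,1]$ the $A_p$ condition already carries the essential supremum in $y$, so $\|A_QW^{-\frac1p}(y)\|^p\lesssim[W]_{A_p}$ for a.e.\ $y\in Q$ (this is precisely Lemma \ref{8}(i)). The paper itself gives no proof, quoting the statement from \cite[Lemma 5.4]{fr04}, and your argument is the natural one; your remark on the asymmetry of where the $A_p$ hypothesis enters is also accurate.
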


Applying Lemma \ref{AQ inv p<1} and an argument similar to that
used in the proof of Corollary \ref{reduceM},
we obtain the following conclusion; we omit the details.

\begin{corollary}\label{AQ inv p<1 corollary}
Let $p\in(0,1]$, $W\in A_p$, $Q$ be a cube of $\mathbb{R}^n$,
and $A_Q$ a reducing operator of order $p$ for $W$.
Then, for any $M\in M_m(\mathbb C)$,
$$
\left\|A_Q^{-1}M\right\|
\sim\mathop{\mathrm{\,ess\,sup\,}}_{x\in Q}\left\|W^{-\frac{1}{p}}(x)M\right\|,
$$
where the positive equivalence constants depend only on $m$, $p$, and $[W]_{A_p}$.
\end{corollary}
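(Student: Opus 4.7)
The plan is to mirror the proof of Lemma \ref{reduceM}, replacing the vectorial equivalence of Definition \ref{reduce} by the vectorial equivalence of Lemma \ref{AQ inv p<1}, which is the analogue tailored to the range $p\in(0,1]$. The structural skeleton is the same: reduce the operator-norm statement to the action of the matrix on an orthonormal basis, apply the vectorial lemma coordinatewise, and reassemble back into an operator norm.

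Concretely, I would fix an orthonormal basis $\{\vec{e}_i\}_{i=1}^m$ of $\mathbb{C}^m$ and invoke \cite[Lemma 3.2]{ro03} (already used in the proof of Lemma \ref{reduceM}), which gives $\|N\|^p\sim\sum_{i=1}^m|N\vec{e}_i|^p$ for every $N\in M_m(\mathbb{C})$, with constants depending only on $m$ and $p$. Applying this identity first to $N=A_Q^{-1}M$ and then, pointwise in $x$, to $N=W^{-\frac{1}{p}}(x)M$, and using Lemma \ref{AQ inv p<1} applied to each vector $M\vec{e}_i\in\mathbb{C}^m$, I would arrive at
$$
\left\|A_Q^{-1}M\right\|^p
\sim\sum_{i=1}^m\left|A_Q^{-1}M\vec{e}_i\right|^p
\sim\sum_{i=1}^m\mathop{\mathrm{\,ess\,sup\,}}_{x\in Q}\left|W^{-\frac{1}{p}}(x)M\vec{e}_i\right|^p,
$$
with implicit constants depending only on $m$, $p$, and $[W]_{A_p}$.

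The only step that is not a direct transcription of the Lemma \ref{reduceM} argument is the reconciliation of the remaining sum-of-essential-suprema with $\mathop{\mathrm{\,ess\,sup\,}}_{x\in Q}\|W^{-\frac{1}{p}}(x)M\|^p$. In Lemma \ref{reduceM} the analogous manipulation was an interchange of a finite sum and an integral, automatic by linearity; here I need
$$
\sum_{i=1}^m\mathop{\mathrm{\,ess\,sup\,}}_{x\in Q}\left|W^{-\frac{1}{p}}(x)M\vec{e}_i\right|^p
\sim\mathop{\mathrm{\,ess\,sup\,}}_{x\in Q}\sum_{i=1}^m\left|W^{-\frac{1}{p}}(x)M\vec{e}_i\right|^p,
$$
which holds with constants depending only on $m$ via the elementary bounds $\mathop{\mathrm{\,ess\,sup\,}}(\sum_i f_i)\le\sum_i\mathop{\mathrm{\,ess\,sup\,}}f_i\le m\mathop{\mathrm{\,ess\,sup\,}}(\sum_i f_i)$ valid for any finite collection of nonnegative measurable $f_i$. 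A second, pointwise-in-$x$ application of the Roudenko basis identity to the matrix $W^{-\frac{1}{p}}(x)M$, together with the monotonicity of $\mathop{\mathrm{\,ess\,sup\,}}$, then identifies the right-hand side with $\mathop{\mathrm{\,ess\,sup\,}}_{x\in Q}\|W^{-\frac{1}{p}}(x)M\|^p$, completing the argument.

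I do not anticipate any real obstacle: the entire argument is a routine adaptation and this is precisely why the authors relegate it to \emph{``we omit the details''}. The only thing to keep careful track of is the provenance of the implicit constants, and a direct check confirms that the factor of $m$ incurred in the sup/sum exchange and in the two applications of the Roudenko identity combines cleanly with the $m$, $p$, and $[W]_{A_p}$ dependence already supplied by Lemma \ref{AQ inv p<1}, which matches the dependence claimed in the statement.
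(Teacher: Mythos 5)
Your proposal is correct and follows exactly the route the paper intends (Lemma \ref{AQ inv p<1} combined with the basis-decomposition argument of Lemma \ref{reduceM} via \cite[Lemma 3.2]{ro03}), and you correctly supply the one detail that genuinely differs from the integral case, namely the two-sided exchange of the finite sum with the essential supremum, which holds with constants $1$ and $m$ because each nonnegative $f_i$ is dominated almost everywhere by $\sum_j f_j$. No gaps.
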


Next, we recall the concept of dyadic cubes.
For any $j\in\mathbb{Z}$ and $k:=(k_1,\ldots,k_n)\in\mathbb{Z}^n$, let
$$
Q_{j,k}:=\prod_{i=1}^n2^{-j}[k_i,k_i+1),\
\mathscr{Q}:=\{Q_{j,k}:\ j\in\mathbb{Z},\ k\in\mathbb{Z}^n\},
$$
and $\mathscr{Q}_{j}:=\{Q_{j,k}:\ k\in\mathbb{Z}^n\}.$
For any $Q:=Q_{j,k}\in\mathscr{Q}$, we let $j_Q:=j$ and $x_Q:=2^{-j}k$.

The following lemma is essentially contained in \cite[Lemmas 3.2 and 3.3]{fr21}.

\begin{lemma}\label{8}
Let $p\in(0,\infty)$, $W\in A_p$,
and $\{A_Q\}_{Q\in\mathscr{Q}}$ be a sequence of
reducing operators of order $p$ for $W$.
\begin{enumerate}[{\rm (i)}]
\item\label{8p<1} If $p\in(0,1]$, then
$$
\sup_{Q\in\mathscr{Q}}\mathop{\mathrm{\,ess\,sup\,}}_{x\in Q}
\left\|A_QW^{-\frac{1}{p}}(x)\right\|
\sim[W]_{A_p}^{\frac{1}{p}},
$$
where the positive equivalence constants depend only on $m$ and $p$.
\item\label{8p>1} If $p\in(1,\infty)$,
then there exist a positive constant $\delta$,
depending only on $n$, $m$, $p$, and $[W]_{A_p}$,
and a positive constant $C$,
depending only on $m$ and $p$,
such that, for any $r\in[0,p'+\delta]$,
\begin{equation}\label{8x}
\sup_{Q\in\mathscr{Q}}
\left[\fint_Q\left\|A_QW^{-\frac{1}{p}}(x)\right\|^r\,dx\right]^{\frac{1}{r}}
\leq C[W]_{A_p}^{\frac{1}{p}}.
\end{equation}
\item\label{8allp} For any $p\in(0,\infty)$,
there exist a positive constant $\delta$,
depending only on $n$, $m$, $p$, and $[W]_{A_p}$,
and a positive constant $C$,
depending only on $m$ and $p$, such that,
for any $r\in[0,p+\delta]$,
\begin{equation}\label{8y}
\sup_{Q\in\mathscr{Q}}
\left[\fint_Q\left\|W^{\frac{1}{p}}(x)A_Q^{-1}\right\|^r\,dx\right]^{\frac{1}{r}}
\leq C.
\end{equation}
\item Under the same assumptions as, respectively, in (ii) and (iii),
the following stronger forms of \eqref{8x} and \eqref{8y} are also valid:
\begin{equation}\label{8xx}
\sup_{Q\in\mathscr{Q}}
\fint_Q\sup_{R\in\mathscr{Q},\,x\in R\subset Q}\left\|A_RW^{-\frac{1}{p}}(x)\right\|^r\,dx
\leq C
\end{equation}
and
\begin{equation}\label{8yy}
\sup_{Q\in\mathscr{Q}}\fint_Q\sup_{R\in\mathscr{Q},\,x\in R\subset Q}
\left\|W^{\frac{1}{p}}(x)A_R^{-1}\right\|^r\,dx
\leq C,
\end{equation}
where the positive constants $C$ are allowed to depend on $[W]_{A_p}$ as well.
\end{enumerate}
\end{lemma}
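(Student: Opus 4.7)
The plan is to derive parts (i)--(iv) in order, reducing each to quantitative identities between reducing operators via Lemma \ref{reduceM}, Lemma \ref{exchange}, Lemma \ref{Ap dual}, and Corollaries \ref{Ap dual corollary} and \ref{AQ inv p<1 corollary}, and then invoking a reverse H\"older inequality for certain scalar functions built from $W$ to gain the extra margin $\delta>0$. Throughout, when $p\in(1,\infty)$ I write $\widetilde W:=W^{-1/(p-1)}\in A_{p'}$ and denote by $\widetilde A_Q$ the reducing operator of order $p'$ for $\widetilde W$, so that $\widetilde W^{1/p'}=W^{-1/p}$.

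For part (i), I first apply Proposition \ref{equivalent} to replace $[W]_{A_p}$ by the equivalent quantity $[W]_{A_p}^*$, in which the essential supremum sits outside the integral; Lemma \ref{reduceM} with $M=W^{-1/p}(y)$ together with Lemma \ref{exchange} then rewrites $[W]_{A_p}^*$ as $\sim\sup_Q\mathop{\mathrm{\,ess\,sup\,}}_{y\in Q}\|A_QW^{-1/p}(y)\|^p$, which is (i). The endpoint cases $r=p'$ of (ii) and $r=p$ of (iii) also fall out of Lemma \ref{reduceM}: one application to $\widetilde W$ with test matrix $A_Q$ yields, after Lemma \ref{exchange},
$$
\fint_Q\left\|A_QW^{-1/p}(x)\right\|^{p'}\,dx
\sim\left\|A_Q\widetilde A_Q\right\|^{p'},
$$
which is $\sim[W]_{A_p}^{p'/p}$ uniformly in $Q$ by Lemma \ref{Ap dual}, while a second application to $W$ with test matrix $A_Q^{-1}$ gives
$$
\fint_Q\left\|W^{1/p}(x)A_Q^{-1}\right\|^p\,dx
\sim\left\|A_QA_Q^{-1}\right\|^p=1.
$$
Jensen's inequality then covers all $r\le p'$ in (ii) and all $r\le p$ in (iii).

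To reach the full range $[0,p'+\delta]$ in (ii) and $[0,p+\delta]$ in (iii), I would invoke a reverse H\"older inequality for the scalar functions $x\mapsto\|A_QW^{-1/p}(x)\|^{p'}$ and $x\mapsto\|W^{1/p}(x)A_Q^{-1}\|^p$, uniform over $Q\in\mathscr Q$. This does not contradict the known failure of the open property for matrix $A_p$-classes, because I only need reverse H\"older for these specific scalar functions and not for the matrix weight itself; the existence of such a $\delta>0$ depending only on $n$, $m$, $p$, and $[W]_{A_p}$ follows by testing against an orthonormal basis $\{\vec e_i\}_{i=1}^m$ of $\mathbb C^m$ and applying a scalar reverse H\"older to each resulting scalar quantity, as in the proofs of \cite[Lemmas 3.2 and 3.3]{fr21}.

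I expect part (iv) to be the main obstacle, as it requires replacing $\|A_QW^{-1/p}(x)\|$ (resp.\ $\|W^{1/p}(x)A_Q^{-1}\|$) inside the $L^r$-average by the internal dyadic supremum $\sup_{R\in\mathscr Q,\,x\in R\subset Q}$. My approach is a Calder\'on--Zygmund/stopping-time decomposition inside $Q$: select the maximal dyadic subcubes on which the relevant norm exceeds a large fixed multiple of its value on the parent, with the threshold chosen using the uniform bounds from (ii) and (iii) so that the selected cubes form a sparse (Carleson) family whose sparseness constant depends only on $n$, $m$, $p$, and $[W]_{A_p}$; summing the $L^r$-averages over the stopping family then reduces \eqref{8xx} and \eqref{8yy} to \eqref{8x} and \eqref{8y}. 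An alternative is to invoke the Christ--Goldberg matrix-weighted dyadic maximal function of \cite{cg01,g03}, whose $L^r$-boundedness on a slight enlargement of the natural range delivers the same $\delta>0$. The delicate point is ensuring that the sparseness (or maximal-function) constants remain uniform in $Q$ and that the resulting $\delta$ coincides with, or exceeds, the one obtained in parts (ii) and (iii).
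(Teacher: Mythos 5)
Your treatment of (i)--(iii) is correct in substance but follows a different route from the paper: the paper simply quotes (i), (ii), (iii) and \eqref{8yy} from \cite[Lemmas 3.2 and 3.3]{fr21} (ultimately from \cite{g03}), whereas you re-derive them from the reducing-operator machinery. Your derivation works, with two small remarks. For (i) you do not need Proposition \ref{equivalent} at all (and note that in $[W]_{A_p}^*$ the essential supremum sits \emph{inside} the $x$-integral, the opposite of what you say); Lemma \ref{reduceM} with $M=W^{-\frac1p}(y)$ turns $[W]_{A_p}$ itself directly into $\sup_Q\mathop{\mathrm{ess\,sup}}_{y\in Q}\|A_QW^{-\frac1p}(y)\|^p$. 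Your endpoint computations for $r=p'$ and $r=p$ are exactly right, and the self-improvement to $r\le p'+\delta$ and $r\le p+\delta$ is already available in the paper as Lemma \ref{181} (applied to $W$ with $M=A_Q^{-1}$, and to $\widetilde W$, $p'$ with $M=A_Q$, using $\widetilde W^{1/p'}=W^{-1/p}$ and Lemma \ref{exchange}), so the orthonormal-basis reduction need not be redone by hand.

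The genuine gap is in (iv). The paper's entire new contribution here is a two-line duality reduction for \eqref{8xx}: by Lemmas \ref{Ap dual} and \ref{exchange}, $\|A_RW^{-\frac1p}(x)\|\le\|A_R\widetilde A_R\|\,\|\widetilde A_R^{-1}\widetilde W^{\frac1{p'}}(x)\|\lesssim[W]_{A_p}^{\frac1p}\|\widetilde W^{\frac1{p'}}(x)\widetilde A_R^{-1}\|$ pointwise, so \eqref{8xx} for $W$ follows from \eqref{8yy} for $\widetilde W\in A_{p'}$, the latter being quoted from \cite[Lemma 3.3]{fr21}. You miss this reduction and instead propose a stopping-time argument that is left at the level of a plan; your alternative via the Christ--Goldberg maximal function is in fact what the cited \cite[Lemma 3.3]{fr21} amounts to. If you insist on a self-contained stopping-time proof, the missing pieces are concrete: (a) the stopping condition must be phrased via a quantity obeying a weak $(1,1)$ bound --- for \eqref{8yy} with $p>1$ one has $\sup_{R\in\mathscr Q,\,x\in R\subset Q}\|A_QA_R^{-1}\|^{p'}\lesssim M^{\mathrm d}_Q(\|A_QW^{-\frac1p}\|^{p'}\mathbf 1_Q)(x)$ by Lemma \ref{exchange} and Corollary \ref{Ap dual corollary}, and part (ii) makes the level set above $\Lambda^{p'}\sim[W]_{A_p}^{p'/p}$ of measure at most $\frac12|Q|$; (b) the resulting recursion $|Q|F(Q)\le C\Lambda^r|Q|+\sum_S|S|F(S)$ over stopping cubes $S$ can only be closed after an a priori finiteness of $\sup_QF(Q)$ is secured (e.g.\ by truncating the inner supremum to finitely many scales); (c) for $p\in(0,1]$ no stopping time is needed, since $\|A_QA_R^{-1}\|=\|A_R^{-1}A_Q\|\lesssim\mathop{\mathrm{ess\,sup}}_{y\in Q}\|A_QW^{-\frac1p}(y)\|\lesssim[W]_{A_p}^{\frac1p}$ for every $R\subset Q$ by Corollary \ref{AQ inv p<1 corollary} and part (i), so \eqref{8yy} follows at once from \eqref{8y}. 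As written, your part (iv) is a plausible programme rather than a proof.
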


\begin{proof}
The results of \eqref{8p<1}, \eqref{8p>1}, \eqref{8allp}, and \eqref{8yy}
were all already stated in \cite[Lemmas 3.2 and 3.3]{fr21},
but some of these estimates are quoted from the much earlier work \cite[pp.\,207-208 and Lemma 3.3]{g03}.

Now, we need to consider \eqref{8xx} which was not stated in \cite{fr21} as such.
As in \eqref{8x}, we have $p\in(1,\infty)$,
and hence we can consider the dual weight $\widetilde W:=W^{-\frac{1}{p-1}}$
which satisfies $\widetilde W\in A_{p'}$ by Lemma \ref{Ap dual}.
For any $R\in\mathscr{Q}$, let $\widetilde A_R$ denote the reducing operator of order $p'$ for $\widetilde W$.
Then, for any $R\in\mathscr{Q}$ and almost every $x\in\mathbb{R}^n$,
\begin{equation}\label{3.9x}
\left\|A_R W^{-\frac{1}{p}}(x)\right\|
\leq\left\|A_R\widetilde A_R\right\|
\left\|\widetilde A_R^{-1}\widetilde W^{\frac{1}{p'}}(x)\right\|
\lesssim[W]_{A_p}^{\frac{1}{p}}\left\|\widetilde W^{\frac{1}{p'}}(x)\widetilde A_R^{-1}\right\|
\end{equation}
due to Lemmas \ref{Ap dual} and \ref{exchange}. We apply \eqref{8yy} to $p'$
and $\widetilde W\in A_{p'}$ in place of $p$ and $W\in A_p$. This shows that
\begin{equation*}
\sup_{Q\in\mathscr Q}\fint_Q\sup_{R\in\mathscr Q,\,x\in R\subset Q}
\left\|\widetilde W^{\frac{1}{p'}}(x)\widetilde A_R^{-1}\right\|^r\,dx
\leq C
\end{equation*}
for any $r\in[0,p'+\delta]$, which, combined with \eqref{3.9x}, further gives us \eqref{8xx}.
This finishes the proof of Lemma \ref{8}.
\end{proof}

\subsection{The $A_p$-Dimension of Matrix Weights}

There will be a need to estimate integral expressions
like those in the definition of matrix $A_p$-weights,
but involving two different cubes $Q$ and $R$ rather than just one.
This subsection is dedicated to developing some tools for this purpose,
including a new concept of the $A_p$-dimension
that controls the order of growth of such bounds as a function of
the relative size and position of the cubes $Q$ and $R$.
Before introducing this new concept, for the sake of comparison,
we first recall its older relative that has been used for
similar purpose in the existing literature.
The following definition can be found in \cite[p.\,1230]{fr21}.

\begin{definition}\label{doubling matrix weight}
Let $p\in(0,\infty)$.
A matrix weight $W$ is called a \emph{doubling matrix weight} of order $p$
if there exists a positive constant $C$ such that,
for any cube $Q\subset\mathbb{R}^n$ and any $\vec z\in\mathbb{C}^m$,
\begin{equation}\label{243}
\int_{2Q}\left|W^{\frac{1}{p}}(x)\vec z\right|^p\,dx
\leq C\int_Q\left|W^{\frac{1}{p}}(x)\vec z\right|^p\,dx.
\end{equation}
Let
$$
\beta_W:=\min\left\{\beta\in(0,\infty):\ \eqref{243}
\text{ holds with }C=2^\beta\right\}.
$$
Then $\beta_W$ is called the \emph{doubling exponent} of
the doubling matrix weight $W$ of order $p$.
\end{definition}

\begin{remark}
On Definition \ref{doubling matrix weight},
an important observation is that $\beta_W\in[n,\infty)$
(see, for instance, \cite[Proposition 2.10]{hs14}).
\end{remark}

Using both some statements in page 493 of \cite{fr21} and \cite[Lemma 2.2]{fr21},
we have the following conclusion; we omit the details,
as we will only use this result for comparison with
our new variant in Lemma \ref{22 precise} further below.

\begin{lemma}\label{22}
Let $p\in(0,\infty)$ and $W\in A_p$.
Then $W$ is a doubling matrix weight of order $p$.
Moreover, if $\{A_Q\}_{Q\in\mathscr{Q}}$ is a sequence of
reducing operators of order $p$ for $W$,
then there exists a positive constant $C$ such that, for any $Q,R\in\mathscr{Q}$,
$$
\left\|A_QA_R^{-1}\right\|^p
\leq C\max\left\{\left[\frac{\ell(R)}{\ell(Q)}\right]^n,
\left[\frac{\ell(Q)}{\ell(R)}\right]^{\beta_W-n}\right\}
\left[1+\frac{|x_Q-x_R|}{\ell(R)\vee\ell(Q)}\right]^{\beta_W},
$$
where $\beta_W$ is the doubling exponent of the doubling matrix weight $W$ of order $p$.
\end{lemma}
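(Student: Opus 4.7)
The plan is to prove the two claims in sequence, using the reducing-operator reformulation throughout. For the doubling assertion, by Lemma \ref{reduceM} (equivalently, Definition \ref{reduce} applied to $\vec z$), the estimate $\int_{2Q}|W^{1/p}\vec z|^p\,dx\lesssim\int_Q|W^{1/p}\vec z|^p\,dx$ is equivalent to $\|A_{2Q}A_Q^{-1}\|^p\lesssim 1$ uniformly in $Q$, with constant depending only on $[W]_{A_p}$. When $p\in(1,\infty)$, I would derive this from the $A_p$-condition on $2Q$ combined with Corollary \ref{Ap dual corollary} applied with $M:=A_Q^{-1}$, using that $\fint_{Q}\|W^{-1/p}(y)A_Q^{-1}\|^{p'}\,dy\leq 2^n\fint_{2Q}\|W^{-1/p}(y)A_Q^{-1}\|^{p'}\,dy$. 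When $p\in(0,1]$, Proposition \ref{equivalent} and Corollary \ref{AQ inv p<1 corollary} yield the analogous conclusion via the essential-supremum formulation of $A_p$.

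For the main bound I rely on two ingredients: (i) if $Q\subset S$, then monotonicity of the integral in Lemma \ref{reduceM} gives $|A_Q\vec z|^p\lesssim(|S|/|Q|)\,|A_S\vec z|^p$ for every $\vec z\in\mathbb C^m$; (ii) if $R\subset S$ with $\ell(S)=2^k\ell(R)$, iterating the just-proven doubling inequality $k$ times produces $|A_S\vec z|^p\lesssim 2^{k(\beta_W-n)}|A_R\vec z|^p$, the factor $2^{-kn}$ arising from normalisation by $|S|$. I then choose $S$ as (a bounded dyadic enlargement of) the smallest cube containing $Q\cup R$, so that
$$
\ell(S)\sim[\ell(Q)\vee\ell(R)]\left[1+\frac{|x_Q-x_R|}{\ell(Q)\vee\ell(R)}\right],
$$
and combine (i) and (ii) to obtain
$$
|A_Q\vec z|^p\lesssim\left(\frac{\ell(S)}{\ell(Q)}\right)^n\left(\frac{\ell(S)}{\ell(R)}\right)^{\beta_W-n}|A_R\vec z|^p.
$$

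Splitting into the two cases $\ell(Q)\leq\ell(R)$ and $\ell(Q)>\ell(R)$ and collecting powers of $\ell(S)$, the right-hand side factorises as $[\ell(S)/(\ell(Q)\vee\ell(R))]^{\beta_W}$ times either $(\ell(R)/\ell(Q))^n$ in the first case or $(\ell(Q)/\ell(R))^{\beta_W-n}$ in the second. Taking the maximum of the two, substituting $\vec z:=A_R^{-1}\vec e$ with $|\vec e|=1$, and taking the supremum over such $\vec e$ then gives the claimed estimate. The main bookkeeping obstacle lies in this last algebraic step: one must verify that the exponent $n$ arising from the ratio of measures in (i) and the exponent $\beta_W-n$ arising from iterated doubling in (ii) combine without loss into precisely the stated $\max$ expression, rather than some weaker uniform upper bound. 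A minor technicality is that the enclosing cube $S$ need not be dyadic, but replacing it by its smallest dyadic ancestor changes side lengths by at most a factor of $2$ and is therefore absorbed into the implicit constant.
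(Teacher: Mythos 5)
Your proposal is correct. Note that the paper does not actually prove this lemma: it is quoted from Frazier--Roudenko \cite{fr21} with the details omitted, so there is no in-paper argument to compare against; your reconstruction (deduce doubling from the $A_p$ condition via reducing operators, then compare $A_Q$ and $A_R$ through a common enclosing cube $S$, using the measure ratio $|S|/|Q|$ for the inclusion $Q\subset S$ and iterated doubling for $R\subset S$) is the standard route, and the final bookkeeping does close up exactly as you suspect: with $L:=\ell(Q)\vee\ell(R)$ and $\ell(S)\sim L\,[1+|x_Q-x_R|/L]$, the factor $(\ell(S)/\ell(Q))^n(\ell(S)/\ell(R))^{\beta_W-n}$ splits as $(L/\ell(Q))^n(L/\ell(R))^{\beta_W-n}\,[1+|x_Q-x_R|/L]^{\beta_W}$, and in each of the two cases one of the first two factors equals $1$ while the other equals the claimed maximum (here $\beta_W\geq n$ is used).

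Two small repairs. First, in the $p\in(1,\infty)$ doubling step the choice ``$M:=A_Q^{-1}$'' does not do what you want (it produces $\|A_Q^{-1}A_Q^{-1}\|$); the clean version is $\|A_{2Q}A_Q^{-1}\|=\|A_Q^{-1}A_{2Q}\|\sim[\fint_Q\|W^{-1/p}(y)A_{2Q}\|^{p'}\,dy]^{1/p'}\leq 2^{n/p'}[\fint_{2Q}\|W^{-1/p}(y)A_{2Q}\|^{p'}\,dy]^{1/p'}\sim\|A_{2Q}^{-1}A_{2Q}\|=1$, using Lemma \ref{exchange} and Corollary \ref{Ap dual corollary} with $M:=A_{2Q}$. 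Second, your ``minor technicality'' about replacing $S$ by a dyadic ancestor should be dropped rather than executed: two dyadic cubes need not have any common dyadic ancestor (take them in different quadrants), and when one exists it can be far larger than $\ell(Q)+\ell(R)+|x_Q-x_R|$. But none of this is needed, because reducing operators exist for every bounded measurable set of positive finite measure and your steps (i) and (ii) use only the defining property \eqref{equ_reduce}, so the auxiliary cube $S$ may be an arbitrary cube.
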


A certain shortcoming of Lemma \ref{22} is the lack of
a reasonable upper bound for the exponent $\beta_W$.
In order to achieve both a sharper form of this estimate
and also some other sharp estimates further below,
we introduce the following useful elaboration of the $A_p$ condition.

\begin{definition}\label{Ap dim}
Let $p\in(0,\infty)$, $d\in\mathbb{R}$, and $W$ be a matrix weight.
Then $W$ is said to have the \emph{$A_p$-dimension $d$},
denoted by $W\in\mathbb{D}_{p,d}(\mathbb{R}^n,\mathbb{C}^m)$,
if there exists a positive constant $C$ such that,
for any cube $Q\subset\mathbb{R}^n$ and any $i\in\mathbb{Z}_+$,
when $p\in(0,1]$,
\begin{equation*}
\mathop{\mathrm{\,ess\,sup\,}}_{y\in2^iQ}\fint_Q \left\|W^{\frac{1}{p}}(x)W^{-\frac{1}{p}}(y)\right\|^p\,dx
\leq C2^{id}
\end{equation*}
or, when $p\in(1,\infty)$,
\begin{equation*}
\fint_Q\left[\fint_{2^iQ}\left\|W^{\frac{1}{p}}(x)W^{-\frac{1}{p}}(y)
\right\|^{p'}\,dy\right]^{\frac{p}{p'}}\,dx
\leq C2^{id},
\end{equation*}
where $\frac{1}{p}+\frac{1}{p'}=1$.
\end{definition}

In what follows, if there exists no confusion,
we denote $\mathbb{D}_{p,d}(\mathbb{R}^n,\mathbb{C}^m)$ simply by $\mathbb{D}_{p,d}$.
We have the following basic properties of $A_p$-dimensions.

\begin{proposition}\label{Ap dim prop}
Let $p\in(0,\infty)$. Then the following statements hold.
\begin{enumerate}[\rm(i)]
\item For any $d\in(-\infty,0)$, $\mathbb{D}_{p,d}=\emptyset$;
\item For any $d\in[0,n)$, $\mathbb{D}_{p,d}\subset A_p$;
\item For any $d\in[n,\infty)$, $\mathbb{D}_{p,d}=A_p$;
\item For any $d_1,d_2\in[0,\infty)$ with $d_1<d_2$,
$\mathbb{D}_{p,d_1}\subset\mathbb{D}_{p,d_2}$;
\item For any $q\in(p,\infty)$ and $d\in[0,\infty)$,
$\mathbb{D}_{p,d}\subset\mathbb{D}_{q,d}$;
\item The definition of $A_p$-dimensions in Definition \ref{Ap dim}
can be equivalently given with $Q$ and $2^i$ therein replaced, respectively,
by ball $B\subset\mathbb{R}^n$ and $\lambda\in[1,\infty)$, or just replace one of them.
\end{enumerate}
\end{proposition}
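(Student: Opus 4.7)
My plan is to handle the six parts in groups, leveraging tools already established (Lemmas \ref{reduceM}, \ref{exchange}, \ref{AQ inv p<1}, \ref{8}, Corollary \ref{AQ inv p<1 corollary}, and Propositions \ref{equivalent} and \ref{178}).

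Parts (i)--(iv) are short. For (iv), since $i\in\mathbb Z_+$, one has $2^{id_1}\leq 2^{id_2}$, so the bound for $\mathbb{D}_{p,d_1}$ implies the one for $\mathbb{D}_{p,d_2}$. For (i), I would note that for any fixed cube $Q$ the defining quantity in Definition \ref{Ap dim} is \emph{nondecreasing} in $i$ (both the ess sup and the inner integral are taken over $2^iQ$, which grows), and it is strictly positive for some $Q$ at $i=0$, since almost-everywhere invertibility of $W$ forces $W^{\frac{1}{p}}(x)W^{-\frac{1}{p}}(y)$ to be nonzero on a set of full measure in $Q\times Q$. If $d<0$, then $C2^{id}\to 0$, contradicting this uniform positive lower bound, so $\mathbb{D}_{p,d}=\emptyset$. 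For (ii), setting $i=0$ in Definition \ref{Ap dim} recovers $[W]_{A_p}$ when $p>1$ and $[W]_{A_p}^*$ when $p\in(0,1]$, which is equivalent to $[W]_{A_p}$ by Proposition \ref{equivalent}; hence $W\in A_p$. For (iii) with $W\in A_p$, I would use the elementary inequality $\fint_Q f\leq 2^{in}\fint_{2^iQ}f$ (valid for nonnegative $f$ since $Q\subset 2^iQ$), applied with $f(x)=\|W^{\frac{1}{p}}(x)W^{-\frac{1}{p}}(y)\|^p$ (when $p\leq 1$) or $f(x)=[\fint_{2^iQ}\|\cdot\|^{p'}dy]^{p/p'}$ (when $p>1$), then invoke the $A_p$ (or $A_p^*$) condition on the cube $2^iQ$ to obtain a bound $\lesssim 2^{in}[W]_{A_p}$; combined with (iv), this yields $A_p\subset\mathbb{D}_{p,d}$ for every $d\geq n$.

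Part (v) follows the same three-case split as the proof of Proposition \ref{178}, driven by the pointwise inequality \eqref{176}. The cases $0<p<q\leq 1$ and $1<p<q<\infty$ are direct substitutions of \eqref{176} into Definition \ref{Ap dim}, with H\"older's inequality in the second case exactly as in Case 2 of Proposition \ref{178}. The mixed case $0<p\leq 1<q<\infty$ is the main obstacle. Following Case 3 of Proposition \ref{178} produces
$$
\fint_Q\left[\fint_{2^iQ}\left\|W^{\frac{1}{q}}(x)W^{-\frac{1}{q}}(y)\right\|^{q'}dy\right]^{\frac{q}{q'}}dx\lesssim \fint_Q\mathop{\mathrm{\,ess\,sup\,}}_{y\in 2^iQ}\left\|W^{\frac{1}{p}}(x)W^{-\frac{1}{p}}(y)\right\|^p\,dx,
$$
whose right-hand side is a strengthened $\mathbb{D}_{p,d}$-type expression with the ess sup inside the integral, and hence is not directly controlled by the definition. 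To bound it, I would mimic the proof of Proposition \ref{equivalent}: factor $\|W^{\frac{1}{p}}(x)W^{-\frac{1}{p}}(y)\|\leq \|W^{\frac{1}{p}}(x)A_{2^iQ}^{-1}\|\cdot\|A_{2^iQ}W^{-\frac{1}{p}}(y)\|$, bound the second factor uniformly for $y\in 2^iQ$ via Lemma \ref{8}(i), average the first factor on $Q$ via Lemma \ref{reduceM} to obtain $\|A_QA_{2^iQ}^{-1}\|^p$, and finally observe that the hypothesis $W\in\mathbb{D}_{p,d}$ (combined with Lemma \ref{exchange} and Corollary \ref{AQ inv p<1 corollary}) is precisely equivalent to $\|A_QA_{2^iQ}^{-1}\|^p\lesssim 2^{id}$.

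Part (vi) is a routine comparison: any ball in $\mathbb R^n$ sits between two cubes of comparable size (and any cube between two balls), and any $\lambda\in[1,\infty)$ lies between $2^i$ and $2^{i+1}$ for some $i\in\mathbb Z_+$, so passing between any of the four variants (cubes or balls, $2^i$ or $\lambda$) changes the bounds only by multiplicative constants depending on $n$ and $d$. The main obstacle across the whole proof is the mixed case of (v), where rectifying the position of the ess sup requires translating Definition \ref{Ap dim} into its equivalent reducing-operator form.
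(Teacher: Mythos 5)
Parts (ii)--(iv) and (vi) are fine and match the paper's (essentially definitional) treatment. Your handling of (v) is also correct, and in the mixed case $0<p\leq 1<q<\infty$ you correctly identify and fill the detail that the paper leaves implicit: the right-hand side after applying \eqref{176} is the ``starred'' quantity with the essential supremum inside the $x$-integral, and your two-cube analogue of the proof of Proposition \ref{equivalent} (factoring through $A_{2^iQ}$, using Lemma \ref{8}(i), Lemma \ref{reduceM}, and the reducing-operator reformulation of $\mathbb{D}_{p,d}$) is exactly the right repair.

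The genuine gap is in part (i) for $p\in(1,\infty)$. Your argument rests on the claim that the defining quantity is nondecreasing in $i$ because ``the inner integral is taken over $2^iQ$, which grows.'' For $p\in(0,1]$ this is correct, since an essential supremum over a growing set is nondecreasing, and the positive lower bound at $i=0$ then contradicts $C2^{id}\to 0$. But for $p\in(1,\infty)$ the inner quantity is the \emph{average} $\fint_{2^iQ}\|W^{\frac1p}(x)W^{-\frac1p}(y)\|^{p'}\,dy$, and averages over growing sets are not monotone (the normalization $|2^iQ|^{-1}$ grows along with the domain). Consequently the strict positivity at $i=0$ gives no lower bound for large $i$, and the contradiction does not follow as stated. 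The paper handles this case differently: assuming $W\in\mathbb{D}_{p,d}$ with $d<0$, it applies the Lebesgue differentiation theorem together with Corollary \ref{Ap dual corollary} and Lemma \ref{exchange} to conclude that $\|W^{\frac1p}(x_0)A_{B(\mathbf 0,1)}^{-1}\|=0$ for almost every $x_0$ with $|x_0|<1$, i.e., $W$ vanishes on a set of positive measure, contradicting Definition \ref{MatrixWeight}(ii). You would need this (or some other argument quantifying that $\|A_QA_{2^iQ}^{-1}\|$ cannot decay geometrically in $i$) to close the case $p>1$.
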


\begin{proof}
By the definition of $A_p$-dimensions, we directly obtain (ii)-(iv) and (vi).
Applying an argument similar to that used in the proof of Proposition \ref{178},
we find that (v) holds.
Now, we prove (i) by considering the following two cases on $p$.

\emph{Case 1)} $p\in(0,1]$.
In this case, if there exists $W\in\mathbb{D}_{p,d}$,
then, from Definitions \ref{MatrixWeight} and \ref{Ap dim},
we deduce that, for any $i\in\mathbb{Z}_+$,
\begin{align}\label{dim p<1}
0
&<\mathop{\mathrm{\,ess\,sup\,}}_{y\in Q_{0,\mathbf{0}}}
\fint_{Q_{0,\mathbf{0}}}\left\|W^{\frac{1}{p}}(x)W^{-\frac{1}{p}}(y)\right\|^p\,dx\\
&\leq\sup_{\mathrm{cube}\,Q}\mathop{\mathrm{\,ess\,sup\,}}_{y\in2^iQ}
\fint_Q\left\|W^{\frac{1}{p}}(x)W^{-\frac{1}{p}}(y)\right\|^p\,dx
\lesssim2^{id},\notag
\end{align}
which contradicts $d\in(-\infty,0)$,
and hence $\mathbb{D}_{p,d}=\emptyset$ in this case.

\emph{Case 2)} $p\in(1,\infty)$.
In this case, if there exists $W\in\mathbb{D}_{p,d}$,
then, by Definition \ref{MatrixWeight}(iii), we conclude that
$\|W^{\frac{1}{p}}A_{B(\mathbf{0},1)}^{-1}\|^p\in L^1_{\mathrm{loc}},$
where $A_{B(\mathbf{0},1)}$ is the reducing operator of order $p$ for $W$.
This, together with the Lebesgue differentiation theorem
(see, for instance, \cite[Corollary 2.1.16]{g14c}),
Lemma \ref{exchange}, Corollary \ref{Ap dual corollary}, and $d\in(-\infty,0)$,
further implies that, for almost every $x_0\in\mathbb{R}^n$ with $|x_0|<1$,
\begin{align}\label{dim p>1}
\left\|W^{\frac{1}{p}}(x_0)A_{B(\mathbf{0},1)}^{-1}\right\|^p
&=\lim_{i\to\infty}\fint_{B(x_0,2^{-i})}
\left\|W^{\frac{1}{p}}(x)A_{B(\mathbf{0},1)}^{-1}\right\|^p\,dx\\
&\sim\lim_{i\to\infty}\fint_{B(x_0,2^{-i})}\left[\fint_{B(\mathbf{0},1)}
\left\|W^{\frac{1}{p}}(x)W^{-\frac{1}{p}}(y)\right\|^{p'}\,dy\right]^{\frac{p}{p'}}\,dx\notag\\
&\lesssim\lim_{i\to\infty}\fint_{B(x_0,2^{-i})}\left[\fint_{B(x_0,2)}
\left\|W^{\frac{1}{p}}(x)W^{-\frac{1}{p}}(y)\right\|^{p'}\,dy\right]^{\frac{p}{p'}}\,dx\notag\\
&\lesssim\lim_{i\to\infty}2^{id}
=0\notag
\end{align}
and hence all entries of $W(x_0)$ are $0$,
which contradicts Definition \ref{MatrixWeight}(ii).
Thus, $\mathbb{D}_{p,d}=\emptyset$ also in this case.
This finishes the proof of (i) and hence Proposition \ref{Ap dim prop}.
\end{proof}

Next, we establish an equivalent characterization of $A_p$-dimensions.

\begin{proposition}\label{Ap dim rem}
Let $p\in(0,\infty)$, $W\in A_p$, $\{A_Q\}_{\mathrm{cube}\,Q}$
be a family of reducing operators of order $p$ for $W$, and $d\in[0,\infty)$.
Then $W$ has the $A_p$-dimension $d$ if and only if
there exists a positive constant $C$ such that,
for any cube $Q\subset\mathbb{R}^n$ and any $i\in\mathbb{Z}_+$,
$\|A_QA_{2^iQ}^{-1}\|^p\leq C2^{id}.$
\end{proposition}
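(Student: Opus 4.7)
The plan is to reduce both the $A_p$-dimension inequality and the condition $\|A_QA_{2^iQ}^{-1}\|^p\le C2^{id}$ to the same intermediate quantity by applying the reducing-operator estimates already established (Lemma \ref{reduceM}, Corollary \ref{Ap dual corollary}, and Corollary \ref{AQ inv p<1 corollary}), using Lemma \ref{exchange} repeatedly to commute the positive definite matrices $A_Q$, $A_{2^iQ}^{-1}$, $W^{1/p}(x)$, and $W^{-1/p}(y)$ whenever they appear as a product inside an operator norm.

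I would split according to the range of $p$. When $p\in(0,1]$, Corollary \ref{AQ inv p<1 corollary}, applied to the cube $2^iQ$ with matrix $M:=A_Q$, gives
\[
\|A_{2^iQ}^{-1}A_Q\|\sim\mathop{\mathrm{ess\,sup}}_{y\in2^iQ}\|W^{-1/p}(y)A_Q\|,
\]
and Lemma \ref{reduceM} applied to the cube $Q$ with $M:=W^{-1/p}(y)$ gives
\[
\|A_QW^{-1/p}(y)\|\sim\left[\fint_Q\|W^{1/p}(x)W^{-1/p}(y)\|^p\,dx\right]^{1/p}.
\]
Since $A_Q$, $A_{2^iQ}^{-1}$, and $W^{-1/p}(y)$ are all positive definite, Lemma \ref{exchange} yields $\|A_{2^iQ}^{-1}A_Q\|=\|A_QA_{2^iQ}^{-1}\|$ and $\|W^{-1/p}(y)A_Q\|=\|A_QW^{-1/p}(y)\|$. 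Combining these identifications shows that the $A_p$-dimension inequality for $p\in(0,1]$ is equivalent to $\|A_QA_{2^iQ}^{-1}\|^p\lesssim 2^{id}$, uniformly in $Q$ and $i$.

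When $p\in(1,\infty)$, I would reverse the order of the applications. First, Corollary \ref{Ap dual corollary} applied to the cube $2^iQ$ with $M:=W^{1/p}(x)$ gives, for each fixed $x\in Q$,
\[
\|A_{2^iQ}^{-1}W^{1/p}(x)\|\sim\left[\fint_{2^iQ}\|W^{-1/p}(y)W^{1/p}(x)\|^{p'}\,dy\right]^{1/p'}.
\]
Another use of Lemma \ref{exchange} (all of $A_{2^iQ}^{-1}$, $W^{\pm 1/p}(x)$, $W^{-1/p}(y)$ are positive definite) turns the left side into $\|W^{1/p}(x)A_{2^iQ}^{-1}\|$ and the integrand into $\|W^{1/p}(x)W^{-1/p}(y)\|^{p'}$. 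Raising to the $p$-th power and averaging in $x$, then applying Lemma \ref{reduceM} on the cube $Q$ with $M:=A_{2^iQ}^{-1}$, yields
\[
\fint_Q\left[\fint_{2^iQ}\|W^{1/p}(x)W^{-1/p}(y)\|^{p'}\,dy\right]^{p/p'}\,dx
\sim\fint_Q\|W^{1/p}(x)A_{2^iQ}^{-1}\|^p\,dx
\sim\|A_QA_{2^iQ}^{-1}\|^p,
\]
so the equivalence is again established. Since the equivalence constants in each of Lemma \ref{reduceM}, Corollary \ref{Ap dual corollary}, and Corollary \ref{AQ inv p<1 corollary} depend only on $m$, $p$, and $[W]_{A_p}$ (and not on $Q$ or $i$), the constant $C$ in Proposition \ref{Ap dim rem} is uniform in the cube and the dilation parameter.

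The only point that requires a little care is the bookkeeping with Lemma \ref{exchange}, because the chain of reducing-operator estimates naturally produces products $A_QW^{-1/p}(y)$, $W^{-1/p}(y)A_Q$, $A_{2^iQ}^{-1}W^{1/p}(x)$, and $W^{1/p}(x)A_{2^iQ}^{-1}$ in varying orders; one must verify at each step that the two factors are positive definite so that the commutation of norms is legitimate. Apart from that, the proof is a direct application of the tools from the earlier subsection, and both directions of the equivalence follow at once because all the $\sim$-relations are genuine equivalences.
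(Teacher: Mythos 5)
Your argument is correct and coincides with the paper's own proof: both cases reduce the $A_p$-dimension expression to $\|A_QA_{2^iQ}^{-1}\|^p$ via Lemma \ref{reduceM}, Corollary \ref{AQ inv p<1 corollary} (for $p\le1$) or Corollary \ref{Ap dual corollary} (for $p>1$), with Lemma \ref{exchange} used to commute the nonnegative definite factors at exactly the same points. The observation that all equivalence constants depend only on $m$, $p$, and $[W]_{A_p}$, hence are uniform in $Q$ and $i$, is also the same as in the paper.
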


\begin{proof}
To show the present proposition, we only need to prove that,
for any cube $Q\subset\mathbb{R}^n$ and any $i\in\mathbb{Z}_+$,
\begin{align}\label{equ}
\left\|A_QA_{2^iQ}^{-1}\right\|^p
&\sim\left\{\begin{aligned}
&\mathop{\mathrm{\,ess\,sup\,}}_{y\in2^iQ}
\fint_Q\left\|W^{\frac{1}{p}}(x)W^{-\frac{1}{p}}(y)\right\|^p\,dx
&&\text{if }p\in(0,1],\\
&\fint_Q\left[\fint_{2^iQ}
\left\|W^{\frac{1}{p}}(x)W^{-\frac{1}{p}}(y)\right\|^{p'}\,dy\right]^{\frac{p}{p'}}\,dx
&&\text{if }p\in(1,\infty),
\end{aligned}\right.
\end{align}
where, when $p\in(1,\infty)$, $\frac{1}{p}+\frac{1}{p'}=1$
and the positive equivalence constants depend only on $m$, $p$, and $[W]_{A_p}$.
To this end, we consider the following two cases on $p$.

\emph{Case 1)} $p\in(0,1]$.
In this case, by Lemmas \ref{reduceM} and \ref{exchange} and Corollary \ref{AQ inv p<1 corollary}, we find that,
for any cube $Q\subset\mathbb{R}^n$ and any $i\in\mathbb{Z}_+$,
\begin{align*}
\mathop{\mathrm{\,ess\,sup\,}}_{y\in2^iQ}\fint_Q \left\|W^{\frac{1}{p}}(x)W^{-\frac{1}{p}}(y)\right\|^p\,dx
&\sim\mathop{\mathrm{\,ess\,sup\,}}_{y\in2^iQ}\left\|A_QW^{-\frac{1}{p}}(y)\right\|^p
=\mathop{\mathrm{\,ess\,sup\,}}_{y\in 2^i Q}\left\|W^{-\frac{1}{p}}(y)A_Q\right\|^p\\
&\sim\left\|A_{2^iQ}^{-1}A_Q\right\|^p
=\left\|A_QA_{2^iQ}^{-1}\right\|^p.
\end{align*}
This finishes the proof of \eqref{equ} in this case.

\emph{Case 2)} $p\in(1,\infty)$.
In this case, from Lemma \ref{exchange}, Corollary \ref{Ap dual corollary},
the H\"older inequality, and Lemmas \ref{181} and \ref{reduceM}, we infer that,
for any cube $Q\subset\mathbb{R}^n$ and any $i\in\mathbb{Z}_+$,
\begin{align*}
\fint_Q\left[\fint_{2^iQ}\left\|W^{\frac{1}{p}}(x)W^{-\frac{1}{p}}(y)
\right\|^{p'}\,dy\right]^{\frac{p}{p'}}\,dx
&=\fint_Q\left[\fint_{2^iQ}\left\|W^{-\frac{1}{p}}(y)W^{\frac{1}{p}}(x)
\right\|^{p'}\,dy\right]^{\frac{p}{p'}}\,dx\\
&\sim\fint_Q\left\|A_{2^iQ}^{-1}W^{\frac{1}{p}}(x)\right\|^p\,dx \\
&=\fint_Q\left\|W^{\frac{1}{p}}(x) A_{2^iQ}^{-1}\right\|^p\,dx
\sim\left\|A_QA_{2^iQ}^{-1}\right\|^p.
\end{align*}
This finishes the proof of \eqref{equ} in this case
and hence Proposition \ref{Ap dim rem}.
\end{proof}

Applying an argument similar to that used in the proof of \cite[Lemma 2]{mr22},
we obtain the following reverse H\"older inequality for matrix weights,
which is based on \cite[Theorem 1.1]{hpr12}; we omit the details.

\begin{lemma}\label{181}
Let $p\in(0,\infty)$ and $W\in A_p$.
Then there exist $r(W)\in(1,\infty)$ and a positive constant $C$ such that,
for any $r\in[1,r(W)]$, any cube $Q\subset\mathbb{R}^n$,
and any nonnegative definite matrix $M\in M_m(\mathbb{C})$,
\begin{equation*}
\left[\fint_Q\left\|W^{\frac{1}{p}}(x)M\right\|^{pr}\,dx\right]^{\frac{1}{r}}
\leq C\fint_Q\left\|W^{\frac{1}{p}}(x)M\right\|^{p}\,dx.
\end{equation*}
\end{lemma}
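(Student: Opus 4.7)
My plan is to reduce the matrix-valued reverse Hölder inequality to a scalar reverse Hölder inequality for weights of the form $|W^{\frac{1}{p}}(\cdot)\vec z|^p$, following the strategy behind \cite[Lemma~2]{mr22}. Fix a nonnegative definite matrix $M\in M_m(\mathbb C)$ and an orthonormal basis $\{\vec e_i\}_{i=1}^m$ of $\mathbb C^m$. By \cite[Lemma~3.2]{ro03} (already used in the proof of Lemma~\ref{reduceM}), for every $x\in\mathbb{R}^n$,
$$
\left\|W^{\frac{1}{p}}(x)M\right\|^{p}\sim\sum_{i=1}^m \left|W^{\frac{1}{p}}(x)M\vec e_i\right|^p,
$$
with positive equivalence constants depending only on $m$ and $p$. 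Writing $\vec z_i:=M\vec e_i$, the inequality to prove therefore reduces to a scalar reverse Hölder inequality for the family of scalar weights $w_{\vec z}(x):=|W^{\frac{1}{p}}(x)\vec z|^p$ indexed by $\vec z\in\mathbb C^m$, with a reverse Hölder exponent that is uniform in $\vec z$.

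Next, I would show that $w_{\vec z}$ is a scalar $A_\infty(\mathbb R^n)$ weight with constant bounded in terms only of $n$, $m$, $p$, and $[W]_{A_p}$ (independent of $\vec z$). For $p\in(1,\infty)$, by Definition~\ref{reduce} one has $\fint_Q w_{\vec z}\sim|A_Q\vec z|^p$, while the dual averaging is controlled via the elementary bound
$$
\left|W^{\frac{1}{p}}(x)\vec z\right|^{-p'}
\leq\left\|W^{\frac{1}{p}}(y)W^{-\frac{1}{p}}(x)\right\|^{p'}\left|W^{\frac{1}{p}}(y)\vec z\right|^{-p'},
$$
which, after a suitable integration in $y$ and an appeal to the matrix $A_p$ condition together with Lemma~\ref{Ap dual}, yields $[w_{\vec z}]_{A_p(\mathbb R^n)}\lesssim [W]_{A_p}$; when $p\in(0,1]$, one argues analogously using Proposition~\ref{equivalent} and Lemma~\ref{AQ inv p<1}. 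In particular, each $w_{\vec z}\in A_\infty(\mathbb R^n)$ with $[w_{\vec z}]_{A_\infty(\mathbb R^n)}$ uniformly bounded. The sharp scalar reverse Hölder inequality of \cite[Theorem~1.1]{hpr12} then supplies a single $r(W)\in(1,\infty)$ and a positive constant $C_0$, both depending only on $n$, $m$, $p$, and $[W]_{A_p}$, such that for every cube $Q$, every $\vec z\in\mathbb C^m$, and every $r\in[1,r(W)]$,
$$
\left[\fint_Q w_{\vec z}(x)^{r}\,dx\right]^{\frac{1}{r}}
\leq C_0\fint_Q w_{\vec z}(x)\,dx.
$$

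Finally, I would assemble the pieces. By the reduction of the first paragraph, Minkowski's inequality in $L^r$ (valid since $r\geq 1$), the uniform scalar reverse Hölder bound just obtained, and the reverse direction of the same reduction,
\begin{align*}
\left[\fint_Q\left\|W^{\frac{1}{p}}(x)M\right\|^{pr}\,dx\right]^{\frac{1}{r}}
&\lesssim\left[\fint_Q\left(\sum_{i=1}^m w_{\vec z_i}(x)\right)^{r}dx\right]^{\frac{1}{r}}
\leq\sum_{i=1}^m\left[\fint_Q w_{\vec z_i}(x)^{r}\,dx\right]^{\frac{1}{r}}\\
&\leq C_0\sum_{i=1}^m\fint_Q w_{\vec z_i}(x)\,dx
\lesssim\fint_Q\left\|W^{\frac{1}{p}}(x)M\right\|^{p}\,dx,
\end{align*}
which is the desired estimate. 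The main obstacle is the uniformity claim in the second paragraph: the quantity $|W^{\frac{1}{p}}(x)\vec z|^{-p'}$ does not coincide with $|W^{-\frac{1}{p}}(x)\vec z|^{p'}$ unless $\vec z$ is an eigenvector of $W(x)$, so the standard characterisations via dual reducing operators (Lemma~\ref{Ap dual}) cannot be invoked directly and one must instead pass through the pointwise matrix inequality above. This is precisely the scalar-$A_p$ reduction carried out in \cite[Lemma~2]{mr22}, and once it is established, the remainder of the argument is routine.
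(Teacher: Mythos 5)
Your overall strategy coincides with the one the paper intends (the paper omits the proof, pointing to \cite[Lemma 2]{mr22} and \cite[Theorem 1.1]{hpr12}): reduce $\|W^{\frac1p}(x)M\|^p$ to the finite sum $\sum_{i=1}^m|W^{\frac1p}(x)M\vec e_i|^p$ via \cite[Lemma 3.2]{ro03}, show that the scalar weights $w_{\vec z}:=|W^{\frac1p}(\cdot)\vec z|^p$ belong to scalar $A_{\max\{1,p\}}(\mathbb R^n)$ with constant controlled by $[W]_{A_p}$ \emph{uniformly} in $\vec z$, invoke the quantitative scalar reverse H\"older inequality of \cite{hpr12} to obtain a single exponent $r(W)$ for the whole family, and reassemble with Minkowski's inequality. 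The reduction and the final assembly are correct.

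The gap is in the uniformity step, which you yourself single out as the main obstacle. The pointwise bound $|W^{\frac1p}(x)\vec z|^{-p'}\le\|W^{\frac1p}(y)W^{-\frac1p}(x)\|^{p'}|W^{\frac1p}(y)\vec z|^{-p'}$ is true, but ``a suitable integration in $y$'' does not close the argument: averaging in $x$ over $Q$ and using Lemma \ref{exchange} with Corollary \ref{Ap dual corollary} only gives $\fint_Q|W^{\frac1p}(x)\vec z|^{-p'}\,dx\lesssim\|W^{\frac1p}(y)A_Q^{-1}\|^{p'}|W^{\frac1p}(y)\vec z|^{-p'}$ for a.e.\ $y\in Q$, and to conclude you would still need a point $y$ at which $|W^{\frac1p}(y)A_Q^{-1}\vec u|\gtrsim\|W^{\frac1p}(y)A_Q^{-1}\|\,|\vec u|$ for the specific vector $\vec u:=A_Q\vec z$, which is not automatic and requires a separate measure-theoretic argument. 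The clean route is duality in the vector variable rather than in $x$: since $W^{\frac1p}(x)$ is self-adjoint, $|W^{\frac1p}(x)\vec z|=\sup_{\vec v\ne\vec{\mathbf{0}}}|\langle\vec z,\vec v\rangle|\,|W^{-\frac1p}(x)\vec v|^{-1}$, so that $|W^{\frac1p}(x)\vec z|^{-p'}\le|W^{-\frac1p}(x)\vec v|^{p'}|\langle\vec z,\vec v\rangle|^{-p'}$ for every $\vec v$; averaging in $x$ and applying \eqref{AQ inv} yields $\fint_Q|W^{\frac1p}(x)\vec z|^{-p'}\,dx\lesssim|A_Q^{-1}\vec v|^{p'}|\langle\vec z,\vec v\rangle|^{-p'}$, and the choice $\vec v=A_Q\vec u$ with $|\vec u|=1$ optimal gives $\fint_Q|W^{\frac1p}(x)\vec z|^{-p'}\,dx\lesssim|A_Q\vec z|^{-p'}$, hence $[w_{\vec z}]_{A_p(\mathbb R^n)}\lesssim1$ uniformly. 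The same supremum identity combined with Lemma \ref{AQ inv p<1} gives the uniform scalar $A_1$ bound when $p\in(0,1]$. With this step repaired, your proof is complete.
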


Now, we can establish the relations between $A_p$ and $\mathbb{D}_{p,d}$ with $d\in[0,n)$.

\begin{proposition}\label{180}
Let $p\in(0,\infty)$ and $W\in A_p$.
Then there exists $d\in[0,n)$ such that $W$ has the $A_p$-dimension $d$.
\end{proposition}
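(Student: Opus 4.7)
The plan is to reduce the statement, via Proposition \ref{Ap dim rem}, to producing some $d\in[0,n)$ and a constant $C$ such that, for every cube $Q\subset\mathbb{R}^n$ and every $i\in\mathbb{Z}_+$,
\begin{equation*}
\left\|A_Q A_{2^iQ}^{-1}\right\|^p\le C\,2^{id},
\end{equation*}
where $\{A_Q\}_{\mathrm{cube}\,Q}$ is any fixed family of reducing operators of order $p$ for $W$. The ``naive'' bound coming from the trivial volume ratio $|2^iQ|/|Q|=2^{in}$ only yields the exponent $d=n$, which is not enough; the whole point is to extract a strictly smaller exponent from the self-improvement (higher integrability) properties of $A_p$ weights that are already encoded in Lemma \ref{8}.

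The first step is to use Lemma \ref{reduceM}, which turns the operator quantity into an integral: $\|A_Q A_{2^iQ}^{-1}\|^p$ is comparable to $\fint_Q\|W^{1/p}(x)A_{2^iQ}^{-1}\|^p\,dx$. The second step is to invoke Lemma \ref{8}\eqref{8allp}, which gives a $\delta=\delta(n,m,p,[W]_{A_p})\in(0,\infty)$ such that, with $r=p+\delta$,
\begin{equation*}
\fint_{2^iQ}\left\|W^{\frac{1}{p}}(x)A_{2^iQ}^{-1}\right\|^{r}\,dx\le C,
\end{equation*}
the constant $C$ being independent of $i$ and of $Q$. Noting that $Q\subset 2^iQ$ with $|2^iQ|=2^{in}|Q|$, this integral bound on $2^iQ$ immediately implies
\begin{equation*}
\fint_{Q}\left\|W^{\frac{1}{p}}(x)A_{2^iQ}^{-1}\right\|^{r}\,dx
\le 2^{in}\fint_{2^iQ}\left\|W^{\frac{1}{p}}(x)A_{2^iQ}^{-1}\right\|^{r}\,dx
\lesssim 2^{in}.
\end{equation*}

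The third step is to pass from $L^{r}$ back to $L^p$ on $Q$ by H\"older's inequality with exponent $r/p>1$:
\begin{equation*}
\fint_{Q}\left\|W^{\frac{1}{p}}(x)A_{2^iQ}^{-1}\right\|^{p}\,dx
\le\left[\fint_{Q}\left\|W^{\frac{1}{p}}(x)A_{2^iQ}^{-1}\right\|^{r}\,dx\right]^{\frac{p}{r}}
\lesssim 2^{inp/r}.
\end{equation*}
Setting $d:=np/(p+\delta)$, this is exactly $\|A_Q A_{2^iQ}^{-1}\|^p\lesssim 2^{id}$, and since $\delta>0$ we have $d\in[0,n)$, as required. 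The argument is uniform in $p\in(0,\infty)$ because Lemma \ref{8}\eqref{8allp} is stated uniformly in $p$.

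The only genuinely non-routine ingredient is the higher integrability in Lemma \ref{8}\eqref{8allp} (ultimately the matrix-weighted reverse H\"older inequality of Lemma \ref{181}); the rest is the standard mechanism of trading a small amount of integrability for a small improvement of the exponent in the volume ratio. The main conceptual point to articulate in the write-up is therefore that the gap $n-d>0$ is exactly the ``open property'' of matrix $A_p$-weights in the integrability exponent, transported, via the trivial inclusion $Q\subset 2^iQ$ and H\"older, into a gap in the geometric exponent $d$.
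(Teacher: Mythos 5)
Your proof is correct and is essentially the paper's argument run in the opposite order: both rest on the matrix reverse H\"older inequality (a gain of integrability from $p$ to $p+\delta$ on the large cube $2^iQ$), the trivial volume factor $2^{in}$ from comparing averages over $Q$ and $2^iQ$, and H\"older's inequality to return to exponent $p$, yielding $d=np/(p+\delta)<n$; the paper simply applies H\"older first and reverse H\"older last, and treats $p\in(0,1]$ and $p\in(1,\infty)$ separately instead of routing both cases through Proposition \ref{Ap dim rem} as you do. One citation should be adjusted: Lemma \ref{8}(iii) is stated only for dyadic cubes, whereas you need the uniform bound $\fint_{2^iQ}\|W^{\frac{1}{p}}(x)A_{2^iQ}^{-1}\|^{p+\delta}\,dx\lesssim 1$ for the (generally non-dyadic) cube $2^iQ$; this follows instead from Lemma \ref{181} applied to the nonnegative definite matrix $M=A_{2^iQ}^{-1}$ together with Lemma \ref{reduceM}, which is exactly the route the paper takes.
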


\begin{proof}
Let $r:=r(W)$, where $r(W)\in(1,\infty)$ is the same as in Lemma \ref{181}.
We show the existence of the $A_p$-dimension $d\in[0,n)$ by considering the following
two cases on $p$.

\emph{Case 1)} $p\in(0,1]$.
In this case, by both H\"older's inequality and Lemma \ref{181},
we conclude that, for any cube $Q\subset\mathbb{R}^n$ and any $i\in\mathbb{Z}_+$,
\begin{align*}
&\mathop{\mathrm{\,ess\,sup\,}}_{y\in 2^iQ}\fint_Q
\left\|W^{\frac{1}{p}}(x)W^{-\frac{1}{p}}(y)\right\|^p\,dx\\
&\quad\leq\mathop{\mathrm{\,ess\,sup\,}}_{y\in2^iQ}\left[\fint_Q
\left\|W^{\frac{1}{p}}(x)W^{-\frac{1}{p}}(y)\right\|^{pr}\,dx\right]^{\frac{1}{r}}\\
&\quad\leq2^{i\frac{n}{r}}
\mathop{\mathrm{\,ess\,sup\,}}_{y\in2^iQ}\left[\fint_{2^iQ}
\left\|W^{\frac{1}{p}}(x)W^{-\frac{1}{p}}(y)\right\|^{pr}\,dx\right]^{\frac{1}{r}}\\
&\quad\lesssim2^{i\frac{n}{r}}
\mathop{\mathrm{\,ess\,sup\,}}_{y\in2^iQ}\fint_{2^iQ}
\left\|W^{\frac{1}{p}}(x)W^{-\frac{1}{p}}(y)\right\|^p\,dx
\leq2^{i\frac{n}{r}}[W]_{A_p},
\end{align*}
and hence $W$ has the $A_p$-dimension $\frac{n}{r}\in[0,n)$.

\emph{Case 2)} $p\in(1,\infty)$.
In this case, for any cube $Q\subset\mathbb{R}^n$ and any $i\in\mathbb{Z}_+$,
let $A_{2^iQ}$ be a reducing operator of order $p$ for $W$.
From Lemma \ref{exchange}, Corollary \ref{Ap dual corollary}, H\"older's inequality, and Lemmas \ref{181} and \ref{reduceM},
we deduce that, for any cube $Q\subset\mathbb{R}^n$ and any $i\in\mathbb{Z}_+$,
\begin{align*}
&\fint_Q\left[\fint_{2^iQ}
\left\|W^{\frac{1}{p}}(x)W^{-\frac{1}{p}}(y)\right\|^{p'}\,dy\right]^{\frac{p}{p'}}\,dx\\
&\quad=\fint_Q\left[\fint_{2^iQ}
\left\|W^{-\frac{1}{p}}(y)W^{\frac{1}{p}}(x)\right\|^{p'}\,dy\right]^{\frac{p}{p'}}\,dx\\
&\quad\sim\fint_Q \left\|A_{2^iQ}^{-1}W^{\frac{1}{p}}(x)\right\|^p\,dx
=\fint_Q\left\|W^{\frac{1}{p}}(x)A_{2^iQ}^{-1}\right\|^p\,dx\\
&\quad\leq\left[\fint_Q
\left\|W^{\frac{1}{p}}(x)A_{2^iQ}^{-1}\right\|^{pr}\,dx\right]^{\frac{1}{r}}
\leq2^{i\frac{n}{r}}\left[\fint_{2^iQ}
\left\|W^{\frac{1}{p}}(x)A_{2^iQ}^{-1}\right\|^{pr}\,dx\right]^{\frac{1}{r}}\\
&\quad\lesssim2^{i\frac{n}{r}}\fint_{2^iQ}\left\|W^{\frac{1}{p}}(x)A_{2^iQ}^{-1}\right\|^p\,dx
\sim2^{i\frac{n}{r}}\left\|A_{2^iQ}A_{2^iQ}^{-1}\right\|^p
=2^{i\frac{n}{r}},
\end{align*}
and hence $W$ has the $A_p$-dimension $\frac{n}{r}\in[0,n)$.
This finishes the proof of Proposition \ref{180}.
\end{proof}

We will also encounter a need to estimate expressions,
where the roles of $2^i Q$ and $Q$ are interchanged compared to their appearance
in Definition \ref{Ap dim}. The following result proves that
this does not require a new definition.

\begin{proposition}\label{dual Ap dim}
Let $p\in(0,\infty)$ and $W\in A_p$ be a matrix weight.
\begin{enumerate}[{\rm(i)}]
\item If $p\in(0,1]$, then there exists a positive constant $C$ such that,
for any cube $Q\subset\mathbb{R}^n$ and any $i\in\mathbb{Z}_+$,
\begin{equation*}
\mathop{\mathrm{\,ess\,sup\,}}_{y\in Q}\fint_{2^i Q} \left\|W^{\frac{1}{p}}(x)W^{-\frac{1}{p}}(y)\right\|^p\,dx
\leq C.
\end{equation*}
\item If $p\in(1,\infty)$ and $d_2\in\mathbb{R}$, then there exists a positive constant $C$ such that,
for any cube $Q\subset\mathbb{R}^n$ and any $i\in\mathbb{Z}_+$,
\begin{equation}\label{upper Ap dim}
\fint_{2^iQ}\left[\fint_{Q}\left\|W^{\frac{1}{p}}(x)W^{-\frac{1}{p}}(y)
\right\|^{p'}\,dy\right]^{\frac{p}{p'}}\,dx
\leq C2^{id_2}
\end{equation}
if and only if the dual weight $\widetilde W\in A_{p'}$
has the $A_{p'}$-dimension $\widetilde d=\frac{d_2}{p-1}$.
\end{enumerate}
\end{proposition}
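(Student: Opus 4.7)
My plan is to prove both parts by translating the given integral expressions into equivalent expressions involving reducing operators, and then, for part (ii), relating the reducing operators of $W$ to those of the dual weight $\widetilde W$ via Lemma \ref{Ap dual}.

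For part (i), with $p\in(0,1]$, applying Lemma \ref{reduceM} with $M=W^{-\frac{1}{p}}(y)$ on the cube $2^iQ$ gives $\fint_{2^iQ}\|W^{\frac{1}{p}}(x)W^{-\frac{1}{p}}(y)\|^p\,dx\sim\|A_{2^iQ}W^{-\frac{1}{p}}(y)\|^p$ uniformly in $y$ and $i$. Since $Q\subset 2^iQ$, the essential supremum over $y\in Q$ is bounded by that over $y\in 2^iQ$. Reversing the same application of Lemma \ref{reduceM} shows that the latter supremum is comparable to $\mathop{\mathrm{\,ess\,sup\,}}_{y\in 2^iQ}\fint_{2^iQ}\|W^{\frac{1}{p}}(x)W^{-\frac{1}{p}}(y)\|^p\,dx$, which is at most $[W]_{A_p}$ by definition, yielding the desired uniform bound.

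For part (ii), with $p\in(1,\infty)$, I would argue in three steps. First, I rewrite the left-hand side of \eqref{upper Ap dim} in reducing-operator form. Since $W^{\frac{1}{p}}(x)$ and $W^{-\frac{1}{p}}(y)$ are Hermitian, $\|W^{\frac{1}{p}}(x)W^{-\frac{1}{p}}(y)\|=\|W^{-\frac{1}{p}}(y)W^{\frac{1}{p}}(x)\|$, and Corollary \ref{Ap dual corollary} applied to the inner integral in $y$ with $M=W^{\frac{1}{p}}(x)$ yields $\fint_Q\|W^{-\frac{1}{p}}(y)W^{\frac{1}{p}}(x)\|^{p'}\,dy\sim\|A_Q^{-1}W^{\frac{1}{p}}(x)\|^{p'}$. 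Raising to the power $p/p'$, integrating in $x$ over $2^iQ$, and invoking Lemmas \ref{exchange} and \ref{reduceM} with $M=A_Q^{-1}$ gives that the left-hand side is $\sim\|A_{2^iQ}A_Q^{-1}\|^p$. Second, I pass to the dual operators: Lemma \ref{Ap dual} gives $|A_Q^{-1}\vec z|\sim|\widetilde A_Q\vec z|$ for all $\vec z\in\mathbb C^m$, and the analogous statement for $2^iQ$, which forces the positive definite Hermitian products $A_Q\widetilde A_Q$ and $A_{2^iQ}\widetilde A_{2^iQ}$ to have operator norm comparable to $1$ together with their inverses. Writing the telescoped identities $A_{2^iQ}A_Q^{-1}=(A_{2^iQ}\widetilde A_Q)(A_Q\widetilde A_Q)^{-1}$ and $\widetilde A_QA_{2^iQ}=(\widetilde A_Q\widetilde A_{2^iQ}^{-1})(\widetilde A_{2^iQ}A_{2^iQ})$, together with the norm-commuting identity $\|A_{2^iQ}\widetilde A_Q\|=\|\widetilde A_QA_{2^iQ}\|$ from Lemma \ref{exchange}, then gives $\|A_{2^iQ}A_Q^{-1}\|\sim\|\widetilde A_Q\widetilde A_{2^iQ}^{-1}\|$. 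Third, Proposition \ref{Ap dim rem} applied to $\widetilde W\in A_{p'}$ characterises its $A_{p'}$-dimension $\widetilde d$ precisely by $\|\widetilde A_Q\widetilde A_{2^iQ}^{-1}\|^{p'}\lesssim 2^{i\widetilde d}$, which is equivalent to $\|A_{2^iQ}A_Q^{-1}\|^p\lesssim 2^{i\widetilde d p/p'}$; thus \eqref{upper Ap dim} holds if and only if $\widetilde d\,p/p'=d_2$, i.e.\ $\widetilde d=d_2\cdot p'/p=d_2/(p-1)$.

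The main technical obstacle will be the second step. Lemma \ref{Ap dual} only gives a vectorwise equivalence between $A_Q^{-1}$ and $\widetilde A_Q$, which is a priori weaker than the operator-norm equivalence of their compositions with other matrices that I need here. The resolution, which the argument above uses implicitly, is that a vectorwise equivalence between two positive definite Hermitian matrices forces their product (and its inverse) to have operator norm comparable to $1$; hence $A_Q^{-1}$ and $\widetilde A_Q$ (and likewise $A_{2^iQ}$ and $\widetilde A_{2^iQ}^{-1}$) can be interchanged as factors in any composition, up to constants depending only on $m$, $p$, and $[W]_{A_p}$.
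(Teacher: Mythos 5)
Your proof is correct and follows essentially the same route as the paper: both parts reduce to reducing-operator expressions, with (i) resting on the observation that the essential supremum over $Q$ is dominated by that over $2^iQ$, and (ii) resting on the duality $A_Q^{-1}\sim\widetilde A_Q$ from Lemma \ref{Ap dual} together with Lemmas \ref{exchange} and \ref{reduceM}. The only cosmetic difference is that the paper pivots on the single symmetric quantity $\|A_{2^iQ}\widetilde A_Q\|$ and matches the two integral conditions directly, whereas you pivot on $\|A_{2^iQ}A_Q^{-1}\|\sim\|\widetilde A_Q\widetilde A_{2^iQ}^{-1}\|$ and finish by citing Proposition \ref{Ap dim rem} for $\widetilde W$; your explicit upgrade of the vectorwise equivalence to an operator-norm equivalence in compositions is sound and is exactly what the paper uses implicitly through Corollary \ref{Ap dual corollary}.
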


\begin{proof}
Let first $p\in(0,1]$. For any $i\in\mathbb Z_+$, observing that the supremum over a smaller set is dominated by the supremum over a bigger one,
\begin{align*}
\mathop{\mathrm{\,ess\,sup\,}}_{y\in Q}
\fint_{2^i Q}\left\|W^{\frac 1p}(x)W^{-\frac 1p}(y)\right\|^p\,dx \leq\mathop{\mathrm{\,ess\,sup\,}}_{y\in 2^i Q}
\fint_{2^i Q}\left\|W^{\frac 1p}(x)W^{-\frac 1p}(y)\right\|^p\,dx
\leq[W]_{A_p},
\end{align*}
so the claim holds with $C=[W]_{A_p}$.

Let then $p\in(1,\infty)$. For each cube $Q$,
let $A_Q$ be the reducing operator of order $p$ for $W$
and $\widetilde A_Q$ the reducing operator of order $p'$
for $\widetilde W=W^{-\frac 1{p-1}}$.
Using the definition of reducing operators
and Lemma \ref{exchange},
we conclude that, for any pair $Q,R$ of cubes,
\begin{align*}
&\left\{\fint_R\left[\fint_Q\left\|W^{\frac 1p}(x)W^{-\frac 1p}(y)\right\|^{p'}\,dy
\right]^{\frac{p}{p'}}\, dx\right\}^{\frac 1p} \\
&\quad\sim\left[\fint_R\left\|W^{\frac 1p}(x)\widetilde A_Q\right\|^p\,dx\right]^{\frac 1p}
\sim\left\|A_R\widetilde A_Q\right\|
\sim\left[\fint_Q\left\|A_R W^{-\frac 1p}(y)\right\|^{p'}\,dy\right]^{\frac 1{p'}} \\
&\quad\sim\left\{\fint_Q\left[\fint_R
\left\|W^{\frac 1p}(x)W^{-\frac 1p}(y)\right\|^{p}\,dx
\right]^{\frac{p'}{p}}\,dy\right\}^{\frac 1{p'}}\\
&\quad= \left\{\fint_Q\left[\fint_R
\left\|\widetilde W^{\frac 1{p'}}(y)\widetilde W^{-\frac 1{p'}}(x)\right\|^{p}\,dx
\right]^{\frac{p'}{p}}\,dy\right\}^{\frac 1{p'}}.
\end{align*}
For $R=2^i Q$, condition \eqref{upper Ap dim}
is equivalent to the boundedness of the left-hand side above by $C 2^{i\frac{d_2}p}$.
On the other hand, the condition that $\widetilde W$ has the $A_{p'}$-dimension $\widetilde d$
is equivalent to the boundedness of the right-hand side above by
$C 2^{i\frac{\widetilde d}{p'}}$. Since both sides are comparable to each other,
it follows that \eqref{upper Ap dim} holds with dimension $d_2$
if and only if $\widetilde W$ has the $A_{p'}$-dimension $\widetilde d$ such that
$\frac{d_2}p=\frac{\widetilde d}{p'}$.
This finishes the proof of Proposition \ref{dual Ap dim}.
\end{proof}

Using the concept of $A_p$-dimensions,
we obtain the following \emph{sharp} estimate that improves Lemma \ref{22}
because $ d<n\leq\beta_W $, where $\beta_W$ is
the doubling exponent of the doubling matrix weight $W$ of order $p$.
For the sharpness, see Lemmas \ref{146} and \ref{146x} further below.

\begin{lemma}\label{22 precise}
Let $p\in(0,\infty)$, let $W\in A_p$ have the $A_p$-dimension $d\in[0,n)$,
and let $\{A_Q\}_{\mathrm{cube}\,Q}$ be a family of
reducing operators of order $p$ for $W$. If $p\in(1,\infty)$,
let further $\widetilde W:=W^{-\frac 1{p-1}}$
(which belongs to $A_{p'}$) have the $A_{p'}$-dimension $\widetilde d$,
while, if $p\in(0,1]$, let $\widetilde d:=0$.
Let
\begin{equation}\label{Delta}
\Delta:=\frac{d}{p}+\frac{\widetilde d}{p'}.
\end{equation}
Then there exists a positive constant $C$ such that,
for any cubes $Q$ and $R$ of $\mathbb{R}^n$,
$$
\left\|A_QA_R^{-1}\right\|
\leq C\max\left\{\left[\frac{\ell(R)}{\ell(Q)}\right]^{\frac{d}{p}},
\left[\frac{\ell(Q)}{\ell(R)}\right]^{\frac{\widetilde d}{p'}}\right\}
\left[1+\frac{|c_Q-c_R|}{\ell(Q)\vee\ell(R)}\right]^\Delta.
$$
\end{lemma}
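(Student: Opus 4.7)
The plan is to reduce the general estimate to two one-parameter concentric estimates and to a ``bridge'' estimate between two cubes of comparable size. First, I would establish two fundamental concentric bounds: for any cube $Q$ and any $i\in\mathbb{Z}_+$,
\begin{equation*}
\left\|A_QA_{2^iQ}^{-1}\right\|\lesssim 2^{id/p}
\quad\text{and}\quad
\left\|A_{2^iQ}A_Q^{-1}\right\|\lesssim 2^{i\widetilde d/p'}.
\end{equation*}
The first follows directly from Proposition \ref{Ap dim rem} and the $A_p$-dimension hypothesis. For the second, when $p\in(1,\infty)$, combining Lemma \ref{reduceM}, Lemma \ref{exchange}, and Corollary \ref{Ap dual corollary} rewrites $\|A_{2^iQ}A_Q^{-1}\|^p$ precisely as the quantity appearing on the left-hand side of \eqref{upper Ap dim}, and then Proposition \ref{dual Ap dim}(ii) (noting $\widetilde d(p-1)/p=\widetilde d/p'$) provides the bound. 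When $p\in(0,1]$, the analogous rewriting via Corollary \ref{AQ inv p<1 corollary} together with Proposition \ref{equivalent} gives $\|A_{2^iQ}A_Q^{-1}\|\lesssim [W]_{A_p}^{1/p}$, which is the correct bound under the convention $\widetilde d=0$.

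For general cubes $Q,R$, set $\ell_0:=\ell(Q)\vee\ell(R)$ and $\lambda:=1+|c_Q-c_R|/\ell_0$. A straightforward geometric construction produces cubes $T_Q$ and $T_R$, centered respectively at $c_Q$ and $c_R$, both containing $Q\cup R$ and both of side length comparable to $\ell_0\lambda$. I would then apply the submultiplicative chain
\begin{equation*}
\left\|A_QA_R^{-1}\right\|\le\left\|A_QA_{T_Q}^{-1}\right\|\cdot\left\|A_{T_Q}A_{T_R}^{-1}\right\|\cdot\left\|A_{T_R}A_R^{-1}\right\|
\end{equation*}
and control the outer two factors by the first-paragraph estimates with $i_Q\sim\log_2(\ell_0\lambda/\ell(Q))$ and $i_R\sim\log_2(\ell_0\lambda/\ell(R))$, obtaining the bounds $(\ell_0\lambda/\ell(Q))^{d/p}$ and $(\ell_0\lambda/\ell(R))^{\widetilde d/p'}$, respectively.

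The main obstacle is the middle bridge factor $\|A_{T_Q}A_{T_R}^{-1}\|$. Here $T_Q$ and $T_R$ have comparable side length $\sim\ell_0\lambda$ and their centers lie at distance at most $\ell_0\lambda$, so both are contained in a common cube $S$ of comparable size. For $p\in(1,\infty)$, Lemmas \ref{reduceM} and \ref{exchange} together with Corollary \ref{Ap dual corollary} express $\|A_{T_Q}A_{T_R}^{-1}\|^p$ as a double-integral average over $T_Q\times T_R$ of $\|W^{1/p}(x)W^{-1/p}(y)\|^{p'}$; I would enlarge both integration regions to $S$ at the cost of a bounded constant and apply the $A_p$-condition on $S$ to conclude $\|A_{T_Q}A_{T_R}^{-1}\|\lesssim [W]_{A_p}^{1/p}$. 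For $p\in(0,1]$, the analogous argument using Corollary \ref{AQ inv p<1 corollary} together with Proposition \ref{equivalent} yields the same constant bound.

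Combining the three pieces gives $\|A_QA_R^{-1}\|\lesssim(\ell_0\lambda/\ell(Q))^{d/p}(\ell_0\lambda/\ell(R))^{\widetilde d/p'}$, and a short case analysis finishes the proof. If $\ell(Q)\le\ell(R)$, so that $\ell_0=\ell(R)$, this product equals $(\ell(R)/\ell(Q))^{d/p}\lambda^{d/p+\widetilde d/p'}=(\ell(R)/\ell(Q))^{d/p}\lambda^\Delta$; if $\ell(R)\le\ell(Q)$, the symmetric reduction yields $(\ell(Q)/\ell(R))^{\widetilde d/p'}\lambda^\Delta$. In either case the product is dominated by the maximum appearing in the statement times $\lambda^\Delta$, giving the claimed estimate.
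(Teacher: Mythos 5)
Your proof is correct. The paper's own argument is structurally very close but uses a two-link chain: it first proves the two-sided bound
$\|A_QA_R^{-1}\|\lesssim\max\{[\ell(R)/\ell(Q)]^{d/p},[\ell(Q)/\ell(R)]^{\widetilde d/p'}\}$
for intersecting cubes directly (via H\"older's inequality and the reducing-operator identities -- essentially the computation you delegate to Propositions \ref{Ap dim rem} and \ref{dual Ap dim}), specializes it to nested cubes as in \eqref{2202}, and then chains $Q\to S\to R$ through a single cube $S\supset Q\cup R$ with $\ell(S)\sim\ell(Q)+\ell(R)+|c_Q-c_R|$. Your three-link chain $Q\to T_Q\to T_R\to R$ through two concentric dilates buys the convenience of quoting the already-established equivalences of Propositions \ref{Ap dim rem} and \ref{dual Ap dim} instead of redoing the H\"older computation, at the cost of the extra bridge estimate $\|A_{T_Q}A_{T_R}^{-1}\|\lesssim1$ for nearby cubes of comparable size, which your enlargement of both integration domains to a common cube together with the $A_p$ condition indeed delivers. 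Both routes terminate in the same product $(\ell_0\lambda/\ell(Q))^{d/p}(\ell_0\lambda/\ell(R))^{\widetilde d/p'}$ and the same concluding case analysis, so the difference is organizational rather than substantive.
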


\begin{proof}
Let us first consider the case when $Q\cap R\neq\emptyset$.
In this case, using some geometrical observations, we obtain
$R\subset\lambda Q$, where $\lambda\sim\max\{\ell(R)/\ell(Q),1\}$.
Next, we claim that
\begin{equation}\label{2203}
\left\|A_QA_R^{-1}\right\|\lesssim
\max\left\{\left[\frac{\ell(R)}{\ell(Q)}\right]^{\frac{d}{p}},
\left[\frac{\ell(Q)}{\ell(R)}\right]^{\frac{\widetilde d}{p'}}\right\}.
\end{equation}
To show this, we consider the following two cases on $p$.

\emph{Case 1)} $p\in(0,1]$.
In this case, we have, for almost every $y\in R$,
\begin{equation}\label{2201}
\left\|A_QA_R^{-1}\right\|^p
\leq\left\|A_QW^{-\frac{1}{p}}(y)\right\|^p\left\|W^{\frac{1}{p}}(y)A_R^{-1}\right\|^p,
\end{equation}
where, by Lemma \ref{reduceM} and Definition \ref{Ap dim}, we find that
\begin{equation*}
\left\|A_QW^{-\frac{1}{p}}(y)\right\|^p
\sim\fint_Q\left\|W^{\frac{1}{p}}(x)W^{-\frac{1}{p}}(y)\right\|^p\,dx
\lesssim\lambda^d
\end{equation*}
because $y\in R\subset\lambda Q$.
Taking an integral average of \eqref{2201} over $y\in R$
and using Lemma \ref{reduceM}, we conclude that
\begin{equation*}
\left\|A_QA_R^{-1}\right\|^p
\lesssim\lambda^d\fint_R\left\|W^{\frac{1}{p}}(y)A_R^{-1}\right\|^p\,dy
\sim\lambda^d\left\|A_RA_R^{-1}\right\|^p
=\lambda^d.
\end{equation*}
This finishes the proof of \eqref{2203} in this case.

\emph{Case 2)} $p\in(1,\infty)$.
In this case, by H\"older's inequality, we obtain
\begin{align}\label{128}
\left\|A_QA_R^{-1}\right\|
&\leq\fint_Q\left\|A_QW^{-\frac{1}{p}}(x)\right\|
\left\|W^{\frac{1}{p}}(x)A_R^{-1}\right\|\,dx\\
&\leq\left[\fint_Q \left\|A_QW^{-\frac{1}{p}}(x)\right\|^{p'}\,dx\right]^{\frac{1}{p'}}
\left[\fint_Q\left\|W^{\frac{1}{p}}(x)A_R^{-1}\right\|^p\,dx\right]^{\frac{1}{p}}
=:\mathrm{I}_1\mathrm{I}_2.\notag
\end{align}
From Lemma \ref{exchange} and Corollary \ref{Ap dual corollary}, we infer that
\begin{equation}\label{129}
\mathrm{I}_1
=\left[\fint_Q\left\|W^{-\frac{1}{p}}(x)A_Q\right\|^{p'}\,dx\right]^{\frac{1}{p'}}
\sim\left\|A_Q^{-1}A_Q\right\|
=1.
\end{equation}
Using Lemma \ref{exchange} and Corollary \ref{Ap dual corollary},
we conclude that, for any $x\in Q$,
$$
\left\|W^{\frac{1}{p}}(x)A_R^{-1}\right\|
=\left\|A_R^{-1} W^{\frac{1}{p}}(x)\right\|
\sim\left[\fint_R\left\|W^{-\frac{1}{p}}(y)W^{\frac{1}{p}}(x)
\right\|^{p'}\,dy\right]^{\frac{1}{p'}}
$$
and hence
\begin{align*}
\mathrm{I}_2
\sim\left\{\fint_Q\left[\fint_R\left\|W^{\frac{1}{p}}(x)W^{-\frac{1}{p}}(y)\right\|^{p'}
\,dy\right]^{\frac{p}{p'}}\,dx\right\}^{\frac{1}{p}}.
\end{align*}
If $\ell(R)\geq\ell(Q)$, then $R\subset 2^i Q$, where $2^i\sim\ell(R)/\ell(Q)$, hence $\fint_R\lesssim\fint_{2^i Q}$, and thus
\begin{equation*}
\mathrm{I}_2\lesssim 2^{i\frac{d}{p}}
\sim\left[\frac{\ell(R)}{\ell(Q)}\right]^{\frac{d}{p}}
\end{equation*}
by Definition \ref{Ap dim} of $A_p$-dimensions.
If $\ell(R)\leq\ell(Q)$, then $Q\subset 2^j R$,
where $2^j\sim\ell(Q)/\ell(R)$, hence $\fint_Q\lesssim\fint_{2^j R}$, and thus
\begin{equation*}
\mathrm{I}_2\lesssim 2^{j\frac{\widetilde d}{p'}}
\sim\left[\frac{\ell(Q)}{\ell(R)}\right]^{\frac{\widetilde d}{p'}}
\end{equation*}
by Proposition \ref{dual Ap dim}. These two bounds for $\mathrm{I}_2$, combined with \eqref{128} and \eqref{129},
further imply \eqref{2203} in this case.

In particular, by \eqref{2203}, we find that
\begin{equation}\label{2202}
\left\|A_QA_R^{-1}\right\|
\lesssim\begin{cases}\displaystyle
\left[\frac{\ell(R)}{\ell(Q)}\right]^{\frac{d}{p}}&\text{if }Q\subset R,\\ \displaystyle
\left[\frac{\ell(Q)}{\ell(R)}\right]^{\frac{\widetilde d}{p'}}&\text{if }R\subset Q.
\end{cases}
\end{equation}

In the general case, we choose a third cube $ S $ such that $ Q \cup R\subset S $.
This clearly can be achieved with
$\ell(S)\sim\ell(Q)+\ell(R)+|c_Q-c_R|$ by some geometrical observations.
From this and \eqref{2202}, we deduce that
\begin{align*}
\left\|A_QA_R^{-1}\right\|
&\leq\left\|A_QA_S^{-1}\right\|\left\|A_SA_R^{-1}\right\|
\lesssim\left[\frac{\ell(S)}{\ell(Q)}\right]^{\frac{d}{p}}
\left[\frac{\ell(S)}{\ell(R)}\right]^{\frac{\widetilde d}{p'}}\\
&=\left[\frac{\ell(Q)\vee\ell(R)}{\ell(Q)}\right]^{\frac{d}{p}}
\left[\frac{\ell(Q)\vee\ell(R)}{\ell(R)}\right]^{\frac{\widetilde d}{p'}}
\left[\frac{\ell(S)}{\ell(Q)\vee\ell(R)}\right]^{\frac{d}{p}+\frac{\widetilde d}{p'}}\\
&\sim\max\left\{\left[\frac{\ell(R)}{\ell(Q)}\right]^{\frac{d}{p}},
\left[\frac{\ell(Q)}{\ell(R)}\right]^{\frac{\widetilde d}{p'}}\right\}
\left[1+\frac{|c_Q-c_R|}{\ell(Q)\vee\ell(R)}\right]^\Delta.
\end{align*}
This finishes the proof of Lemma \ref{22 precise}.
\end{proof}

Since assumptions like those in Lemma \ref{22 precise} will be frequently needed below,
we introduce the following concept.

\begin{definition}
Let $p\in(0,\infty)$ and $W\in A_p$ be a matrix weight. We say that $W$ has $A_p$-dimensions $(d,\widetilde d,\Delta)$ if
\begin{enumerate}[{\rm(i)}]
\item $W$ has the $A_p$-dimension $d$,
\item $p\in(0,1]$ and $\widetilde d=0$, or $p\in(1,\infty)$
and $W^{-\frac 1{p-1}}$ (which belongs to $A_{p'}$)
has the $A_{p'}$-dimension $\widetilde d$, and
\item $\Delta$ is the same as in \eqref{Delta}.
\end{enumerate}
\end{definition}

The following lemma is widely used later in this article;
we omit the details.

\begin{lemma}\label{33y}
For any cubes $Q,R\subset\mathbb{R}^n$,
any $x,x'\in Q$, and any $y,y'\in R$,
$$
1+\frac{|x-y|}{\ell(Q)\vee\ell(R)}
\sim1+\frac{|x'-y'|}{\ell(Q)\vee\ell(R)},
$$
where the positive equivalence constants depend only on $n$.
\end{lemma}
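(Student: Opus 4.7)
The plan is to derive this two-sided bound from a single application of the triangle inequality together with the elementary geometric observation that any two points in a cube $Q$ in $\mathbb{R}^n$ are separated by at most $\sqrt{n}\,\ell(Q)$. This is a purely geometric estimate, independent of the matrix-weight machinery developed above; the role of taking the maximum $\ell(Q)\vee\ell(R)$ in the denominator is precisely to absorb the diameters of both $Q$ and $R$ into a single scale.

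First I would fix cubes $Q,R\subset\mathbb R^n$ and points $x,x'\in Q$, $y,y'\in R$, and write
$$
|x-y|\leq|x-x'|+|x'-y'|+|y'-y|.
$$
Since $x,x'$ lie in the cube $Q$ and $y,y'$ lie in the cube $R$, I would bound $|x-x'|\leq\sqrt n\,\ell(Q)$ and $|y-y'|\leq\sqrt n\,\ell(R)$, both of which are at most $\sqrt n\,[\ell(Q)\vee\ell(R)]$. Dividing through by $\ell(Q)\vee\ell(R)$ and adding $1$ on both sides then gives
$$
1+\frac{|x-y|}{\ell(Q)\vee\ell(R)}
\leq1+\frac{|x'-y'|}{\ell(Q)\vee\ell(R)}+2\sqrt n
\leq(1+2\sqrt n)\left[1+\frac{|x'-y'|}{\ell(Q)\vee\ell(R)}\right],
$$
which establishes one direction of the equivalence with a constant that depends only on $n$.

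For the reverse direction I would simply interchange the roles of $(x,y)$ and $(x',y')$ in the argument above; the same triangle inequality then yields the matching upper bound for $1+|x'-y'|/[\ell(Q)\vee\ell(R)]$ in terms of $1+|x-y|/[\ell(Q)\vee\ell(R)]$ with the same constant $1+2\sqrt n$. Combining the two inequalities gives the claimed equivalence with positive constants depending only on $n$. There is no serious obstacle here: the lemma is essentially a bookkeeping statement asserting that, for purposes of estimates of the form appearing in Lemma \ref{22 precise}, we may freely replace any representative point of $Q$ (respectively $R$) by any other, at the cost of a multiplicative constant $C(n)$.
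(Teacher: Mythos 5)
Your proof is correct and is exactly the elementary triangle-inequality argument the paper has in mind (the paper states this lemma without proof, describing it as routine). The diameter bound $|x-x'|\leq\sqrt n\,\ell(Q)$, the absorption of the additive constant $2\sqrt n$ into a multiplicative factor $1+2\sqrt n$, and the symmetry for the reverse direction are all carried out correctly, with constants depending only on $n$ as required.
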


With Lemma \ref{33y}, we obtain the following useful variant of Lemma \ref{22 precise}.

\begin{corollary}\label{237}
Let $p\in(0,\infty)$, let $W\in A_p$ have $A_p$-dimensions $(d,\widetilde d,\Delta)$,
and let $\{A_Q\}_{\mathrm{cube}\,Q}$ be a family of
reducing operators of order $p$ for $W$.
Then there exists a positive constant $C$ such that,
for any $Q,R\in\mathscr{Q}$,
$$
\left\|A_QA_R^{-1}\right\|
\leq C\max\left\{\left[\frac{\ell(R)}{\ell(Q)}\right]^{\frac dp},
\left[\frac{\ell(Q)}{\ell(R)}\right]^{\frac{\widetilde d}{p'}}\right\}
\left[1+\frac{|x_Q-x_R|}{\ell(Q)\vee\ell(R)}\right]^\Delta.
$$
\end{corollary}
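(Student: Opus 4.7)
The plan is to reduce Corollary \ref{237} immediately to Lemma \ref{22 precise} combined with Lemma \ref{33y}. Since every dyadic cube $Q \in \mathscr{Q}$ is a cube in the sense used throughout the paper, Lemma \ref{22 precise} applied to the pair $Q, R \in \mathscr{Q}$ already yields
$$
\left\|A_Q A_R^{-1}\right\|
\leq C \max\left\{\left[\frac{\ell(R)}{\ell(Q)}\right]^{\frac{d}{p}},
\left[\frac{\ell(Q)}{\ell(R)}\right]^{\frac{\widetilde d}{p'}}\right\}
\left[1 + \frac{|c_Q - c_R|}{\ell(Q) \vee \ell(R)}\right]^{\Delta},
$$
which is the desired conclusion except that $|c_Q - c_R|$ appears in place of $|x_Q - x_R|$.

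To convert between these two versions, I would observe that for any $Q = Q_{j,k} \in \mathscr{Q}$, the corner point $x_Q = 2^{-j} k$ belongs to $Q_{j,k} = \prod_{i=1}^{n} 2^{-j}[k_i, k_i+1)$ (the intervals are left-closed), and of course $c_Q \in Q$ as well. Hence both $x_Q, c_Q \in Q$ and $x_R, c_R \in R$. Applying Lemma \ref{33y} with $x = x_Q$, $x' = c_Q$, $y = x_R$, $y' = c_R$ gives
$$
1 + \frac{|x_Q - x_R|}{\ell(Q) \vee \ell(R)} \sim 1 + \frac{|c_Q - c_R|}{\ell(Q) \vee \ell(R)},
$$
with equivalence constants depending only on $n$. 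Substituting this equivalence into the bound from Lemma \ref{22 precise} and absorbing the constant $n$-dependent factor into $C$ completes the proof.

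There is no substantive obstacle: the statement is a mild reformulation of Lemma \ref{22 precise} tailored to the dyadic setting, convenient because subsequent developments index reducing operators over $\mathscr{Q}$ and measure spatial separation via the canonical markers $x_Q = 2^{-j_Q} k$ rather than via centers. The only thing worth verifying carefully, as noted above, is the set-theoretic point that $x_Q \in Q$ with our half-open convention, which is exactly what makes Lemma \ref{33y} applicable without any perturbation argument.
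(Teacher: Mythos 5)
Your proof is correct and follows exactly the paper's own argument: apply Lemma \ref{22 precise} to the dyadic cubes and then use Lemma \ref{33y} (with $x_Q,c_Q\in Q$ and $x_R,c_R\in R$) to replace $|c_Q-c_R|$ by $|x_Q-x_R|$. The extra care you take to note that $x_Q\in Q$ under the half-open convention is a nice touch but does not change the argument.
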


\begin{proof}
By Lemmas \ref{22 precise} and \ref{33y}, we conclude that,
for any $Q,R\in\mathscr{Q}$,
\begin{align*}
\left\|A_QA_R^{-1}\right\|^p
&\lesssim\max\left\{\left[\frac{\ell(R)}{\ell(Q)}\right]^{\frac dp},
\left[\frac{\ell(Q)}{\ell(R)}\right]^{\frac{\widetilde d}{p'}}\right\}
\left[1+\frac{|c_Q-c_R|}{\ell(Q)\vee\ell(R)}\right]^\Delta \\
&\sim\max\left\{\left[\frac{\ell(R)}{\ell(Q)}\right]^{\frac dp},
\left[\frac{\ell(Q)}{\ell(R)}\right]^{\frac{\widetilde d}{p'}}\right\}
\left[1+\frac{|x_Q-x_R|}{\ell(Q)\vee\ell(R)}\right]^\Delta.
\end{align*}
This finishes the proof of Corollary \ref{237}.
\end{proof}

\begin{remark}
Observe that, in Corollary \ref{237}, if $Q,R\in\mathscr{Q}_j$ for some $j\in\mathbb{Z}$, then one has
\begin{equation*}
\left\|A_QA_R^{-1}\right\|
\lesssim\left(1+2^j\left|x_Q-x_R\right|\right)^\Delta.
\end{equation*}
From this and Lemma \ref{33y}, we infer that,
for any $j\in\mathbb{Z}$, $Q,R\in\mathscr{Q}_j$,
$x\in Q$, and $y\in R$,
\begin{equation}\label{23y}
\left\|A_QA_R^{-1}\right\|
\lesssim\left(1+2^j|x-y|\right)^\Delta,
\end{equation}
where the implicit positive constant is independent of $j$, $Q$, $R$, $x$, and $y$.
\end{remark}

\subsection{Examples Related to $A_p$-Dimensions}

In this subsection, we explore some further properties of the new concept of
$A_p$-dimensions and provide some illustrating examples,
including ones that prove the sharpness of Lemma \ref{22 precise}.
This subsection is not strictly needed for the subsequent applications of
this concept to the real-variable theory of matrix weighted function spaces
and may be skipped by a reader who prefers to proceed to
the title topic of this article without further delay.

For any matrix weight $W$ and any $p\in(0,\infty)$,
one hopes the $A_p$-dimension of $W$ to be as small as possible.
The following conclusion gives the critical point of the $A_p$-dimension.

\begin{proposition}\label{critical point}
Let $p\in(0,\infty)$, $W\in A_p$, and
\begin{equation}\label{dW}
d_p(W):=\limsup_{i\to\infty}\frac{1}{i}\log_2a_i,
\end{equation}
where, for any $i\in\mathbb{Z}_+$,
$$
a_i:=\begin{cases}
\displaystyle\sup_{\mathrm{cube}\,Q}\mathop{\mathrm{\,ess\,sup\,}}_{y\in2^iQ}\fint_Q
\left\|W^{\frac{1}{p}}(x)W^{-\frac{1}{p}}(y)\right\|^p\,dx
&\text{if }p\in(0,1],\\
\displaystyle\sup_{\mathrm{cube}\,Q}\fint_Q\left[\fint_{2^iQ}
\left\|W^{\frac{1}{p}}(x)W^{-\frac{1}{p}}(y)\right\|^{p'}\,dy\right]^{\frac{p}{p'}}\,dx
&\text{if }p\in(1,\infty)
\end{cases}
$$
with $\frac{1}{p}+\frac{1}{p'}=1$ when $p\in(1,\infty)$.
Then $d_p(W)\in[0,n)$ is the critical point,
that is, for any $\varepsilon\in(0,\infty)$,
$d_p(W)+\varepsilon$ is an $A_p$-dimension of $W$,
but $d_p(W)-\varepsilon$ is not.
\end{proposition}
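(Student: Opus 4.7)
The plan is to reduce the proposition to an assertion about the growth of the sequence $\{a_i\}_{i\in\mathbb Z_+}$. Indeed, by Definition~\ref{Ap dim}, $d$ is an $A_p$-dimension of $W$ precisely when there exists $C>0$ such that $a_i\le C\cdot 2^{id}$ for every $i\in\mathbb Z_+$, and with this reformulation the three assertions to verify are: (a)~for every $\varepsilon>0$, $d_p(W)+\varepsilon$ admits such a $C$; (b)~$d_p(W)-\varepsilon$ does not; and (c)~$d_p(W)\in[0,n)$.

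For (a), I would first invoke Proposition~\ref{180} to select some $d_0\in[0,n)$ for which $W$ has $A_p$-dimension $d_0$; this guarantees $a_i\le C_0\cdot 2^{id_0}<\infty$ for every $i\in\mathbb Z_+$, so each $a_i$ is at least finite. By the definition of $\limsup$, given $\varepsilon>0$ there is $N\in\mathbb N$ with $\frac{1}{i}\log_2 a_i<d_p(W)+\varepsilon$, and hence $a_i\le 2^{i(d_p(W)+\varepsilon)}$, for every $i\ge N$. For the finitely many remaining indices $i\in\{0,\ldots,N-1\}$ the numbers $a_i\cdot 2^{-i(d_p(W)+\varepsilon)}$ form a finite set, and taking its maximum together with the tail bound yields a uniform constant $C$ with $a_i\le C\cdot 2^{i(d_p(W)+\varepsilon)}$ for every $i\in\mathbb Z_+$, confirming that $d_p(W)+\varepsilon$ is an $A_p$-dimension of $W$.

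For (b), if $d_p(W)-\varepsilon$ were an $A_p$-dimension for some $\varepsilon>0$, then $a_i\le C\cdot 2^{i(d_p(W)-\varepsilon)}$ for all $i$ would give $\frac{1}{i}\log_2 a_i\le\frac{\log_2 C}{i}+d_p(W)-\varepsilon$ for $i\ge 1$, and taking $\limsup_{i\to\infty}$ would force $d_p(W)\le d_p(W)-\varepsilon$, a contradiction. For (c), the upper bound $d_p(W)<n$ follows by running the same $\limsup$ argument on $a_i\le C_0\cdot 2^{id_0}$ from Proposition~\ref{180}, while the lower bound $d_p(W)\ge 0$ follows by combining (a) with Proposition~\ref{Ap dim prop}(i): each $d_p(W)+\varepsilon$ is an $A_p$-dimension and hence non-negative, so $d_p(W)\ge -\varepsilon$ for every $\varepsilon>0$.

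The only genuine subtlety lies in step~(a), where the $\limsup$ controls $a_i$ only for $i$ sufficiently large and one must separately absorb the finitely many small indices into the constant $C$; the a priori finiteness of each $a_i$ supplied by Proposition~\ref{180} is exactly what makes this possible, so I do not expect any serious obstacle beyond careful bookkeeping.
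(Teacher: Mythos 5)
Your proposal is correct and follows essentially the same route as the paper's proof: use Proposition \ref{180} to get $a_i\leq C_0 2^{id_0}$ (hence finiteness of each $a_i$ and the bound $d_p(W)\leq d_0<n$), absorb the finitely many initial indices into the constant to show $d_p(W)+\varepsilon$ is an $A_p$-dimension, and derive a contradiction from the $\limsup$ definition to rule out $d_p(W)-\varepsilon$. The only cosmetic differences are that the paper obtains the lower bound $d_p(W)\geq 0$ directly from the uniform lower bound $a_i\geq C_1>0$ (which is precisely the content of the displays used to prove Proposition \ref{Ap dim prop}(i), so your appeal to that proposition is equivalent), and your step (b) takes the $\limsup$ of the assumed bound directly rather than extracting a subsequence with $\varepsilon/2$ as the paper does.
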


\begin{proof}
We first show $d_p(W)\in[0,n)$.
By Proposition \ref{180}, we find that there exists $d\in[0,n)$ such that $W$ has the $A_p$-dimension $d$.
This, together with both \eqref{dim p<1} and \eqref{dim p>1},
further implies that there exist two positive constants $C_1$ and $C_2$ such that,
for any $i\in\mathbb{Z}_+$,
$C_1\leq a_i\leq C_22^{id}$
and hence
$$
0
=\limsup_{i\to\infty}\frac{1}{i}\log_2C_1
\leq d_p(W)
\leq\limsup_{i\to\infty}\left(d+\frac{1}{i}\log_2C_2\right)
=d
<n,
$$
that is, $d_p(W)\in[0,n)$.

Next, we prove that, for any $\varepsilon\in(0,\infty)$,
$W$ has the $A_p$-dimension $d_p(W)+\varepsilon$.
From the definition of $d_p(W)$, we deduce that,
for any $\varepsilon\in(0,\infty)$,
there exists $N\in\mathbb{Z}_+$ such that,
for any $i\in\mathbb{N}$ with $i>N$,
$\frac{1}{i}\log_2a_i<d_p(W)+\varepsilon$
and hence $
a_i<2^{i[d_p(W)+\varepsilon]}.$
Thus, for any $i\in\mathbb{Z}_+$,
$$a_i\leq\max\left\{a_0,a_12^{-[d_p(W)+\varepsilon]},
\ldots,a_N2^{-N[d_p(W)+\varepsilon]},1\right\}2^{i[d_p(W)+\varepsilon]}.$$
Therefore, $W$ has the $A_p$-dimension $d_p(W)+\varepsilon$.

Finally, we show that, for any $\varepsilon\in(0,\infty)$,
$d_p(W)-\varepsilon$ is not the $A_p$-dimension of $W$ by contradiction.
If $W$ has the $A_p$-dimension $d_p(W)-\varepsilon$,
then there exists a positive constant $C$ such that, for any $i\in\mathbb{N}$,
\begin{equation}\label{132}
a_i\leq C2^{i[d_p(W)-\varepsilon]}.
\end{equation}
By the definition of $d_p(W)$, we conclude that,
for any $\varepsilon\in(0,\infty)$ and $N\in\mathbb{N}$,
there exists $i\in\mathbb{N}$ with $i>N$ such that
$\frac{1}{i}\log_2a_i>d_p(W)-\frac{\varepsilon}{2}$
and hence $a_i>2^{i[d_p(W)-\frac{\varepsilon}{2}]}$ which contradicts \eqref{132}.
Therefore, $d_p(W)-\varepsilon$ is not the $A_p$-dimension of $W$.
This finishes the proof of Proposition \ref{critical point}.
\end{proof}

\begin{remark}\label{251}
The definition of $d_p(W)$ could have been equivalently given with
cube $Q$ replaced by ball $B\subset\mathbb{R}^n$; we omit the details.
Moreover, Proposition \ref{critical point} proves that
$$
d_p(W)=\inf\left\{d\in\mathbb{R}:\ W\in\mathbb{D}_{p,d}\right\}.
$$
\end{remark}

A natural question is whether or not $d_p(W)$ is the $A_p$-dimension of $W$.
We will give examples to show that either of
$W$ having $A_p$-dimension $d_p(W)$ or not are possible.
To this end, corresponding to Definition \ref{Ap dim},
we introduce the concept of $A_p$-dimensions of scalar weights.

\begin{definition}
Let $p\in[1,\infty)$ and $d\in[0,\infty)$.
A scalar weight $w$ is said to have the \emph{$A_p$-dimension $d$},
denoted by $w\in\mathbb{D}_{p,d}(\mathbb{R}^n)$,
if there exists a positive constant $C$ such that,
for any cube $Q$ and any $i\in\mathbb{Z}_+$, when $p=1$,
\begin{equation*}
\fint_Qw(x)\,dx\left\|w^{-1}\right\|_{L^\infty(2^iQ)}
\leq C2^{id}
\end{equation*}
or, when $p\in(1,\infty)$,
\begin{equation*}
\fint_Qw(x)\,dx\left\{\fint_{2^iQ}[w(x)]^{-\frac{p'}{p}}\,dx\right\}^{\frac{p}{p'}}\,dx
\leq C2^{id},
\end{equation*}
where $\frac{1}{p}+\frac{1}{p'}=1$.
\end{definition}

By the case $m=1$ of both Proposition \ref{Ap dim rem} and \eqref{equ_reduce},
we directly obtain the following conclusion; we omit the details.

\begin{proposition}
Let $p\in[1,\infty)$, $w\in A_p(\mathbb{R}^n)$, and $d\in[0,\infty)$.
Then $w$ has the $A_p$-dimension $d$ if and only if
there exists a positive constant $C$ such that,
for any cube $Q\subset\mathbb{R}^n$ and any $i\in\mathbb{Z}_+$,
$$
\frac{w(Q)}{w(2^iQ)}\leq C2^{i(d-n)}.
$$
\end{proposition}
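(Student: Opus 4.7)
The plan is to specialize Proposition \ref{Ap dim rem} and the defining relation \eqref{equ_reduce} of reducing operators to the scalar case $m=1$, and then translate the resulting bound on the ratio of reducing operators into the stated bound on $w(Q)/w(2^iQ)$. There is essentially no technical obstacle; the whole argument is a careful translation of constants.

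First, I would unpack what \eqref{equ_reduce} says when $m=1$. For any cube $E\subset\mathbb{R}^n$ of positive finite measure, a reducing operator of order $p$ for the scalar weight $w$ is a positive scalar $A_E$ satisfying
$$
A_E|z|\sim\left[\fint_E|w^{1/p}(x)z|^p\,dx\right]^{1/p}=\left[\frac{w(E)}{|E|}\right]^{1/p}|z|
$$
for every $z\in\mathbb C$, since $|w^{1/p}(x)z|^p=w(x)|z|^p$. So I may take $A_E:=[w(E)/|E|]^{1/p}$, and in particular $A_E^p\sim w(E)/|E|$ with equivalence constants depending only on $p$.

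Second, I would invoke Proposition \ref{Ap dim rem} in the case $m=1$: the scalar weight $w\in A_p$ has $A_p$-dimension $d$ if and only if there exists a positive constant $C$ such that, for every cube $Q\subset\mathbb{R}^n$ and every $i\in\mathbb Z_+$,
$$
\left|A_QA_{2^iQ}^{-1}\right|^p\leq C\,2^{id}.
$$
This invocation requires that scalar $A_p$ weights in the sense of Definition \ref{def Ap} coincide with matrix $A_p$ weights when $m=1$, which follows by a direct computation from the two definitions, using the identity $p/p'=p-1$ for $p\in(1,\infty)$ and the fact that $\|w(x)w^{-1}(y)\|=w(x)w^{-1}(y)$ in the scalar case.

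Third, I would substitute the explicit scalar reducing operator above into this inequality to get
$$
\left|A_QA_{2^iQ}^{-1}\right|^p=\frac{A_Q^p}{A_{2^iQ}^p}\sim\frac{w(Q)/|Q|}{w(2^iQ)/|2^iQ|}=\frac{w(Q)}{w(2^iQ)}\cdot 2^{in}.
$$
Rearranging, $|A_QA_{2^iQ}^{-1}|^p\lesssim 2^{id}$ is equivalent to $w(Q)/w(2^iQ)\lesssim 2^{i(d-n)}$, which is exactly the claimed inequality. The only care to be taken is to absorb the (universal) equivalence constants from \eqref{equ_reduce} and Proposition \ref{Ap dim rem} into the constant $C$ appearing in both directions, so that the "only if" and "if" directions both go through with constants depending only on $p$ and on the corresponding constant of the other condition.
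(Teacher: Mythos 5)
Your proposal is correct and is exactly the argument the paper has in mind: the paper explicitly states that this proposition follows from the case $m=1$ of Proposition \ref{Ap dim rem} together with \eqref{equ_reduce}, omitting the details, and your computation $A_E\sim[w(E)/|E|]^{1/p}$ followed by $|A_QA_{2^iQ}^{-1}|^p\sim 2^{in}w(Q)/w(2^iQ)$ is precisely the omitted translation. The only point worth making explicit (which you gesture at) is that the scalar $A_p$-dimension of Definition 2.29 coincides with the matrix $A_p$-dimension of $W=w$ when $m=1$ and $p\in[1,\infty)$, which is a direct comparison of the two definitions.
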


Using Proposition \ref{critical point} with $m=1$,
we immediately have the following conclusion; we omit the details.
\begin{proposition}
Let $p\in[1,\infty)$, $w\in A_p(\mathbb{R}^n)$, and
\begin{equation}\label{dw}
d_p(w):=\limsup_{i\to\infty}\frac{1}{i}\log_2a_i,
\end{equation}
where, for any $i\in\mathbb{Z}_+$,
$$
a_i:=\left\{\begin{aligned}
&\sup_{\mathrm{cube}\,Q}\fint_Qw(x)\,dx\left\|w^{-1}\right\|_{L^\infty(2^i Q)}
&&\text{if }p=1,\\
&\sup_{\mathrm{cube}\,Q}\fint_Qw(x)\,dx
\left\{\fint_{2^iQ}[w(x)]^{-\frac{p'}{p}}\,dx\right\}^{\frac{p}{p'}}\,dx
&&\text{if }p\in(1,\infty)
\end{aligned}\right.
$$
with $\frac{1}{p}+\frac{1}{p'}=1$ when $p\in(1,\infty)$.
Then $d_p(w)\in[0,n)$ is a critical point,
that is, for any $\varepsilon\in(0,\infty)$,
$d_p(w)+\varepsilon$ is the $A_p$-dimension of $w$
but $d_p(w)-\varepsilon$ is not.
\end{proposition}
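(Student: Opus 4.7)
The plan is to deduce the scalar proposition as a direct specialization of Proposition \ref{critical point} to the case $m=1$, as already hinted at by the author. I would begin by identifying a scalar weight $w$ with the $1\times 1$ matrix weight $W:=[w]$, so that the operator norm reduces to absolute value and
$$
\left\|W^{\frac{1}{p}}(x)W^{-\frac{1}{p}}(y)\right\|^{p}
=\frac{w(x)}{w(y)},\qquad
\left\|W^{\frac{1}{p}}(x)W^{-\frac{1}{p}}(y)\right\|^{p'}
=\left[\frac{w(x)}{w(y)}\right]^{\frac{p'}{p}}.
$$
The first step is to verify, via this identification, that the two sequences $\{a_i\}_{i\in\mathbb{Z}_+}$ in \eqref{dw} and \eqref{dW} coincide in the relevant regimes. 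For $p=1$, the ess-sup in $y\in 2^iQ$ in \eqref{dW} factors out of the average in $x\in Q$ (since the integrand splits as $w(x)\,[w(y)]^{-1}$), yielding exactly the scalar quantity $\fint_Q w\,\|w^{-1}\|_{L^\infty(2^iQ)}$; this also covers the compatibility between the matrix $A_p$ definition on the range $p\in(0,1]$ and the scalar $A_1$ condition. For $p\in(1,\infty)$, pulling $[w(x)]^{p'/p}$ out of the inner integral and raising to the power $p/p'$ produces the factor $w(x)$, and the remaining inner expression is precisely $\{\fint_{2^iQ}[w(y)]^{-p'/p}\,dy\}^{p/p'}$, matching the scalar formula.

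With this equality of the sequences $\{a_i\}$, we obtain $d_p(w)=d_p(W)$ by the definitions \eqref{dw} and \eqref{dW}. The hypothesis $w\in A_p(\mathbb{R}^n)$ translates to $W\in A_p(\mathbb{R}^n,\mathbb{C}^1)$ (again using the $p=1$ boundary identification when needed), so Proposition \ref{critical point} applies to $W$. Transporting its three conclusions back to $w$ yields exactly the claims: $d_p(w)\in[0,n)$; for every $\varepsilon\in(0,\infty)$, $d_p(w)+\varepsilon$ is an $A_p$-dimension of $w$; and $d_p(w)-\varepsilon$ is not.

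There is no substantive obstacle, since Proposition \ref{critical point} was proved in full generality for arbitrary $m\in\mathbb{N}$; the only item requiring mild care is the bookkeeping between the two parameter ranges $p\in(0,1]$ (matrix) and $p=1$ (scalar), which is settled by the factorization computation described above. Consequently the proof is a one-line specialization plus a short verification that the defining quantities agree under the identification $m=1$, justifying the author's remark that the details may be omitted.
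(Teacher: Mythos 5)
Your proposal is correct and is exactly the route the paper takes: the paper states that the scalar result follows "using Proposition \ref{critical point} with $m=1$" and omits the details, which are precisely the factorization computations you carry out (splitting $\|W^{1/p}(x)W^{-1/p}(y)\|$ into $w(x)^{1/p}w(y)^{-1/p}$ so that the two sequences $\{a_i\}$ in \eqref{dw} and \eqref{dW} coincide, including the $p=1$ versus $p\in(0,1]$ bookkeeping). No issues.
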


The following lemma gives the relation between scalar and matrix weights,
which follows immediately from their definitions; we omit the details.

\begin{lemma}\label{w vs W}
Let $p\in(0,\infty)$ and $d\in[0,\infty)$.
Let $w$ be a scalar weight and $W:=wI_m$,
where $I_m$ is the same as in \eqref{Im}. Then
\begin{enumerate}[\rm(i)]
\item $W\in A_p(\mathbb{R}^n,\mathbb{C}^m)$ if and only if
$w\in A_{\max\{1,p\}}(\mathbb{R}^n)$;
\item $W\in\mathbb{D}_{p,d}(\mathbb{R}^n,\mathbb{C}^m)$ if and only if $w\in\mathbb{D}_{\max\{1,p\},d}(\mathbb{R}^n)$.
\end{enumerate}
\end{lemma}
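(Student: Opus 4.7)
The plan is to exploit the fact that when $W=wI_m$, powers of $W$ are diagonal scalar multiples of the identity: $W^{1/p}(x)=w^{1/p}(x)I_m$ and $W^{-1/p}(y)=w^{-1/p}(y)I_m$, so
\[
W^{\frac1p}(x)W^{-\frac1p}(y)=\left[\frac{w(x)}{w(y)}\right]^{\frac1p}I_m,
\]
and hence $\|W^{1/p}(x)W^{-1/p}(y)\|=[w(x)/w(y)]^{1/p}$, with the matrix norm from \eqref{n1} collapsing to the absolute value. This is the only nontrivial computation; both parts of the lemma reduce to substituting this identity into the relevant definitions.

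For (i), substitute the above into the definition of $[W]_{A_p}$. When $p\in(0,1]$, raising to the $p$-th power gives $w(x)/w(y)$, and
\[
[W]_{A_p}
=\sup_Q\mathop{\mathrm{\,ess\,sup\,}}_{y\in Q}\fint_Q\frac{w(x)}{w(y)}\,dx
=\sup_Q\fint_Qw(x)\,dx\;\|w^{-1}\|_{L^\infty(Q)}=[w]_{A_1},
\]
so $W\in A_p$ iff $w\in A_1=A_{\max\{1,p\}}$. When $p\in(1,\infty)$, the same substitution gives $\|W^{1/p}(x)W^{-1/p}(y)\|^{p'}=w(x)^{p'/p}w(y)^{-p'/p}$, and pulling the $w(x)^{p'/p}$ through the inner integral and the subsequent $(\cdot)^{p/p'}$ yields exactly $[w]_{A_p}$; hence $W\in A_p$ iff $w\in A_p=A_{\max\{1,p\}}$.

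For (ii), apply the same identity inside the integrals appearing in Definition \ref{Ap dim}. When $p\in(0,1]$ the essential supremum over $y\in 2^iQ$ pulls $w(y)^{-1}$ out and produces $\fint_Qw(x)\,dx\cdot\|w^{-1}\|_{L^\infty(2^iQ)}$, matching the $A_1$-dimension condition. When $p\in(1,\infty)$ a direct substitution gives $\fint_Qw(x)\,dx\,[\fint_{2^iQ}w(y)^{-p'/p}\,dy]^{p/p'}$, matching the $A_p$-dimension condition for $w$. In both cases, the boundedness by $C2^{id}$ is transferred verbatim between the matrix and scalar sides, so $W\in\mathbb{D}_{p,d}(\mathbb{R}^n,\mathbb{C}^m)$ iff $w\in\mathbb{D}_{\max\{1,p\},d}(\mathbb{R}^n)$.

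There is no real obstacle here; the only point that deserves any comment is the appearance of $\max\{1,p\}$ in the scalar condition, which arises because the scalar $A_p$-class is defined only for $p\in[1,\infty)$ whereas the matrix condition is nontrivial for all $p\in(0,\infty)$, and because for $p\in(0,1]$ the matrix definition features an essential supremum that on the scalar side matches precisely the $A_1$-condition. Given that the entire argument is a line-by-line substitution, I agree with the authors that the details may be safely omitted.
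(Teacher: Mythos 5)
Your proof is correct and is exactly the line-by-line substitution the authors had in mind when they wrote that the lemma ``follows immediately from their definitions''; the key identity $\|W^{\frac1p}(x)W^{-\frac1p}(y)\|=[w(x)/w(y)]^{\frac1p}$ reduces both parts to the scalar definitions verbatim. The paper omits the proof entirely, so there is nothing to compare beyond noting that your computations (including the treatment of the essential supremum in the case $p\in(0,1]$ and the factor $\max\{1,p\}$) fill in the omitted details correctly.
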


We are now in a position to give an example of scalar weights
to indicate the attainability of the critical $A_p$-dimensions.

\begin{lemma}\label{exa}
For any $a,b\in\mathbb{R}$, let scalar weight
$w_{a,b}(x):=|x|^a[\log(2+|x|)]^b$ for any $x\in\mathbb{R}^n$.
\begin{enumerate}[{\rm(i)}]
\item For any $a\in(-n,0]$ and $b\in(-\infty,0]$,
one has $w_{a,b}\in A_1(\mathbb{R}^n)$ and $d_1(w_{a,b})=-a$,
where $d_1(w_{a,b})$ is the same as in \eqref{dw};
moreover, $d_1(w_{a,b})$ is the $A_1$-dimension of $w_{a,b}$ if and only if $b=0$.
\item For any $p\in(1,\infty)$, $a\in(-n,n(p-1))$, and $b\in\mathbb{R}$, one has
$w_{a,b}\in A_p(\mathbb{R}^n)$ and $d_p(w_{a,b})=a_-$,
where $d_p(w_{a,b})$ is the same as in \eqref{dw};
moreover, $d_p(w_{a,b})$ is the $A_p$-dimension of $w_{a,b}$
if and only if $a\in(0,n(p-1))$ or $b\in[0,\infty)$.
\end{enumerate}
\end{lemma}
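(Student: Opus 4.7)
The plan is to compute $d_p(w_{a,b})$ and its characterization as an $A_p$-dimension by reducing to a direct calculation on balls $B(0,r)$ centered at the origin. First I would observe that when $|c_Q|\gg\ell(Q)$ the radial weight $w_{a,b}$ is essentially constant on $Q$ (and on $2^iQ$ as long as $2^i\ell(Q)\ll|c_Q|$), so the corresponding $A_p$-product is uniformly bounded; cubes $Q$ with $|c_Q|\lesssim\ell(Q)$ are squeezed between two comparable centered balls, and the radial monotonicity of $w_{a,b}$ on each factor (under the stated hypotheses) allows the estimates to be transferred; the intermediate regime is handled by a direct comparison. This simultaneously establishes $A_p$-membership and reduces the computation of $a_i$ in \eqref{dw} to a one-parameter supremum over $r\in(0,\infty)$.

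For $Q=B(0,r)$ a standard dominant-contribution analysis of the radial integrals yields, uniformly for $r>0$,
\[
\fint_{B(0,r)}w_{a,b}(x)\,dx\sim r^a[\log(2+r)]^b,
\]
and, for $a\in(-n,n(p-1))$,
\[
\fint_{B(0,R)}[w_{a,b}(x)]^{-\frac{p'}{p}}\,dx\sim R^{-\frac{ap'}{p}}[\log(2+R)]^{-\frac{bp'}{p}},
\]
while in case (i) the pointwise monotonicity of $w_{a,b}^{-1}$ in $|x|$ (valid under $a,b\leq 0$) gives $\|w_{a,b}^{-1}\|_{L^\infty(B(0,R))}\sim R^{-a}[\log(2+R)]^{-b}$. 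Substituting $R=2^ir$, the product entering the definition of $a_i$ equals, in both cases and up to constants,
\[
2^{-ai}\left[\frac{\log(2+r)}{\log(2+2^ir)}\right]^{b}.
\]

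Next I would analyze the log-ratio $\log(2+r)/\log(2+2^ir)$: as $r$ ranges over $(0,\infty)$ it takes all values in $[c/i,1]$ for some $c\in(0,\infty)$, with the value $\sim 1$ approached as $r\to 0$ or $r\to\infty$ and the value $\sim 1/i$ attained around $r=1$. Raised to the $b$-th power, its supremum is $\sim 1$ if $b\geq 0$ and $\sim i^{|b|}$ if $b<0$. Hence $a_i\sim 2^{-ai}\Lambda(i)$ with $\Lambda(i)$ bounded when $b\geq 0$ and $\Lambda(i)\sim i^{|b|}$ when $b<0$; in either case $\Lambda(i)$ grows at most polynomially, so $d_p(w_{a,b})=\limsup_{i\to\infty}\frac{1}{i}\log_2 a_i=\max(0,-a)=a_-$, giving the first stated equality in both parts.

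For $d_p(w_{a,b})$ to itself be an $A_p$-dimension we need $a_i\leq C2^{i\,d_p(w_{a,b})}$. If $a>0$ (so $d_p(w_{a,b})=0$), the decaying factor $2^{-ai}$ kills any polynomial $\Lambda(i)$ and the bound holds; this handles case (ii) with $a\in(0,n(p-1))$. If $a\leq 0$ then $2^{-ai}=2^{i\,d_p(w_{a,b})}$, and the bound is equivalent to $\Lambda(i)$ being bounded, i.e., to $b\geq 0$; in case (ii) this yields the characterization ``$a\in(0,n(p-1))$ or $b\geq 0$'', and in case (i), where $b\leq 0$ is assumed, this sharpens to $b=0$. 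The main technical obstacle is the first reduction from arbitrary cubes to centered balls uniformly in $i$; once this is in place, everything else reduces to the single-parameter log-ratio calculation sketched above.
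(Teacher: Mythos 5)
Your overall strategy is the same as the paper's (compute the averages of $w_{a,b}$ over balls, reduce the $A_p$-product to the ratio $\bigl(\frac{|x_0|+r}{|x_0|+2^ir}\bigr)^a\bigl[\frac{\log(2+|x_0|+r)}{\log(2+|x_0|+2^ir)}\bigr]^b$, and analyze the supremum), and your one-parameter log-ratio analysis for centered balls is correct. However, there are two genuine gaps. First, your displayed claim $a_i\sim 2^{-ai}\Lambda(i)$ is obtained by taking the supremum over centered balls only, and it is false when $a>0$: it is internally inconsistent with your own conclusion $d_p(w_{a,b})=\max(0,-a)$, since a polynomial $\Lambda$ would force $\limsup_i\frac1i\log_2a_i=-a<0$, whereas $d_p\geq0$ always. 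For $a>0$ the supremum defining $a_i$ is dominated not by centered balls but by balls far from the origin (where the product is bounded above \emph{and below} by constants), so the correct statement is $a_i\sim\max\{2^{-ai}\Lambda(i),\,\Lambda'(i)\}$ with a second, off-center contribution that you drop. The paper avoids this by proving a single two-sided estimate $\fint_{B(x_0,r)}w_{a,b}\sim(|x_0|+r)^a[\log(2+|x_0|+r)]^b$ valid for \emph{all} balls (Lemma \ref{135}), so that the supremum over $(x_0,r)$ is computed globally rather than by a regime decomposition.

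Second, and more seriously, the ``intermediate regime ($\ell(Q)\ll|c_Q|\lesssim 2^i\ell(Q)$) is handled by a direct comparison'' conceals exactly the hard case of the attainability claim, namely $a\in(0,n(p-1))$ and $b<0$. There one must show that $f(|x_0|+r)/f(|x_0|+2^ir)\lesssim1$ uniformly, where $f(t):=t^a[\log(2+t)]^b$; your argument for $a>0$ only observes that $2^{-ai}$ kills $\Lambda(i)$, which again addresses only centered balls. The function $f$ is \emph{not} monotone in general when $a>0$ and $b<0$ (e.g.\ for $a$ small and $|b|$ large it decreases on an intermediate interval), so the needed bound $\sup_{0<s<t}f(s)/f(t)<\infty$ does not follow from any naive comparison; the paper devotes Case 2.2 of its proof to precisely this point, checking $f'>0$ outside a compact range of $t$ and controlling the exceptional range by $\sup f/\inf f$ over compact sets. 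Without this step, the ``if'' direction of the characterization for $a\in(0,n(p-1))$, $b<0$ is unproven. (By contrast, your treatment of part (i) and of the non-attainability direction via $\Lambda(i)\sim i^{|b|}$ at $r\sim1$ matches the paper's counterexample $B(\mathbf{0},1)$ and is fine.)
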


To prove Lemma \ref{exa}, we need the following estimate.

\begin{lemma}\label{135}
Let $a\in(-n,\infty)$, $b\in\mathbb{R}$, and
$w_{a,b}$ be the same as in Lemma \ref{exa}.
Then, for any $x_0\in\mathbb{R}^n$ and $r\in(0,\infty)$,
\begin{equation}\label{ave}
\fint_{B(x_0,r)}w_{a,b}(x)\,dx
\sim(|x_0|+r)^a[\log(2+|x_0|+r)]^b,
\end{equation}
where the positive equivalence constants depend only on $n$, $a$, and $b$.
\end{lemma}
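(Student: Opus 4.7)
The plan is to reduce the $n$-dimensional average in \eqref{ave} to a one-dimensional radial asymptotic, and then split into two geometric cases according to the ratio $|x_0|/r$.

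First I would establish, by classical one-variable asymptotic analysis, that
$$
I(R):=\int_0^R\rho^{a+n-1}[\log(2+\rho)]^b\,d\rho
\sim R^{a+n}[\log(2+R)]^b
$$
for every $R\in(0,\infty)$, with positive equivalence constants depending only on $n$, $a$, and $b$. For $R\in(0,1]$ this is immediate from the fact that the log factor is bounded above and below by positive constants on $[0,1]$, together with $\int_0^R\rho^{a+n-1}\,d\rho=R^{a+n}/(a+n)$, the integral being convergent because $a+n>0$. For $R\geq1$ one applies L'H\^opital's rule to the ratio $I(R)/\{R^{a+n}[\log(2+R)]^b\}$; the derivative of the denominator equals $[(a+n)+b/\log(2+R)]R^{a+n-1}[\log(2+R)]^b$, so the ratio tends to $1/(a+n)>0$ as $R\to\infty$, giving the desired two-sided comparison uniformly in $R\geq1$.

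With the radial asymptotic in hand, I would split into two cases. If $|x_0|>2r$, then $|x_0|+r\sim|x_0|$, and for every $x\in B(x_0,r)$ one has $|x_0|/2<|x|<3|x_0|/2$, whence $w_{a,b}(x)\sim|x_0|^a[\log(2+|x_0|)]^b$ uniformly on $B(x_0,r)$; averaging yields \eqref{ave} immediately. If $|x_0|\leq 2r$, then $|x_0|+r\sim r$, and the target becomes $\fint_{B(x_0,r)}w_{a,b}\sim r^a[\log(2+r)]^b$. For the upper bound, the inclusion $B(x_0,r)\subset B(\mathbf{0},3r)$ combined with the radial asymptotic yields
$$
\int_{B(x_0,r)}w_{a,b}(x)\,dx\leq\int_{B(\mathbf{0},3r)}w_{a,b}(x)\,dx
=n\omega_n\,I(3r)\lesssim r^{a+n}[\log(2+r)]^b,
$$
after which we divide by $|B(x_0,r)|\sim r^n$ to conclude.

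For the matching lower bound when $|x_0|\leq 2r$, I would produce an explicit sub-ball $B(y,r/4)\subset B(x_0,r)$ on which $|x|\sim r$ uniformly, and then use pointwise comparability of $w_{a,b}$. Concretely, set $y:=x_0$ when $|x_0|\geq r/2$, $y:=(r/2)x_0/|x_0|$ when $0<|x_0|<r/2$, and $y:=(r/2)e_1$ when $x_0=\mathbf{0}$; a short geometric check shows $|y-x_0|\leq 3r/4$ (so that $B(y,r/4)\subset B(x_0,r)$) and $r/4\leq|x|\leq 3r$ for every $x\in B(y,r/4)$, whence $w_{a,b}(x)\sim r^a[\log(2+r)]^b$ there and
$$
\int_{B(x_0,r)}w_{a,b}\geq\int_{B(y,r/4)}w_{a,b}\sim r^{a+n}[\log(2+r)]^b.
$$
The main technical obstacle is precisely the one-dimensional asymptotic for arbitrary $b\in\mathbb{R}$: for $b<0$ and large $R$ one cannot pull the log factor outside the integral without losing the correct size, since $[\log(2+R)]^b$ is smaller than a constant, and the L'H\^opital computation above circumvents this cleanly by exploiting $a+n>0$. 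Everything else is a bookkeeping exercise in which all implicit constants are verified to depend only on $n$, $a$, and $b$.
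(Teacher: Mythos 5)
Your proof is correct and follows the same overall architecture as the paper's: reduce to the radial asymptotic $\int_0^R\rho^{a+n-1}[\log(2+\rho)]^b\,d\rho\sim R^{a+n}[\log(2+R)]^b$, then split on whether $|x_0|$ is large or small relative to $r$, handling the far case by pointwise comparability of $w_{a,b}$ on the ball and the near case by an upper bound via $B(x_0,r)\subset B(\mathbf 0,3r)$ and a lower bound via a well-placed sub-ball. The one genuinely different ingredient is your treatment of the radial asymptotic for $b<0$: the paper integrates by parts and then argues that the resulting ratio $F(r)$ is continuous, bounded near $0$ and near $\infty$, hence bounded on $(0,\infty)$; you instead apply L'H\^opital at infinity and invoke continuity and positivity of the ratio on $[1,\infty)$. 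Both arguments are of the same ``limit plus compactness'' flavour and both exploit $a+n>0$; yours is arguably a little cleaner since it treats all $b\in\mathbb R$ at once, whereas the paper separates $b\ge0$ from $b<0$. Your unified sub-ball construction for the lower bound (a single ball $B(y,r/4)$ with $|x|\sim r$ on it) also merges the paper's two sub-cases $|x_0|<\tfrac12 r$ and $|x_0|\ge\tfrac12 r$ into one; the geometry checks out. Two small points to tidy: the derivative of $R^{a+n}[\log(2+R)]^b$ is $\bigl[(a+n)+b\,\tfrac{R}{(2+R)\log(2+R)}\bigr]R^{a+n-1}[\log(2+R)]^b$, not $\bigl[(a+n)+b/\log(2+R)\bigr]R^{a+n-1}[\log(2+R)]^b$ as written (this does not affect the limit), and the step from ``the ratio tends to $1/(a+n)$'' to ``two-sided comparison uniformly on $[1,\infty)$'' deserves the explicit remark that the ratio is continuous and strictly positive, hence bounded above and below on any compact subinterval.
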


\begin{proof}
We first consider the case that $x_0=\mathbf{0}$.
By a change of variables,
we find that, for any $r\in(0,\infty)$,
\begin{equation}\label{134x}
\fint_{B(\mathbf{0},r)}w_{a,b}(x)\,dx
\sim\frac{1}{r^n}\int_0^rt^{a+n-1}[\log(2+t)]^b\,dt.
\end{equation}
We next claim that, for any $r\in(0,\infty)$,
\begin{equation}\label{133}
\int_0^rt^{a+n-1}[\log(2+t)]^b\,dt
\sim r^{a+n}[\log(2+r)]^b,
\end{equation}
where the positive equivalence constants depend only on $n$, $a$, and $b$.
To show this claim, we consider the following two cases on $b$.

\emph{Case 1)} $b\in[0,\infty)$.
In this case, for any $r\in(0,\infty)$,
by the monotonicity of $\log(2+t)$ on $(0,\infty)$, we obtain
\begin{align*}
\int_0^rt^{a+n-1}[\log(2+t)]^b\,dt
\leq[\log(2+r)]^b\int_0^rt^{a+n-1}\,dt
=\frac{1}{a+n}r^{a+n}[\log(2+r)]^b
\end{align*}
and
\begin{align*}
\int_0^rt^{a+n-1}[\log(2+t)]^b\,dt
&\geq\left[\log\left(2+\frac{r}{2}\right)\right]^b\int_{\frac{r}{2}}^rt^{a+n-1}\,dt\\
&=\frac{1-2^{-(a+n)}}{a+n}r^{a+n}\left[\log\left(2+\frac{r}{2}\right)\right]^b\\
&\geq\frac{1-2^{-(a+n)}}{2^b(a+n)}r^{a+n}[\log(2+r)]^b.
\end{align*}
These finish the proof of \eqref{133} in this case.

\emph{Case 2)} $b\in(-\infty,0)$.
In this case, from the monotonicity of $\log(2+t)$ on $(0,\infty)$ again, it follows that,
for any $r\in(0,\infty)$,
\begin{align*}
\int_0^rt^{a+n-1}[\log(2+t)]^b\,dt
&>[\log(2+r)]^b\int_0^rt^{a+n-1}\,dt
=\frac{1}{a+n}r^{a+n}[\log(2+r)]^b.
\end{align*}
On the other hand, using the integration by parts, we obtain, for any $r\in(0,\infty)$,
\begin{align*}
\int_0^rt^{a+n-1}[\log(2+t)]^b\,dt
&=\frac{1}{a+n}r^{a+n}[\log(2+r)]^b\\
&\quad-\frac{b}{a+n}\int_0^rt^{a+n}\frac{[\log(2+t)]^{b-1}}{2+t}\,dt
\end{align*}
and hence
\begin{equation}\label{new add 2}
\int_0^rh(t)t^{a+n-1}[\log(2+t)]^b\,dt
=\frac{1}{a+n}r^{a+n}[\log(2+r)]^b,
\end{equation}
where
$$
h(t):=1+\frac{b}{a+n}\frac{t}{(2+t)\log(2+t)}.
$$
Notice that
$\lim_{t\to0}h(t)=1=\lim_{t\to\infty}h(t),$
which, combined with \eqref{new add 2}, further implies that
there exists a positive constant $\delta$ such that, for any $r\in(0,\delta)$,
$$
\int_0^rt^{a+n-1}[\log(2+t)]^b\,dt
<\frac{2}{a+n}r^{a+n}[\log(2+r)]^b
$$
and that there exists a positive constant $N$ such that,
for any $r\in(N,\infty)$,
\begin{align*}
&\int_0^rt^{a+n-1}[\log(2+t)]^b\,dt\\
&\quad=\int_0^Nt^{a+n-1}[\log(2+t)]^b\,dt+\int_N^r\cdots\\
&\quad<\int_0^Nt^{a+n-1}[\log(2+t)]^b\,dt+\frac{2}{a+n}r^{a+n}[\log(2+r)]^b,
\end{align*}
For any $r\in(0,\infty)$, let
$$
F(r):=\frac{\int_0^rt^{a+n-1}[\log(2+t)]^b\,dt}{r^{a+n}[\log(2+r)]^b}.
$$
Then $F$ is continuous on $(0,\infty)$,
$F(r)<\frac{2}{a+n}\text{ for any }r\in(0,\delta),$
and there exists a positive constant $\widetilde{N}$ such that,
for any $r\in(\widetilde{N},\infty)$,
$F(r)<1+\frac{2}{a+n}.$
Therefore, $F$ is bounded on $(0,\infty)$.
This finishes the proof of \eqref{133} in this case and hence \eqref{133}.

By both \eqref{134x} and \eqref{133},
we conclude that, for any $r\in(0,\infty)$,
\begin{equation}\label{134}
\fint_{B(\mathbf{0},r)}w_{a,b}(x)\,dx
\sim r^a[\log(2+r)]^b.
\end{equation}

Now, we consider the general case.
Let $x_0\in\mathbb{R}^n$ and $r\in(0,\infty)$ be fixed.
If $|x_0|\geq2r$, then, for any $x\in B(x_0,r)$,
$$
\frac13(|x_0|+r)
\leq|x_0|-r
<|x_0|-|x-x_0|
\leq|x|
\leq|x_0|+|x-x_0|
<|x_0|+r,
$$
and hence $|x|\sim|x_0|+r$, so, in this case, \eqref{ave} is obviously true.
Then we only need to consider the case when $|x_0|<2r$.
In this case, $B(x_0,r)\subset B(\mathbf{0},3r)$.
From this and \eqref{134}, we infer that
\begin{align*}
\fint_{B(x_0,r)}w_{a,b}(x)\,dx
&\lesssim\fint_{B(\mathbf{0},3r)}w_{a,b}(x)\,dx
\sim(3r)^a[\log(2+3r)]^b\\
&\sim r^a[\log(2+r)]^b
\sim(|x_0|+r)^a[\log(2+|x_0|+r)]^b.
\end{align*}
On the other hand,
if $|x_0|<\frac12r$, then $B(x_0,r)\supset B(\mathbf{0},\frac12r)$.
This, together with \eqref{134}, further implies that
\begin{align*}
\fint_{B(x_0,r)}w_{a,b}(x)\,dx
&\gtrsim\fint_{B(\mathbf{0},\frac12r)}w_{a,b}(x)\,dx
\sim\left(\frac{r}{2}\right)^a\left[\log\left(2+\frac{r}{2}\right)\right]^b\\
&\sim r^a[\log(2+r)]^b
\sim(|x_0|+r)^a[\log(2+|x_0|+r)]^b.
\end{align*}
If $|x_0|\geq\frac12r=2\frac14r$, then,
for any $x\in B(x_0,\frac14r)$, $|x|\sim|x_0|+r$ and hence
\begin{equation*}
\fint_{B(x_0,r)}w_{a,b}(x)\,dx
\gtrsim\fint_{B(x_0,\frac14r)}w_{a,b}(x)\,dx
\sim(|x_0|+r)^a[\log(2+|x_0|+r)]^b.
\end{equation*}
This finishes the proof of \eqref{ave} in this case and hence Lemma \ref{135}.
\end{proof}

Lemma \ref{135} remains true if we replace balls $B$ therein by cubes $Q$;
we omit the details.

\begin{corollary}\label{135x}
Let $a\in(-n,\infty)$, $b\in\mathbb{R}$, and
$w_{a,b}$ be the same as in Lemma \ref{exa}.
Then, for any cube $Q\subset\mathbb{R}^n$,
$$
\fint_Qw_{a,b}(x)\,dx
\sim[|c_Q|+\ell(Q)]^a\{\log[2+|c_Q|+\ell(Q)]\}^b,
$$
where the positive equivalence constants depend only on $n$, $a$, and $b$.
\end{corollary}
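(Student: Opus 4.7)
The plan is to deduce Corollary \ref{135x} directly from Lemma \ref{135} by sandwiching the cube $Q$ between two concentric balls of comparable radius. Given a cube $Q\subset\mathbb{R}^n$ with center $c_Q$ and edge length $\ell(Q)$, let $B_{\mathrm{in}}:=B(c_Q,\ell(Q)/2)$ be the largest ball inscribed in $Q$ and $B_{\mathrm{out}}:=B(c_Q,\sqrt{n}\,\ell(Q)/2)$ the smallest ball containing $Q$. Then $B_{\mathrm{in}}\subset Q\subset B_{\mathrm{out}}$ and $|B_{\mathrm{in}}|\sim|Q|\sim|B_{\mathrm{out}}|$ with positive equivalence constants depending only on $n$.

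First, I would use the nonnegativity of $w_{a,b}$ together with the set inclusions above to obtain
$$
\int_{B_{\mathrm{in}}}w_{a,b}(x)\,dx\leq\int_Qw_{a,b}(x)\,dx\leq\int_{B_{\mathrm{out}}}w_{a,b}(x)\,dx,
$$
and then divide by $|Q|$, absorbing the ratios $|B_{\mathrm{in}}|/|Q|$ and $|B_{\mathrm{out}}|/|Q|$ (both $\sim 1$) into implicit constants to conclude
$$
\fint_{B_{\mathrm{in}}}w_{a,b}(x)\,dx\lesssim\fint_Qw_{a,b}(x)\,dx\lesssim\fint_{B_{\mathrm{out}}}w_{a,b}(x)\,dx.
$$

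Next, I would apply Lemma \ref{135} to each of $B_{\mathrm{in}}$ and $B_{\mathrm{out}}$ with $x_0=c_Q$ and $r\in\{\ell(Q)/2,\sqrt{n}\,\ell(Q)/2\}$. This gives both outer bounds equivalent to $(|c_Q|+r)^a[\log(2+|c_Q|+r)]^b$. Since $r\sim\ell(Q)$, we have $|c_Q|+r\sim|c_Q|+\ell(Q)$, and monotonicity of the map $t\mapsto\log(2+t)$ on $[0,\infty)$ together with the comparison $t\sim s$ yields $\log(2+t)\sim\log(2+s)$ (because both are bounded below by $\log 2$). Consequently both extremes in the sandwich are equivalent to $[|c_Q|+\ell(Q)]^a\{\log[2+|c_Q|+\ell(Q)]\}^b$, which establishes the claim.

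The argument is essentially geometric with no real obstacle; the only mild point of care is making sure the logarithmic factor is insensitive to the $n$-dependent constants relating the radii to $\ell(Q)$, which is handled by the elementary observation above. All equivalence constants depend only on $n$, $a$, and $b$, as stated.
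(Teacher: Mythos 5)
Your proof is correct and is precisely the standard argument the paper leaves to the reader (the paper merely remarks that Lemma \ref{135} ``remains true'' for cubes and omits the details): sandwiching $Q$ between the inscribed and circumscribed balls, using the nonnegativity of $w_{a,b}$ and the comparability of the volumes, and then applying Lemma \ref{135} to both balls. The one point needing care --- that $t\sim s$ implies $[\log(2+t)]^b\sim[\log(2+s)]^b$ because both logarithms are bounded below by $\log 2$ --- is handled explicitly, so the argument is complete.
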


It is the time for us to prove Lemma \ref{exa}.

\begin{proof}[Proof of Lemma \ref{exa}]
We first show (i).
Let $a\in(-n,0]$ and $b\in(-\infty,0]$.
From \cite[Lemma 2.3(iv)]{fs97}, we deduce that $w_{a,b}\in A_1(\mathbb{R}^n)$.
Next, we calculate $d_1(w_{a,b})$. By Remark \ref{251}, we find that
$$
d_1(w_{a,b})=\limsup_{i\to\infty}\frac{1}{i}\log_2\sup_{\mathrm{ball}\,B}I(B,i),
$$
where
$$
I(B,i):=\fint_B w_{a,b}(x)\,dx
\left\|w_{a,b}^{-1}\right\|_{L^\infty(2^iB)}.
$$
From both Lemma \ref{135} and the definition of $\|\cdot\|_{L^\infty}$, we infer that,
for any $x_0\in\mathbb{R}^n$, $r\in(0,\infty)$, and $i\in\mathbb{Z}_+$,
\begin{equation}\label{136}
I(B(x_0,r),i)\sim\left(\frac{|x_0|+r}{|x_0|+2^ir}\right)^a
\left[\frac{\log(2+|x_0|+r)}{\log(2+|x_0|+2^ir)}\right]^b,
\end{equation}
where the positive equivalence constants depend only on $n$, $a$, and $b$.
Notice that
\begin{align*}
2^{-ia}
&=\sup_{r\in(0,\infty)}
\left(\frac{r}{2^ir}\right)^a
\leq\sup_{x_0\in\mathbb{R}^n,\,r\in(0,\infty)}
\left(\frac{|x_0|+r}{|x_0|+2^ir}\right)^a\\
&\leq\sup_{x_0\in\mathbb{R}^n,\,r\in(0,\infty)}
\left(\frac{|x_0|+r}{2^i|x_0|+2^ir}\right)^a
=2^{-ia},
\end{align*}
and hence
\begin{equation}\label{new add 3}
\sup_{x_0\in\mathbb{R}^n,\,r\in(0,\infty)}
\left(\frac{|x_0|+r}{|x_0|+2^ir}\right)^a
=2^{-ia}.
\end{equation}
Moreover, for any $x_0\in\mathbb{R}^n$, $r\in(0,\infty)$,
and $i\in\mathbb{Z}_+$, we have
$$
\log\left(2+|x_0|+2^ir\right)\geq\log(2+|x_0|+r)
$$
and
\begin{align*}
\log\left(2+|x_0|+2^ir\right)
\leq\log(2+|x_0|+r)^{i+1}
=(i+1)\log(2+|x_0|+r).
\end{align*}
These, combined with both \eqref{136} and \eqref{new add 3},
further imply that, for any $i\in\mathbb Z_+$,
\begin{align}\label{138}
2^{-ia}
&=\sup_{x_0\in\mathbb{R}^n,\,r\in(0,\infty)}
\left(\frac{|x_0|+r}{|x_0|+2^ir}\right)^a
\lesssim\sup_{x_0\in\mathbb{R}^n,\,r\in(0,\infty)} I(B(x_0,r),i)\\
&\lesssim\sup_{x_0\in\mathbb{R}^n,\,r\in(0,\infty)}
\left(\frac{|x_0|+r}{|x_0|+2^ir}\right)^a(i+1)^{-b}
=2^{-ia}(i+1)^{-b},\notag
\end{align}
and hence
$$
d_1\left(w_{a,b}\right)
=\limsup_{i\to\infty}\frac{1}{i}\log_2
\sup_{x_0\in\mathbb{R}^n,\,r\in(0,\infty)}I(B(x_0,r),i)
=-a.
$$

It remains to prove whether or not $d_1(w_{a,b})=-a$
is the $A_1$-dimension of $w_{a,b}$.
To show this, we consider the following two cases on $b$.

\emph{Case 1.1)} $b=0$.
In this case, by both \eqref{136} and $a\in(-n,0]$, we conclude that,
for any $x_0\in\mathbb{R}^n$, $r\in(0,\infty)$, and $i\in\mathbb{Z}_+$,
$$
I(B(x_0,r),i)
\sim\left(\frac{|x_0|+r}{|x_0|+2^ir}\right)^a
=2^{-ia}\left(\frac{|x_0|+r}{2^{-i}|x_0|+r}\right)^a
\leq2^{-ia},
$$
which further implies that $d_1(w_{a,0})=-a$ is the $A_1$-dimension of $w_{a,0}$.

\emph{Case 1.2)} $b\in(-\infty,0)$.
In this case, from \eqref{136}, we deduce that,
for any $i\in\mathbb{Z}_+$,
$$
I(B(\mathbf{0},1),i)
\sim2^{-ia}\left[\frac{\log3}{\log(2+2^i)}\right]^b,
$$
which, together with
$$
\lim_{i\to\infty}\left[\frac{\log3}{\log(2+2^i)}\right]^b=\infty,
$$
further implies that $d_1(w_{a,b})=-a$
is not the $A_1$-dimension of $w_{a,b}$.
This finishes the proof of (i).

Now, we prove (ii).
Let $p\in(1,\infty)$, $a\in(-n,n(p-1))$, and $b\in\mathbb{R}$.
By \cite[Lemma 2.3(v)]{fs97}, we find that $w_{a,b}\in A_p$.
Next, we calculate $d_p(w_{a,b})$.
From Remark \ref{251}, we infer that
$$
d_p(w_{a,b})=\limsup_{i\to\infty}\frac{1}{i}\log_2\sup_{\mathrm{ball}\,B}J(B,i),
$$
where
$$
J(B,i):=\fint_Bw_{a,b}(x)\,dx
\left\{\fint_{2^iB}\left[w_{a,b}(x)\right]^{-\frac{1}{p-1}}\,dx\right\}^{p-1}.
$$
By Lemma \ref{135}, we conclude that,
for any $x_0\in\mathbb{R}^n$, $r\in(0,\infty)$, and $i\in\mathbb{Z}_+$,
\begin{equation}\label{137}
J(B(x_0,r),i)\sim\left(\frac{|x_0|+r}{|x_0|+2^ir}\right)^a
\left[\frac{\log(2+|x_0|+r)}{\log(2+|x_0|+2^ir)}\right]^b,
\end{equation}
where the positive equivalence constants depend only on $n$, $a$, and $b$.
Applying the argument similar to that used in the estimation of \eqref{138},
we obtain, for any $i\in\mathbb Z_+$,
$$
2^{ia_-}(i+1)^{-b_+}
\lesssim\sup_{x_0\in\mathbb{R}^n,\,r\in(0,\infty)}J(B(x_0, r),i)
\lesssim2^{ia_-}(i+1)^{b_-}
$$
and hence
$$
d_p(w_{a,b})
=\limsup_{i\to\infty}\frac{1}{i}\log_2J(B(x_0,r),i)
=a_-.
$$

It remains to show whether or not $d_p(w_{a,b})=a_-$
is the $A_p$-dimension of $w_{a,b}$.
To prove this, we consider the following four cases on both $a$ and $b$.

\emph{Case 2.1)} $a\in(0,n(p-1))$ and $b\in[0,\infty)$.
In this case, by \eqref{137}, we find that,
for any $x_0\in\mathbb{R}^n$, $r\in(0,\infty)$, and $i\in\mathbb{Z}_+$,
$J(B(x_0,r),i)\lesssim1,$
which further implies that $d_p(w_{a,b})=0$
is the $A_p$-dimension of $w_{a,b}$.

\emph{Case 2.2)} $a\in(0,n(p-1))$ and $b\in(-\infty,0)$.
In this case, from \eqref{137}, we deduce that,
for any $x_0\in\mathbb{R}^n$, $r\in(0,\infty)$, and $i\in\mathbb{Z}_+$,
\begin{equation}\label{139}
J(B(x_0,r),i)
\sim\frac{f(|x_0|+r)}{f(|x_0|+2^ir)},
\end{equation}
where $f(t):=t^a[\log(2+t)]^b$ for any $t\in(0,\infty)$.
Notice that, for any $t\in(e^{-\frac{b}{a}},\infty)$,
\begin{align*}
f'(t)
&=\frac{at^a}{2+t}[\log(2+t)]^{b-1}
\left[\frac{2+t}{t}\log(2+t)+\frac{b}{a}\right]\\
&>\frac{at^a}{2+t}[\log(2+t)]^{b-1}
\left(\log t+\frac{b}{a}\right)
>0
\end{align*}
and, for any $t\in(0,-\frac{a}{b})$,
$$
f'(t)>\frac{at^a}{2+t}[\log(2+t)]^{b-1}
\left(\frac{1}{t}+\frac{b}{a}\right)
>0.
$$
Thus, for any $x_0\in\mathbb{R}^n$, $r\in(0,\infty)$,
and $i\in\mathbb{Z}_+$,
if $|x_0|+r>e^{-\frac{b}{a}}$ or $|x_0|+2^ir<-\frac{a}{b}$, then
$$
\frac{f(|x_0|+r)}{f(|x_0|+2^ir)}\leq1;
$$
if $|x_0|+r\leq e^{-\frac{b}{a}}$ and $-\frac{a}{b}\leq|x_0|+2^ir$, then
$$
\frac{f(|x_0|+r)}{f(|x_0|+2^ir)}
\leq\frac{\sup_{t\in(0,e^{-\frac{b}{a}}]}f(t)}{\inf_{t\in[-\frac{a}{b},\infty)}f(t)}
<\infty.
$$
By these and \eqref{139}, we conclude that,
for any $x_0\in\mathbb{R}^n$, $r\in(0,\infty)$, and $i\in\mathbb{Z}_+$,
$J(B(x_0,r),i)\lesssim1,$
which further implies that $d_p(w_{a,b})=0$
is the $A_p$-dimension of $w_{a,b}$.

\emph{Case 2.3)} $a\in(-n,0]$ and $b\in[0,\infty)$.
In this case, using \eqref{137}, $b\in[0,\infty)$, and $a\in(-n,0]$, we obtain,
for any $x_0\in\mathbb{R}^n$, $r\in(0,\infty)$, and $i\in\mathbb{Z}_+$,
$$
J(B(x_0,r),i)
\lesssim\left(\frac{|x_0|+r}{|x_0|+2^ir}\right)^a
=2^{-ia}\left(\frac{|x_0|+r}{2^{-i}|x_0|+r}\right)^a
\leq2^{-ia},
$$
which further implies that $d_p(w_{a,b})=-a$
is the $A_p$-dimension of $w_{a,b}$.

\emph{Case 2.4)} $a\in(-n,0]$ and $b\in(-\infty,0)$.
In this case, from \eqref{137}, we infer that,
for any $i\in\mathbb{Z}_+$,
$$
J(B(\mathbf{0},1),i)
\sim2^{-ia}\left[\frac{\log3}{\log(2+2^i)}\right]^b,
$$
which, combined with
$$
\lim_{i\to\infty}\left[\frac{\log3}{\log(2+2^i)}\right]^b=\infty,
$$
further implies that $d_p(w_{a,b})=-a$
is not the $A_p$-dimension of $w_{a,b}$.
This finishes the proof of (ii) and hence Lemma \ref{exa}.
\end{proof}

The following proposition is now immediately deduced from
Lemmas \ref{w vs W} and \ref{exa}; we omit the details.

\begin{proposition}\label{252}
Let $p\in(0,\infty)$ and $d\in[0,n)$.
Then the following statements hold.
\begin{enumerate}[\rm(i)]
\item There exists $W\in A_p$ having the $A_p$-dimension $d_p(W)=d$,
where $d_p(W)$ is the same as in \eqref{dW}.
\item There exists $W\in A_p$ such that $d_p(W)=d$ but
$d_p(W)$ is not the $A_p$-dimension of $W$.
\end{enumerate}
\end{proposition}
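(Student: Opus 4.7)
The plan is to reduce both statements to the scalar case via Lemma \ref{w vs W} and then use the explicit power-logarithmic examples from Lemma \ref{exa}. By Lemma \ref{w vs W}, if $w$ is a scalar weight and $W := wI_m$, then $W \in A_p(\mathbb{R}^n,\mathbb{C}^m)$ iff $w \in A_{\max\{1,p\}}(\mathbb{R}^n)$, and $W \in \mathbb{D}_{p,d}(\mathbb{R}^n,\mathbb{C}^m)$ iff $w \in \mathbb{D}_{\max\{1,p\},d}(\mathbb{R}^n)$. From the very definition of the critical point in \eqref{dW} and \eqref{dw}, this yields $d_p(W) = d_{\max\{1,p\}}(w)$, and moreover $d_p(W)$ is attained as an $A_p$-dimension of $W$ iff $d_{\max\{1,p\}}(w)$ is attained as an $A_{\max\{1,p\}}$-dimension of $w$. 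Thus everything reduces to exhibiting, for each prescribed $d \in [0, n)$, a scalar weight with the desired critical value and the desired attainment behaviour.

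For part (i), given $d \in [0, n)$, I would take $w := w_{-d, 0}$ (interpreting the case $d = 0$ as $w \equiv 1$). When $p \in (0, 1]$, Lemma \ref{exa}(i) applies with $a = -d \in (-n, 0]$ and $b = 0$, so $d_1(w) = -a = d$ and, since $b = 0$, this value is the $A_1$-dimension of $w$. When $p \in (1, \infty)$ and $d \in (0, n)$, Lemma \ref{exa}(ii) applies with the same $(a,b) = (-d, 0)$, so $d_p(w) = a_- = d$ and attainment follows from the criterion $b \in [0,\infty)$; the case $p > 1$, $d = 0$ is handled directly by $w \equiv 1$. Setting $W := wI_m$ and invoking Lemma \ref{w vs W} gives the required matrix weight.

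For part (ii), given $d \in [0, n)$, I would take $w := w_{-d, -1}$. When $p \in (0, 1]$, Lemma \ref{exa}(i) applies with $a = -d \in (-n, 0]$ and $b = -1 \in (-\infty, 0)$, producing $d_1(w) = d$ that is \emph{not} the $A_1$-dimension (since $b \neq 0$). When $p \in (1, \infty)$, Lemma \ref{exa}(ii) applies with the same $(a, b)$: one has $a_- = d$, and neither $a > 0$ nor $b \geq 0$ holds, so $d_p(w) = d$ fails to be the $A_p$-dimension (this is precisely Case 2.4 in the proof of Lemma \ref{exa}(ii), which already covers the boundary $a = 0$, i.e., $d = 0$). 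Transferring once more through Lemma \ref{w vs W} yields the matrix weight $W = wI_m$ claimed in (ii).

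There is no substantive obstacle: the whole argument is a parameter-matching exercise, ensuring that the chosen pair $(a, b)$ lies in the admissible range of Lemma \ref{exa}, produces the prescribed critical value via the formulas $-a$ (when $p \leq 1$) or $a_-$ (when $p > 1$), and satisfies or violates the stated attainment criterion as desired. The only point demanding mild attention is the endpoint $d = 0$ for $p > 1$ in part (i), which is degenerate under the formula $a = -d$ but is handled trivially by the constant weight $w \equiv 1$.
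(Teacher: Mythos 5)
Your proposal is correct and follows exactly the route the paper intends: the paper states that Proposition \ref{252} is ``immediately deduced from Lemmas \ref{w vs W} and \ref{exa}'' and omits the details, and your argument supplies precisely those details, choosing $w_{-d,0}$ for (i) and $w_{-d,-1}$ for (ii) and transferring via $W=wI_m$. The parameter checks (admissibility of $(a,b)$, the formulas $-a$ and $a_-$ for the critical value, and the attainment criteria) are all handled correctly, including the harmless endpoint $d=0$.
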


Next, we give more properties of $A_p$-dimensions.

\begin{proposition}\label{256}
Let $p\in(0,\infty)$.
\begin{enumerate}[\rm(i)]
\item For any $d_1,d_2\in[0,n)$ with $d_1<d_2$,
$\mathbb{D}_{p,d_1}\subsetneqq\mathbb{D}_{p,d_2}$.
\item For any $d_0\in[0,n)$,
$$
\bigcup_{d\in[0,d_0)}\mathbb{D}_{p,d}
\subsetneqq\bigcup_{d\in[0,n)}\mathbb{D}_{p,d}
=A_p.
$$
\end{enumerate}
\end{proposition}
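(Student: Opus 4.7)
The plan is to derive both strict inclusions from Proposition \ref{critical point} together with the explicit constructions provided by Proposition \ref{252}. The non-strict containment $\mathbb{D}_{p,d_1}\subset\mathbb{D}_{p,d_2}$ in part~(i) is already covered by Proposition \ref{Ap dim prop}(iv), and the identity $\bigcup_{d\in[0,n)}\mathbb{D}_{p,d}=A_p$ in part~(ii) is an immediate consequence of Proposition \ref{Ap dim prop}(ii) (each $\mathbb{D}_{p,d}\subset A_p$ for $d\in[0,n)$) combined with Proposition \ref{180} (every $W\in A_p$ has some $A_p$-dimension in $[0,n)$). Hence the content of the proposition lies entirely in proving the two strictness assertions.

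For the strictness in (i), I would pick any $d\in(d_1,d_2)$ and invoke Proposition \ref{252}(i) to produce $W\in A_p$ with critical value $d_p(W)=d$. On the one hand, since $d_2=d_p(W)+(d_2-d)$ with $d_2-d>0$, Proposition \ref{critical point} ensures that $d_2$ is an $A_p$-dimension of $W$, so $W\in\mathbb{D}_{p,d_2}$. On the other hand, if $W$ belonged to $\mathbb{D}_{p,d_1}$, then by Remark \ref{251} we would have
\begin{equation*}
d_p(W)=\inf\left\{d'\in\mathbb{R}:\ W\in\mathbb{D}_{p,d'}\right\}\leq d_1<d=d_p(W),
\end{equation*}
which is impossible. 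Therefore $W\in\mathbb{D}_{p,d_2}\setminus\mathbb{D}_{p,d_1}$, establishing strictness.

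For the strictness in (ii), the case $d_0=0$ is trivial because the left-hand side is empty while $A_p$ is nonempty (for instance, $I_m\in A_p$ via Lemma \ref{w vs W}). For $d_0\in(0,n)$, I would apply Proposition \ref{252}(i) once more to obtain $W\in A_p$ with $d_p(W)=d_0$. Exactly as in (i), for every $d\in[0,d_0)$ we cannot have $W\in\mathbb{D}_{p,d}$, since this would force $d_p(W)\leq d<d_0=d_p(W)$. Consequently $W\in A_p\setminus\bigcup_{d\in[0,d_0)}\mathbb{D}_{p,d}$.

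The only nontrivial ingredient invoked is Proposition \ref{252}, which guarantees the existence of matrix weights with any prescribed critical $A_p$-dimension in $[0,n)$; once that is available, the rest is a short bookkeeping argument. The main point to be careful about is to use Remark \ref{251} rather than Proposition \ref{critical point} directly when ruling out membership in $\mathbb{D}_{p,d_1}$, since $d_p(W)$ itself need not be an $A_p$-dimension.
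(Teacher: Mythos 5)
Your proof is correct and follows essentially the same route as the paper's: both derive the strict inclusions from the existence results of Proposition \ref{252} combined with the critical-point characterization of $d_p(W)$ (Proposition \ref{critical point} and Remark \ref{251}), and both obtain the equality $\bigcup_{d\in[0,n)}\mathbb{D}_{p,d}=A_p$ from Propositions \ref{Ap dim prop}(ii) and \ref{180}. The only immaterial difference is in (i): you take a witness $W$ with critical value $d_p(W)=d$ at an intermediate $d\in(d_1,d_2)$ and rule out $W\in\mathbb{D}_{p,d_1}$ via the infimum identity of Remark \ref{251}, whereas the paper uses the weight from Proposition \ref{252} with $d_p(W)=d_1$ \emph{not} attained, for which $W\notin\mathbb{D}_{p,d_1}$ is immediate.
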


\begin{proof}
We first show (i). For any $d_1,d_2\in[0,n)$ with $d_1<d_2$,
by Proposition \ref{252}(i), we find that there exists $W\in A_p$ such that $d_p(W)=d_1$,
where $d_p(W)$ is the same as in \eqref{dW},
but $W\notin\mathbb{D}_{p,d_1}$.
However, from Proposition \ref{critical point} and $d_2>d_1$,
we infer that $W\in\mathbb{D}_{p,d_2}$.
These, together with Proposition \ref{Ap dim prop}(iii), further imply that
$\mathbb{D}_{p,d_1}\subsetneqq\mathbb{D}_{p,d_2}$,
which completes the proof of (i).

Now, we prove (ii). By (i), we conclude that
$\bigcup_{d\in[0,d_0)}\mathbb{D}_{p,d}
\subsetneqq\bigcup_{d\in[0,n)}\mathbb{D}_{p,d}.$
From Propositions \ref{Ap dim prop}(ii) and \ref{180}, we deduce that
$\bigcup_{d\in[0,n)}\mathbb{D}_{p,d}=A_p.$
This finishes the proof of (ii) and hence Proposition \ref{256}.
\end{proof}

Next, we show that Lemma \ref{22 precise} is sharp.

\begin{lemma}\label{146}
Let $p\in(0,1]$, $d\in[0,n)$, and $a,b\in\mathbb{R}$.
Suppose that, for any $W\in A_p$ having the $A_p$-dimension $d$,
there exists a positive constant $C$ such that,
for any cubes $Q$ and $R$ of $\mathbb{R}^n$,
\begin{equation}\label{149}
\left\|A_QA_R^{-1}\right\|
\leq C\max\left\{\left[\frac{\ell(R)}{\ell(Q)}\right]^a,1\right\}
\left[1+\frac{|c_Q-c_R|}{\ell(R)\vee\ell(Q)}\right]^b,
\end{equation}
where $\{A_Q\}_{\mathrm{cube}\,Q}$ is a family of
reducing operators of order $p$ for $W$.
Then $a,b\in[\frac{d}{p},\infty)$.
\end{lemma}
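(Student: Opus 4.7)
The plan is to test the assumed inequality \eqref{149} against the simple scalar--matrix weight $W:=w_{-d,0}I_m$, where $w_{-d,0}(x)=|x|^{-d}$ in the notation of Lemma \ref{exa}. By Lemma \ref{exa}(i) applied with parameters $(-d,0)$, the scalar weight $w_{-d,0}$ lies in $A_1$ and has $A_1$-dimension exactly $d$; then Lemma \ref{w vs W} promotes $W$ to a matrix weight in $A_p$ with $A_p$-dimension $d$, so the hypothesis of the lemma applies to this $W$. Since $W$ is a scalar multiple of the identity, its reducing operators of order $p$ simplify to $A_E\sim(\fint_E w_{-d,0})^{1/p}I_m$, and hence
\[
\bigl\|A_Q A_R^{-1}\bigr\|\sim\left[\frac{\fint_Q w_{-d,0}}{\fint_R w_{-d,0}}\right]^{\frac{1}{p}}.
\]
Corollary \ref{135x} evaluates these averages as $\fint_Q w_{-d,0}\sim[|c_Q|+\ell(Q)]^{-d}$, which I would use throughout.

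To extract $a\geq d/p$, I take $Q$ to be the cube of edge length $1$ centered at $\mathbf{0}$ and $R:=2^iQ$ for $i\in\mathbb{N}$. Then $c_Q=c_R=\mathbf{0}$, so the distance factor $[1+|c_Q-c_R|/(\ell(R)\vee\ell(Q))]^b$ collapses to $1$, while $\ell(R)/\ell(Q)=2^i$, and Corollary \ref{135x} gives $\fint_Q w_{-d,0}\sim 1$ and $\fint_R w_{-d,0}\sim 2^{-id}$. Substituting yields $\|A_QA_R^{-1}\|\sim 2^{id/p}$, and \eqref{149} reduces to $2^{id/p}\lesssim\max\{2^{ia},1\}$. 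Letting $i\to\infty$ forces $a\geq d/p$, since otherwise the right-hand side is bounded by a constant independent of $i$ while the left-hand side is unbounded (whenever $d>0$).

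To extract $b\geq d/p$, I keep $Q$ as the unit cube at the origin and take $R$ to be the unit cube centered at $(t,0,\ldots,0)$ with $t\to\infty$. Now $\ell(R)/\ell(Q)=1$ kills the ratio factor (the $\max$ equals $1$), while $|c_Q-c_R|=t$ makes the distance factor $[1+t]^b\sim t^b$ for large $t$. Since $\fint_R w_{-d,0}\sim(t+1)^{-d}\sim t^{-d}$, we obtain $\|A_QA_R^{-1}\|\sim t^{d/p}$, and \eqref{149} becomes $t^{d/p}\lesssim t^b$, forcing $b\geq d/p$ in the limit $t\to\infty$.

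There is no serious obstacle here; both halves reduce to a direct computation via Corollary \ref{135x}, and the asymmetric structure of the right-hand side of \eqref{149}---where the ratio factor and the distance factor can be switched off independently by suitable choices of $Q$ and $R$---is precisely what lets us decouple the constraints on $a$ and on $b$. The only minor wrinkle is the degenerate endpoint $d=0$: then $w_{-d,0}$ is constant so $\|A_QA_R^{-1}\|\equiv 1$ and this weight yields no information, but the needed bound $b\geq 0$ can still be extracted by testing \eqref{149} against a nontrivial bounded weight in $\mathbb{D}_{1,0}$ together with distant equal-size cubes, while $a\geq 0$ is the trivial content of $a\geq d/p$ in that case.
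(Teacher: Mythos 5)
Your proof is correct and follows essentially the same route as the paper's: the authors also test \eqref{149} against $W=w_{-d,0}I_m=|\cdot|^{-d}I_m$, verify via Lemmas \ref{w vs W} and \ref{exa} that it has $A_p$-dimension $d$, compute $\|A_QA_R^{-1}\|^p\sim[(|c_R|+\ell(R))/(|c_Q|+\ell(Q))]^d$ through Corollary \ref{135x}, and decouple the constraints on $a$ and $b$ by taking concentric dilates centered at the origin and then translated unit cubes. The only cosmetic differences are your use of dyadic dilations $2^i$ in place of a continuous parameter $\lambda$ and your separate discussion of the endpoint $d=0$, where in fact the constant weight already forces $b\geq0$ directly (while the constraint on $a$ is degenerate there, in your argument and in the paper's alike).
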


\begin{proof}
Let $W(x):=w_{-d,0}(x)I_m$, where $w_{-d,0}(x):=|x|^{-d}$ is the same as in Lemma \ref{exa}.
By Lemmas \ref{w vs W} and \ref{exa}, we conclude that
$W\in A_p$ has the $A_p$-dimension $d$.
We next claim that, for any $Q\in\mathscr{Q}$ and $\vec z\in\mathbb{C}^m$,
\begin{equation}\label{147}
\left|A_Q\vec z\right|^p
\sim\left[|c_Q|+\ell(Q)\right]^{-d}\left|\vec z\right|^p
\end{equation}
and
\begin{equation}\label{148}
\left|A_Q^{-1}\vec z\right|^p
\sim\left[|c_Q|+\ell(Q)\right]^d\left|\vec z\right|^p.
\end{equation}
Indeed, from \eqref{equ_reduce} and Corollary \ref{135x}, we infer that,
for any $Q\in\mathscr{Q}$ and $\vec z\in\mathbb{C}^m$,
$$
\left|A_Q\vec z\right|^p
\sim\fint_Q\left|W^{\frac{1}{p}}(x)\vec z\right|^p\,dx
=\fint_Qw_{-d,0}(x)\,dx\left|\vec z\right|^p
\sim\left[|c_Q|+\ell(Q)\right]^{-d}\left|\vec z\right|^p.
$$
This finishes the proof of \eqref{147}.
By \eqref{147} with the change of variables $\vec z :=A_Q^{-1}\vec z$, we conclude that
$$
\left|\vec z\right|^p
=\left|A_QA_Q^{-1}\vec z\right|^p
\sim\left[|c_Q|+\ell(Q)\right]^{-d}\left|A_Q^{-1}\vec z\right|^p,
$$
which completes the proof of \eqref{148} and hence the above claim.
By this claim, we find that, for any cubes $Q$ and $R$ of $\mathbb{R}^n$,
\begin{align*}
\left\|A_QA_R^{-1}\right\|^p
&=\sup_{\vec z\in\mathbb{C}^m,\,|\vec z|=1}
\left|A_QA_R^{-1}\vec z\right|^p\\
&\sim\sup_{\vec z\in\mathbb{C}^m,\,|\vec z|=1}
\left[|c_Q|+\ell(Q)\right]^{-d}\left|A_R^{-1}\vec z\right|^p
\sim\left[\frac{|c_R|+\ell(R)}{|c_Q|+\ell(Q)}\right]^d.
\end{align*}
From this and \eqref{149}, we deduce that,
for any cube $Q\subset\mathbb{R}^n$ with $c_Q=\mathbf{0}$
and for any $\lambda\in(1,\infty)$,
$$
\lambda^{\frac{d}{p}}
\sim\left\|A_QA_{\lambda Q}^{-1}\right\|
\lesssim\lambda^a
$$
and hence $a\geq\frac{d}{p}$.
Using the same estimates, we conclude that,
for any cube $Q\subset\mathbb{R}^n$ with $c_Q=\mathbf{0}$ and $\ell(Q)=1$
and for any $x\in\mathbb{R}^n$,
$$
(1+|x|)^{\frac{d}{p}}
\sim\left\|A_QA_{Q+x}^{-1}\right\|
\lesssim(1+|x|)^b
$$
and hence $b\geq\frac{d}{p}$.
This finishes the proof of Lemma \ref{146}.
\end{proof}

Lemma \ref{146} proves that Lemma \ref{22 precise} is sharp when $p\in(0,1]$.
To show that Lemma \ref{146} is still sharp when $p\in(1,\infty)$,
we need the following conclusion which is a simple application of Proposition \ref{dual Ap dim}; we omit the details.

\begin{proposition}\label{dual Ap dim scalar}
Let $p\in[1,\infty)$ and $w\in A_p(\mathbb{R}^n)$.
\begin{enumerate}[{\rm(i)}]
\item If $p=1$, then there exists a positive constant $C$ such that,
for any cube $Q\subset\mathbb{R}^n$ and any $i\in\mathbb{Z}_+$,
\begin{equation*}
\fint_{2^i Q}w(x)\,dx\left\|w^{-1}\right\|_{L^\infty(Q)}
\leq C.
\end{equation*}
\item If $p\in(1,\infty)$ and $d_2\in\mathbb{R}$, then there exists a positive constant $C$ such that,
for any cube $Q\subset\mathbb{R}^n$ and any $i\in\mathbb{Z}_+$,
\begin{equation*}
\fint_{2^iQ}w(x)\,dx\left\{\fint_Q[w(x)]^{-\frac{p'}{p}}\,dx\right\}^{\frac{p}{p'}}
\leq C2^{id_2}
\end{equation*}
if and only if the dual weight $w^{-\frac{1}{p-1}}\in A_{p'}$
has the $A_{p'}$-dimension $\widetilde d=\frac{d_2}{p-1}$.
\end{enumerate}
\end{proposition}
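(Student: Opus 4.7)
The plan is to deduce this proposition by directly specializing Proposition \ref{dual Ap dim} to the scalar case $m=1$, taking $W(x)=w(x)$. The key observation that makes this immediate is that, for $m=1$, the operator norm collapses to a product of powers: for any $s\in(0,\infty)$ and almost every $x,y\in\mathbb{R}^n$,
\begin{equation*}
\left\|w^{\frac{1}{p}}(x)w^{-\frac{1}{p}}(y)\right\|^{s}
=w(x)^{\frac{s}{p}}w(y)^{-\frac{s}{p}},
\end{equation*}
so the joint integrals appearing in Proposition \ref{dual Ap dim} factor completely as products of one-variable integrals, separating the dependence on $x$ and $y$.

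For part (i), I would apply Proposition \ref{dual Ap dim}(i) with $p=1$ and $W=w$, noting that $w\in A_1(\mathbb{R}^n)$ is the same as $W\in A_1(\mathbb{R}^n,\mathbb{C})$ by direct comparison of Definitions \ref{def Ap} and the matrix $A_p$ definition in the case $m=1$. The integrand $\|w(x)w^{-1}(y)\|=w(x)/w(y)$ separates, and the essential supremum over $y\in Q$ of $1/w(y)$ equals $\|w^{-1}\|_{L^\infty(Q)}$, so the left-hand side of Proposition \ref{dual Ap dim}(i) becomes exactly $\fint_{2^{i}Q}w(x)\,dx\cdot\|w^{-1}\|_{L^\infty(Q)}$, giving the claim with the same constant.

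For part (ii), I would similarly apply Proposition \ref{dual Ap dim}(ii) with $W=w$ and $\widetilde W=w^{-\frac{1}{p-1}}$. Because $w(x)$ does not depend on $y$, the inner integral factors as
\begin{equation*}
\left[\fint_{Q}\left\|w^{\frac{1}{p}}(x)w^{-\frac{1}{p}}(y)\right\|^{p'}\,dy\right]^{\frac{p}{p'}}
=w(x)\left[\fint_{Q}w(y)^{-\frac{p'}{p}}\,dy\right]^{\frac{p}{p'}},
\end{equation*}
and then the outer average over $x\in 2^{i}Q$ pulls out the $y$-factor, yielding precisely the left-hand side in (ii). The dual weight $\widetilde W=\widetilde w=w^{-\frac{1}{p-1}}$ lies in $A_{p'}(\mathbb{R}^n,\mathbb{C})$ iff $\widetilde w\in A_{p'}(\mathbb{R}^n)$, and an analogous factorization shows that the $A_{p'}$-dimension of $\widetilde W$ (as matrix weight) coincides with that of $\widetilde w$ (as scalar weight). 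The equivalence between the two sides of the if-and-only-if statement in Proposition \ref{dual Ap dim}(ii) then transfers verbatim.

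There is no genuine obstacle here: every step is either a factorization of $\|w^{1/p}(x)w^{-1/p}(y)\|^s$ into $w(x)^{s/p}w(y)^{-s/p}$, or the verification that matrix $A_p$ reduces to scalar $A_p$ when $m=1$ (which is immediate from the definitions for $p\in[1,\infty)$). This is why the paper is content to omit the details.
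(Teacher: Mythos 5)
Your proposal is correct and is exactly the route the paper intends: the paper states that Proposition \ref{dual Ap dim scalar} is ``a simple application of Proposition \ref{dual Ap dim}'' with details omitted, and your factorization $\left\|w^{\frac1p}(x)w^{-\frac1p}(y)\right\|^{s}=w(x)^{\frac{s}{p}}w(y)^{-\frac{s}{p}}$ for $m=1$, together with Lemma \ref{w vs W} to identify scalar and matrix $A_p$ classes and $A_p$-dimensions, supplies precisely those omitted details.
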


\begin{lemma}\label{146x}
Let $p\in(1,\infty)$, $\frac{1}{p}+\frac{1}{p'}=1$,
$d,\widetilde d\in[0,n)$, and $a,b,c\in\mathbb{R}$.
Suppose that, for any $W\in A_p$ having the $A_p$-dimension $d$
and $\widetilde W:=W^{-\frac 1{p-1}}$ (which belongs to $A_{p'}$)
 having the $A_{p'}$-dimension $\widetilde d$,
there exists a positive constant $C$ such that,
for any cubes $Q$ and $R$ of $\mathbb{R}^n$,
\begin{equation}\label{149x}
\left\|A_QA_R^{-1}\right\|
\leq C\max\left\{\left[\frac{\ell(R)}{\ell(Q)}\right]^a,
\left[\frac{\ell(Q)}{\ell(R)}\right]^b\right\}
\left[1+\frac{|c_Q-c_R|}{\ell(R)\vee\ell(Q)}\right]^c,
\end{equation}
where $\{A_Q\}_{\mathrm{cube}\,Q}$ is a family of
reducing operators of order $p$ for $W$. Then
$a\in[\frac{d}{p},\infty),$ $b\in[\frac{\widetilde d}{p'},\infty),$
and $c\in[\Delta,\infty),$
where $\Delta$ is the same as in \eqref{Delta}.
\end{lemma}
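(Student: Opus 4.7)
The approach is to test the hypothesized bound \eqref{149x} against several carefully chosen matrix weights, extracting the three required lower bounds by varying the configuration of the cubes $Q$ and $R$. Scalar-multiple-of-identity weights, analogous to those used in the proof of Lemma \ref{146}, suffice for $a \geq d/p$ and $b \geq \widetilde{d}/p'$. For the sharp bound $c \geq \Delta$, we need a genuinely matrix-valued weight whose eigendirections rotate as the spatial variable varies, since any scalar or block-diagonal weight yields only $\|A_Q A_R^{-1}\| \sim L^{\max\{d/p,\widetilde{d}/p'\}}$, short of the required $L^\Delta$.

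To obtain $a \geq d/p$, take $W_1(x) := |x|^{-d} I_m$. By Lemma \ref{exa}(ii) and Lemma \ref{w vs W}, $W_1 \in A_p$ has $A_p$-dimension $d$ and $\widetilde{W}_1 = |x|^{d/(p-1)} I_m$ has $A_{p'}$-dimension $0 \leq \widetilde{d}$, so the hypothesis applies with the stated dimensions $(d,\widetilde{d})$. By \eqref{equ_reduce} and Corollary \ref{135x}, the reducing operator is scalar, $A_Q = \alpha(Q) I_m$ with $\alpha(Q)^p \sim [|c_Q|+\ell(Q)]^{-d}$. Choosing $Q$ with $c_Q = \mathbf{0}$ and $R := \lambda Q$ for $\lambda > 1$, we obtain $\|A_Q A_R^{-1}\| \sim \lambda^{d/p}$ while the right side of \eqref{149x} is $\sim \lambda^a$, and letting $\lambda \to \infty$ gives $a \geq d/p$. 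Symmetrically, for $b \geq \widetilde{d}/p'$, take $W_2(x) := |x|^{\widetilde{d}(p-1)} I_m$, which has $A_p$-dimension $0 \leq d$ and dual $A_{p'}$-dimension $\widetilde{d}$; with $c_R = \mathbf{0}$ and $Q := \lambda R$ for $\lambda > 1$, the analogous computation yields $\lambda^{\widetilde{d}/p'} \lesssim \lambda^b$, whence $b \geq \widetilde{d}/p'$.

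For $c \geq \Delta$, a scalar-multiple or block-diagonal weight is insufficient because $A_Q$ and $A_R$ are then simultaneously diagonalizable and $\|A_Q A_R^{-1}\|$ equals the maximum entry-wise ratio of their diagonal entries, yielding only the maximum of $d/p$ and $\widetilde{d}/p'$. Instead, for $m = 2$, consider
$$
W(x) := U(x)^*\operatorname{diag}\left(|x|^{-d},|x|^{\widetilde{d}(p-1)}\right)U(x),
$$
where $U(x)$ is a unitary matrix chosen so that the ``small-eigenvalue direction'' of $W$ near one location coincides with the ``large-eigenvalue direction'' of $W$ near another; for instance, take $U \equiv I_2$ on a region $E_1$ and $U \equiv$ (rotation by $\pi/2$) on a disjoint region $E_2$, with both regions at distance $\sim L$ from the origin. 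For unit cubes $Q \subset E_1$ and $R \subset E_2$ with $|c_Q|, |c_R|, |c_Q - c_R| \sim L$, the rotational swap makes $A_Q$ and $A_R$ diagonal in the standard basis but with the roles of the two diagonal entries interchanged, so that
$$
\|A_Q A_R^{-1}\| \gtrsim |c_Q|^{\widetilde{d}/p'} \cdot |c_R|^{d/p} \sim L^{\Delta}.
$$
Applying \eqref{149x} with $\ell(Q) = \ell(R)$ (so the $\max$ term equals $1$) and letting $L \to \infty$ then forces $c \geq \Delta$.

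The main obstacle lies in Step 3: verifying that the matrix-valued $W$, despite its piecewise rotation, retains $A_p$-dimension exactly $d$ and dual $A_{p'}$-dimension exactly $\widetilde{d}$. Since the eigenvalues of $W(x)$ at each point coincide with those of the diagonal reference weight $\operatorname{diag}(|x|^{-d},|x|^{\widetilde{d}(p-1)})$, and only the eigenvectors rotate, averages over cubes contained in a single region $E_j$ reduce to the already-analyzed diagonal case and produce the correct dimensional growth. Boundary effects arising from the sharp jump in $U(x)$ can be tamed by placing $E_1$ and $E_2$ well apart, or by smoothly interpolating $U(x)$ through a narrow transition strip; alternatively, one may take $U(x)$ to rotate continuously with the spherical angle $x/|x|$, yielding a rotationally symmetric construction whose $A_p$-dimensions follow from the rotational invariance of the integrals defining the $A_p$ condition.
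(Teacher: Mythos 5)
Your tests for $a\ge\frac{d}{p}$ and $b\ge\frac{\widetilde d}{p'}$ are fine and match the paper's in spirit (radial power weights times $I_m$, using that dual dimension $0$ implies dual dimension $\widetilde d$). The gap is in Step 3, and it begins with a false premise: a scalar weight can perfectly well produce $\|A_QA_R^{-1}\|\sim L^{\Delta}$. Your claim that scalars only give $L^{\max\{d/p,\widetilde d/p'\}}$ is true for a single power weight, but not for a weight with a pole at one point and a zero at another. The paper takes $W=wI_m$ with $w(x)=|x|^{-d}|x-x_0|^{(p-1)\widetilde d}$ and $x_0=(1,0,\ldots,0)$; then $W$ has $A_p$-dimension $d$ (driven by the pole at $\mathbf 0$) and $\widetilde W$ has $A_{p'}$-dimension $\widetilde d$ (driven by the zero at $x_0$), and for equal small cubes $Q$ centered at $\mathbf 0$ and $R$ centered at $x_0$ one has $\|A_Q\|\sim[\ell(Q)]^{-d/p}$ and $\|A_R^{-1}\|\sim[\ell(R)]^{-\widetilde d/p'}$, whence $\|A_QA_R^{-1}\|\sim[\ell(Q)]^{-\Delta}$ against a right-hand side of \eqref{149x} of order $[\ell(Q)]^{-c}$; letting $\ell(Q)\to0$ gives $c\ge\Delta$. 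The two exponents add rather than compete in a maximum because they are harvested at two different spatial locations.

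Worse, your substitute construction is not an $A_p$ weight, so the hypothesis of the lemma cannot be applied to it. With $U\equiv I_2$ on $E_1$ and $U$ equal to the rotation by $\pi/2$ on $E_2$, for $x\in E_1$ and $y\in E_2$ one computes $W^{\frac1p}(x)W^{-\frac1p}(y)=\operatorname{diag}\,(|x|^{-\frac{d}{p}}|y|^{-\frac{\widetilde d}{p'}},\,|x|^{\frac{\widetilde d}{p'}}|y|^{\frac{d}{p}})$, whose norm is $\gtrsim L^{\Delta}$ when $|x|\sim|y|\sim L$. A single cube straddling the interface between $E_1$ and $E_2$ at distance $L$ from the origin therefore has its $A_p$-average bounded below by a constant multiple of $L^{\Delta p}$, which is unbounded as $L\to\infty$ unless $d=\widetilde d=0$. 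Placing $E_1$ and $E_2$ ``well apart'' does not help (the interface must lie somewhere), and the smooth or spherically symmetric interpolations you mention would require a genuine quantitative verification of the $A_p$ condition that your sketch does not supply — rotating eigendirections against strongly disparate eigenvalues is exactly the mechanism behind Bownik-type counterexamples, and the $A_p$ condition is not invariant under rotations acting on the matrix structure. The scalar two-singularity weight makes all of this unnecessary.
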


\begin{proof}
Let $x_0:=(1,0,\ldots,0)\in\mathbb{R}^n$.
Let $W:=wI_m$, where, for any $x\in\mathbb{R}^n$,
\begin{align*}
w(x):=w_1(x)w_2(x):=|x|^{-d}|x-x_0|^{(p-1)\widetilde d}.
\end{align*}
We first prove that $w$ has the $A_p$-dimension $d$.
By Lemma \ref{exa}, we find that
$w_1=w_{-d,0}\in A_1(\mathbb{R}^n)$ has the $A_1$-dimension $d$
and $w_2^{-\frac{1}{p-1}}(\cdot)=w_{-\widetilde d,0}(\cdot-x_0)\in A_1(\mathbb{R}^n)$.
From this and Proposition \ref{dual Ap dim scalar}(i),
we infer that, for any cube $Q\subset\mathbb{R}^n$ and any $i\in\mathbb Z_+$,
\begin{equation*}
\fint_{2^iQ}w_2^{-\frac{1}{p-1}}(x)\,dx
\left\|w_2^{\frac{1}{p-1}}\right\|_{L^\infty(Q)}
\lesssim 1,
\end{equation*}
which, combined with $w_1$ having the $A_1$-dimension $d$, further implies that
\begin{align*}
&\fint_Qw(x)\,dx\left\{\fint_{2^iQ}[w(x)]^{-\frac{p'}{p}}\,dx\right\}^{\frac{p}{p'}}\\
&\quad\leq\fint_Qw_1(x)\,dx\left\|w_1^{-1}\right\|_{L^\infty(2^iQ)}
\left[\fint_{2^iQ}w_2^{-\frac{1}{p-1}}(x)\,dx
\left\|w_2^{\frac{1}{p-1}}\right\|_{L^\infty(Q)}\right]^{\frac{p}{p'}}
\lesssim2^{id}
\end{align*}
and hence $w$ has the $A_p$-dimension $d$.
Applying an argument similar to that used in the proof of
$w$ having the $A_p$-dimension $d$, we obtain
$\widetilde w:=w^{-\frac{1}{p-1}}=|x-x_0|^{-\widetilde d}|x|^{\frac{d}{p-1}}$
has the $A_{p'}$-dimension $\widetilde d$.
This, together with $w$ having the $A_p$-dimension $d$ and Lemma \ref{w vs W}(ii),
further implies that $W\in A_p$ has the $A_p$-dimension $d$
and $\widetilde W:=W^{-\frac 1{p-1}}\in A_{p'}$ has the $A_{p'}$-dimension $\widetilde d$.

Now, we estimate $a$.
From Corollary \ref{135x}, we deduce that,
for any cube $Q\subset\mathbb{R}^n$ with $c_Q=\mathbf{0}$ and $\ell(Q)<\frac12$
and for any $M\in M_m(\mathbb{C})$,
\begin{align}\label{revised1}
\|A_QM\|^p
=\fint_Qw(x)\,dx\|M\|^p
\sim\fint_Q|x|^{-d}\,dx\|M\|^p
\sim[\ell(Q)]^{-d}\|M\|^p,
\end{align}
which further implies that
$\|A_Q^{-1}M\|^p\sim[\ell(Q)]^d\|M\|^p.$
By these and \eqref{149x}, we conclude that,
for any cube $Q\subset\mathbb{R}^n$ with $c_Q=\mathbf{0}$ and $\ell(Q)<\frac12$
and for any $\lambda\in(0,1)$,
\begin{align*}
\lambda^{-a}
\gtrsim\left\|A_{\lambda Q}A_Q^{-1}\right\|
\sim[\ell(\lambda Q)]^{-\frac{d}{p}}\left\|A_Q^{-1}\right\|
\sim\lambda^{-\frac{d}{p}}
\end{align*}
and hence $a\geq\frac{d}{p}$.

Next, we estimate $b$. From Lemma \ref{135}, we infer that,
for any cube $Q\subset\mathbb{R}^n$ with $c_Q=x_0$ and $\ell(Q)<\frac12$
and for any $M\in M_m(\mathbb{C})$,
\begin{align*}
\|A_QM\|^p
&=\fint_Qw(x)\,dx\|M\|^p
\sim\fint_Q|x-x_0|^{(p-1)\widetilde d}\,dx\|M\|^p\\
&\sim\fint_{Q-x_0}|x|^{(p-1)\widetilde d}\,dx\|M\|^p
\sim[\ell(Q)]^{(p-1)\widetilde d}\|M\|^p,
\end{align*}
which further implies that
\begin{align}\label{revised2}
\left\|A_Q^{-1}M\right\|^p
\sim[\ell(Q)]^{-(p-1)\widetilde d}\|M\|^p.
\end{align}
By these and \eqref{149x}, we find that,
for any cube $Q\subset\mathbb{R}^n$ with $c_Q=x_0$ and $\ell(Q)<\frac12$
and for any $\lambda\in(0,1)$,
\begin{align*}
\lambda^{-b}
\gtrsim\left\|A_QA_{\lambda Q}^{-1}\right\|
\sim[\ell(Q)]^{\frac{\widetilde d}{p'}}\left\|A_{\lambda Q}^{-1}\right\|
\sim\lambda^{-\frac{\widetilde d}{p'}}
\end{align*}
and hence $b\geq\frac{\widetilde d}{p'}$.

Finally, we estimate $c$.
By \eqref{149x}, \eqref{revised1}, and \eqref{revised2}, we conclude that,
for any cube $Q\subset\mathbb{R}^n$ with $c_Q=\mathbf{0}$ and $\ell(Q)<\frac12$
and for the cube $R\subset\mathbb{R}^n$ with $c_R=x_0$ and $\ell(R)=\ell(Q)$,
\begin{align*}
\left[1+\frac{1}{\ell(Q)}\right]^c
\gtrsim\left\|A_QA_R^{-1}\right\|
\sim[\ell(Q)]^{-\frac{d}{p}}\left\|A_R^{-1}\right\|
\sim\left[\frac{1}{\ell(Q)}\right]^{\frac{d}{p}+\frac{\widetilde d}{p'}}
\end{align*}
and hence $c\geq\frac{d}{p}+\frac{\widetilde d}{p'}=\Delta$.
This finishes the proof of Lemma \ref{146x}.
\end{proof}

\begin{remark}
Lemmas \ref{146} and \ref{146x} show that Lemma \ref{22 precise} is sharp.
\end{remark}

\section{Matrix-Weighted Besov-Type and Triebel--Lizorkin-Type Spaces}
\label{BF type spaces}

In this section, we introduce matrix-weighted Besov-type and Triebel--Lizorkin-type spaces
and obtain their $\varphi$-transform characterization.
When $\tau=0$, our results in this section reduce to
the corresponding ones of Frazier and Roudenko in \cite{fr04,fr21,ro03}.
Let us begin with some concepts.

Let $s\in\mathbb{R}$, $\tau\in[0,\infty)$, and $p,q\in(0,\infty]$.
For any sequence $\{f_j\}_{j\in\mathbb Z}$ of measurable functions on $\mathbb{R}^n$,
any subset $J\subset\mathbb Z$, and any measurable set $E\subset\mathbb{R}^n$, let
\begin{align*}
\|\{f_j\}_{j\in\mathbb Z}\|_{L\dot B_{pq}(E\times J)}
:=\|\{f_j\}_{j\in\mathbb Z}\|_{\ell^qL^p(E\times J)}
:=\|\{f_j\}_{j\in\mathbb Z}\|_{\ell^q(J;L^p(E))}
:=\left[\sum_{j\in J}\|f_j\|_{L^p(E)}^q\right]^{\frac{1}{q}}
\end{align*}
and
\begin{align*}
\|\{f_j\}_{j\in\mathbb Z}\|_{L\dot F_{pq}(E\times J)}
:=\|\{f_j\}_{j\in\mathbb Z}\|_{L^p\ell^q(E\times J)}
:=\|\{f_j\}_{j\in\mathbb Z}\|_{L^p(E;\ell^q(J))}
:=\left\|\left(\sum_{j\in J}|f_j|^q\right)^{\frac{1}{q}}\right\|_{L^p(E)}
\end{align*}
with the usual modification made when $q=\infty$.
For simplicity of the presentation, in what follows, we may drop the domain $E\times J$ from these symbols, when it is the full space $E\times J=\mathbb R^n\times\mathbb Z$.
We use $L\dot A_{pq}\in\{L\dot B_{pq},L\dot F_{pq}\}$ as a generic notation in statements that apply to both types of spaces.

In particular, for any $P\in\mathscr{Q}$, we abbreviate $\widehat{P}:=P\times\{j_P,j_P+1,\ldots\}$ so that
$$
\|\{f_j\}_{j\in\mathbb Z}\|_{L\dot B_{pq}(\widehat{P})}
=\|\{f_j\}_{j\in\mathbb Z}\|_{\ell^qL^p(\widehat{P})}
=\left[\sum_{j=j_P}^\infty\|f_j\|_{L^p(P)}^q\right]^{\frac{1}{q}}
$$
and
$$
\|\{f_j\}_{j\in\mathbb Z}\|_{L\dot F_{pq}(\widehat{P})}
=\|\{f_j\}_{j\in\mathbb Z}\|_{L^p\ell^q(\widehat{P})}
=\left\|\left(\sum_{j=j_P}^\infty|f_j|^q\right)^{\frac{1}{q}}\right\|_{L^p(P)}.
$$
Let us further define
\begin{equation}\label{LApq}
\|\{f_j\}_{j\in\mathbb Z}\|_{L\dot A_{p,q}^\tau}
:=\sup_{P\in\mathscr{Q}}|P|^{-\tau}\|\{f_j\}_{j\in\mathbb Z}\|_{L\dot A_{pq}(\widehat{P})}
\end{equation}
for both choices of $L\dot A_{p,q}^\tau\in\{L\dot B_{p,q}^\tau,L\dot F_{p,q}^\tau\}$.

Moreover, for any $k\in\mathbb{Z}$, let $\{f_j\}_{j\geq k}
:=\{f_j\mathbf{1}_{[k,\infty)}(j)\}_{j\in\mathbb{Z}}$.

Let $\mathcal{S}$ be the space of all Schwartz functions on $\mathbb{R}^n$,
equipped with the well-known topology determined by a countable family of norms,
and let $\mathcal{S}'$ be the set of all continuous linear functionals on $\mathcal{S}$,
equipped with the weak-$*$ topology.
For any $f\in L^1$ and $\xi\in\mathbb{R}^n$, let
$$\widehat{f}(\xi):=\int_{\mathbb{R}^n}f(x)e^{-ix\cdot\xi}\,dx$$
to denote the \emph{Fourier transform} of $f$.
This agrees with the normalisation of the Fourier transform used,
for instance, in \cite[p.\,4]{fjw91} and \cite[p.\,452]{yy10},
and allows us to quote some lemmas from these works directly,
whereas using any other normalisation (such as with $2\pi$ in the exponent)
would also necessitate slight adjustments here and there in several other formulas.
For any $f\in\mathcal S$ and $x\in\mathbb{R}^n$, let
$f^\vee(x):=\widehat{f}(-x)$
to denote the \emph{inverse Fourier transform} of $f$.
It is well known that, for any $f\in\mathcal S$,
$(\widehat{f})^\vee=(f^\vee)^\wedge=f.$
We can also define the \emph{Fourier transform $\widehat{f}$}
and the \emph{inverse Fourier transform $f^\vee$} of any Schwartz distribution $f$ as follows.
For any $f\in\mathcal S'$ and $\varphi\in\mathcal S$, let
$\langle\widehat{f},\varphi\rangle
:=\langle f,\widehat{\varphi}\rangle$
and
$\langle f^\vee,\varphi\rangle
:=\langle f,\varphi^\vee\rangle.$

Let $\varphi,\psi\in\mathcal{S}$ satisfy
\begin{equation}\label{19}
\operatorname{supp}\widehat{\varphi},\operatorname{supp}\widehat{\psi}
\subset\left\{\xi\in\mathbb{R}^n:\ \frac12\leq|\xi|\leq2\right\}
\end{equation}
and
\begin{equation}\label{20}
\left|\widehat\varphi(\xi)\right|,\left|\widehat\psi(\xi)\right|\geq C>0
\text{ if }\xi\in\mathbb{R}^n\text{ with }\frac35\leq|\xi|\leq\frac53,
\end{equation}
where $C$ is a positive constant independent of $\xi$ and
\begin{equation}\label{21}
\sum_{j\in\mathbb{Z}}\overline{\widehat{\varphi}\left(2^j\xi\right)}
\widehat{\psi}\left(2^j\xi\right)=1
\text{ if }\xi\in\mathbb{R}^n\setminus\{\mathbf{0}\}.
\end{equation}

For any complex-valued function $g$ on $\mathbb{R}^n$, let
$\operatorname{supp}g:=\{x\in\mathbb{R}^n:\ g(x)\neq0\}.$
For any $f\in\mathcal{S}'$, let
$$
\operatorname{supp}f:=\bigcap\left\{\text{closed set }K\subset\mathbb{R}^n:\
\langle f,\varphi\rangle=0\text{ for any }\varphi\in\mathcal{S}
\text{ with}\operatorname{supp}\varphi\subset\mathbb{R}^n\setminus K\right\},
$$
which can be found in \cite[Definition 2.3.16]{g14c}.

Let $\varphi$ be a complex-valued function on $\mathbb{R}^n$.
For any $j\in\mathbb{Z}$ and $x\in\mathbb{R}^n$, let
$\varphi_j(x):=2^{jn}\varphi(2^jx)$.
For any $Q:=Q_{j,k}\in\mathscr{Q}$ and $x\in\mathbb{R}^n$, let
$$
\varphi_Q(x)
:=|Q|^{-\frac12}\varphi\left(2^jx-k\right)
=|Q|^{\frac12}\varphi_j(x-x_Q).
$$

As in \cite{yy10}, let
\begin{equation*}
\mathcal{S}_\infty:=\left\{\varphi\in\mathcal{S}:\
\int_{\mathbb R^n}x^\gamma\varphi(x)\,dx=0
\text{ for any }\gamma\in\mathbb{Z}_+^n\right\},
\end{equation*}
regarded as a subspace of $\mathcal{S}$ with the same topology.
We denote by $\mathcal{S}_\infty'$ the space of
all continuous linear functionals on $\mathcal{S}_\infty$, equipped with the weak-$*$ topology.
It is well known that $\mathcal{S}_\infty'$ coincides with
the quotient space $\mathcal{S}'/\mathcal{P}$ as topological spaces,
where $\mathcal{P}$ denotes the set of all polynomials on $\mathbb{R}^n$;
see, for instance, \cite[Chapter 5]{t83}, \cite[Proposition 8.1]{ysy10}, or \cite{Sa17}.

The structure of this section is organized as follows.
In Subsection \ref{W and AQ 1}, we introduce matrix-weighted Besov-type
and Triebel--Lizorkin-type spaces $\dot A^{s,\tau}_{p,q}(W)$
and then corresponding averaging spaces $\dot A^{s,\tau}_{p,q}(\mathbb{A})$,
and we prove that $\dot A^{s,\tau}_{p,q}(W)=\dot A^{s,\tau}_{p,q}(\mathbb{A})$.
In Subsection \ref{W and AQ 2}, we introduce matrix-weighted Besov-type
and Triebel--Lizorkin-type sequence spaces $\dot a^{s,\tau}_{p,q}(W)$
and corresponding averaging spaces $\dot{a}^{s,\tau}_{p,q}(\mathbb{A})$,
and we show that $\dot a^{s,\tau}_{p,q}(W)=\dot{a}^{s,\tau}_{p,q}(\mathbb{A})$.
Finally, in Subsection \ref{phi-transform},
we obtain the $\varphi$-transform characterization of $\dot A^{s,\tau}_{p,q}(W)$.

\subsection{Function Spaces: Definitions and Basic Properties}
\label{W and AQ 1}

First, we recall the concept of Besov-type and Triebel--Lizorkin-type spaces
and corresponding sequence spaces
(see, for instance, \cite[Definitions 1.1 and 3.1]{yy10}).

\begin{definition}\label{d3.1}
Let $s\in\mathbb{R}$, $\tau\in[0,\infty)$, $q\in(0,\infty]$,
and $\varphi\in\mathcal{S}$ satisfy \eqref{19} and \eqref{20}.

The \emph{homogeneous Besov-type space} $\dot B^{s,\tau}_{p,q}$, where $p\in(0,\infty]$,
and the \emph{homogeneous Triebel--Lizorkin-type space}
$\dot F^{s,\tau}_{p,q}$, where $p\in(0,\infty)$, are defined by setting
$$
\dot A^{s,\tau}_{p,q}:=\left\{f\in\mathcal{S}_\infty':\
\|f\|_{\dot A^{s,\tau}_{p,q}}<\infty\right\},
$$
where, for any $f\in\mathcal{S}_\infty'$,
$$
\|f\|_{{\dot A^{s,\tau}_{p,q}}}:=
\left\|\left\{2^{js}\varphi_j*f\right\}_{j\in\mathbb Z}\right\|_{L\dot A_{p,q}^\tau}
$$
with $\|\cdot\|_{L\dot A_{p,q}^\tau}$ the same as in \eqref{LApq}.
\end{definition}

\begin{remark}
In Definition \ref{d3.1}, if we replace the dyadic cube $P$
and the corresponding $j_P$ in \eqref{LApq},
respectively, by arbitrary cube $P$ and the corresponding $\lfloor-\log_2\ell(P)\rfloor$,
we then obtain equivalent quasi-norms.
Similar spaces below have the same property.
\end{remark}

For any $Q\in\mathscr{Q}$,
let $\widetilde{\mathbf{1}}_Q:=|Q|^{-\frac12}\mathbf{1}_Q$.

\begin{definition}
Let $s\in\mathbb{R}$, $\tau\in[0,\infty)$, and $q\in(0,\infty]$.
The \emph{homogeneous Besov-type sequence space} $\dot b^{s,\tau}_{p,q}$, where  $p\in(0,\infty]$, and the \emph{homogeneous Triebel--Lizorkin-type sequence space} $\dot f^{s,\tau}_{p,q}$, where $p\in(0,\infty)$, are defined to be the sets of all sequences
$t:=\{t_Q\}_{Q\in\mathscr{Q}}\subset\mathbb{C}$ such that
$$
\|t\|_{\dot a^{s,\tau}_{p,q}}
:=\left\|\left\{2^{js}t_j\right\}_{j\in\mathbb Z}\right\|_{L\dot A_{p,q}^\tau}
<\infty,
$$
where $\|\cdot\|_{L\dot A_{p,q}^\tau}$ is the same as in \eqref{LApq} and,
for any $j\in\mathbb{Z}$,
\begin{equation}\label{tj}
t_j:=\sum_{Q\in\mathscr{Q}_j}{t}_Q\widetilde{\mathbf{1}}_Q.
\end{equation}
\end{definition}

Above and in what follows, it is understood that the symbols $A$ and $a$
should be consistently replaced either by $B$ and $b$, or by $F$ and $f$,
respectively, throughout the entire statement.

To motivate the definition of matrix-weighted versions of the spaces just introduced, we first recall the concept of the matrix-weighted Lebesgue space
(see, for instance, \cite[p.\,450]{v97}):

\begin{definition}
Let $p\in(0,\infty)$ and $W$ be a matrix weight.
The \emph{matrix-weighted Lebesgue space} $L^p(W,\mathbb{R}^n)$
is defined to be the set of all measurable vector-valued functions
$\vec f:\ \mathbb{R}^n\to\mathbb{C}^m$ such that
$$
\left\|\vec{f}\right\|_{L^p(W,\mathbb{R}^n)}
:=\left[\int_{\mathbb{R}^n}
\left|W^{\frac{1}{p}}(x)\vec{f}(x)\right|^p\,dx\right]^{\frac{1}{p}}<\infty.
$$
\end{definition}

In what follows, we denote $L^p(W,\mathbb{R}^n)$ simply by $L^p(W)$.
For any measurable vector-valued function $\vec f:\ \mathbb{R}^n\to\mathbb{C}^m$
and any measurable set $E$, we define
$\|\vec{f}\|_{L^p(W,E)}:=\|\vec{f}\mathbf{1}_E\|_{L^p(W)}.$

Now, we introduce the matrix-weighted Besov-type and Triebel--Lizorkin-type spaces as follows.

\begin{definition}\label{def 3.4}
Let $s\in\mathbb{R}$, $\tau\in[0,\infty)$, $p\in(0,\infty)$, and $q\in(0,\infty]$.
Let $\varphi\in\mathcal{S}$ satisfy \eqref{19} and \eqref{20}, and let $W\in A_p$ be a matrix weight.
The \emph{homogeneous matrix-weighted Besov-type space}
$\dot B^{s,\tau}_{p,q}(W,\varphi)$
and the \emph{homogeneous matrix-weighted Triebel--Lizorkin-type space}
$\dot F^{s,\tau}_{p,q}(W,\varphi)$
are defined by setting
$$
\dot A^{s,\tau}_{p,q}(W,\varphi)
:=\left\{\vec{f}\in(\mathcal{S}_\infty')^m:\
\left\|\vec{f}\right\|_{\dot A^{s,\tau}_{p,q}(W,\varphi)}<\infty\right\},
$$
where, for any $\vec{f}\in(\mathcal{S}_\infty')^m$,
$$
\left\|\vec{f}\right\|_{\dot A^{s,\tau}_{p,q}(W,\varphi)}
:=\left\|\left\{2^{js}\left|W^{\frac{1}{p}}\left(\varphi_j*\vec f\right)
\right|\right\}_{j\in\mathbb Z}\right\|_{L\dot A_{p,q}^\tau}
$$
with $\|\cdot\|_{L\dot A_{p,q}^\tau}$ the same as in \eqref{LApq}.
\end{definition}

Obviously, for any $p\in(0,\infty)$,
$\dot B^{s,\tau}_{p,p}(W,\varphi)=\dot F^{s,\tau}_{p,p}(W,\varphi)$.

The following lemma is well known; we omit the details.

\begin{lemma}\label{famous 2}
Let $\alpha\in(0,1]$.
Then, for any $\{z_i\}_{i\in\mathbb{N}}\subset\mathbb{C}$,
$(\sum_{i=1}^\infty|z_i|)^{\alpha}
\leq\sum_{i=1}^\infty|z_i|^{\alpha}.$
\end{lemma}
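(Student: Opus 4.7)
The plan is to reduce the inequality to the elementary pointwise fact that $t^\alpha \geq t$ whenever $t \in [0,1]$ and $\alpha \in (0,1]$, which follows immediately from the monotonicity of the exponential function in its exponent (or from $\alpha \log t \geq \log t$ when $\log t \leq 0$). The case $\alpha = 1$ gives equality trivially, so I only need to handle $\alpha \in (0,1)$, and I may assume that not all $z_i$ vanish, since otherwise both sides are $0$.

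Next, let $S := \sum_{i=1}^\infty |z_i| \in (0,\infty]$. Two cases arise. If $S < \infty$, I rescale: divide through by $S$ and set $t_i := |z_i|/S$, so that $\sum_i t_i = 1$ and each $t_i \in [0,1]$. Applying $t_i^\alpha \geq t_i$ termwise and summing yields
\begin{equation*}
\sum_{i=1}^\infty t_i^\alpha \geq \sum_{i=1}^\infty t_i = 1,
\end{equation*}
which, after multiplying by $S^\alpha$, is exactly $\sum_i |z_i|^\alpha \geq S^\alpha = (\sum_i |z_i|)^\alpha$.

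If instead $S = \infty$, then the left-hand side is $\infty$, and I need only verify that the right-hand side is also $\infty$. I argue by contradiction: if $\sum_i |z_i|^\alpha < \infty$, then $|z_i|^\alpha \to 0$, hence $|z_i| \to 0$, so for all sufficiently large $i$ one has $|z_i| \leq 1$ and therefore $|z_i| \leq |z_i|^\alpha$; summing over such $i$ gives $\sum_i |z_i| < \infty$, contradicting $S = \infty$.

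There is no real obstacle here; the only mild subtlety is being careful with the $S = \infty$ case, which the contradiction argument above handles cleanly. The entire proof is essentially one line once the normalization trick and the pointwise inequality $t^\alpha \geq t$ on $[0,1]$ are identified.
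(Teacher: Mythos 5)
Your proof is correct and complete; the normalization to $t_i:=|z_i|/S$ together with the pointwise inequality $t^\alpha\geq t$ on $[0,1]$ is the standard argument, and your treatment of the cases $S=0$ and $S=\infty$ is careful and sound. The paper itself omits the proof entirely (it states only that the lemma is well known), so there is no alternative argument to compare against; your write-up supplies exactly the details the authors chose to skip.
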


The following proposition gives
a relation between $\dot B^{s,\tau}_{p,q}(W,\varphi)$
and $\dot F^{s,\tau}_{p,q}(W,\varphi)$.
In what follows, the symbol $\subset$ always stands for continuous embedding.

\begin{proposition}\label{39}
Let $s\in\mathbb{R}$, $\tau\in[0,\infty)$, $p\in(0,\infty)$, and $q\in(0,\infty]$.
Let $\varphi\in\mathcal{S}$ satisfy \eqref{19} and \eqref{20}, and let $W\in A_p$. Then
$\dot B_{p,p\wedge q}^{s,\tau}(W,\varphi)
\subset\dot F^{s,\tau}_{p,q}(W,\varphi)
\subset\dot B_{p,p\vee q}^{s,\tau}(W,\varphi).$
Moreover, for any $\vec f\in(\mathcal{S}_\infty')^m$,
$$
\left\|\vec f\right\|_{\dot B^{s,\tau}_{p,p\vee q}(W,\varphi)}\leq\left\|\vec f\right\|_{\dot F^{s,\tau}_{p,q}(W,\varphi)}
\leq\left\|\vec f\right\|_{\dot B_{p,p\wedge q}^{s,\tau}(W,\varphi)}.
$$
\end{proposition}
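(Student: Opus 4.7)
The plan is to recognise this proposition as a purely scalar Minkowski-type comparison of mixed Lebesgue--sequence (quasi-)norms, applied to the non-negative scalar sequence
\begin{equation*}
g_j:=2^{js}\left|W^{\frac{1}{p}}\left(\varphi_j*\vec f\right)\right|,\qquad j\in\mathbb Z.
\end{equation*}
Indeed, by Definitions \ref{d3.1} and \ref{def 3.4}, for any $r\in(0,\infty]$ one has
$$
\left\|\vec f\right\|_{\dot B^{s,\tau}_{p,r}(W,\varphi)}=\sup_{P\in\mathscr Q}|P|^{-\tau}\left\|\{g_j\}_{j\geq j_P}\right\|_{\ell^rL^p(\widehat P)}
$$
and
$$
\left\|\vec f\right\|_{\dot F^{s,\tau}_{p,q}(W,\varphi)}=\sup_{P\in\mathscr Q}|P|^{-\tau}\left\|\{g_j\}_{j\geq j_P}\right\|_{L^p\ell^q(\widehat P)}.
$$
Consequently, it suffices to prove, for every cube $P\in\mathscr Q$, the pointwise (in $P$) chain
\begin{equation*}
\left\|\{g_j\}_{j\geq j_P}\right\|_{\ell^{p\vee q}L^p(\widehat P)}
\leq \left\|\{g_j\}_{j\geq j_P}\right\|_{L^p\ell^q(\widehat P)}
\leq \left\|\{g_j\}_{j\geq j_P}\right\|_{\ell^{p\wedge q}L^p(\widehat P)};
\end{equation*}
multiplying through by $|P|^{-\tau}$ and taking the supremum over $P\in\mathscr Q$ then delivers both the embeddings and the stated norm bounds.

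To establish the chain I would split into two subcases according to whether $q\leq p$ or $q\geq p$. In the first subcase, $p\vee q=p$ and $p\wedge q=q$; the second inequality of the chain is Minkowski's integral inequality applied with exponent $p/q\geq 1$ to $\{g_j^q\}_{j\geq j_P}$, since taking the $1/q$-th power of $\|\sum_{j\geq j_P}g_j^q\|_{L^{p/q}(P)}\leq\sum_{j\geq j_P}\|g_j^q\|_{L^{p/q}(P)}$ yields it directly. The first inequality then follows from the pointwise monotonicity $\|\cdot\|_{\ell^p}\leq\|\cdot\|_{\ell^q}$ (valid since $q\leq p$), combined with Tonelli's theorem to identify $L^p\ell^p$ with $\ell^pL^p$. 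The second subcase is handled symmetrically: the first inequality of the chain follows from Minkowski's integral inequality with exponent $q/p\geq 1$ applied to $\{g_j^p\}_{j\geq j_P}$, while the second uses $\|\cdot\|_{\ell^q}\leq\|\cdot\|_{\ell^p}$ (since $p\leq q$) pointwise together with Tonelli. The case $q=\infty$ is disposed of by the obvious adaptations (replacing $\ell^\infty$-sums by suprema).

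The main obstacle is essentially non-existent, and I expect the proof to be short. The matrix weight $W$ plays no role beyond defining the scalar sequence $\{g_j\}$, so none of the $A_p$-dimension machinery from Section \ref{Ap dimension} is invoked; everything reduces to Minkowski's integral inequality (always applied to non-negative functions with an exponent $\geq 1$) together with the elementary monotonicity of $\ell^r$-norms in $r$. The only point demanding a moment's care is matching the correct direction of each inequality against $p\wedge q$ and $p\vee q$ in the two subcases.
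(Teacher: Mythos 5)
Your proof is correct and follows essentially the same route as the paper: both reduce the claim to the scalar chain $\|\{g_j\}\|_{\ell^{p\vee q}L^p(\widehat P)}\leq\|\{g_j\}\|_{L^p\ell^q(\widehat P)}\leq\|\{g_j\}\|_{\ell^{p\wedge q}L^p(\widehat P)}$ for $g_j:=2^{js}|W^{1/p}(\varphi_j*\vec f)|$, proved by Minkowski's integral inequality together with the monotonicity of $\ell^r$-norms. The paper merely packages the two cases uniformly via the exponents $p\wedge q$ and $p\vee q$ (so that in each regime one of the two steps degenerates to an identity), whereas you make the case split $q\leq p$ versus $q\geq p$ explicit; the content is identical.
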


\begin{proof}
We only consider the case that $q\in(0,\infty)$
because the case that $q=\infty$ is easier and we omit the details.
We first prove that
\begin{equation}\label{40}
\dot B_{p,p\wedge q}^{s,\tau}(W,\varphi)
\subset\dot F^{s,\tau}_{p,q}(W,\varphi).
\end{equation}
For any $\vec f\in(\mathcal{S}_\infty')^m$ and $j\in\mathbb{Z}$, let
$g_j:=2^{js}|W^{\frac{1}{p}}(\varphi_j*\vec f)|.$
From Lemma \ref{famous 2} with $\alpha$ replaced by $\frac{p\wedge q}{q}$
and from the Minkowski integral inequality, we deduce that,
for any $P\in\mathscr{Q}$ and $\vec f\in(\mathcal{S}_\infty')^m$,
\begin{align*}
\left\|\left\{g_j\right\}_{j\in\mathbb Z}\right\|_{L^p\ell^q(\widehat{P})}
&\leq\left\|\left\{\left(g_j\right)^{p\wedge q}\right\}_{j\in\mathbb Z}
\right\|_{L^{\frac{p}{p\wedge q}}\ell^1(\widehat{P})}^{\frac{1}{p\wedge q}}\\
&\leq\left\|\left\{\left(g_j\right)^{p\wedge q}\right\}_{j\in\mathbb Z}
\right\|_{\ell^1L^{\frac{p}{p\wedge q}}(\widehat{P})}^{\frac{1}{p\wedge q}}
=\left\|\left\{g_j\right\}_{j\in\mathbb Z}\right\|_{\ell^{p\wedge q}L^p(\widehat{P})},
\end{align*}
and hence $\|\vec f\|_{\dot F^{s,\tau}_{p,q}(W,\varphi)}
\leq\|\vec f\|_{\dot B_{p,p\wedge q}^{s,\tau}(W,\varphi)}$.
This shows that \eqref{40}.

Next, we prove that
\begin{equation}\label{41}
\dot F^{s,\tau}_{p,q}(W,\varphi)
\subset\dot B_{p,p\vee q}^{s,\tau}(W,\varphi).
\end{equation}
By the Minkowski integral inequality
and Lemma \ref{famous 2} with $\alpha$ replaced by $\frac{p}{p\vee q}$, we conclude that,
for any $P\in\mathscr{Q}$ and $\vec f\in(\mathcal{S}_\infty')^m$,
\begin{align*}
\left\|\left\{g_j\right\}_{j\in\mathbb Z}\right\|_{\ell^{p\vee q}L^p(\widehat{P})}
&=\left\|\left\{\left(g_j\right)^p\right\}_{j\in\mathbb Z}
\right\|_{\ell^{\frac{p\vee q}{p}}L^1(\widehat{P})}^{\frac 1p}\\
&\leq\left\|\left\{\left(g_j\right)^p\right\}_{j\in\mathbb Z}
\right\|_{L^1\ell^{\frac{p\vee q}{p}}(\widehat{P})}^{\frac 1p}
\leq\left\|\left\{g_j\right\}_{j\in\mathbb Z}\right\|_{L^p\ell^q(\widehat{P})},
\end{align*}
and hence $\|\vec f\|_{\dot{B}_{p,p\vee q}^{s,\tau}(W,\varphi)}
\leq\|\vec f\|_{\dot F^{s,\tau}_{p,q}(W,\varphi)}$.
This finishes the proof of \eqref{41} and hence Proposition \ref{39}.
\end{proof}

\begin{definition}
Let $s\in\mathbb{R}$, $\tau\in[0,\infty)$, $p\in(0,\infty)$, and $q\in(0,\infty]$.
Let $\varphi\in\mathcal{S}$ satisfy \eqref{19} and \eqref{20}, let $W\in A_p$,
and let $\mathbb{A}:=\{A_Q\}_{Q\in\mathscr{Q}}$ be
a sequence of reducing operators of order $p$ for $W$.
The \emph{homogeneous averaging matrix-weighted Besov-type space}
$\dot B^{s,\tau}_{p,q}(\mathbb{A},\varphi)$ and
the \emph{homogeneous averaging matrix-weighted Triebel--Lizorkin-type space}
$\dot F^{s,\tau}_{p,q}(\mathbb{A},\varphi)$
are
defined by setting, for both $\dot A^{s,\tau}_{p,q}(\mathbb{A},\varphi)\in\{\dot B^{s,\tau}_{p,q}(\mathbb{A},\varphi),\dot F^{s,\tau}_{p,q}(\mathbb{A},\varphi)\}$,
$$
\dot A^{s,\tau}_{p,q}(\mathbb{A},\varphi)
:=\left\{\vec{f}\in(\mathcal{S}_\infty')^m:\
\left\|\vec{f}\right\|_{\dot A^{s,\tau}_{p,q}(\mathbb{A},\varphi)}<\infty\right\},
$$
where, for any $\vec f\in(\mathcal{S}_\infty')^m$,
$$
\left\|\vec{f}\right\|_{\dot A^{s,\tau}_{p,q}(\mathbb{A},\varphi)}
:=\left\|\left\{2^{js}\left|A_j\left(\varphi_j*\vec f\right)
\right|\right\}_{j\in\mathbb Z}\right\|_{L\dot A_{p,q}^\tau}
$$
with $\|\cdot\|_{L\dot A_{p,q}^\tau}$ the same as in \eqref{LApq} and,
for any $j\in\mathbb{Z}$,
\begin{equation}\label{Aj}
A_j:=\sum_{Q\in\mathscr{Q}_j}A_Q\mathbf{1}_Q.
\end{equation}
\end{definition}

By \eqref{equ_reduce}, we find that
$\dot A^{s,\tau}_{p,q}(\mathbb{A},\varphi)$
is independent of the choice of $\mathbb{A}$.

For any sequence $\mathbb{A}:=\{A_Q\}_{Q\in\mathscr{Q}}$ of matrices,
any $\varphi\in\mathcal{S}_\infty$, and any $\vec f\in(\mathcal{S}_\infty')^m$, let
\begin{equation}\label{sup}
\sup_{\mathbb{A},\varphi}\left(\vec f\right)
:=\left\{\sup_{\mathbb{A},\varphi,Q}\left(\vec f\right)\right\}_{Q\in\mathscr{Q}},
\end{equation}
where, for any $Q\in\mathscr{Q}$,
$$
\sup_{\mathbb{A},\varphi,Q}\left(\vec f\right)
:=|Q|^{\frac12}\sup_{y\in Q}
\left|A_Q\left(\varphi_{j_Q}*\vec f\right)(y)\right|.
$$

The following theorem is the main result of this subsection.

\begin{theorem}\label{11}
Let $s\in\mathbb{R}$, $\tau\in[0,\infty)$, $p\in(0,\infty)$, and $q\in(0,\infty]$.
Let $\varphi\in\mathcal{S}$ satisfy both \eqref{19} and \eqref{20}.
Let $W\in A_p$ and $\mathbb{A}:=\{A_Q\}_{Q\in\mathscr{Q}}$ be a sequence of
reducing operators of order $p$ for $W$.
Then $\vec f\in\dot A^{s,\tau}_{p,q}(W,\varphi)$
if and only if $\vec f\in\dot A^{s,\tau}_{p,q}(\mathbb{A},\varphi)$.
Moreover, for any $\vec f\in(\mathcal{S}_\infty')^m$,
$$
\left\|\vec f\right\|_{\dot A^{s,\tau}_{p,q}(W,\varphi)}
\sim\left\|\sup_{\mathbb{A},\varphi}\left(\vec f\right)\right\|_{\dot a^{s,\tau}_{p,q}}
\sim\left\|\vec f\right\|_{\dot A^{s,\tau}_{p,q}(\mathbb{A},\varphi)},
$$
where the positive equivalence constants are independent of $\vec f$.
\end{theorem}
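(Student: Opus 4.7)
The plan is to establish the cyclic chain of inequalities
\[
\|\vec f\|_{\dot A^{s,\tau}_{p,q}(W,\varphi)}
\lesssim \left\|\sup\nolimits_{\mathbb{A},\varphi}(\vec f)\right\|_{\dot a^{s,\tau}_{p,q}}
\lesssim \|\vec f\|_{\dot A^{s,\tau}_{p,q}(\mathbb{A},\varphi)}
\lesssim \|\vec f\|_{\dot A^{s,\tau}_{p,q}(W,\varphi)},
\]
together with the trivial pointwise bound $|A_j(\varphi_j*\vec f)(x)| \leq t_j(x)$, where $t_j$ is built from $t_Q := \sup_{\mathbb{A},\varphi,Q}(\vec f)$ as in \eqref{tj}, which yields $\|\vec f\|_{\dot A^{s,\tau}_{p,q}(\mathbb{A},\varphi)} \leq \|\sup_{\mathbb{A},\varphi}(\vec f)\|_{\dot a^{s,\tau}_{p,q}}$ and thereby closes the three-way equivalence.

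For the first inequality, the workhorse is the pointwise decomposition
\[
\left|W^{\frac 1p}(x)(\varphi_j*\vec f)(x)\right|
\leq \left\|W^{\frac 1p}(x)A_{Q_j(x)}^{-1}\right\|\cdot \left|A_{Q_j(x)}(\varphi_j*\vec f)(x)\right|
\leq \left\|W^{\frac 1p}(x)A_{Q_j(x)}^{-1}\right\|\cdot t_j(x),
\]
where $Q_j(x) \in \mathscr{Q}_j$ contains $x$. In the Besov case I would then integrate over cubes $Q \in \mathscr{Q}_j$ with $Q \subset P$ and use $\int_Q \|W^{\frac 1p}(x)A_Q^{-1}\|^p\,dx \sim |Q|$ from Lemma \ref{reduceM} to absorb the weight factor. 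In the Triebel--Lizorkin case I would dominate the weight factor, uniformly in $j \geq j_P$, by $E(x) := \sup_{R \in \mathscr{Q},\,x\in R\subset P}\|W^{\frac 1p}(x)A_R^{-1}\|$, and apply H\"older's inequality together with \eqref{8yy} of Lemma \ref{8}, which gives $\fint_P E^r \lesssim 1$ for some $r > p$, transferring the $L^p(\ell^q)$-bound on $P$ from $\{t_j\}$ to $\{|W^{\frac 1p}(\varphi_j*\vec f)|\}$.

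The two Peetre-type inequalities hinge on the observation that $A_Q(\varphi_j*\vec f) = \varphi_j*(A_Q\vec f)$ remains Fourier-supported in an annulus of radius $\sim 2^j$, since $A_Q$ is a constant matrix on $Q$ and commutes with convolution. The classical Peetre maximal function theorem then gives, for every $x \in Q \in \mathscr{Q}_j$ and every $r \in (0,\infty)$,
\[
\sup_{y \in Q}\left|A_Q(\varphi_j*\vec f)(y)\right| \lesssim \left[\mathcal{M}\left(|A_Q(\varphi_j*\vec f)|^r\right)(x)\right]^{\frac 1r}.
\]
For $\|\sup_{\mathbb{A},\varphi}(\vec f)\|_{\dot a^{s,\tau}_{p,q}} \lesssim \|\vec f\|_{\dot A^{s,\tau}_{p,q}(\mathbb{A},\varphi)}$ I would choose $r < \min\{p,q\}$ and invoke the vector-valued Fefferman--Stein maximal inequality on $L^p(\ell^q)$ for $\dot F$ or on $\ell^q(L^p)$ for $\dot B$. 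For $\|\vec f\|_{\dot A^{s,\tau}_{p,q}(\mathbb{A},\varphi)} \lesssim \|\vec f\|_{\dot A^{s,\tau}_{p,q}(W,\varphi)}$ I would first apply the reducing-operator identity pointwise in $y$ to write $|A_Q(\varphi_j*\vec f)(y)|^p \sim \fint_Q|W^{\frac 1p}(u)(\varphi_j*\vec f)(y)|^p\,du$, then apply Peetre to the band-limited function $y \mapsto W^{\frac 1p}(u)(\varphi_j*\vec f)(y) = \varphi_j*(W^{\frac 1p}(u)\vec f)(y)$ for each fixed $u$, and finish with Fefferman--Stein.

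The main obstacle I expect is the Morrey-type $|P|^{-\tau}$ structure, since both the Peetre supremum and the Hardy--Littlewood maximal function are intrinsically global, whereas the $\dot A^{s,\tau}_{p,q}$-quasi-norm is the supremum over $P \in \mathscr{Q}$ of quantities localised to $P$. To reconcile these I would split each Peetre supremum $\sup_y |A_Q(\varphi_j*\vec f)(y)|/(1+2^j|x-y|)^a$ into a local contribution, with $y$ in a fixed dilate of $P$ that is handled by a localised Fefferman--Stein argument, and a tail contribution with $y$ outside that dilate. The tail is controlled by invoking Corollary \ref{237} in the form $\|A_QA_{Q(y)}^{-1}\| \lesssim (1+2^j|x_Q-x_{Q(y)}|)^\Delta$ when $Q,Q(y) \in \mathscr{Q}_j$ to replace $A_Q(\varphi_j*\vec f)$ by $A_j(\varphi_j*\vec f)$ at the cost of a factor $(1+2^j|x-y|)^\Delta$, so that choosing $a > \Delta + n/r$ makes the resulting geometric sum converge uniformly in $P$ and be absorbable into the supremum-over-$P$ quasi-norm. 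This is precisely where the $A_p$-dimension machinery of Section \ref{Ap dimension} is indispensable: without the sharp growth control in Lemma \ref{22 precise}, the tail sums would not be summable uniformly in $P$ and the Morrey structure would be destroyed.
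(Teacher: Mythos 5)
Your overall architecture is essentially the paper's: the same cyclic chain through $\|\sup_{\mathbb A,\varphi}(\vec f)\|_{\dot a^{s,\tau}_{p,q}}$, the same sampling/Peetre step with the $\|A_QA_R^{-1}\|$ comparison absorbed into the decay exponent via the $A_p$-dimension $\Delta$ (Corollary \ref{237}), and the same local-plus-tail treatment of the Morrey supremum (this is Lemma \ref{summary F} in the paper). Those parts are sound. There is, however, one genuine gap, in the Triebel--Lizorkin case of the inequality $\|\vec f\|_{\dot F^{s,\tau}_{p,q}(W,\varphi)}\lesssim\|\sup_{\mathbb A,\varphi}(\vec f)\|_{\dot f^{s,\tau}_{p,q}}$. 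You propose to majorize the weight factor uniformly in $j\ge j_P$ by $E(x):=\sup_{R\ni x,\,R\subset P}\|W^{\frac1p}(x)A_R^{-1}\|$, obtain $\fint_P E^r\lesssim1$ for some $r>p$ from \eqref{8yy}, and close by H\"older. This cannot work: setting $F:=(\sum_{j\ge j_P}f_j^q)^{\frac1q}$, H\"older gives
$$\int_P E^pF^p\le\left(\int_P E^{pa}\right)^{\frac1a}\left(\int_P F^{pa'}\right)^{\frac1{a'}}$$
with $a\in(1,\infty)$, so the right-hand side involves $\|F\|_{L^{pa'}(P)}$ with $pa'>p$, a strictly stronger quantity than the $\|F\|_{L^p(P)}$ you control; pushing $a\to\infty$ would require $E\in L^\infty$, which fails for $p\in(1,\infty)$. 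The single-level bound $\|\gamma_jE_j(f_j)\|_{L^p(P)}\lesssim\|E_j(f_j)\|_{L^p(P)}$ is elementary (only $\fint_Q\|W^{\frac1p}A_Q^{-1}\|^p\sim1$ and the local constancy of $E_j(f_j)$ are needed), which is why your Besov-case argument is fine; but the $\ell^q$-combined version mixes all generations and does not follow from H\"older plus higher integrability of $E$. What is needed is exactly Lemma \ref{46x} (quoted from \cite[Corollary 3.8]{fr21}) and its localization Corollary \ref{46}, whose proof exploits the martingale (locally constant) structure of $\{E_j(f_j)\}_j$ in an essential, non-H\"older way; you must invoke that result.

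A secondary, fillable issue: in the step $\|\vec f\|_{\dot A^{s,\tau}_{p,q}(\mathbb A,\varphi)}\lesssim\|\vec f\|_{\dot A^{s,\tau}_{p,q}(W,\varphi)}$, after applying Peetre to $y\mapsto W^{\frac1p}(u)(\varphi_j*\vec f)(y)$ and averaging over $u\in Q$, the maximal function sits on $|W^{\frac1p}(u)(\varphi_j*\vec f)|$ with the ``wrong'' matrix $W^{\frac1p}(u)$; converting to $|W^{\frac1p}(z)(\varphi_j*\vec f)(z)|$ under the convolution kernel requires, for $p\in(1,\infty)$, an additional H\"older step in $z$ together with the bound $2^{jn}\int_{\mathbb R^n}(1+2^j|x-z|)^{-M}\|A_j(z)W^{-\frac1p}(z)\|^{rp'}\,dz\lesssim1$ furnished by Lemma \ref{8}(ii) (this is Lemma \ref{151} in the paper); ``finish with Fefferman--Stein'' glosses over this. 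With these two corrections your outline coincides with the paper's proof.
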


To show Theorem \ref{11}, we need several technical lemmas.
The following lemma can be proved by some simple computations; we omit the details.

\begin{lemma}\label{253x}
Let $a\in(n,\infty)$.
Then, for any $j\in\mathbb{Z}$ and $y\in\mathbb{R}^n$,
\begin{equation}\label{33x}
\int_{\mathbb{R}^n}\frac{2^{jn}}{(1+|2^jx+y|)^a}\,dx
\sim1.
\end{equation}
Moreover, for any $j\in\mathbb{Z}$ with $j\leq0$ and for any $y\in\mathbb{R}^n$,
\begin{equation}\label{33z}
\sum_{k\in\mathbb{Z}^n}\frac{2^{jn}}{(1+|2^jk+y|)^a}
\sim1.
\end{equation}
Here all the positive equivalence constants depend only on $a$ and $n$.
\end{lemma}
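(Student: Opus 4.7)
\medskip

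\noindent\textbf{Proof plan.}
The plan is to treat the two estimates separately, since \eqref{33x} is a pure change-of-variables calculation while \eqref{33z} is a Riemann-sum comparison that crucially exploits the hypothesis $j\leq0$.

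For \eqref{33x}, I would simply perform the change of variables $u:=2^jx+y$, with Jacobian $du=2^{jn}\,dx$, to reduce the integral to
$$
\int_{\mathbb{R}^n}\frac{2^{jn}}{(1+|2^jx+y|)^a}\,dx
=\int_{\mathbb{R}^n}\frac{du}{(1+|u|)^a},
$$
which is a finite positive constant depending only on $n$ and $a$ (using $a>n$ for convergence at infinity). Note that $j$ and $y$ disappear entirely, so this yields in fact an equality, not merely an equivalence.

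For \eqref{33z}, I would compare the sum with the integral in \eqref{33x} by partitioning $\mathbb{R}^n$ into the pairwise disjoint cubes $Q_k:=2^jk+[0,2^j)^n$ with $k\in\mathbb{Z}^n$, each of volume $2^{jn}$. For any $x\in Q_k$, we have $|x-2^jk|\leq\sqrt{n}\,2^j\leq\sqrt n$ because $j\leq 0$, and hence the triangle inequality gives
$$
1+\bigl|2^jk+y\bigr|\sim 1+|x+y|
$$
with positive equivalence constants depending only on $n$. Summing over $k$ and writing $\sum_{k}2^{jn}=\sum_k |Q_k|=\sum_k\int_{Q_k}\,dx$, I then obtain
$$
\sum_{k\in\mathbb{Z}^n}\frac{2^{jn}}{(1+|2^jk+y|)^a}
\sim\sum_{k\in\mathbb Z^n}\int_{Q_k}\frac{dx}{(1+|x+y|)^a}
=\int_{\mathbb{R}^n}\frac{dx}{(1+|x+y|)^a},
$$
and a translation reduces the last integral to the constant already computed in the proof of \eqref{33x}.

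There is no real obstacle here; the only subtlety worth emphasising is the necessity of $j\leq0$ in \eqref{33z}. When $j>0$, the mesh size $2^j$ exceeds $1$, so the comparison $1+|2^jk+y|\sim 1+|x+y|$ on $Q_k$ breaks down (the additive error $\sqrt n\,2^j$ is no longer absorbed into the ``$1+$''), and indeed the sum can fail to be bounded above by a constant independent of $j$. This restriction is therefore sharp and explains why the two displays of the lemma are stated under different hypotheses on $j$.
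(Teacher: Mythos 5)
Your proof is correct, and since the paper omits the argument entirely (stating only that the lemma ``can be proved by some simple computations''), your change-of-variables computation for \eqref{33x} and the Riemann-sum comparison over the cubes $2^jk+[0,2^j)^n$ for \eqref{33z} is precisely the standard route the authors have in mind. Your closing observation that $j\leq 0$ is genuinely needed for \eqref{33z} (e.g.\ the $k=\mathbf{0}$, $y=\mathbf{0}$ term alone is $2^{jn}$) is accurate and a worthwhile addition.
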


For any $k:=(k_1,\ldots,k_n)\in\mathbb{Z}^n$,
let $\|k\|_{\infty}:=\max_{i\in\{1,\ldots,n\}}|k_i|$.
We also have the following simple estimate; we omit the details.

\begin{lemma}\label{33}
Let $P\in\mathscr{Q}$ and $k\in\mathbb{Z}^n$ with $\|k\|_{\infty}\geq2$.
Then, for any $j\in\{j_P,j_P+1,\ldots\}$, $x\in P$, and $y\in P+k\ell(P)$,
$1+2^j|x-y|\sim2^{j-j_P}|k|,$
where the positive equivalence constants depend only on $n$.
\end{lemma}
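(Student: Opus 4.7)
The plan is to reduce the claim $1+2^j|x-y|\sim 2^{j-j_P}|k|$ to the purely geometric statement $|x-y|\sim|k|\ell(P)$, using that $\ell(P)=2^{-j_P}$, and then to absorb the additive constant: since $j\geq j_P$ and $\|k\|_\infty\geq 2$ (hence $|k|\geq 2$), one has $2^{j-j_P}|k|\geq 2$, so the $1$ in $1+2^j|x-y|$ is harmless and
\[
1+2^j|x-y|\sim 2^j|x-y|\sim 2^j|k|\ell(P)=2^{j-j_P}|k|.
\]
Everything therefore boils down to the inequalities $|x-y|\sim|k|\ell(P)$.

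For the upper bound, I write $y=y'+k\ell(P)$ with $y'\in P$. The triangle inequality and $\operatorname{diam}P=\sqrt n\,\ell(P)$ give
\[
|x-y|\leq|x-y'|+|k|\ell(P)\leq\bigl(\sqrt n+|k|\bigr)\ell(P).
\]
Since $|k|\geq\|k\|_\infty\geq 2$, we have $\sqrt n\leq(\sqrt n/2)|k|$, which yields $|x-y|\leq(1+\sqrt n/2)|k|\ell(P)$.

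For the lower bound the naive estimate $|x-y|\geq(|k|-\sqrt n)\ell(P)$ is inadequate when $\sqrt n\geq|k|$, so I argue coordinate-wise. Pick $i_0\in\{1,\ldots,n\}$ with $|k_{i_0}|=\|k\|_\infty$. Since $x_{i_0}$ and $y_{i_0}-k_{i_0}\ell(P)$ both lie in a common coordinate interval of length $\ell(P)$,
\[
|y_{i_0}-x_{i_0}|\geq|k_{i_0}|\ell(P)-\ell(P)=\bigl(|k_{i_0}|-1\bigr)\ell(P)\geq\tfrac12|k_{i_0}|\ell(P)=\tfrac12\|k\|_\infty\ell(P),
\]
where the second inequality uses $|k_{i_0}|=\|k\|_\infty\geq 2$. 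Combining with $\|k\|_\infty\geq|k|/\sqrt n$ gives $|x-y|\geq|y_{i_0}-x_{i_0}|\geq|k|\ell(P)/(2\sqrt n)$, completing the proof of $|x-y|\sim|k|\ell(P)$ with constants depending only on $n$.

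There is no real obstacle here; the only mild subtlety worth flagging is precisely the passage from the crude bound $|x-y|\geq(|k|-\sqrt n)\ell(P)$, which degenerates in high dimension, to the coordinate-wise bound in terms of $\|k\|_\infty$, which is exactly why the hypothesis $\|k\|_\infty\geq 2$ (as opposed to $|k|\geq 2$) is the natural one to impose.
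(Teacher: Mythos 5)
Your proof is correct and complete; the paper omits the details of this ``simple estimate,'' and your argument supplies exactly the standard verification: reduce to $|x-y|\sim|k|\ell(P)$ and absorb the additive $1$ using $2^{j-j_P}|k|\geq 2$. The coordinate-wise lower bound via $\|k\|_\infty$ is the right way to avoid the degenerate estimate $(|k|-\sqrt{n})\ell(P)$, and all constants indeed depend only on $n$.
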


The following lemma is a reformulation of
the famous Fefferman--Stein vector-valued maximal inequality:

\begin{lemma}\label{Fefferman Stein}
Let $p\in(1,\infty)$ and $q\in(1,\infty]$.
Then there exists a positive constant $C$ such that,
for any sequence $\{f_j\}_{j\in\mathbb Z}$ of measurable functions on $\mathbb R^n$,
\begin{equation*}
\left\|\left\{\mathcal{M}\left(f_j\right)\right\}_{j\in\mathbb Z}\right\|_{L\dot A_{pq}}
\leq C\left\|\left\{f_j\right\}_{j\in\mathbb Z}\right\|_{L\dot A_{pq}},
\end{equation*}
where $\mathcal{M}$ is the same as in \eqref{maximal}.
\end{lemma}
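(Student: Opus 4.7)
The plan is to split the proof according to the two choices $L\dot A_{pq}\in\{L\dot B_{pq},L\dot F_{pq}\}$ and reduce each to a classical maximal function estimate.

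First I would handle the Besov case $L\dot B_{pq}=\ell^q L^p$. Since this is just an iterated norm with no interaction between the different $j$'s, the scalar Hardy--Littlewood maximal inequality $\|\mathcal{M}(f)\|_{L^p}\leq C_p\|f\|_{L^p}$, which is valid for any $p\in(1,\infty)$ (see, e.g., \cite[Theorem 2.1.6]{g14c}), applied term by term and followed by taking the $\ell^q$ norm, immediately yields
\begin{equation*}
\left\|\left\{\mathcal{M}(f_j)\right\}_{j\in\mathbb Z}\right\|_{\ell^qL^p}
=\left[\sum_{j\in\mathbb Z}\|\mathcal{M}(f_j)\|_{L^p}^q\right]^{\frac{1}{q}}
\leq C_p\left[\sum_{j\in\mathbb Z}\|f_j\|_{L^p}^q\right]^{\frac{1}{q}}
=C_p\left\|\left\{f_j\right\}_{j\in\mathbb Z}\right\|_{\ell^qL^p},
\end{equation*}
with the usual modification when $q=\infty$. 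This case actually works for any $p\in(1,\infty)$ and any $q\in(0,\infty]$.

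For the Triebel--Lizorkin case $L\dot F_{pq}=L^p\ell^q$, the required inequality
\begin{equation*}
\left\|\left(\sum_{j\in\mathbb Z}|\mathcal{M}(f_j)|^q\right)^{\frac{1}{q}}\right\|_{L^p}
\leq C\left\|\left(\sum_{j\in\mathbb Z}|f_j|^q\right)^{\frac{1}{q}}\right\|_{L^p}
\end{equation*}
is precisely the classical Fefferman--Stein vector-valued maximal inequality, valid for all $p,q\in(1,\infty]$ with $p<\infty$ (with the usual modification when $q=\infty$); this is a deep but well-known result, and I would simply invoke it, citing, for instance, \cite[Theorem 4.6.6]{g14c}.

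Assembling these two cases gives the claim in both settings $L\dot A_{pq}\in\{L\dot B_{pq},L\dot F_{pq}\}$ simultaneously, with the constant $C$ depending only on $n$, $p$, and $q$. The only genuine analytic input is the classical vector-valued Fefferman--Stein inequality in the $\dot F$ case; the $\dot B$ case is a routine consequence of the scalar maximal theorem. Since the Lemma is a reformulation of a standard fact and the proof is so short, I would simply record it and note that the details are omitted, consistent with the authors' practice elsewhere in the paper.
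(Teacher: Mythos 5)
Your proposal is correct and follows essentially the same route as the paper: the Besov case via the scalar Hardy--Littlewood maximal inequality applied term by term followed by taking $\ell^q$ norms, and the Triebel--Lizorkin case by direct appeal to the classical Fefferman--Stein vector-valued maximal inequality (the paper cites the original \cite{fs71} rather than a textbook, but that is immaterial).
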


\begin{proof}
For $L\dot A_{pq}=L\dot F_{pq}$, this is the Fefferman--Stein maximal inequality
$$
\left\|\left\{\sum_{j\in\mathbb Z}\left[\mathcal{M}\left(f_j\right)\right]^q
\right\}^{\frac{1}{q}}\right\|_{L^p}
\leq C\left\|\left(\sum_{j\in\mathbb Z}\left|f_j\right|^q\right)^{\frac{1}{q}}\right\|_{L^p},
$$
which was established in \cite[Theorem 1]{fs71}.

For $L\dot A_{pq}=L\dot B_{pq}$, it is simply the classical Hardy--Littlewood maximal inequality $
\|\mathcal{M}(f_j)\|_{L^p}
\leq C\|f_j\|_{L^p}$ followed by taking $\ell^q$ norms of both sides.
This finishes the proof of Lemma \ref{Fefferman Stein}.
\end{proof}

\begin{lemma}\label{summary F}
Let $s\in\mathbb R$, $\tau\in[0,\infty)$, $p\in(0,\infty)$, $q\in(0,\infty]$, and $M\in(n,\infty)$.
Suppose two sequences $\{g_j\}_{j\in\mathbb{Z}}$ and $\{h_j\}_{j\in\mathbb{Z}}$
of measurable functions on $\mathbb{R}^n$ satisfy:
there exist $r\in(0,\min\{p,q\})$ and a positive constant $C$ such that,
for any $j\in\mathbb{Z}$ and $x\in\mathbb{R}^n$,
\begin{equation}\label{143}
\left|g_j(x)\right|^r
\leq C2^{jn}\int_{\mathbb{R}^n}\frac{1}{(1+2^j|x-z|)^M}\left|h_j(z)\right|^r\,dz.
\end{equation}
Then there exists a positive constant $\widetilde{C}$,
depending only on $C$, $n$, $p$, $q$, and $M$, such that
$$
\left\|\left\{2^{js}g_j\right\}_{j\in\mathbb Z}\right\|_{L\dot A_{p,q}^\tau}
\leq\widetilde{C}\left\|\left\{2^{js}h_j\right\}_{j\in\mathbb Z}\right\|_{L\dot A_{p,q}^\tau}.
$$
\end{lemma}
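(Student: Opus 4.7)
The plan is to exploit the $r$-power identity
\[
\|\{2^{js}g_j\}_{j\in\mathbb Z}\|_{L\dot A_{pq}(\widehat P)}^{r}
=\|\{2^{jsr}|g_j|^{r}\}_{j\in\mathbb Z}\|_{L\dot A_{p/r,q/r}(\widehat P)},
\]
which, thanks to $r\in(0,\min\{p,q\})$, reduces the estimate to one in a space whose target exponents $p/r,q/r$ both exceed $1$, unlocking the use of maximal-function techniques. Writing $u_j:=2^{jsr}|g_j|^{r}$ and $v_j:=2^{jsr}|h_j|^{r}$, the hypothesis \eqref{143} translates to $u_j(x)\lesssim 2^{jn}\int_{\mathbb R^n}(1+2^j|x-z|)^{-M}v_j(z)\,dz$, which for each test cube $P\in\mathscr Q$ and $x\in P$ I would split via the geometric decomposition $\mathbb R^n=3P\cup\bigcup_{l\geq 1}E_l$ with $E_l:=3\cdot 2^{l}P\setminus 3\cdot 2^{l-1}P$.

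On the local part $3P$, the classical bound (holding because $M>n$) dominates the convolution by $\mathcal M(v_j\mathbf 1_{3P})(x)$, while on the annulus $E_l$ the lower bound $(1+2^j|x-z|)^{M}\gtrsim 2^{M(j-j_P+l)}$ yields the pointwise estimate
\[
u_j(x)\lesssim \mathcal M(v_j\mathbf 1_{3P})(x)+2^{(j-j_P)(n-M)}\sum_{l=1}^{\infty}2^{l(n-M)}\fint_{3\cdot 2^{l}P}v_j(z)\,dz
\]
for $x\in P$ and $j\geq j_P$, the tail being constant in $x$. Taking the $L\dot A_{p/r,q/r}(\widehat P)$ norm and invoking the triangle inequality (valid since $p/r,q/r\geq 1$) separates the estimate into a principal term and an $l$-indexed tail. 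For the principal term, Lemma \ref{Fefferman Stein} applied with exponents $p/r,q/r>1$ absorbs the maximal function, and since $3P$ is covered by a dimension-only bounded number of dyadic cubes of level $j_P$, the definition of the $\tau$-norm directly yields a bound of the form $|P|^{\tau r}\|\{v_j\}\|_{L\dot A_{p/r,q/r}^{\tau r}}$, which is exactly the desired $|P|^{\tau r}\|\{2^{js}h_j\}\|_{L\dot A_{p,q}^{\tau}}^{r}$ after reversing the $r$-power identity.

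For each tail summand I would enclose $3\cdot 2^{l}P$ in a dyadic cube $P_l$ with $j_{P_l}=j_P-l$, and use H\"older's inequality (since $p/r\geq 1$) to bound $\fint_{3\cdot 2^{l}P}v_j$ by $|P_l|^{-r/p}\|v_j\|_{L^{p/r}(P_l)}$. The constant-in-$x$ nature of the summand collapses its $L\dot A_{p/r,q/r}(\widehat P)$ norm to $|P|^{r/p}$ times an $\ell^{q/r}$ norm in $j$, which I would compare with $\|\{v_j\}\|_{L\dot A_{p/r,q/r}(\widehat{P_l})}$ by exploiting $2^{(j-j_P)(n-M)}\leq 1$ for $j\geq j_P$ and extending the $j$-range from $\{j\geq j_P\}$ to $\{j\geq j_{P_l}\}$; a further application of the $\tau$-norm bound $\|\{v_j\}\|_{L\dot A_{p/r,q/r}(\widehat{P_l})}\leq |P_l|^{\tau r}\|\{v_j\}\|_{L\dot A_{p/r,q/r}^{\tau r}}$ then produces the right-hand side. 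Summing against $2^{l(n-M)}$ yields a geometric series that converges thanks to $M>n$, with $r$ chosen small enough to absorb any Morrey-scaling losses. The main obstacle is this final bookkeeping: the H\"older loss $(|P|/|P_l|)^{r/p}$, the Morrey-scaling factor $|P_l|^{\tau r}$, and the kernel decay $2^{l(n-M)}$ must be delicately balanced against each other, a balance achieved by exploiting the freedom in choosing $r\in(0,\min\{p,q\})$.
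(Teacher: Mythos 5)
Your overall architecture (the $r$-power reduction to $L\dot A_{p/r,q/r}$, the near/far splitting of the kernel integral, and the Fefferman--Stein treatment of the local part over $3P$) matches the paper's proof. The difference, and the source of a genuine gap, is your far-field treatment. You decompose the far field into concentric annuli $E_l=3\cdot2^lP\setminus3\cdot2^{l-1}P$, reduce each annulus contribution to the constant $\fint_{3\cdot2^lP}v_j$, and then apply H\"older in $x$ to compare this constant with $\|v_j\|_{L^{p/r}(P_l)}$. After taking the norm over $\widehat P$ this produces, for each $l$, the quantity $(\sum_{j\ge j_P}\|v_j\|_{L^{p/r}(P_l)}^{q/r})^{r/q}$, i.e.\ an $\ell^{q/r}L^{p/r}$ expression. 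In the Besov case this is what you need, but in the Triebel--Lizorkin case you must compare it with $\|\{v_j\}_{j}\|_{L^{p/r}\ell^{q/r}(\widehat{P_l})}$, and Minkowski's inequality gives $\|\cdot\|_{\ell^{q/r}L^{p/r}}\le\|\cdot\|_{L^{p/r}\ell^{q/r}}$ only when $q/r\ge p/r$, i.e.\ $q\ge p$. When $q<p$ the inequality goes the other way (take $v_j$ with pairwise disjoint supports to see that no reverse bound holds), so your tail estimate does not close for $\dot F$-type norms with $q<p$. The paper avoids this by decomposing the far field into the lattice translates $P+k\ell(P)$, $\|k\|_\infty\ge2$, and by dominating $\fint_{P+k\ell(P)}v_j$ not by a constant-times-H\"older bound but by $\mathcal M(v_j\mathbf 1_{P+k\ell(P)})(x+k\ell(P))$, a genuine function of $x\in P$; the vector-valued Fefferman--Stein inequality (Lemma \ref{Fefferman Stein}) then applies to these translated maximal functions and keeps the whole estimate inside $L^{p/r}\ell^{q/r}$.

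A second, smaller issue: with concentric annuli the Morrey normalization costs an extra factor $|3\cdot2^lP|^{\tau r}\sim2^{ln\tau r}|P|^{\tau r}$, and together with your H\"older loss $(|P|/|P_l|)^{r/p}=2^{-lnr/p}$ the $l$-series becomes $\sum_{l}2^{l(n-M+n\tau r-nr/p)}$, which converges for the \emph{given} $M\in(n,\infty)$ only if $r(\tau-\frac1p)_+$ is small enough. You are right that one may shrink $r$ (hypothesis \eqref{143} for $r$ implies it for any smaller exponent by Jensen's inequality against the normalized kernel), but this makes $\widetilde C$ depend on $\tau$ and on the required smallness of $r$. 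In the paper's decomposition each piece $P+k\ell(P)$ is a dyadic cube of the same generation as $P$, so the Morrey factor is exactly $|P|^\tau$ with no loss, and $\sum_{\|k\|_\infty\ge2}|k|^{-M}$ converges as soon as $M>n$, uniformly in $\tau$. I recommend adopting the translate decomposition and the translated-maximal-function bound; the rest of your argument then goes through.
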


\begin{proof}
Fix $P\in\mathscr{Q}$.
By \eqref{143}, we obtain, for any $j\in\{j_P,j_P+1,\ldots\}$ and $x\in P$,
\begin{align*}
\left|2^{js}g_j(x)\right|^r
&\lesssim\int_{\mathbb{R}^n}\frac{2^{jn}}{(1+2^j|x-z|)^M}
\left|2^{js}h_j(z)\right|^r\,dz\\
&=\int_{3P}\frac{2^{jn}}{(1+2^j|x-z|)^M}\left|2^{js}h_j(z)\right|^r\,dz
+\sum_{k\in\mathbb{Z}^n,\,\|k\|_\infty\geq2}\int_{P+k\ell(P)}\cdots\\
&=:I_j(x)+J_j(x)
\end{align*}
and hence
\begin{align}\label{150}
\left\|\left\{2^{js}g_j\right\}_{j\in\mathbb Z}\right\|_{L\dot A_{pq}(\widehat{P})}
&=\left\|\left\{\left|2^{js}g_j\right|^r\right\}_{j\in\mathbb Z}
\right\|_{L\dot A_{\frac{p}{r},\frac{q}{r}}(\widehat{P})}^{\frac{1}{r}}\\
&\lesssim\left\|\left\{I_j\right\}_{j\in\mathbb Z}
\right\|_{L^{\frac{p}{r}}\ell^{\frac{q}{r}}(\widehat{P})}^{\frac{1}{r}}
+\left\|\left\{J_j\right\}_{j\in\mathbb Z}
\right\|_{L\dot A_{\frac{p}{r},\frac{q}{r}}(\widehat{P})}^{\frac{1}{r}}.\notag
\end{align}

We first estimate $I_j$.
Using $M\in(n,\infty)$, we conclude that,
for any $j\in\{j_P,j_P+1,\ldots\}$ and $x\in P$,
\begin{align*}
I_j(x)
&=\int_{B(x,2^{-j})}
\frac{2^{jn}}{(1+2^j|x-z|)^M}
\left|2^{js}h_j(z)\right|^r\mathbf{1}_{3P}(z)\,dz
+\sum_{i=1}^\infty\int_{B(x,2^{i-j})\setminus B(x,2^{i-1-j})}\cdots\\
&\lesssim\sum_{i=0}^\infty2^{i(n-M)}
\fint_{B(x,2^{i-j})}\left|2^{js}h_j(z)\right|^r\mathbf{1}_{3P}(z)\,dz
\lesssim\mathcal{M}\left(\left|2^{js}h_j\right|^r\mathbf{1}_{3P}\right)(x),
\end{align*}
where $\mathcal{M}$ is the same as in \eqref{maximal}.
From this and Lemma \ref{Fefferman Stein}, we infer that
\begin{align}\label{58}
\left\|\left\{I_j\right\}_{j\in\mathbb Z}
\right\|_{L\dot A_{\frac{p}{r},\frac{q}{r}}(\widehat{P})}^{\frac{1}{r}}
&\lesssim\left\|\left\{\mathcal{M}\left(\left|2^{js}h_j\right|^r
\mathbf{1}_{3P}\right)\right\}_{j\in\mathbb Z}
\right\|_{L\dot A_{\frac{p}{r},\frac{q}{r}}(\widehat{P})}^{\frac{1}{r}}\\
&\lesssim\left\|\left\{\left|2^{js}h_j\right|^r\right\}_{j\in\mathbb Z}
\right\|_{L\dot A_{\frac{p}{r},\frac{q}{r}}([3P]\times\{j_P,j_P+1,\ldots\})}^{\frac{1}{r}}\notag\\
&=\left\|\left\{2^{js}h_j\right\}_{j\in\mathbb Z}
\right\|_{L\dot A_{p,q}([3P]\times\{j_P,j_P+1,\ldots\})}
\lesssim|P|^\tau\left\|\left\{2^{js}h_j\right\}_{j\in\mathbb Z}\right\|_{L\dot A_{p,q}^\tau}.\notag
\end{align}

Now, we estimate $J_j$.
By Lemma \ref{33} and the assumption that $M\in(n,\infty)$, we find that,
for any $j\in\{j_P,j_P+1,\ldots\}$ and $x\in P$,
\begin{align*}
J_j(x)
&\sim\sum_{k\in\mathbb{Z}^n,\,\|k\|_\infty\geq2}|k|^{-M}
2^{(j-j_P)(n-M)}\fint_{P+k \ell(P)}\left|2^{js}h_j(z)\right|^r\,dz\\
&\leq\sum_{k\in\mathbb{Z}^n,\,\|k\|_\infty\geq2}|k|^{-M}
\mathcal{M}\left(\left|2^{js}h_j\right|^r
\mathbf{1}_{P+k\ell(P)}\right)(x+k\ell(P))
=:\sum_{k\in\mathbb{Z}^n,\,\|k\|_\infty\geq2}|k|^{-M}m_{j,k}(x).
\end{align*}
From this,
the triangle inequality in $L\dot A_{\frac{p}{r},\frac{q}{r}}$,
Lemma \ref{Fefferman Stein},
and the fact that $M\in(n,\infty)$, we deduce that
\begin{align*}
\left\|\left\{J_j\right\}_{j\in\mathbb Z}
\right\|_{L\dot A_{\frac{p}{r},\frac{q}{r}}(\widehat{P})}
&\lesssim\left\|\left\{\sum_{k\in\mathbb{Z}^n,\,\|k\|_\infty\geq2}
|k|^{-M}m_{j,k}\right\}_{j\in\mathbb Z}
\right\|_{L\dot A_{\frac{p}{r},\frac{q}{r}}(\widehat{P})}\\
&\leq\sum_{k\in\mathbb{Z}^n,\,\|k\|_\infty\geq2}|k|^{-M}\left\|\,\left\{m_{j,k}\right\}_{j\in\mathbb Z}\right\|_{L\dot A_{\frac{p}{r},\frac{q}{r}}(\widehat{P})}\\
&\leq\sum_{k\in\mathbb{Z}^n,\,\|k\|_\infty\geq2}|k|^{-M}
\left\|\left\{\mathcal{M}\left(\left|2^{js}h_j\right|^r \mathbf{1}_{P+k\ell(P)}\right)\right\}_{j\geq j_P} \right\|_{L\dot A_{\frac{p}{r},\frac{q}{r}}}\\
&\lesssim\sum_{k\in\mathbb{Z}^n,\,\|k\|_\infty\geq2}|k|^{-M}
\left\|\left\{ \left|2^{js}h_j\right|^r \mathbf{1}_{P+k\ell(P)}\right\}_{j\geq j_P} \right\|_{L\dot A_{\frac{p}{r},\frac{q}{r}}}\\
&=\sum_{k\in\mathbb{Z}^n,\,\|k\|_\infty\geq2}|k|^{-M}
\left\|\left\{ \left|2^{js}h_j\right| \mathbf{1}_{P+k\ell(P)}\right\}_{j\geq j_P} \right\|_{L\dot A_{p,q}}^r\\
&\lesssim\left[|P|^{\tau}
\left\|\left\{2^{js}h_j\right\}_{j\in\mathbb Z}\right\|_{L\dot A_{p,q}^\tau}\right]^r.
\end{align*}
Combined with \eqref{58} and \eqref{150}, this further implies that
$$
\left\|\left\{2^{js}g_j\right\}_{j\in\mathbb Z}\right\|_{L\dot A_{p,q}(\widehat{P})}
\lesssim|P|^{\tau}
\left\|\left\{2^{js}h_j\right\}_{j\in\mathbb Z}\right\|_{L\dot A_{p,q}^\tau},
$$
and hence
$$
\left\|\left\{2^{js}g_j\right\}_{j\in\mathbb Z}\right\|_{L\dot A_{p,q}^\tau}
\lesssim\left\|\left\{2^{js}h_j\right\}_{j\in\mathbb Z}\right\|_{L\dot A_{p,q}^\tau}.
$$
This finishes the proof of Lemma \ref{summary F}.
\end{proof}

\begin{remark}
Let $s\in\mathbb{R}$, $\tau\in[0,\infty)$, $p\in(0,\infty)$, and $q\in(0,\infty]$.
Let $\varphi\in\mathcal{S}$ satisfy both \eqref{19} and \eqref{20}.
Let $W\in A_p$ and $\mathbb{A}:=\{A_Q\}_{Q\in\mathscr{Q}}$
be a sequence of reducing operators of order $p$ for $W$.
Let $L\dot A_{p,q}^{\tau}\in\{L\dot B_{p,q}^\tau,L\dot F_{p,q}^\tau\}$.
Observe that the norms of many spaces can be represented via $\|\cdot\|_{L\dot A_{p,q}^\tau}$, for instance,
$$
\left\|\left\{2^{js}g_j\right\}_{j\in\mathbb Z}\right\|_{L\dot A_{p,q}^\tau}
=\begin{cases}
\left\|\vec f\right\|_{\dot A^{s,\tau}_{p,q}(W)}
&\text{if }g_j:=\left|W^{\frac{1}{p}}\left(\varphi_j*\vec f\right)\right|,\ \forall\,j\in\mathbb Z,\\
\left\|\vec f\right\|_{\dot A^{s,\tau}_{p,q}(\mathbb{A})}
&\text{if }g_j:=\left|A_j \left(\varphi_j*\vec f\right)\right|,\ \forall\,j\in\mathbb Z,\\
\|t\|_{\dot a^{s,\tau}_{p,q}}
&\text{if }g_j:=t_j,\ \forall\,j\in\mathbb Z,
\end{cases}
$$
where, for any $j\in\mathbb Z$, $A_j$ and $t_j$ are the same as, respectively, in \eqref{Aj} and \eqref{tj},
and hence Lemma \ref{summary F} provides a unified way to compare above norms.
\end{remark}

By \cite[Theorem 2.3.21]{g14c}, we obtain,
if $f\in\mathcal{S}'$ and $\widehat{f}$ has compact support, then $f\in C^\infty$,
where $C^\infty$ denotes the set of all infinitely differentiable functions on $\mathbb{R}^n$.
The following lemma can be found in the proof of \cite[Theorem 2.4]{fr21}.
For the convenience of the reader, we give the details of its proof here.

\begin{lemma}\label{10x}
Let $\gamma\in\mathcal{S}$ satisfy
$\widehat{\gamma}(\xi)=1$ for any $\xi\in\mathbb{R}^n$ with $|\xi|\leq2$ and
$$
\operatorname{supp}\widehat{\gamma}\subset\{\xi\in\mathbb{R}^n:\ |\xi|<\pi\}.
$$
Then, for any $j\in\mathbb{Z}$ and $f\in\mathcal{S}'$
with $\operatorname{supp}\widehat{f}\subset\{\xi\in\mathbb{R}^n:\ |\xi|\leq2^{j+1}\}$,
one has $f\in C^\infty$ and, for any $x,y\in\mathbb{R}^n$,
\begin{equation}\label{10y}
f(x)=\sum_{R\in\mathscr{Q}_j}2^{-jn}f(x_R+y)\gamma_j(x-x_R-y)
\end{equation}
pointwise.
\end{lemma}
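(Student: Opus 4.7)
The proof plan rests on the classical sampling idea: under the support hypothesis on $\widehat f$, one has $f=f*\gamma_j$, and the integral $(f*\gamma_j)(x)$ can be replaced by a shifted Riemann/sampling sum on the lattice $y+2^{-j}\mathbb Z^n$ via the Poisson summation formula.

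First I would dispense with the smoothness claim. Since $\widehat f$ is compactly supported, the Paley--Wiener theorem (as recalled in the paragraph preceding the lemma) gives $f\in C^\infty$, and in fact one obtains polynomial growth of $f$ and all its derivatives. Next, using $\widehat{\gamma_j}(\xi)=\widehat\gamma(2^{-j}\xi)$, the hypothesis $\widehat\gamma\equiv 1$ on $\{|\xi|\le 2\}$ yields $\widehat{\gamma_j}\equiv 1$ on $\{|\xi|\le 2^{j+1}\}\supset\operatorname{supp}\widehat f$, so $\widehat f=\widehat f\,\widehat{\gamma_j}=\widehat{f*\gamma_j}$ and therefore
\begin{equation*}
f(x)=(f*\gamma_j)(x)=\int_{\mathbb{R}^n}f(z)\gamma_j(x-z)\,dz.
\end{equation*}

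The central step is to convert this integral into the sampling sum. For fixed $x,y\in\mathbb{R}^n$, define
\begin{equation*}
h(z):=f(z)\gamma_j(x-z).
\end{equation*}
Since $f\in C^\infty$ has polynomial growth and $\gamma_j\in\mathcal S$, one checks that $h\in\mathcal S$, so the Poisson summation formula applies on the lattice $y+2^{-j}\mathbb Z^n$:
\begin{equation*}
\sum_{k\in\mathbb Z^n} h\bigl(y+2^{-j}k\bigr)=2^{jn}\sum_{k\in\mathbb Z^n}\widehat h\bigl(2\pi\cdot 2^{j}k\bigr)\,e^{i2\pi k\cdot(2^{j}y)}.
\end{equation*}
The key geometric input is that only $k=0$ survives on the right-hand side. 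Indeed, writing $g_x(z):=\gamma_j(x-z)$ one has $\widehat{g_x}(\eta)=e^{-ix\cdot\eta}\widehat{\gamma_j}(-\eta)$, whose support equals $-\operatorname{supp}\widehat{\gamma_j}\subset\{|\eta|<\pi 2^{j}\}$. Consequently $\widehat h=(2\pi)^{-n}\widehat f*\widehat{g_x}$ is supported in
\begin{equation*}
\operatorname{supp}\widehat f+\operatorname{supp}\widehat{g_x}\subset\bigl\{|\xi|\le 2^{j+1}+\pi 2^{j}\bigr\}=\bigl\{|\xi|\le(2+\pi)2^{j}\bigr\}.
\end{equation*}
Since $2\pi>2+\pi$, for any $k\in\mathbb Z^n\setminus\{0\}$ one has $|2\pi\cdot 2^{j}k|\ge 2\pi\cdot 2^{j}>(2+\pi)2^{j}$, so $\widehat h(2\pi\cdot 2^{j}k)=0$. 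For $k=0$, $\widehat h(0)=\int h(z)\,dz=f(x)$. Therefore
\begin{equation*}
\sum_{k\in\mathbb Z^n}f\bigl(y+2^{-j}k\bigr)\gamma_j\bigl(x-y-2^{-j}k\bigr)=2^{jn}f(x),
\end{equation*}
which, since $x_R=2^{-j}k$ as $R=Q_{j,k}$ ranges over $\mathscr Q_j$, is exactly the claimed identity \eqref{10y}.

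The main obstacle, and the only delicate point, is the justification of Poisson summation. While $\gamma_j\in\mathcal S$ provides rapid decay, one must be sure that $f$ (a priori only in $\mathcal S'$) is a smooth function of polynomial growth so that $h$ is Schwartz; this is where Paley--Wiener is essential. If one prefers to avoid invoking Schwartz-class membership of $h$, the alternative is to regularize (e.g.\ convolve $f$ with an approximate identity whose Fourier transform is compactly supported and tends to $1$) to reduce to a Schwartz $f$, apply the identity there, and pass to the limit using the absolute convergence of the sum, which is guaranteed by the polynomial growth of $f$ combined with the rapid decay of $\gamma_j$.
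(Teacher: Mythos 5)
Your proof is correct and follows the same strategy as the paper: both arguments first deduce $f\in C^\infty$ from the compact support of $\widehat f$, then reduce to $f=f*\gamma_j$ via the Fourier support conditions, and finally invoke a sampling identity for band-limited functions. The only difference is that where the paper handles the shift by setting $g:=f(\cdot+y)$ and quoting \cite[Lemma 6.10]{fjw91} for the sampling formula, you prove that identity directly by Poisson summation on the shifted lattice $y+2^{-j}\mathbb Z^n$ — which is essentially how the cited lemma is proved — and your verification that only the $k=0$ dual-lattice point meets $\operatorname{supp}\widehat h\subset\{|\xi|\le(2+\pi)2^j\}$ (using $2\pi>2+\pi$) correctly isolates why the hypothesis $\operatorname{supp}\widehat\gamma\subset\{|\xi|<\pi\}$ is needed.
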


\begin{proof}
By \cite[Theorem 2.3.21]{g14c} and $f\in\mathcal{S}'$
with $\operatorname{supp}\widehat{f}\subset\{\xi\in\mathbb{R}^n:\ |\xi|\leq2^{j+1}\}$
for some $j\in\mathbb Z$, we find that $f\in C^\infty$.
Let $j\in\mathbb{Z}$, $y\in\mathbb{R}^n$, and $g(\cdot):=f(\cdot+y)$.
Then
$$
\operatorname{supp}\widehat{g}
=\operatorname{supp}\widehat{f}
\subset\left\{\xi\in\mathbb{R}^n:\ |\xi|\leq2^{j+1}\right\},
$$
which, together with $\widehat{\gamma_j}(\xi)=1$
for any $\xi\in\mathbb{R}^n$ with $|\xi|\leq2^{j+1}$,
further implies that $g=g*\gamma_j$.
From this and \cite[Lemma 6.10]{fjw91}, we infer that,
for any $x\in\mathbb{R}^n$,
$$
g(x)
=\left(g*\gamma_j\right)(x)
=\sum_{R\in\mathscr{Q}_j}2^{-jn}g(x_R)\gamma_j(x-x_R),
$$
and hence
$$
f(x+y)=\sum_{R\in\mathscr{Q}_j}2^{-jn}f(x_R+y)\gamma_j(x-x_R).
$$
By a change of variables, we obtain \eqref{10y}.
This finishes the proof of Lemma \ref{10x}.
\end{proof}

\begin{remark}
Let $f\in\mathcal{S}_\infty'$ and $\varphi$ satisfy both Fourier support conditions \eqref{19} and \eqref{20}.
Notice that, for any $j\in\mathbb{Z}$,
$$
\operatorname{supp}\widehat{\varphi_j*f}\subset\operatorname{supp}\widehat{\varphi_j}
\subset\left\{\xi\in\mathbb{R}^n:\ |\xi|\leq2^{j+1}\right\}.
$$
Applying Lemma \ref{10x} with $f$ replaced by $\varphi_j*f$ for any $j\in\mathbb Z$,
we conclude that, for any $j\in\mathbb{Z}$ and $x,y\in\mathbb{R}^n$,
\begin{equation}\label{10}
\left(\varphi_j*f\right)(x)
=\sum_{R\in\mathscr{Q}_j}2^{-jn}\left(\varphi_j*f\right)(x_R+y)\gamma_j(x-x_R-y),
\end{equation}
where $\gamma\in\mathcal{S}$ is the same as in Lemma \ref{10x}.
\end{remark}

Next, we establish the relations between
$\|\vec f\|_{\dot A^{s,\tau}_{p,q}(\mathbb{A},\varphi)}$
and $\|\sup_{\mathbb{A},\varphi}(\vec f)\|_{\dot a^{s,\tau}_{p,q}}$.

\begin{lemma}\label{56}
Let $s\in\mathbb{R}$, $\tau\in[0,\infty)$, $p\in(0,\infty)$, and $q\in(0,\infty]$.
Let $\varphi\in\mathcal{S}$ satisfy both \eqref{19} and \eqref{20}.
Let $W\in A_p$ and $\mathbb{A}:=\{A_Q\}_{Q\in\mathscr{Q}}$
be a sequence of reducing operators of order $p$ for $W$.
Then $\vec f\in\dot A^{s,\tau}_{p,q}(\mathbb{A},\varphi)$ if and only if
$\sup_{\mathbb{A},\varphi}(\vec f)\in\dot a^{s,\tau}_{p,q}$,
where $\sup_{\mathbb{A},\varphi}$ is the same as in \eqref{sup}.
Moreover, there exists a constant $C\in[1,\infty)$ such that,
for any $\vec f\in(\mathcal{S}_\infty')^m$,
$$
\left\|\vec f\right\|_{\dot A^{s,\tau}_{p,q}(\mathbb{A},\varphi)}
\leq\left\|\sup_{\mathbb{A},\varphi}\left(\vec f\right)\right\|_{\dot a^{s,\tau}_{p,q}}
\leq C\left\|\vec f\right\|_{\dot A^{s,\tau}_{p,q}(\mathbb{A},\varphi)}.
$$
\end{lemma}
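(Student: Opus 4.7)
The first inequality $\|\vec f\|_{\dot A^{s,\tau}_{p,q}(\mathbb A,\varphi)} \leq \|\sup_{\mathbb A,\varphi}(\vec f)\|_{\dot a^{s,\tau}_{p,q}}$ is essentially tautological. For $x \in Q \in \mathscr Q_j$, the definition \eqref{Aj} gives $A_j(\varphi_j*\vec f)(x) = A_Q(\varphi_j*\vec f)(x)$, so $|A_j(\varphi_j*\vec f)(x)| \leq \sup_{y\in Q}|A_Q(\varphi_j*\vec f)(y)|$. Unwinding \eqref{sup} and \eqref{tj} shows that the right-hand side equals $(\sup_{\mathbb A,\varphi}(\vec f))_j(x)$, and this pointwise bound transfers directly to the $L\dot A^\tau_{p,q}$ quasi-norm via the definitions of both sides.

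For the reverse inequality I set $g_j(x) := \sum_{Q\in\mathscr Q_j}\sup_{y\in Q}|A_Q(\varphi_j*\vec f)(y)|\mathbf{1}_Q(x)$ and $h_j := |A_j(\varphi_j*\vec f)|$, so that $\|\{2^{js}g_j\}_{j\in\mathbb Z}\|_{L\dot A^\tau_{p,q}} = \|\sup_{\mathbb A,\varphi}(\vec f)\|_{\dot a^{s,\tau}_{p,q}}$ and $\|\{2^{js}h_j\}_{j\in\mathbb Z}\|_{L\dot A^\tau_{p,q}} = \|\vec f\|_{\dot A^{s,\tau}_{p,q}(\mathbb A,\varphi)}$, and I aim to verify the hypothesis \eqref{143} of Lemma \ref{summary F} for these choices. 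The starting point is the reproducing formula \eqref{10} of Lemma \ref{10x}. Inserting $A_Q A_R^{-1}A_R$ into each term of the sum, taking absolute values, dominating $\|A_QA_R^{-1}\|$ by $(1+2^j|x_Q-x_R|)^\Delta$ via \eqref{23y}, and bounding $|\gamma_j(y-x_R-z)|$ by its Schwartz decay $2^{jn}(1+2^j|y-x_R-z|)^{-N}$ with $N$ arbitrarily large, one then averages out the free parameter $z$ to turn the discrete sum into an integral and invokes Lemma \ref{33y} to replace $|y-u|$ by $|x-u|$ uniformly for $y$ in the containing cube $Q$. This produces a first-power bound
\begin{align*}
\sup_{y\in Q}|A_Q(\varphi_j*\vec f)(y)| \lesssim 2^{jn}\int_{\mathbb R^n} h_j(u)(1+2^j|x-u|)^{-M_1}\,du
\end{align*}
valid for all $x\in Q$ and any preassigned exponent $M_1$.

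The main obstacle is upgrading the first-power bound to the $r$-th power form required by \eqref{143} for some $r\in(0,\min\{p,q\})$; Jensen or H\"older only help when $r\geq 1$. The plan is to exploit that every component of the vector-valued function $A_Q(\varphi_j*\vec f)$ has Fourier support in $\{\xi\in\mathbb R^n:\,|\xi|\leq 2^{j+1}\}$, which enables a Plancherel--Polya--Peetre-type inequality: for any $r>0$ and any sufficiently large $M_2>n$, a band-limited scalar function $f$ with such spectrum satisfies $|f(y)|^r \lesssim 2^{jn}\int_{\mathbb R^n}|f(u)|^r(1+2^j|y-u|)^{-M_2}\,du$. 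Applied componentwise to $A_Q(\varphi_j*\vec f)$, combined with the auxiliary bound $|A_Q(\varphi_j*\vec f)(u)|^r \lesssim (1+2^j|y-u|)^{\Delta r}h_j(u)^r$ (obtained from \eqref{23y} and Lemma \ref{33y} without any $\gamma_j$ factor), and on choosing $M_2>\Delta r+n$, this yields the desired pointwise estimate with kernel exponent $M:=M_2-\Delta r>n$. Taking the supremum over $y\in Q$ and using Lemma \ref{33y} once more produces \eqref{143} for our $g_j$ and $h_j$; Lemma \ref{summary F} then delivers the claimed reverse inequality.
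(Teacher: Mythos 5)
Your proof is correct and follows essentially the same route as the paper's: the paper likewise reduces the reverse inequality to hypothesis \eqref{143} of Lemma \ref{summary F}, obtaining the $r$-th power sub-mean-value estimate from the sampling formula \eqref{10}, the $r$-subadditivity of Lemma \ref{famous 2}, averaging over the free shift parameter, and the reducing-operator switch \eqref{23y}. The only difference is one of packaging: you invoke the Plancherel--P\'olya--Peetre inequality for band-limited functions as a black box (after a superfluous detour through a first-power bound), whereas the paper re-derives exactly that inequality inline; the matrix switch via \eqref{23y} and the final choice of kernel exponent coincide.
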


\begin{proof}
The first inequality in the claim is immediate from the definition of $\sup_{\mathbb{A},\varphi} (\vec f) $, so it remains to show the second inequality.

From \eqref{10}, we deduce that, for any $j\in\mathbb{Z}$ and $x,y\in\mathbb{R}^n$,
\begin{equation}\label{50}
\left(\varphi_j*\vec f\right)(x)
=\sum_{R\in\mathscr{Q}_j}2^{-jn}\left(\varphi_j*\vec f\right)(x_R+y)\gamma_j(x-x_R-y),
\end{equation}
where $\gamma\in\mathcal{S}$ is the same as in Lemma \ref{10x}.
We fix $r\in(0,\min\{p,q,1\})$ and $M\in(\Delta+\frac{n}{r},\infty)$, where $\Delta$ is such that $W\in A_p$ has $A_p$-dimensions $(d,\widetilde d,\Delta)$.
By \eqref{50}, Lemma \ref{famous 2}, and the fact that $\gamma\in\mathcal{S}$,
we find that, for any $j\in\mathbb{Z}$,
$Q\in\mathscr{Q}_j$, $x\in Q$, and $y\in\mathbb{R}^n$,
\begin{align*}
\left|A_Q\left(\varphi_j*\vec f\right)(x)\right|^r
&\leq\sum_{R\in\mathscr{Q}_j}\left|2^{-jn}\gamma_j(x-x_R-y)\right|^r
\left|A_Q\left(\varphi_j*\vec f\right)(x_R+y)\right|^r\\
&\lesssim\sum_{R\in\mathscr{Q}_j}\frac{1}{(1+2^j|x-x_R-y|)^{Mr}}
\left|A_Q\left(\varphi_j*\vec f\right)(x_R+y)\right|^r.
\end{align*}
Combined with Lemma \ref{33y}, this further implies that
\begin{equation}\label{145}
\left[|Q|^{-\frac12}\sup_{\mathbb{A},\varphi,Q}\left(\vec f\right)\right]^r
\lesssim\sum_{R\in\mathscr{Q}_j}\frac{1}{(1+2^j|x-x_R-y|)^{Mr}}
\left|A_Q\left(\varphi_j*\vec f\right)(x_R+y)\right|^r.
\end{equation}
Using this, the Tonelli theorem, and \eqref{23y},
we obtain, for any $j\in\mathbb{Z}$,
$Q\in\mathscr{Q}_j$, and $x\in\mathbb{R}^n$,
\begin{align*}
\left[\sup_{\mathbb{A},\varphi,Q}\left(\vec f\right)
\widetilde{\mathbf{1}}_Q(x)\right]^r
&\lesssim2^{jn}\sum_{R\in\mathscr{Q}_j}\int_R
\frac{1}{(1+2^j|x-z|)^{Mr}}
\left|A_Q\left(\varphi_j*\vec f\right)(z)\right|^r\,dz\\
&\leq2^{jn}\sum_{R\in\mathscr{Q}_j}\int_R
\frac{\|A_QA_R^{-1}\|^r}{(1+2^j|x-z|)^{Mr}}
\left|A_R \left(\varphi_j*\vec f\right)(z)\right|^r\,dz\\
&\lesssim2^{jn}\sum_{R\in\mathscr{Q}_j}\int_R
\frac{1}{(1+2^j|x-z|)^{(M-\Delta)r}}
\left|A_R \left(\varphi_j*\vec f\right)(z)\right|^r\,dz\\
&=2^{jn}\int_{\mathbb{R}^n}
\frac{1}{(1+2^j|x-z|)^{(M-\Delta)r}}
\left|A_j(z)\left(\varphi_j*\vec f\right)(z)\right|^r\,dz.
\end{align*}
For any $j\in\mathbb{Z}$, let
\begin{equation}\label{gj}
g_j:=\sum_{Q\in\mathscr{Q}_j}
\sup_{\mathbb{A},\varphi,Q}\left(\vec f\right)\widetilde{\mathbf{1}}_Q
\text{ and }
h_j:=\left|A_j\left(\varphi_j*\vec{f}\right)\right|.
\end{equation}
Thus, for any $j\in\mathbb{Z}$ and $x\in\mathbb{R}^n$, we have
\begin{equation}\label{155}
\left|g_j(x)\right|^r
\lesssim2^{jn}\int_{\mathbb{R}^n}
\frac{1}{(1+2^j|x-z|)^{(M-\Delta)r}}\left|h_j(z)\right|^r\,dz,
\end{equation}
From this and Lemma \ref{summary F} with $M$ replaced by $(M-\Delta)r\in(n,\infty)$,
we infer that
$$
\left\|\sup_{\mathbb{A},\varphi}\left(\vec f\right)\right\|_{\dot a^{s,\tau}_{p,q}}
=\left\|\left\{2^{js}g_j\right\}_{j\in\mathbb Z}\right\|_{L\dot A_{p,q}^\tau}
\lesssim\left\|\left\{2^{js}h_j\right\}_{j\in\mathbb Z}\right\|_{L\dot A_{p,q}^\tau}
=\left\|\vec f\right\|_{\dot A^{s,\tau}_{p,q}(\mathbb{A},\varphi)}.
$$
This finishes the proof of Lemma \ref{56}.
\end{proof}

Next, to give the relations between
$\dot A^{s,\tau}_{p,q}(\mathbb{A},\varphi)$
and $\dot A^{s,\tau}_{p,q}(W,\varphi)$,
we need a technical lemma.

\begin{lemma}\label{151}
Let $p\in(1,\infty)$, $\frac{1}{p}+\frac{1}{p'}=1$, $W\in A_p$,
$\{A_Q\}_{Q\in\mathscr{Q}}$ be a sequence of
reducing operators of order $p$ for $W$, $M\in(n,\infty)$,
and $\delta$ be the same as in Lemma \ref{8}(ii).
Then there exists a positive constant $C$ such that,
for any $j\in\mathbb{Z}$, $x\in\mathbb{R}^n$,
and $r\in[0,p'+\delta]$,
$$
2^{jn}\int_{\mathbb{R}^n}\frac{1}{(1+2^j|x-z|)^M}
\left\|A_j(z)W^{-\frac{1}{p}}(z)\right\|^r\,dz
\leq C,
$$
where $A_j$ is the same as in \eqref{Aj}.
\end{lemma}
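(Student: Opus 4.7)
The plan is to control the integral by partitioning $\mathbb{R}^n$ into the ball $B(x,2^{-j})$ together with the dyadic annuli around $x$ at scale $2^{-j}$, and then to apply Lemma \ref{8}(ii) cube-by-cube on each piece. Since $M>n$, the resulting geometric sum will converge and yield the desired uniform bound.

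First, I would introduce
\begin{equation*}
A_0:=B\left(x,2^{-j}\right),\qquad A_i:=B\left(x,2^{i-j}\right)\setminus B\left(x,2^{i-1-j}\right),\quad i\in\mathbb{N},
\end{equation*}
so that $\mathbb{R}^n=\bigcup_{i=0}^\infty A_i$. On $A_0$ one has $(1+2^j|x-z|)^M\geq 1$, whereas on $A_i$ with $i\geq 1$ one has $(1+2^j|x-z|)^M\gtrsim 2^{iM}$. The integral therefore splits as a sum, each term of which is dominated by $2^{-iM}$ times the integral of $\|A_j(z)W^{-\frac{1}{p}}(z)\|^r$ over $B(x,2^{i-j})$.

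Next, for each $i\in\mathbb{Z}_+$, I would count the cubes at level $j$ meeting $B(x,2^{i-j})$. Since such a ball is contained in a finite union of at most $C_n 2^{in}$ cubes $R\in\mathscr{Q}_j$, and since, by Lemma \ref{8}(ii), for every such cube and every $r\in[0,p'+\delta]$,
\begin{equation*}
\int_R\left\|A_RW^{-\frac{1}{p}}(z)\right\|^r\,dz
=|R|\fint_R\left\|A_RW^{-\frac{1}{p}}(z)\right\|^r\,dz
\leq C|R|
=C2^{-jn},
\end{equation*}
summing over the relevant cubes yields
\begin{equation*}
\int_{B(x,2^{i-j})}\left\|A_j(z)W^{-\frac{1}{p}}(z)\right\|^r\,dz
\leq C2^{(i-j)n}.
\end{equation*}

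Combining the two previous steps, the $i=0$ piece contributes a bounded constant, and for $i\geq 1$ the $i$-th annulus contributes at most a constant times $2^{-iM}\cdot 2^{jn}\cdot 2^{(i-j)n}=2^{i(n-M)}$. Therefore
\begin{equation*}
2^{jn}\int_{\mathbb{R}^n}\frac{\|A_j(z)W^{-\frac{1}{p}}(z)\|^r}{(1+2^j|x-z|)^M}\,dz
\leq C\sum_{i=0}^{\infty}2^{i(n-M)},
\end{equation*}
which is finite because $M>n$, and the bound is plainly independent of $j$, $x$, and of $r$ ranging in the fixed interval $[0,p'+\delta]$. The only point requiring some care is that the constant in Lemma \ref{8}(ii) depends only on $m$ and $p$ (with $\delta$ depending on $n,m,p,[W]_{A_p}$), so the final $C$ will depend only on the allowed parameters; the combinatorial count of level-$j$ cubes intersecting $B(x,2^{i-j})$ is the sole slightly technical point, but is standard and dimensional.
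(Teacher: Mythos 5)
Your proof is correct and rests on the same two ingredients as the paper's: the cube-wise bound $\fint_R\|A_RW^{-\frac1p}(z)\|^r\,dz\leq C$ from Lemma \ref{8}(ii) applied to each $R\in\mathscr{Q}_j$, and the integrability of the kernel for $M>n$. The only difference is organizational — the paper sums directly over the cubes $R\in\mathscr{Q}_j$ and invokes the lattice sum \eqref{33z}, whereas you group the cubes into dyadic annuli around $x$ and sum a geometric series — and the two bookkeepings are equivalent.
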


\begin{proof}
By Lemmas \ref{33y} and \ref{8}(ii), $M\in(n,\infty)$, and \eqref{33z}, we conclude that,
for any $j\in\mathbb{Z}$, $x\in\mathbb{R}^n$, and $r\in[0,p'+\delta]$,
\begin{align*}
&2^{jn}\int_{\mathbb{R}^n}
\frac{1}{(1+2^j|x-z|)^M}
\left\|A_j(z)W^{-\frac{1}{p}}(z)\right\|^r\,dz\\
&\quad=\sum_{R\in\mathscr{Q}_j}\fint_R
\frac{1}{(1+2^j|x-z|)^M}
\left\|A_RW^{-\frac{1}{p}}(z)\right\|^r\,dz\\
&\quad\sim\sum_{R\in\mathscr{Q}_j}
\frac{1}{(1+2^j|x-x_R|)^M}
\fint_R\left\|A_RW^{-\frac{1}{p}}(z)\right\|^r\,dz
\lesssim\sum_{k\in\mathbb{Z}^n}\frac{1}{(1+|2^jx-k|)^M}
\sim1.
\end{align*}
This finishes the proof of Lemma \ref{151}.
\end{proof}

\begin{lemma}\label{60}
Let $s\in\mathbb{R}$, $\tau\in[0,\infty)$, $p\in(0,\infty)$, $q\in(0,\infty]$,
$\varphi\in\mathcal{S}$ satisfy both \eqref{19} and \eqref{20}, $W\in A_p$,
and $\mathbb{A}:=\{A_Q\}_{Q\in\mathscr{Q}}$ be a sequence of
reducing operators of order $p$ for $W$.
Then there exists a positive constant $C$ such that,
for any $\vec f\in(\mathcal{S}_\infty')^m$,
\begin{align}\label{63}
\left\|\vec f\right\|_{\dot A^{s,\tau}_{p,q}(\mathbb{A},\varphi)}
\leq C\left\|\vec f\right\|_{\dot A^{s,\tau}_{p,q}(W,\varphi)}.
\end{align}
\end{lemma}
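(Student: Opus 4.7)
The plan is to reduce \eqref{63} to Lemma \ref{summary F}. With
$$g_j:=\left|A_j\left(\varphi_j*\vec f\right)\right|\text{ and }h_j:=\left|W^{\frac{1}{p}}\left(\varphi_j*\vec f\right)\right|,$$
it suffices to produce $\tilde r\in(0,\min\{p,q\})$ and $M\in(n,\infty)$ such that, for every $j\in\mathbb Z$ and $x\in\mathbb R^n$,
\begin{equation*}
g_j(x)^{\tilde r}\lesssim 2^{jn}\int_{\mathbb R^n}\frac{h_j(z)^{\tilde r}}{(1+2^j|x-z|)^M}\,dz,
\end{equation*}
since Lemma \ref{summary F} then delivers \eqref{63} at once.

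When $p\in(0,1]$, Lemma \ref{8}\eqref{8p<1} already provides the uniform pointwise bound $\|A_QW^{-\frac{1}{p}}(x)\|\lesssim 1$ for almost every $x\in Q\in\mathscr{Q}$. Writing
$$A_Q\left(\varphi_j*\vec f\right)(x)=A_QW^{-\frac{1}{p}}(x)\cdot W^{\frac{1}{p}}(x)\left(\varphi_j*\vec f\right)(x)$$
and taking norms yields $g_j\lesssim h_j$ pointwise, which is a trivial special case of the displayed estimate.

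For $p\in(1,\infty)$ the pointwise bound is no longer available, and the strategy is to mimic the proof of Lemma \ref{56}. Fix $r\in(0,\min\{p,q,1\})$ and $M$ to be chosen. Applying the reproducing formula \eqref{10} to $\varphi_j*\vec f$, multiplying on the left by $A_Q$, inserting $I_m=W^{-\frac{1}{p}}(x_R+y)W^{\frac{1}{p}}(x_R+y)$, and combining the Schwartz decay of $\gamma$ with Lemma \ref{famous 2} produce, for every $y\in\mathbb R^n$ and $x\in Q\in\mathscr{Q}_j$,
\begin{equation*}
\left|A_Q\left(\varphi_j*\vec f\right)(x)\right|^r\lesssim\sum_{R\in\mathscr{Q}_j}\frac{\left\|A_QW^{-\frac{1}{p}}(x_R+y)\right\|^r\,\left|W^{\frac{1}{p}}(x_R+y)\left(\varphi_j*\vec f\right)(x_R+y)\right|^r}{(1+2^j|x-x_R-y|)^{Mr}}.
\end{equation*}
Averaging over $y\in 2^{-j}[0,1)^n$ and changing variables $z=x_R+y$ convert the sum into an integral over $\mathbb R^n$. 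For $z\in R$ I would then split $\|A_QW^{-\frac{1}{p}}(z)\|\leq\|A_QA_R^{-1}\|\cdot\|A_j(z)W^{-\frac{1}{p}}(z)\|$ and use \eqref{23y} together with Lemma \ref{33y} to bound $\|A_QA_R^{-1}\|\lesssim(1+2^j|x-z|)^\Delta$, arriving at
\begin{equation*}
g_j(x)^r\lesssim 2^{jn}\int_{\mathbb R^n}\frac{\left\|A_j(z)W^{-\frac{1}{p}}(z)\right\|^r\,h_j(z)^r}{(1+2^j|x-z|)^{(M-\Delta)r}}\,dz.
\end{equation*}

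The main obstacle is the parasitic factor $\|A_j(z)W^{-\frac{1}{p}}(z)\|^r$, which is not pointwise bounded when $p>1$ but is well controlled on averages by Lemma \ref{151}. The key move is H\"older's inequality with conjugate exponents $u,u'$: I would first fix $\tilde r\in(0,\min\{p,q\})$, then choose $u'>1$ large enough that $ru:=\tilde r/(u'-1)\leq p'+\delta$ (with $\delta$ as in Lemma \ref{8}\eqref{8p>1}), and set $r:=\tilde r/u'$. H\"older's inequality then separates the two factors, Lemma \ref{151} bounds the factor carrying $\|A_jW^{-\frac{1}{p}}\|^{ru}$ by an absolute constant provided $M$ is chosen so that $(M-\Delta)r>n$, and what survives is exactly
\begin{equation*}
g_j(x)^{\tilde r}\lesssim 2^{jn}\int_{\mathbb R^n}\frac{h_j(z)^{\tilde r}}{(1+2^j|x-z|)^{(M-\Delta)r}}\,dz,
\end{equation*}
which feeds directly into Lemma \ref{summary F} to conclude.
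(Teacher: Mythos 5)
Your proposal is correct and follows essentially the same route as the paper: the case $p\in(0,1]$ via the pointwise bound of Lemma \ref{8}\eqref{8p<1}, and the case $p\in(1,\infty)$ by deriving the convolution-type estimate with the parasitic factor $\|A_j(z)W^{-\frac 1p}(z)\|^r$ (exactly as in the proof of Lemma \ref{56}, using \eqref{10}, \eqref{23y}, and Lemma \ref{33y}), then removing that factor by H\"older's inequality together with Lemma \ref{151} before invoking Lemma \ref{summary F}. The only difference is the choice of H\"older exponents (your large $u'$ versus the paper's conjugate pair $(p',p)$ applied to $g_j^r$ raised to the power $p$), which is immaterial since both keep the exponent on $\|A_jW^{-\frac 1p}\|$ within the admissible range $[0,p'+\delta]$ and the final exponent within $(0,\min\{p,q,1\})$.
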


\begin{proof}
To prove \eqref{63}, we need consider the following two cases on $p$.

\emph{Case 1)} $p\in(0,1]$.
In this case, by Lemma \ref{8}(i), we find that,
for any $j\in\mathbb{Z}$ and almost every $x\in\mathbb{R}^n$,
\begin{align*}
\left|A_j(x)\left(\varphi_j*\vec{f}\right)(x)\right|
&\leq\left\|A_j(x)W^{-\frac{1}{p}}(x)\right\|
\left|W^{\frac{1}{p}}(x)\left(\varphi_j*\vec{f}\right)(x)\right|\\
&\lesssim\left|W^{\frac{1}{p}}(x)\left(\varphi_j*\vec{f}\right)(x)\right|,
\end{align*}
where $A_j$ is the same as in \eqref{Aj}.
This finishes the proof of \eqref{63} in this case.

\emph{Case 2)} $p\in(1,\infty)$.
In this case, for any $j\in\mathbb Z$, let
\begin{equation}\label{gj 2}
g_j:=\left|A_j\left(\varphi_j*\vec f\right)\right|\text{ and } h_j:=\left|W^{\frac{1}{p}}\left(\varphi_j*\vec{f}\right)\right|.
\end{equation}
We fix $P\in\mathscr{Q}$, $r\in(0,\frac{\min\{p,q,1\}}{p})$,
and $M\in(\Delta+\frac{n}{r},\infty)$,
where $\Delta$ is such that $W$ has $A_p$-dimensions $(d,\widetilde d,\Delta)$.
By \eqref{145}, the Tonelli theorem, and \eqref{23y},
we obtain, for any $j\in\mathbb{Z}$,
$Q\in\mathscr{Q}_j$, and $x\in Q$,
\begin{align*}
\left|A_Q\left(\varphi_j*\vec f\right)(x)\right|^r
&\leq\left[|Q|^{-\frac12}\sup_{\mathbb{A},\varphi,Q}\left(\vec f\right)\right]^r\\
&\lesssim2^{jn}\sum_{R\in\mathscr{Q}_j}\int_R
\frac{1}{(1+2^j|x-z|)^{Mr}}
\left|A_Q\left(\varphi_j*\vec f\right)(z)\right|^r\,dz\\
&\leq2^{jn}\sum_{R\in\mathscr{Q}_j}\int_R
\frac{\|A_QA_R^{-1}\|^r\|A_RW^{-\frac{1}{p}}(z)\|^r}{(1+2^j|x-z|)^{Mr}}
\left|h_j(z)\right|^r\,dz\\
&\lesssim2^{jn}\sum_{R\in\mathscr{Q}_j}\int_R
\frac{\|A_RW^{-\frac{1}{p}}(z)\|^r}{(1+2^j|x-z|)^{(M-\Delta)r}}
\left|h_j(z)\right|^r\,dz\\
&=2^{jn}\int_{\mathbb{R}^n}
\frac{\|A_j(z)W^{-\frac{1}{p}}(z)\|^r}{(1+2^j|x-z|)^{\widetilde{M}}}
\left|h_j(z)\right|^r\,dz,
\end{align*}
where $\widetilde{M}:=(M-\Delta)r\in(n,\infty)$
and $h_j$ is the same as in \eqref{gj 2}.
Using this, H\"older's inequality,
and Lemma \ref{151} with $M$ and $r$ replaced,
respectively, by $\widetilde{M}$ and $ rp' $, we find that,
for any $j\in\mathbb{Z}$ and $x\in\mathbb{R}^n$,
\begin{align*}
\left| g_j (x)\right|^{rp}
&\lesssim\left[2^{jn}\int_{\mathbb{R}^n}
\frac{1}{(1+2^j|x-z|)^{\widetilde{M}}}
\left\|A_j(z)W^{-\frac{1}{p}}(z)\right\|^{rp'}\,dz \right]^{\frac{p}{p'}}\notag\\
&\quad\times 2^{jn}\int_{\mathbb{R}^n}
\frac{1}{(1+2^j|x-z|)^{\widetilde{M}}}
\left|h_j(z)\right|^{rp}\,dz\\
&\lesssim2^{jn}\int_{\mathbb{R}^n}
\frac{1}{(1+2^j|x-z|)^{\widetilde{M}}}
\left|h_j(z)\right|^{rp}\,dz,
\end{align*}
where $g_j$ is the same as in \eqref{gj 2}.
From this and Lemma \ref{summary F} with $M$ and $r$ replaced,
respectively, by $\widetilde{M}\in(n,\infty)$ and $rp\in(0,\min\{p,q,1\})$, we deduce that
$$
\left\|\vec f\right\|_{\dot A^{s,\tau}_{p,q}(\mathbb{A},\varphi)}
=\left\|\left\{2^{js}g_j\right\}_{j\in\mathbb Z}\right\|_{L\dot A_{p,q}^\tau}
\lesssim\left\|\left\{2^{js}h_j\right\}_{j\in\mathbb Z}\right\|_{L\dot A_{p,q}^\tau}
=\left\|\vec f\right\|_{\dot A^{s,\tau}_{p,q}(W,\varphi)}.
$$
This finishes the proof of \eqref{63} in this case and hence Lemma \ref{60}.
\end{proof}

Finally, to establish the relations between
$\|\vec f\|_{\dot A^{s,\tau}_{p,q}(W,\varphi)}$
and $\|\sup_{\mathbb{A},\varphi}(\vec f)\|_{\dot a^{s,\tau}_{p,q}}$,
we need several technical lemmas.
For any $j\in\mathbb{Z}$ and any nonnegative measurable function
$f$ on $\mathbb R^n$ or any $f\in L^1_{\mathop\mathrm{loc}}$, let
\begin{equation}\label{Ej}
E_j (f):=\sum_{Q\in\mathscr{Q}_j}
\left[\fint_Q f(x)\,dx\right]\mathbf{1}_Q.
\end{equation}

The following lemma is just \cite[Corollary 3.8]{fr21}.

\begin{lemma}\label{46x}
Let $p\in(0,\infty)$, $q\in(0,\infty]$, $W\in A_p$,
and $\{A_Q\}_{Q\in\mathscr{Q}}$ be a sequence of
reducing operators of order $p$ for $W$.
For any $j\in\mathbb{Z}$, let
\begin{equation}\label{47}
\gamma_j:=\sum_{Q\in\mathscr{Q}_j}
\left\|W^{\frac{1}{p}}A_Q^{-1}\right\|\mathbf{1}_Q.
\end{equation}
Then there exists a positive constant $C$ such that,
for any sequence $\{f_j\}_{j\in\mathbb{Z}}$ of nonnegative measurable functions
on $\mathbb R^n$ or for any $\{f_j\}_{j\in\mathbb{Z}}\subset L^1_{\mathop\mathrm{loc}}$,
$$
\left\|\left\{\gamma_jE_j\left(f_j\right)\right\}_{j\in\mathbb Z}\right\|_{L^p\ell^q}
\leq C\left\|\left\{E_j\left(f_j\right)\right\}_{j\in\mathbb Z}\right\|_{L^p\ell^q},
$$
where $E_j$ for any $j\in\mathbb Z$ is the same as in \eqref{Ej}.
\end{lemma}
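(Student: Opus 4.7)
The plan is to exploit the higher integrability of $\gamma_j$ from Lemma~\ref{8}(iii), combine it with the fact that $E_j(f_j)$ is piecewise constant on $\mathscr Q_j$, and then pass from a scalar $L^p$ bound to the vector-valued $L^p\ell^q$ bound via a duality and Fefferman--Stein argument. First, I fix $r=p+\delta$ with small $\delta>0$ coming from Lemma~\ref{8}(iii), so that $\fint_Q\gamma_j^r\le C_0$ uniformly in $j\in\mathbb Z$ and $Q\in\mathscr Q_j$; H\"older's inequality then gives $\fint_Q\gamma_j^s\le C_0^{s/r}$ for every $s\in(0,r]$.

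The scalar single-level bound follows directly: writing $c_Q:=\fint_Q f_j$ for each $Q\in\mathscr Q_j$, one has
\begin{equation*}
\int_Q|\gamma_j E_j(f_j)|^p\,dx=c_Q^p\int_Q\gamma_j^p\,dx\le Cc_Q^p|Q|=C\int_Q|E_j(f_j)|^p\,dx,
\end{equation*}
which on summing over $Q\in\mathscr Q_j$ yields $\|\gamma_jE_j(f_j)\|_{L^p}\le C\|E_j(f_j)\|_{L^p}$ uniformly in $j$. This already settles the case $q=p$.

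For the full vector-valued bound, I would dualize against a unit-norm $\{G_j\}\in L^{p'}\ell^{q'}$ and use the self-adjointness identity $\int \gamma_jE_j(f_j)\,G_j\,dx=\int E_j(f_j)\,E_j(\gamma_j G_j)\,dx$, followed by H\"older in $L^p\ell^q\times L^{p'}\ell^{q'}$. This reduces the problem to proving
\begin{equation*}
\|\{E_j(\gamma_j G_j)\}_j\|_{L^{p'}\ell^{q'}}\le C\|\{G_j\}_j\|_{L^{p'}\ell^{q'}}.
\end{equation*}
Applying H\"older on each cube $Q\in\mathscr Q_j$ together with $\fint_Q\gamma_j^r\le C_0$ yields the pointwise estimate $E_j(\gamma_j G_j)(x)\le C\,[\mathcal M(|G_j|^{r'})(x)]^{1/r'}$, and Lemma~\ref{Fefferman Stein} applied to $\{|G_j|^{r'}\}_j$ in $L^{p'/r'}\ell^{q'/r'}$ then closes the argument whenever $r$ can be chosen so that $r>\max\{p,q\}$.

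The main obstacle is covering the full parameter range $p\in(0,\infty)$, $q\in(0,\infty]$, in particular the cases $p\le 1$, $q\le 1$, and $q\ge p+\delta$: duality is not directly available when $p$ or $q$ is at most $1$, and the higher-integrability exponent $\delta$ in Lemma~\ref{8}(iii) may be too small to give $r>q$ when $q$ is large. The small-$p$ or small-$q$ ranges can be handled by a direct Fefferman--Stein argument with a small parameter $s$ in place of duality, while the large-$q$ range needs a more delicate analysis exploiting the particular dyadic structure of $E_j(f_j)$ (each $E_j(f_j)$ being constant on $\mathscr Q_j$, so that the $\ell^q$-sum can be reorganised into sums over cubes). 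This is exactly the scheme of \cite[Corollary~3.8]{fr21}, which the present lemma quotes.
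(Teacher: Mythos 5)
You have correctly identified that this lemma is exactly \cite[Corollary 3.8]{fr21}: the paper offers no proof of its own, only that citation, so there is no ``paper's argument'' to compare against beyond the source you name. The pieces you actually prove are fine: the single-level scalar bound (hence the case $q=p$) is correct, the self-adjointness identity $\int\gamma_jE_j(f_j)G_j=\int E_j(f_j)E_j(\gamma_jG_j)$ is valid because both $E_j(f_j)$ and $E_j(\gamma_jG_j)$ are constant on each $Q\in\mathscr Q_j$, and the duality plus Fefferman--Stein step legitimately covers $1<p<\infty$ and $1<q<p+\delta$.

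However, as a proof of the stated lemma the proposal has a genuine gap, and your suggested patches do not close it. For the small-$p$/small-$q$ range you propose a ``direct Fefferman--Stein argument with a small parameter $s$'', which would require a pointwise domination of the form $\gamma_j(x)E_j(f_j)(x)\leq C[\mathcal M(|E_j(f_j)|^s)(x)]^{1/s}$; since $E_j(f_j)$ is constant on the cube $Q\ni x$, the right-hand side is comparable to $C|\fint_Qf_j|$ there, so this bound would force $\gamma_j\leq C$ almost everywhere --- which is false: Lemma \ref{8}(iii) only controls the $L^{p+\delta}$ \emph{averages} of $\gamma_j$ over cubes of generation $j$, not its pointwise size (pointwise boundedness of the reverse quantity $\|A_QW^{-1/p}\|$ holds only for $p\leq1$, by Lemma \ref{8}(i)). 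For $q\geq p+\delta$, and in particular for $q=\infty$ which the statement explicitly includes, you leave the argument entirely to ``the scheme of \cite[Corollary 3.8]{fr21}'' without supplying it. The missing ingredient is precisely the strengthened estimate \eqref{8yy} of Lemma \ref{8}(iv): the uniform $L^{p+\delta}$-average control of $\sup_{R\in\mathscr Q,\,x\in R\subset Q}\|W^{\frac1p}(x)A_R^{-1}\|$ is what lets one dominate all the multipliers $\gamma_j$, $j\geq j_Q$, simultaneously by a single function with good integrability on $Q$, and this uniformity in $j$ is what makes the Frazier--Roudenko argument work for every $q\in(0,\infty]$. Your proposal never invokes \eqref{8yy}, and without it (or an equivalent stopping-time/good-$\lambda$ device) the remaining parameter range cannot be reached by the route you describe.
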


Applying Lemma \ref{46x}, we can obtain the following conclusion.

\begin{corollary}\label{46}
Let $p\in(0,\infty)$, $q\in(0,\infty]$, $W\in A_p$,
$\{A_Q\}_{Q\in\mathscr{Q}}$ be a sequence of
reducing operators of order $p$ for $W$,
and $\{\gamma_j\}_{j\in\mathbb{Z}}$ the same as in \eqref{47}.
Then there exists a positive constant $C$ such that,
for any sequence $\{f_j\}_{j\in\mathbb{Z}}$ of nonnegative measurable functions
on $\mathbb R^n$ [or for any $\{f_j\}_{j\in\mathbb{Z}}
\subset L^1_{\mathop\mathrm{loc}}$]
and for any $P\in\mathscr{Q}$,
$$
\left\|\left\{\gamma_jE_j\left(f_j\right)\right\}_{j\in\mathbb Z}\right\|_{L^p\ell^q(\widehat P)}
\leq C\left\|\left\{E_j\left(f_j\right)\right\}_{j\in\mathbb Z}
\right\|_{L^p\ell^q(\widehat P)},
$$
where $E_j$ for any $j\in\mathbb Z$ is the same as in \eqref{Ej}.
\end{corollary}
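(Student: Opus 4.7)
The plan is to reduce Corollary \ref{46} to Lemma \ref{46x} by a simple localization trick that exploits the dyadic nesting of cubes in $\mathscr Q$. Given $P\in\mathscr{Q}$ and a sequence $\{f_j\}_{j\in\mathbb Z}$, I define a localized sequence by setting
\begin{equation*}
\widetilde f_j:=f_j\mathbf{1}_P\ \text{if }j\geq j_P,\qquad \widetilde f_j:=0\ \text{if }j<j_P,
\end{equation*}
and then apply Lemma \ref{46x} directly to $\{\widetilde f_j\}_{j\in\mathbb Z}$.

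The key observation is that, for any $j\geq j_P$, every $Q\in\mathscr Q_j$ has $\ell(Q)=2^{-j}\leq 2^{-j_P}=\ell(P)$, so by dyadic nesting either $Q\subset P$ or $Q\cap P=\emptyset$. Consequently, for such $j$, $E_j(\widetilde f_j)$ vanishes on every $Q\in\mathscr Q_j$ disjoint from $P$, while on every $Q\subset P$ it satisfies $E_j(\widetilde f_j)|_Q=\fint_Q f_j=E_j(f_j)|_Q$. Hence, for all $j\geq j_P$,
\begin{equation*}
E_j(\widetilde f_j)=E_j(f_j)\mathbf{1}_P,\qquad \gamma_j E_j(\widetilde f_j)=\gamma_j E_j(f_j)\mathbf{1}_P,
\end{equation*}
and both quantities are identically zero for $j<j_P$. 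Taking $L^p\ell^q$-norms over $\mathbb R^n\times\mathbb Z$ of these identities produces exactly the restricted norms on $\widehat P=P\times\{j_P,j_P+1,\ldots\}$:
\begin{equation*}
\left\|\left\{\gamma_jE_j(\widetilde f_j)\right\}_{j\in\mathbb Z}\right\|_{L^p\ell^q}
=\left\|\left\{\gamma_jE_j(f_j)\right\}_{j\in\mathbb Z}\right\|_{L^p\ell^q(\widehat P)},
\end{equation*}
and similarly without the factor $\gamma_j$. Applying Lemma \ref{46x} to $\{\widetilde f_j\}_{j\in\mathbb Z}$ then yields the desired inequality with the same constant $C$.

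There is essentially no obstacle: the argument is a transparent localization, and the only point requiring attention is the dyadic nesting property, which is the reason why truncating by $\mathbf 1_P$ is compatible with the averaging operators $E_j$ for $j\geq j_P$. The bracketed case $\{f_j\}_{j\in\mathbb Z}\subset L^1_{\mathrm{loc}}$ is handled identically, since $\widetilde f_j=f_j\mathbf 1_P$ remains locally integrable.
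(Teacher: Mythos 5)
Your proposal is correct and coincides with the paper's own argument: the paper also defines the truncated sequence $g_j:=f_j\mathbf{1}_P$ for $j\geq j_P$ and $g_j:=0$ otherwise, uses the dyadic nesting to identify the restricted norms over $\widehat P$ with global norms of $\{E_j(g_j)\}_{j\in\mathbb Z}$, and then applies Lemma \ref{46x}. Nothing further is needed.
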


\begin{proof}
Fix $P\in\mathscr{Q}$.
For any $j\in\mathbb{Z}$, let
$$
g_j:=
\begin{cases}
\mathbf{1}_P f_j&\text{if }j\geq j_P,\\
0&\text{otherwise}.
\end{cases}
$$
Then, by Lemma \ref{46x}, we conclude that
\begin{align*}
\left\|\left(\sum_{j=j_P}^\infty
\left[\gamma_jE_j\left( f_j \right)\right]^q\right)^{\frac{1}{q}}\right\|_{L^p(P)}
&=\left\|\left(\sum_{j\in\mathbb{Z}}
\left[\gamma_jE_j\left(g_j\right)\right]^q\right)^{\frac{1}{q}}\right\|_{L^p}\\
&\lesssim\left\|\left(\sum_{j\in\mathbb{Z}}
\left[E_j\left(g_j\right)\right]^q\right)^{\frac{1}{q}}\right\|_{L^p}
=\left\|\left(\sum_{j=j_P}^\infty
\left[E_j\left(f_j\right)\right]^q\right)^{\frac{1}{q}}\right\|_{L^p(P)}.
\end{align*}
This finishes the proof of Corollary \ref{46}.
\end{proof}

\begin{remark}\label{46y}
In Corollary \ref{46}, if, for any $i\in\mathbb{Z}$, let
$$
f_i:=
\begin{cases}
g&\text{if }i=j,\\
0&\text{if }i\neq j,
\end{cases}
$$
where $g$ is a nonnegative measurable function on $\mathbb R^n$
or $g\in L^1_{\mathop\mathrm{loc}}$,
then, for any $j\in\mathbb{Z}$ and $P\in\mathscr{Q}$,
$$
\left\|\gamma_jE_j(g)\right\|_{L^p(P)}
\leq C\left\|E_j(g)\right\|_{L^p(P)},
$$
where $C$ is a positive constant independent of $g$, $j$, and $P$.
\end{remark}

\begin{lemma}\label{69}
Let $s\in\mathbb{R}$, $\tau\in[0,\infty)$, $p\in(0,\infty)$, $q\in(0,\infty]$,
$\varphi\in\mathcal{S}$ satisfy both \eqref{19} and \eqref{20}, $W\in A_p $,
and $\mathbb{A}:=\{A_Q\}_{Q\in\mathscr{Q}}$ be a sequence of
reducing operators of order $p$ for $W$.
Then there exists a positive constant $C$ such that,
for any $\vec f\in(\mathcal{S}_\infty')^m$,
$$
\left\|\vec f\right\|_{\dot A^{s,\tau}_{p,q}(W,\varphi)}
\leq C\left\|\sup_{\mathbb{A},\varphi}\left(\vec f\right)\right\|_{\dot a^{s,\tau}_{p,q}},
$$
where $\sup_{\mathbb{A},\varphi}$ is the same as in \eqref{sup}.
\end{lemma}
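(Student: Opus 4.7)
The plan is to combine a simple pointwise estimate with the averaging inequality of Corollary~\ref{46}. First I would observe that, for any $j \in \mathbb Z$ and $Q \in \mathscr Q_j$, the factorisation $W^{\frac 1p}(x) = [W^{\frac 1p}(x) A_Q^{-1}] A_Q$ gives, for almost every $x \in Q$,
$$
\left|W^{\frac 1p}(x)\left(\varphi_j * \vec f\right)(x)\right|
\leq \left\|W^{\frac 1p}(x) A_Q^{-1}\right\|\left|A_Q\left(\varphi_j*\vec f\right)(x)\right|
\leq \gamma_j(x)\, g_j(x),
$$
where $\gamma_j$ is as in \eqref{47} and $g_j$ is as in \eqref{gj}; the second inequality uses that $|A_Q(\varphi_j * \vec f)(x)| \leq \sup_{y\in Q}|A_Q(\varphi_j * \vec f)(y)| = g_j(x)$ for $x \in Q$, since on $Q$ the function $g_j$ equals the constant $|Q|^{-\frac 12}\sup_{\mathbb A,\varphi,Q}(\vec f)$.

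The key observation is then that, by the same constancy, $g_j$ is fixed by the averaging operator in \eqref{Ej}, i.e., $g_j = E_j(g_j)$. Hence, for each $P \in \mathscr Q$, Corollary~\ref{46} applied to the sequence $\{2^{js} g_j\}_{j\in\mathbb Z}$ (in the Triebel--Lizorkin case), or Remark~\ref{46y} applied at each fixed $j \geq j_P$ followed by taking the $\ell^q$-norm in $j$ (in the Besov case), will yield
$$
\left\|\left\{2^{js}\gamma_j g_j\right\}_{j\in\mathbb Z}\right\|_{L\dot A_{p,q}(\widehat P)}
\lesssim \left\|\left\{2^{js} g_j\right\}_{j\in\mathbb Z}\right\|_{L\dot A_{p,q}(\widehat P)}.
$$
Combining this with the previous pointwise bound, dividing by $|P|^\tau$, and taking the supremum over $P\in\mathscr Q$ will produce
$$
\left\|\vec f\right\|_{\dot A^{s,\tau}_{p,q}(W,\varphi)}
\leq \left\|\left\{2^{js}\gamma_j g_j\right\}_{j\in\mathbb Z}\right\|_{L\dot A^\tau_{p,q}}
\lesssim \left\|\left\{2^{js}g_j\right\}_{j\in\mathbb Z}\right\|_{L\dot A^\tau_{p,q}}
= \left\|\sup_{\mathbb A,\varphi}\left(\vec f\right)\right\|_{\dot a^{s,\tau}_{p,q}},
$$
which is the claim.

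I do not expect a serious obstacle, because the hard analytic content (the matrix-weighted averaging inequality that allows $\gamma_j$ to be absorbed into a constant) has already been packaged into Lemma~\ref{46x} and Corollary~\ref{46}; the only mildly delicate point is to be careful that, in the Besov case, the order of norms is $\ell^q L^p$ rather than $L^p\ell^q$, so Remark~\ref{46y} must be used at each scale $j$ before summing in $j$. The remainder of the argument is bookkeeping with the definitions of $L\dot A_{p,q}^\tau$, $\dot a^{s,\tau}_{p,q}$, and $\sup_{\mathbb A,\varphi}$.
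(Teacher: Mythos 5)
Your proposal is correct and follows essentially the same route as the paper's proof: the pointwise bound $|W^{\frac1p}(\varphi_j*\vec f)|\leq\gamma_j\cdot E_j(f_j)$ with $f_j$ constant on dyadic cubes, followed by Corollary~\ref{46} (Triebel--Lizorkin case) and Remark~\ref{46y} at each fixed scale (Besov case) to absorb $\gamma_j$. The only difference is notational (the paper carries the factor $2^{js}$ inside its $f_j$), and your attention to the order of norms in the Besov case matches the paper exactly.
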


\begin{proof}
We first show that, for any $\vec f\in(\mathcal{S}_\infty')^m$,
\begin{equation}\label{70x}
\left\|\vec f\right\|_{\dot B^{s,\tau}_{p,q}(W,\varphi)}
\lesssim\left\|\sup_{\mathbb{A},\varphi}
\left(\vec f\right)\right\|_{\dot b^{s,\tau}_{p,q}}.
\end{equation}
For any $j\in\mathbb{Z}$, let
$$
g_j:=2^{js}\left|W^{\frac{1}{p}}\left(\varphi_j*\vec f\right)\right|
\text{ and }
f_j:=2^{js}\sum_{Q\in\mathscr{Q}_j}
\sup_{\mathbb{A},\varphi,Q}\left(\vec f\right)\widetilde{\mathbf{1}}_Q.
$$
Then, from the definitions of $g_j$ and $f_j$, we infer that, for any $j\in\mathbb Z$,
\begin{align}\label{238}
g_j\leq2^{js}\sum_{Q\in\mathscr{Q}_j}
\left\|W^{\frac{1}{p}}A_Q^{-1}\right\|
\left|A_Q\left(\varphi_j*\vec f\right)\right|\mathbf{1}_Q
\leq\gamma_jf_j
=\gamma_jE_j\left(f_j\right),
\end{align}
where $\gamma_j $ and $E_j$ are the same as, respectively, in \eqref{47} and \eqref{Ej}.
By this and Remark \ref{46y}, we find that,
for any $j\in\mathbb{Z}$ and $P\in\mathscr{Q}$,
$$
\left\| g_j \right\|_{L^p(P)}
\leq\left\|\gamma_jE_j\left(f_j\right)\right\|_{L^p(P)}
\lesssim\left\|E_j\left(f_j\right)\right\|_{L^p(P)}
=\left\|f_j\right\|_{L^p(P)},
$$
and hence \eqref{70x} holds.

Next, we prove that, for any $\vec f\in(\mathcal{S}_\infty')^m$,
\begin{equation}\label{70}
\left\|\vec f\right\|_{\dot F^{s,\tau}_{p,q}(W,\varphi)}
\lesssim\left\|\sup_{\mathbb{A},\varphi}
\left(\vec f\right)\right\|_{\dot f^{s,\tau}_{p,q}}.
\end{equation}
From \eqref{238} and Corollary \ref{46}, it follows that,
for any $P\in\mathscr{Q}$,
\begin{align*}
\left\|\left\{g_j\right\}_{j\in\mathbb Z}\right\|_{L^p\ell^q(\widehat P)}
&\leq\left\|\left\{\gamma_jE_j\left(f_j\right)\right\}_{j\in\mathbb Z}\right\|_{L^p\ell^q(\widehat P)}\\
&\lesssim\left\|\left\{E_j\left(f_j\right)\right\}_{j\in\mathbb Z}\right\|_{L^p\ell^q(\widehat P)}
=\left\|\left\{f_j\right\}_{j\in\mathbb Z}\right\|_{L^p\ell^q(\widehat P)},
\end{align*}
and hence
$$
\left\|\vec f\right\|_{\dot F^{s,\tau}_{p,q}(W,\varphi)}
\lesssim\left\|\sup_{\mathbb{A},\varphi}\left(\vec f\right)\right\|_{\dot f^{s,\tau}_{p,q}}.
$$
This finishes the proof of \eqref{70} and hence Lemma \ref{69}.
\end{proof}

\begin{proof}[Proof of Theorem \ref{11}]
Lemmas \ref{60} and \ref{69} give, respectively, the two inequalities
\begin{equation*}
\left\|\vec f\right\|_{\dot A^{s,\tau}_{p,q}(\mathbb A,\varphi)}
\lesssim \left\|\vec f\right\|_{\dot A^{s,\tau}_{p,q}(W,\varphi)}
\lesssim\left\|\sup_{\mathbb A,\varphi}\left(\vec f\right)\right\|_{\dot a^{s,\tau}_{p,q}},
\end{equation*}
while Lemma \ref{56} states the equivalence of the left and the right sides.
These give the equivalence of all three norms above
and hence finish the proof of Theorem \ref{11}.
\end{proof}

\subsection{Sequence Spaces: Definitions and Basic Properties}\label{W and AQ 2}

First, we introduce matrix-weighted
Besov-type and Triebel--Lizorkin-type sequence spaces.

\begin{definition}
Let $s\in\mathbb{R}$, $\tau\in[0,\infty)$, $p\in(0,\infty)$, $q\in(0,\infty]$, and $W\in A_p$.
The \emph{homogeneous matrix-weighted Besov-type sequence space} $\dot b^{s,\tau}_{p,q}(W)$
and the \emph{homogeneous matrix-weighted Triebel--Lizorkin-type sequence space}
$\dot f^{s,\tau}_{p,q}(W)$
are defined to be the sets of all sequences
$\vec t:=\{\vec t_Q\}_{Q\in\mathscr{Q}}\subset\mathbb{C}^m$ such that
$$
\left\|\vec{t}\right\|_{\dot a^{s,\tau}_{p,q}(W)}
:=\left\|\left\{2^{js}\left|W^{\frac{1}{p}}\vec t_j
\right|\right\}_{j\in\mathbb Z}\right\|_{L\dot A_{p,q}^\tau}<\infty,
$$
where, for any $j\in\mathbb{Z}$,
\begin{equation}\label{vec tj}
\vec t_j:=\sum_{Q\in\mathscr{Q}_j}\vec{t}_Q\widetilde{\mathbf{1}}_Q
\end{equation}
and $\|\cdot\|_{L\dot A_{p,q}^\tau}$ is the same as in \eqref{LApq}.
\end{definition}

Above and in what follows, we use $\dot a^{s,\tau}_{p,q}(W)$ to denote
either $\dot b^{s,\tau}_{p,q}(W)$ or $\dot f^{s,\tau}_{p,q}(W)$.
Applying an argument similar to that used in the proof of Proposition \ref{39},
we obtain the following relations between $\dot b^{s,\tau}_{p,q}(W)$
and $\dot f^{s,\tau}_{p,q}(W)$; we omit the details.

\begin{proposition}
Let $s\in\mathbb{R}$, $\tau\in[0,\infty)$,
$p\in(0,\infty)$, $q\in(0,\infty]$, and $W\in A_p$. Then
$$
\dot b_{p,p\wedge q}^{s,\tau}(W)
\subset\dot f^{s,\tau}_{p,q}(W)
\subset\dot b_{p,p\vee q}^{s,\tau}(W).
$$
Moreover, for any $\vec t:=\{\vec t_Q\}_{Q\in\mathscr{Q}}\subset\mathbb{C}^m$,
$$
\left\|\vec t\right\|_{\dot b^{s,\tau}_{p,p\vee q}(W)}
\leq\left\|\vec t\right\|_{\dot f^{s,\tau}_{p,q}(W)}
\leq\left\|\vec t\right\|_{\dot b_{p,p\wedge q}^{s,\tau}(W)}.
$$
\end{proposition}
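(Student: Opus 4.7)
The strategy is to mimic the proof of Proposition \ref{39} verbatim, replacing the function-space quantity $2^{js}|W^{\frac{1}{p}}(\varphi_j*\vec f)|$ by its sequence-space analogue $g_j:=2^{js}|W^{\frac{1}{p}}\vec t_j|$, where $\vec t_j$ is the function defined in \eqref{vec tj}. Observe that, by the definition of the sequence-space quasi-norm,
$$
\left\|\vec t\right\|_{\dot a^{s,\tau}_{p,q}(W)}
=\left\|\left\{g_j\right\}_{j\in\mathbb Z}\right\|_{L\dot A_{p,q}^\tau}
=\sup_{P\in\mathscr{Q}}|P|^{-\tau}
\left\|\left\{g_j\right\}_{j\in\mathbb Z}\right\|_{L\dot A_{pq}(\widehat{P})},
$$
for either choice of $L\dot A_{pq}\in\{L\dot B_{pq},L\dot F_{pq}\}$. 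Thus it suffices to establish, for every dyadic cube $P\in\mathscr{Q}$, the pair of inequalities
$$
\left\|\left\{g_j\right\}_{j\in\mathbb Z}\right\|_{\ell^{p\vee q}L^p(\widehat P)}
\leq\left\|\left\{g_j\right\}_{j\in\mathbb Z}\right\|_{L^p\ell^q(\widehat P)}
\leq\left\|\left\{g_j\right\}_{j\in\mathbb Z}\right\|_{\ell^{p\wedge q}L^p(\widehat P)}.
$$

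For the right-hand inequality (yielding $\dot b_{p,p\wedge q}^{s,\tau}(W)\subset\dot f^{s,\tau}_{p,q}(W)$), I would first raise everything to the power $p\wedge q$ and invoke Lemma \ref{famous 2} with $\alpha:=\frac{p\wedge q}{q}\in(0,1]$ to move the $\ell^1$-norm inside; then applying the Minkowski integral inequality to exchange $\ell^1$ with $L^{\frac{p}{p\wedge q}}$ and undoing the power $p\wedge q$ delivers the claimed bound. The left-hand inequality (yielding $\dot f^{s,\tau}_{p,q}(W)\subset\dot b_{p,p\vee q}^{s,\tau}(W)$) is symmetric: raise to power $p$, apply the Minkowski integral inequality to swap $\ell^{\frac{p\vee q}{p}}$ with $L^1$, and then use Lemma \ref{famous 2} with $\alpha:=\frac{p}{p\vee q}\in(0,1]$.

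Taking supremum over $P\in\mathscr{Q}$ weighted by $|P|^{-\tau}$ then yields the desired norm estimates and hence the two continuous embeddings. The case $q=\infty$ should be handled separately with the usual modification (the $\ell^q$ norm becomes a supremum), but is in fact easier: the relevant $\ell^{p\wedge q}=\ell^p$ and $\ell^{p\vee q}=\ell^\infty$ cases reduce to direct pointwise/termwise comparisons. There is no substantive obstacle here; the argument is entirely parallel to that of Proposition \ref{39}, with the sole conceptual point being that the weight $W^{\frac{1}{p}}$ and the dependence on $Q\in\mathscr{Q}_j$ are absorbed cleanly into the single quantity $g_j$, so that the mixed-norm juggling reduces to the scalar inequalities of Lemma \ref{famous 2} and Minkowski, with no interaction with the matrix structure.
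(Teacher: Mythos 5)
Your proposal is correct and is exactly what the paper intends: the paper itself omits the proof, stating only that one applies the argument of Proposition \ref{39} verbatim, and your observation that the mixed-norm inequalities there act on the scalar quantities $g_j$ without any reference to their internal structure (so that $2^{js}|W^{\frac1p}(\varphi_j*\vec f)|$ may be replaced by $2^{js}|W^{\frac1p}\vec t_j|$ with no change) is precisely the point. No gaps.
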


Now, we introduce the averaging matrix-weighted Besov-type
and Triebel--Lizorkin-type sequence spaces.

\begin{definition}
Let $s\in\mathbb{R}$, $\tau\in[0,\infty)$, $p\in(0,\infty)$, $q\in(0,\infty]$, $W\in A_p$,
and $\mathbb{A}:=\{A_Q\}_{Q\in\mathscr{Q}}$ be a sequence of
reducing operators of order $p$ for $W$.
The \emph{homogeneous averaging matrix-weighted Besov-type sequence space}
$\dot{b}^{s,\tau}_{p,q}(\mathbb{A})$
and the \emph{homogeneous averaging matrix-weighted Triebel--Lizorkin-type sequence space}
$\dot{f}^{s,\tau}_{p,q}(\mathbb{A})$
are defined to be the sets of all sequences
$\vec t:=\{\vec t_Q\}_{Q\in\mathscr{Q}}\subset\mathbb{C}^m$ such that
$$
\left\|\vec{t}\right\|_{\dot{a}^{s,\tau}_{p,q}(\mathbb{A})}
:=\left\|\left\{2^{js}\left|A_j\vec t_j\right|\right\}_{j\in\mathbb Z}\right\|_{L\dot A_{p,q}^\tau}<\infty,
$$
where $A_j$, $\vec t_j$, and $\|\cdot\|_{L\dot A_{p,q}^\tau}$
are the same as, respectively, in \eqref{Aj}, \eqref{vec tj}, and \eqref{LApq}.
\end{definition}

Above and in what follows, we use $\dot{a}^{s,\tau}_{p,q}(\mathbb{A})$ to denote
either $\dot{b}^{s,\tau}_{p,q}(\mathbb{A})$ or $\dot{f}^{s,\tau}_{p,q}(\mathbb{A})$.
By \eqref{equ_reduce}, we find that
$\dot{a}^{s,\tau}_{p,q}(\mathbb{A})$ is independent of the choice of $\mathbb{A}$.
The following theorem is the main result of this subsection.

\begin{theorem}\label{37}
Let $s\in\mathbb{R}$, $\tau\in[0,\infty)$,
$p\in(0,\infty)$, $q\in(0,\infty]$, $W\in A_p$,
and $\mathbb{A}:=\{A_Q\}_{Q\in\mathscr{Q}}$ be a sequence of
reducing operators of order $p$ for $W$.
Then, for any $\vec t:=\{\vec t_Q\}_{Q\in\mathscr{Q}}\subset\mathbb{C}^m$,
$\|\vec{t}\|_{\dot a^{s,\tau}_{p,q}(W)}
\sim\|\vec{t}\|_{\dot{a}^{s,\tau}_{p,q}(\mathbb{A})},$
where the positive equivalence constants are independent of $\vec t$.
\end{theorem}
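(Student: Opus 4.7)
The plan is to establish the two inequalities $\|\vec t\|_{\dot a^{s,\tau}_{p,q}(W)}\lesssim\|\vec t\|_{\dot a^{s,\tau}_{p,q}(\mathbb A)}$ and its reverse separately. For the first, I imitate Lemma \ref{69}: the identity $\vec t_j(x)=|Q|^{-1/2}\vec t_Q$ on $Q\in\mathscr{Q}_j$ together with $\vec t_Q=A_Q^{-1}(A_Q\vec t_Q)$ gives the pointwise bound $|W^{1/p}(x)\vec t_j(x)|\leq\gamma_j(x)|A_j(x)\vec t_j(x)|$ with $\gamma_j$ as in \eqref{47}; since $|A_j\vec t_j|$ is constant on each $Q\in\mathscr{Q}_j$, it equals $E_j(|A_j\vec t_j|)$, and the estimate becomes $|W^{1/p}\vec t_j|\leq\gamma_j E_j(|A_j\vec t_j|)$. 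Applying Corollary \ref{46} (for the Triebel--Lizorkin scale) or Remark \ref{46y} followed by $\ell^q$-summation (for the Besov scale) to each dyadic $P\in\mathscr{Q}$ and then taking $|P|^{-\tau}\sup_P$ finishes this direction.

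The reverse direction hinges on upgrading the reducing-operator identity $|A_Q\vec t_Q|^p\sim\fint_Q|W^{1/p}(y)\vec t_Q|^p\,dy$ to the sharper statement
\begin{equation*}
|A_Q\vec t_Q|\leq C\left[\fint_Q|W^{1/p}(y)\vec t_Q|^\sigma\,dy\right]^{1/\sigma}
\quad\text{for every }\sigma\in(0,\infty).
\end{equation*}
For $p\in(0,1]$ this follows immediately from Lemma \ref{8}\eqref{8p<1}, since $\|A_QW^{-1/p}(y)\|\lesssim 1$ for a.e.\ $y\in Q$ yields $|A_Q\vec t_Q|\lesssim|W^{1/p}(y)\vec t_Q|$ pointwise, which integrates to the $\sigma$-power bound. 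For $p\in(1,\infty)$, starting from $|A_Q\vec t_Q|^\kappa\leq\|A_QW^{-1/p}(y)\|^\kappa|W^{1/p}(y)\vec t_Q|^\kappa$ and averaging over $y\in Q$, H\"older's inequality with conjugate exponents $\sigma/(\sigma-\kappa)$ and $\sigma/\kappa$ gives
\begin{equation*}
|A_Q\vec t_Q|^\kappa\leq\left(\fint_Q\|A_QW^{-1/p}\|^{\kappa\sigma/(\sigma-\kappa)}\,dy\right)^{(\sigma-\kappa)/\sigma}\left(\fint_Q|W^{1/p}\vec t_Q|^\sigma\,dy\right)^{\kappa/\sigma};
\end{equation*}
choosing $\kappa\in(0,\sigma)$ small enough that $\kappa\sigma/(\sigma-\kappa)\leq p'+\delta$, with $\delta$ from Lemma \ref{8}\eqref{8p>1}, bounds the first factor by a constant, and raising to $1/\kappa$ yields the claim.

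Having fixed $\sigma\in(0,\min\{p,q\})$, for any $x\in Q\in\mathscr{Q}_j$ the sharper bound combined with $|Q|=2^{-jn}$ and $\vec t_j(y)=|Q|^{-1/2}\vec t_Q$ on $Q$ gives $|A_j(x)\vec t_j(x)|^\sigma\lesssim 2^{jn}\int_Q|W^{1/p}(z)\vec t_j(z)|^\sigma\,dz$; since $\mathbf 1_Q(z)\lesssim(1+2^j|x-z|)^{-M}$ for any $M\geq 0$ when $x\in Q$ (trivial on $Q$, vacuous off $Q$), fixing any $M>n$ extends the integration to $\mathbb R^n$ and produces
\begin{equation*}
|A_j(x)\vec t_j(x)|^\sigma\leq C\,2^{jn}\int_{\mathbb R^n}\frac{|W^{1/p}(z)\vec t_j(z)|^\sigma}{(1+2^j|x-z|)^M}\,dz.
\end{equation*}
This is exactly the hypothesis of Lemma \ref{summary F} with $r=\sigma\in(0,\min\{p,q\})$, $g_j=|A_j\vec t_j|$, and $h_j=|W^{1/p}\vec t_j|$, whose conclusion delivers $\|\vec t\|_{\dot a^{s,\tau}_{p,q}(\mathbb A)}\lesssim\|\vec t\|_{\dot a^{s,\tau}_{p,q}(W)}$. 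The main obstacle is the sharper average bound in the $p\in(1,\infty)$ case: only the open-endedness of the integrability of $\|A_QW^{-1/p}\|$ strictly beyond $p'$ given by Lemma \ref{8}\eqref{8p>1} allows $\sigma$ to be chosen below $\min\{p,q\}$, which is exactly the condition needed to invoke the Fefferman--Stein-type estimate packaged in Lemma \ref{summary F}.
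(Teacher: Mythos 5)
Your proposal is correct, and the harder direction is argued by a genuinely different route than the paper's. The forward inequality $\|\vec t\|_{\dot a^{s,\tau}_{p,q}(W)}\lesssim\|\vec t\|_{\dot a^{s,\tau}_{p,q}(\mathbb A)}$ coincides with the paper's treatment of the $\dot f$-scale (the bound $|W^{\frac1p}\vec t_j|\leq\gamma_j E_j(|A_j\vec t_j|)$ plus Corollary \ref{46}/Remark \ref{46y}); for the $\dot b$-scale the paper instead gets the full equivalence in one line from \eqref{equ_reduce}, since the $L^p(P)$-norm of the piecewise-constant $|W^{\frac1p}\vec t_j|$ decomposes cube by cube. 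For the reverse inequality with $p\in(1,\infty)$, the paper uses Chebyshev's inequality and Lemma \ref{8}(ii) with $r=1$ to produce subsets $E_Q\subset Q$ with $|E_Q|\geq\frac12|Q|$ on which $\|A_QW^{-\frac1p}(x)\|\lesssim1$, and then invokes the measure-theoretic substitution Lemma \ref{44} (from \cite{yyz13}) for $\dot f^{s,\tau}_{p,q}$. You instead upgrade the reducing-operator identity to $|A_Q\vec t_Q|\lesssim(\fint_Q|W^{\frac1p}\vec t_Q|^{\sigma})^{\frac1\sigma}$ for arbitrarily small $\sigma$ and feed the resulting convolution bound into Lemma \ref{summary F}. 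Your route treats the Besov and Triebel--Lizorkin scales uniformly and avoids importing Lemma \ref{44}, at the price of invoking the Fefferman--Stein machinery where the paper's argument is purely measure-theoretic at the level of sequences. One small inaccuracy in your closing remark: the self-improvement of Lemma \ref{8}(ii) \emph{strictly beyond} $p'$ is not what makes the argument work. Since the H\"older exponent $\kappa\sigma/(\sigma-\kappa)$ tends to $0$ as $\kappa\to0$, any fixed admissible exponent (even $r=1$, exactly as the paper uses) suffices to control the first factor; the freedom is in $\kappa$, not in $\delta$.
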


To show Theorem \ref{37}, we need the following lemma
which is a part of \cite[Proposition 2.4]{yyz13}.

\begin{lemma}\label{44}
Let $s\in\mathbb{R}$, $\tau\in[0,\infty)$,
$p\in(0,\infty)$, $q\in(0,\infty]$, and $\delta\in(0,1)$.
Suppose that, for any $Q\in\mathscr{Q}$, $E_Q\subset Q$ is a measurable set
with $|E_Q|\geq\delta|Q|$.
Then, for any sequence $t:=\{t_Q\}_{Q\in\mathscr{Q}}\subset\mathbb{C}$,
$$
\|t\|_{\dot f^{s,\tau}_{p,q}}
\sim\left\|\left\{2^{j(s+\frac{n}{2})}
\sum_{Q\in\mathscr{Q}_j}t_Q\mathbf{1}_{E_Q}
\right\}_{j\in\mathbb Z}\right\|_{L\dot F_{p,q}^\tau},
$$
where the positive equivalence constants are independent of $t$.
\end{lemma}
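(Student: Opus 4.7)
My plan is to reduce the claim to the framework already provided by Lemma \ref{summary F}. Observing that, for any $Q\in\mathscr{Q}_j$, $|Q|^{-\frac12}=2^{jn/2}$, and that the cubes in $\mathscr{Q}_j$ are pairwise disjoint (so the modulus distributes across the sum), we have, pointwise on $\mathbb{R}^n$,
\begin{equation*}
2^{js}\left|t_j\right|=2^{j(s+\frac{n}{2})}\sum_{Q\in\mathscr{Q}_j}|t_Q|\mathbf{1}_Q
=:2^{j(s+\frac{n}{2})}g_j,
\qquad
2^{j(s+\frac{n}{2})}\sum_{Q\in\mathscr{Q}_j}t_Q\mathbf{1}_{E_Q}
\text{ has modulus }2^{j(s+\frac{n}{2})}h_j,
\end{equation*}
where $h_j:=\sum_{Q\in\mathscr{Q}_j}|t_Q|\mathbf{1}_{E_Q}$. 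Thus the claim is equivalent to
$\|\{2^{j(s+n/2)}g_j\}_{j\in\mathbb{Z}}\|_{L\dot F_{p,q}^\tau}
\sim\|\{2^{j(s+n/2)}h_j\}_{j\in\mathbb{Z}}\|_{L\dot F_{p,q}^\tau}$.

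The direction $\lesssim$ corresponds to the hard part of the claim, while the reverse direction is immediate: since $E_Q\subset Q$, one has $h_j\leq g_j$ pointwise, which yields the bound term by term in \eqref{LApq}.

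For the nontrivial direction, I would fix $\rho\in(0,\min\{p,q\})$ and $M\in(n,\infty)$ and establish the pointwise estimate
\begin{equation*}
\left[g_j(x)\right]^\rho
\leq C\,2^{jn}\int_{\mathbb{R}^n}\frac{\left[h_j(z)\right]^\rho}{(1+2^j|x-z|)^M}\,dz
\qquad\text{for any }x\in\mathbb{R}^n\text{ and }j\in\mathbb{Z},
\end{equation*}
with $C$ depending only on $n$, $M$, and $\delta$. To verify this, fix $j$ and $x$, and let $Q(x)$ denote the unique cube in $\mathscr{Q}_j$ containing $x$. By disjointness, $[g_j(x)]^\rho=|t_{Q(x)}|^\rho$. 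On the other hand, restricting the integral to $E_{Q(x)}\subset Q(x)$ and using that $|x-z|\leq\sqrt{n}\,2^{-j}$ for $z\in Q(x)$, one has $(1+2^j|x-z|)^{-M}\geq(1+\sqrt{n})^{-M}$, so the right-hand side is bounded below by
\begin{equation*}
C(n,M)\,2^{jn}|E_{Q(x)}|\,|t_{Q(x)}|^\rho
\geq C(n,M)\,\delta\,|t_{Q(x)}|^\rho,
\end{equation*}
which gives the desired pointwise bound.

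With this pointwise estimate at hand, a direct application of Lemma \ref{summary F}, with $s$ replaced by $s+n/2$ and $r$ by $\rho$, yields
$\|\{2^{j(s+n/2)}g_j\}_j\|_{L\dot F_{p,q}^\tau}
\lesssim\|\{2^{j(s+n/2)}h_j\}_j\|_{L\dot F_{p,q}^\tau}$,
which completes the proof. I expect the main subtle point to be not the pointwise bound itself, but confirming that the hypothesis $\rho<\min\{p,q\}$ (rather than $\rho\leq\min\{p,q\}$) is genuinely needed, which is dictated by Lemma \ref{summary F}: one needs strict inequality to exploit the Fefferman--Stein vector-valued maximal inequality (through Lemma \ref{Fefferman Stein}) applied with exponents $p/\rho,q/\rho\in(1,\infty]$ when absorbing the near part, and to make the tail sum over translated cubes converge via the decay rate $M>n$. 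No further hypotheses on $t$ are needed, and the same argument works for all admissible $s$, $\tau$, $p$, $q$.
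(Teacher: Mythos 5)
Your proof is correct. Note, however, that the paper does not prove this lemma at all: it simply quotes it as a special case of \cite[Proposition 2.4]{yyz13}, so you have supplied a self-contained argument where the authors rely on a citation. Your route is a natural one given the tools already set up in this paper: after the (correct) reduction to comparing $g_j=\sum_{Q\in\mathscr{Q}_j}|t_Q|\mathbf{1}_Q$ with $h_j=\sum_{Q\in\mathscr{Q}_j}|t_Q|\mathbf{1}_{E_Q}$ (the moduli do distribute over the sums by disjointness, and the easy direction $h_j\le g_j$ is indeed immediate), your pointwise bound
$[g_j(x)]^\rho\le\delta^{-1}(1+\sqrt n)^M\,2^{jn}\int_{\mathbb R^n}(1+2^j|x-z|)^{-M}[h_j(z)]^\rho\,dz$
is verified exactly as you say, since $h_j\equiv|t_{Q(x)}|$ on $E_{Q(x)}$ and $2^{jn}|E_{Q(x)}|\ge\delta$; feeding this into Lemma \ref{summary F} with $s$ replaced by $s+\frac n2$ and $r=\rho\in(0,\min\{p,q\})$ finishes the hard direction, including the case $q=\infty$ where $\min\{p,q\}=p$. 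This is essentially the same Peetre-maximal-function-plus-Fefferman--Stein mechanism that underlies the cited result of \cite{yyz13}, but packaging it through Lemma \ref{summary F} (which is proved earlier in the paper, so there is no circularity) makes the whole section more self-contained; your closing remark about why the strict inequality $\rho<\min\{p,q\}$ is needed is also accurate, as it is exactly what lets Lemma \ref{Fefferman Stein} be applied with exponents $p/\rho\in(1,\infty)$ and $q/\rho\in(1,\infty]$.
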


Next, we prove Theorem \ref{37}.

\begin{proof}[Proof of Theorem \ref{37}]
Using \eqref{equ_reduce}, we obtain, for any
$\vec t:=\{\vec t_Q\}_{Q\in\mathscr{Q}}\subset\mathbb{C}^m$,
$\|\vec{t}\|_{\dot b^{s,\tau}_{p,q}(W)}
\sim\|\vec{t}\|_{\dot{b}^{s,\tau}_{p,q}(\mathbb{A})}.$
It remains to show that, for any
$\vec t:=\{\vec t_Q\}_{Q\in\mathscr{Q}}\subset\mathbb{C}^m$,
$\|\vec{t}\|_{\dot f^{s,\tau}_{p,q}(W)}
\sim\|\vec{t}\|_{\dot{f}^{s,\tau}_{p,q}(\mathbb{A})}.$

We first prove that, for any
$\vec t:=\{\vec t_Q\}_{Q\in\mathscr{Q}}\subset\mathbb{C}^m$,
\begin{equation}\label{45}
\left\|\vec{t}\right\|_{\dot f^{s,\tau}_{p,q}(W)}
\lesssim\left\|\vec{t}\right\|_{\dot{f}^{s,\tau}_{p,q}(\mathbb{A})}.
\end{equation}
For any $j\in\mathbb{Z}$, let
$$
f_j:=\sum_{Q\in\mathscr{Q}_j}|Q|^{-\frac{s}{n}}
\left|A_Q\vec t_Q \right| \widetilde{\mathbf{1}}_Q\
\mathrm{and}\
g_j:=\sum_{Q\in\mathscr{Q}_j}|Q|^{-\frac{s}{n}}
\left|W^{\frac{1}{p}}\vec t_Q \right|\widetilde{\mathbf{1}}_Q.
$$
Then, for any $j\in\mathbb Z$,
$$
g_j
\leq\sum_{Q\in\mathscr{Q}_j}|Q|^{-\frac{s}{n}}
\left\|W^{\frac{1}{p}}A_Q^{-1}\right\|
\left|A_Q\vec t_Q\right|\widetilde{\mathbf{1}}_Q
=\gamma_jf_j
=\gamma_jE_j\left(f_j\right),
$$
where $\gamma_j $ and $E_j$ are the same as, respectively, in \eqref{47} and \eqref{Ej}.
From this, Corollary \ref{46},
and the fact that $f_j$ is a constant on $Q\in\mathscr{Q}_j$, we deduce that
\begin{align*}
\left\|\vec{t}\right\|_{\dot f^{s,\tau}_{p,q}(W)}
&=\|\{g_j\}_{j\in\mathbb Z}\|_{L\dot F_{p,q}^\tau}
\leq\left\|\left\{\gamma_jE_j\left(f_j\right)\right\}_{j\in\mathbb Z}
\right\|_{L\dot F_{p,q}^\tau}\\
&\lesssim\left\|\left\{E_j\left(f_j\right)\right\}_{j\in\mathbb Z}
\right\|_{L\dot F_{p,q}^\tau}
=\|\{f_j\}_{j\in\mathbb Z}\|_{L\dot F_{p,q}^\tau}
=\left\|\vec t\right\|_{\dot{f}^{s,\tau}_{p,q}(\mathbb{A})},
\end{align*}
where $\|\cdot\|_{L\dot F_{p,q}^\tau}$ is the same as in \eqref{LApq}.
This finishes the proof of \eqref{45}.

Now, we show that, for any
$\vec t:=\{\vec t_Q\}_{Q\in\mathscr{Q}}\subset\mathbb{C}^m$,
\begin{equation}\label{43}
\left\|\vec{t}\right\|_{\dot{f}^{s,\tau}_{p,q}(\mathbb{A})}
\lesssim\left\|\vec{t}\right\|_{\dot f^{s,\tau}_{p,q}(W)}.
\end{equation}
To this end, we consider the following two cases on $p$.

\emph{Case 1)} $p\in(0,1]$.
In this case, by Lemma \ref{8}(i), we conclude that,
for any $Q\in\mathscr{Q}$ and almost every $x\in Q$,
$$
\left|A_Q\vec t_Q\right|
\leq\left\|A_QW^{-\frac{1}{p}}(x)\right\|\left|W^{\frac{1}{p}}(x)\vec t_Q\right|
\lesssim\left|W^{\frac{1}{p}}(x)\vec t_Q\right|.
$$
This finishes the proof of \eqref{43} in this case.

\emph{Case 2)} $p\in(1,\infty)$.
In this case, applying Lemma \ref{8}(ii) with $r$ replaced by $1$,
we find that there exists a positive constant $C$ such that
\begin{equation}\label{140}
\sup_{Q\in\mathscr{Q}}
\fint_Q\left\|A_QW^{-\frac{1}{p}}(x)\right\|\,dx
\leq C.
\end{equation}
For any $Q\in\mathscr{Q}$, let
$$
E_Q:=\left\{x\in Q:\ \left\|A_QW^{-\frac{1}{p}}(x)\right\|\leq2C\right\}.
$$
From this, Chebyshev's inequality, and \eqref{140},
we infer that, for any $Q\in\mathscr{Q}$,
\begin{align*}
|Q\setminus E_Q|
\leq\frac{1}{2C}\int_{Q\setminus E_Q}\left\|A_QW^{-\frac{1}{p}}(x)\right\|\,dx
\leq\frac{|Q|}{2C}\fint_{Q}\left\|A_QW^{-\frac{1}{p}}(x)\right\|\,dx
\leq\frac12|Q|
\end{align*}
and hence $|E_Q|\geq\frac12|Q|$.
This, together with both Lemma \ref{44} with $\delta$ replaced by $\frac12$
and the definition of $E_Q$, further implies that
\begin{align*}
\left\|\vec{t}\right\|_{\dot{f}^{s,\tau}_{p,q}(\mathbb{A})}
&=\left\|\left\{\left|A_Q\vec t_Q\right|
\right\}_{Q\in\mathscr{Q}}\right\|_{\dot f^{s,\tau}_{p,q}}
\sim\left\|\left\{2^{js}\sum_{Q\in\mathscr{Q}_j}
\left|A_Q\vec t_Q\right|\widetilde{\mathbf{1}}_{E_Q}\right\}_{j\in\mathbb Z}
\right\|_{L\dot F_{p,q}^\tau}\\
&\lesssim\left\|\left\{2^{js}\sum_{Q\in\mathscr{Q}_j}
\left|W^{\frac{1}{p}}\vec t_Q\right|\widetilde{\mathbf{1}}_{E_Q}\right\}_{j\in\mathbb Z}
\right\|_{L\dot F_{p,q}^\tau}
\leq\left\|\left\{2^{js}\left|W^{\frac{1}{p}}\vec t_j\right|\right\}_{j\in\mathbb Z}
\right\|_{L\dot F_{p,q}^\tau}
=\left\|\vec{t}\right\|_{\dot f^{s,\tau}_{p,q}(W)},
\end{align*}
where $\vec t_j$ for any $j\in\mathbb Z$ is the same as in \eqref{vec tj}.
This finishes the proof of \eqref{43} in this case and hence Theorem \ref{37}.
\end{proof}

\subsection{The $\varphi$-Transform Characterization}
\label{phi-transform}

In this subsection, we establish the $\varphi$-transform characterization of $\dot A^{s,\tau}_{p,q}(W)$.
Recall that the \emph{$\varphi$-transform} is defined to be the map taking each
$\vec f\in(\mathcal{S}_\infty')^m$ to the sequence
$S_\varphi\vec{f}:=\{(S_\varphi\vec{f})_Q\}_{Q\in\mathscr{Q}}$,
where $(S_\varphi\vec{f})_Q:=\langle\vec{f},\varphi_Q\rangle$ for any $Q\in\mathscr{Q}$;
the \emph{inverse $\varphi$-transform} is defined to be the map taking a sequence
$\vec t:=\{\vec t_Q\}_{Q\in\mathscr{Q}}\subset\mathbb{C}^m$
to $T_\psi\vec t:=\sum_{Q\in\mathscr{Q}}\vec t_Q\psi_Q$ in $(\mathcal{S}_\infty')^m$
(see, for instance, \cite{fj85, fj88}).
Then we have the following result.

\begin{theorem}\label{phi}
Let $s\in\mathbb{R}$, $\tau\in[0,\infty)$, $p\in(0,\infty)$, and $q\in(0,\infty]$.
Let $\varphi,\psi\in\mathcal{S}$ satisfy \eqref{19} and \eqref{20}, let $\widetilde\varphi(x):=\overline{\varphi(-x)}$ for any $x\in\mathbb{R}^n$, and let $W\in A_p$.
Then the operators
\begin{equation*}
S_\varphi:\ \dot A^{s,\tau}_{p,q}(W,\widetilde{\varphi})\to\dot a^{s,\tau}_{p,q}(W)
\text{ and }
T_\psi:\ \dot a^{s,\tau}_{p,q}(W)\to\dot A^{s,\tau}_{p,q}(W,\varphi)
\end{equation*}
are bounded.
Furthermore, if $\varphi$ and $\psi$ satisfy \eqref{21}, then
$T_\psi\circ S_\varphi$ is the identity on $\dot A^{s,\tau}_{p,q}(W, \widetilde{\varphi})$.
\end{theorem}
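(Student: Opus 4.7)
The plan is to follow the Frazier--Jawerth paradigm: reduce all weighted estimates to pointwise ones expressed via the reducing operators $\{A_Q\}_{Q\in\mathscr{Q}}$, and then invoke the averaging-space equivalences of Theorems \ref{11} and \ref{37}, together with the sharp cross-scale bound of Corollary \ref{237}.

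For the boundedness of $S_\varphi$, a direct change of variables using $\varphi_Q(x)=|Q|^{1/2}\varphi_{j_Q}(x-x_Q)$ and $\widetilde\varphi(x)=\overline{\varphi(-x)}$ shows that $(S_\varphi\vec f)_Q=\langle\vec f,\varphi_Q\rangle=|Q|^{1/2}(\widetilde\varphi_{j_Q}*\vec f)(x_Q)$, hence
\begin{equation*}
\left|A_Q(S_\varphi\vec f)_Q\right|\leq\sup_{\mathbb{A},\widetilde\varphi,Q}(\vec f)
\end{equation*}
in the notation of \eqref{sup}. Passing to the $\dot a^{s,\tau}_{p,q}$-norm and combining Theorems \ref{37} and \ref{11} yields
$\|S_\varphi\vec f\|_{\dot a^{s,\tau}_{p,q}(W)}\sim\|S_\varphi\vec f\|_{\dot a^{s,\tau}_{p,q}(\mathbb{A})}\leq\|\sup_{\mathbb{A},\widetilde\varphi}(\vec f)\|_{\dot a^{s,\tau}_{p,q}}\sim\|\vec f\|_{\dot A^{s,\tau}_{p,q}(W,\widetilde\varphi)}$.

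The main technical step is the boundedness of $T_\psi$. First one confirms that $T_\psi\vec t:=\sum_Q\vec t_Q\psi_Q$ converges in $(\mathcal S_\infty')^m$ by testing against $\phi\in\mathcal S_\infty$ and exploiting the vanishing moments of $\psi$; this is classical. The Fourier support condition \eqref{19} then forces $\varphi_j*\psi_Q=0$ unless $|j-j_Q|\leq N$ for a fixed constant $N$, so the scale interaction is essentially diagonal. For such pairs, the standard estimate $|\varphi_j*\psi_Q(x)|\lesssim|Q|^{1/2}2^{jn}(1+2^j|x-x_Q|)^{-M}$ holds for every $M\in(0,\infty)$. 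Inserting $A_R\vec t_Q=(A_RA_Q^{-1})(A_Q\vec t_Q)$ for $R\in\mathscr{Q}_j$ with $x\in R$, and using Corollary \ref{237} to bound $\|A_RA_Q^{-1}\|\lesssim(1+2^j|x-x_Q|)^{\Delta}$, we derive
\begin{equation*}
\left|A_R\left(\varphi_j*T_\psi\vec t\right)(x)\right|\lesssim\sum_{|k|\leq N}\sum_{Q\in\mathscr{Q}_{j+k}}|A_Q\vec t_Q|\,2^{jn/2}\left(1+2^j|x-x_Q|\right)^{-(M-\Delta)}.
\end{equation*}
Setting $h_j:=\sum_{Q\in\mathscr{Q}_j}|A_Q\vec t_Q|\widetilde{\mathbf 1}_Q$, picking $r\in(0,\min\{p,q,1\})$, and choosing $M$ so that $M':=(M-\Delta)r>n$, a routine use of $(\sum a_Q)^r\leq\sum a_Q^r$, Lemma \ref{33y}, and the identity $|A_Q\vec t_Q|^r|Q|^{-r/2}=h_{j_Q}(y)^r$ for $y\in Q$ converts the above sum into the Peetre-type integral bound
\begin{equation*}
\left|A_R\left(\varphi_j*T_\psi\vec t\right)(x)\right|^r\lesssim\sum_{|k|\leq N}2^{jn}\int_{\mathbb R^n}\frac{h_{j+k}(y)^r}{(1+2^j|x-y|)^{M'}}\,dy,
\end{equation*}
to which Lemma \ref{summary F} applies shift by shift. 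This gives $\|T_\psi\vec t\|_{\dot A^{s,\tau}_{p,q}(\mathbb{A},\varphi)}\lesssim\|\vec t\|_{\dot a^{s,\tau}_{p,q}(\mathbb{A})}$, and Theorems \ref{11} and \ref{37} transfer this to the $(W,\varphi)$-side. The hard part here is precisely the absorption of the cross-scale factor $\|A_RA_Q^{-1}\|$ into polynomial decay in $|x-x_Q|$: this is where the $A_p$-dimensional estimate of Corollary \ref{237} enters decisively.

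Finally, the identity $T_\psi\circ S_\varphi=\mathrm{id}$ on $\dot A^{s,\tau}_{p,q}(W,\widetilde\varphi)$ is the discrete Calder\'on reproducing formula: the pointwise condition \eqref{21}, together with $\widehat{\widetilde\varphi_k}=\overline{\widehat{\varphi_k}}$, gives $\vec f=\sum_{j\in\mathbb Z}\widetilde\varphi_j*\psi_j*\vec f$ in $(\mathcal S_\infty')^m$ (componentwise); discretising each $\psi_j*\vec f$ through Lemma \ref{10x} along the lines of \cite{fj85,fj88,fjw91} converts this to $\vec f=\sum_{Q\in\mathscr{Q}}\langle\vec f,\varphi_Q\rangle\psi_Q=T_\psi S_\varphi\vec f$, with convergence in $(\mathcal S_\infty')^m$ ensured by the norm estimates just established.
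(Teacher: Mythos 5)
Your proposal is correct and follows the same overall architecture as the paper: the $S_\varphi$ bound via $|A_Q(S_\varphi\vec f)_Q|\le\sup_{\mathbb A,\widetilde\varphi,Q}(\vec f)$ combined with Theorems \ref{11} and \ref{37}, the near-diagonality of scales from \eqref{19}, the absorption of $\|A_RA_Q^{-1}\|$ via Corollary \ref{237}, and Lemma \ref{7} for the reproducing identity are all exactly the paper's steps. The one genuine divergence is the final summation step for $T_\psi$: the paper dominates $|A_j(\varphi_j*T_\psi\vec t)|$ by the Peetre-type sequence maximal operator $(u^*_{p\wedge q,\widetilde\lambda})_{j-1}+(u^*_{p\wedge q,\widetilde\lambda})_j+(u^*_{p\wedge q,\widetilde\lambda})_{j+1}$ and then invokes Lemma \ref{4}, whose proof rests on the unweighted equivalence $\|u\|_{\dot a^{s,\tau}_{p,q}}\sim\|u^*_{r,\lambda}\|_{\dot a^{s,\tau}_{p,q}}$ quoted from \cite[Lemma 3.3]{yy10}; you instead convert the lattice sum into a convolution-type integral against $h_{j+k}^r$ and apply Lemma \ref{summary F} shift by shift. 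Both routes ultimately reduce to the Fefferman--Stein inequality, so this is a matter of which black box one prefers; your version is self-contained within the paper's own lemmas, at the cost of having to note explicitly that Lemma \ref{summary F} as stated has $h_j$ (not $h_{j+k}$) on the right-hand side, so a bounded re-indexing of the scale parameter (which shifts the starting index $j_P$ in the localized norms by at most $N$ and is absorbed into the constants) must be recorded. Two cosmetic remarks: in the reproducing formula one samples $\widetilde\varphi_j*\vec f$ at the points $x_Q$ (not $\psi_j*\vec f$), which is precisely the content of Lemma \ref{7}; and the well-definedness of $T_\psi\vec t$ for $\vec t\in\dot a^{s,\tau}_{p,q}(W)$ is not quite ``classical'' --- it needs the growth bound on $\|A_Q^{-1}\|$ from Corollary \ref{237}, which is what Lemma \ref{3} supplies, so you should cite or reproduce that argument rather than wave at the vanishing moments alone.
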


To prove this theorem, we need several technical lemmas.
We first recall the following Calder\'on reproducing formulae
which are \cite[Lemma 2.1]{yy10}.

\begin{lemma}\label{7}
Let $\varphi,\psi\in\mathcal{S}$ satisfy \eqref{21} and
both $\overline{\operatorname{supp}\widehat{\varphi}}$ and $\overline{\operatorname{supp}\widehat{\psi}}$
are compact and bounded away from the origin.
Then, for any $f\in\mathcal{S}_\infty$,
\begin{equation}\label{7x}
f=\sum_{j\in\mathbb{Z}}2^{-jn}\sum_{k\in\mathbb{Z}^n}
\left(\widetilde{\varphi}_j*f\right)\left(2^{-j}k\right)
\psi_j\left(\cdot-2^{-j}k\right)
=\sum_{Q\in\mathscr{Q}}\left\langle f,\varphi_Q\right\rangle\psi_Q
\end{equation}
in $\mathcal{S}_\infty $, where $\widetilde{\varphi}(x):=\overline{\varphi(-x)}$
for any $x\in\mathbb{R}^n$.
Moreover, for any $f\in\mathcal{S}_\infty'$,
\eqref{7x} also converges in $\mathcal{S}_\infty'$.
\end{lemma}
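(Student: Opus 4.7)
The plan is to prove this classical Calder\'on-type reproducing formula in three stages: identification of the two series, the continuous Calder\'on identity, and a sampling-type discretization of each dyadic level. I would begin by verifying that the two stated series are literally the same. Using the definitions $\varphi_Q(x)=|Q|^{-1/2}\varphi(2^jx-k)$ and $\psi_Q(x)=|Q|^{-1/2}\psi(2^jx-k)$ for $Q=Q_{j,k}$, together with $\widetilde\varphi(x)=\overline{\varphi(-x)}$, a direct change of variables shows
$$
\left\langle f,\varphi_Q\right\rangle \psi_Q=2^{-jn}\left(\widetilde\varphi_j*f\right)\left(2^{-j}k\right)\psi_j\left(\cdot-2^{-j}k\right),
$$
so the two representations agree term by term. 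From then on I work with the ``continuous'' form.

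Next I would establish the \emph{continuous} Calder\'on identity $f=\sum_{j\in\mathbb Z}\widetilde\varphi_j*\psi_j*f$ in $\mathcal S_\infty$. Taking Fourier transforms,
$$
\widehat{\widetilde\varphi_j*\psi_j*f}(\xi)=\overline{\widehat\varphi(2^{-j}\xi)}\,\widehat\psi(2^{-j}\xi)\,\widehat f(\xi),
$$
and assumption \eqref{21} gives $\sum_j\overline{\widehat\varphi(2^{-j}\xi)}\widehat\psi(2^{-j}\xi)=1$ for $\xi\neq\mathbf 0$. The key point is that for $f\in\mathcal S_\infty$, $\widehat f$ vanishes to infinite order at the origin, so the Littlewood--Paley-type tails for $j\to -\infty$ (where the summands are frequency-localised near the origin) decay rapidly, while the tails for $j\to+\infty$ decay from the Schwartz decay of $\widehat f$. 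Multiplying the $\ell^1$-summable identity in frequency by $\widehat f$, inverting the Fourier transform, and checking that the partial sums converge in every Schwartz seminorm (by standard estimates on frequency-supported Schwartz functions, controlled through the support assumption on $\widehat\varphi,\widehat\psi$) yields the continuous identity in $\mathcal S_\infty$.

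The decisive step, which I expect to be the main technical obstacle, is the \emph{exact} sampling identity
$$
\widetilde\varphi_j*\psi_j*f=2^{-jn}\sum_{k\in\mathbb Z^n}\left(\widetilde\varphi_j*f\right)\left(2^{-j}k\right)\psi_j\left(\cdot-2^{-j}k\right).
$$
Here one exploits that $g_j:=\widetilde\varphi_j*f$ and $\psi_j$ both have Fourier transforms supported in $\{|\xi|\leq 2^{j+1}\}$, so the product $\widehat{g_j*\psi_j}$ is supported in a ball of radius $2^{j+1}$; since $2^{j+1}<\pi 2^j$, the Nyquist condition at sampling spacing $2^{-j}$ is satisfied, and Poisson summation produces no aliasing. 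Concretely, I would apply Poisson summation to $y\mapsto g_j(y)\psi_j(x-y)$ at spacing $2^{-j}$ and observe that only the zero frequency shift contributes, which gives exactly the desired identity, with absolute convergence of the $k$-sum in every Schwartz seminorm thanks to the Schwartz decay of $g_j$ and $\psi_j$.

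Combining the continuous identity with the exact discretization, and then summing over $j\in\mathbb Z$, gives the pointwise (and $\mathcal S_\infty$-topology) convergence of the reproducing formula for $f\in\mathcal S_\infty$. The obstacle is verifying, seminorm by seminorm, that the double sum converges absolutely in $\mathcal S_\infty$; this follows by combining (a) the moment-vanishing of $\varphi$ and $\psi$ (consequence of the compact Fourier support away from $\mathbf 0$), which produces rapid spatial decay of $g_j$ and all its derivatives at each scale, with (b) the infinite order vanishing of $\widehat f$ at $\mathbf 0$, which tames the low-frequency ($j\to-\infty$) regime. Finally, the extension to $f\in\mathcal S_\infty'$ is obtained by duality: for any test function $\eta\in\mathcal S_\infty$, apply the already-proved $\mathcal S_\infty$-convergent formula to $\eta$, pair with $f$, and reorganize to conclude that the corresponding series converges to $f$ in the weak-$*$ topology of $\mathcal S_\infty'$.
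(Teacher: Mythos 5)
Your proof is correct and follows essentially the same route as the source the paper cites for this lemma (\cite[Lemma 2.1]{yy10}, which in turn rests on the Frazier--Jawerth machinery of \cite{fjw91}): the continuous Calder\'on identity obtained from \eqref{21}, combined with exact sampling of band-limited functions via Poisson summation --- the same sampling device the paper invokes as Lemma \ref{10x}. One small caveat: your Nyquist step uses that $\operatorname{supp}\widehat{\varphi},\operatorname{supp}\widehat{\psi}\subset\{\xi:\ |\xi|\le2\}$, so that the Fourier support of $y\mapsto(\widetilde{\varphi}_j*f)(y)\,\psi_j(x-y)$ lies in $\{\xi:\ |\xi|\le2^{j+2}\}\subset\{\xi:\ |\xi|<2\pi\,2^{j}\}$; this is condition \eqref{19} rather than the literal hypothesis ``compact and bounded away from the origin'', and while it is harmless here because \eqref{19} holds in every application in the paper, under the stated hypothesis alone the sampling spacing $2^{-j}$ need not satisfy the Nyquist condition, so you should either add that restriction or explain how to remove it.
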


The following lemma is \cite[Lemma 2.2]{yy08}.

\begin{lemma}\label{59}
Let $\varphi,\psi\in\mathcal{S}_\infty$.
For any $M\in\mathbb{N}$, there exists a positive constant $C$,
depending only on $M$ and $n$, such that,
for any $j,i\in\mathbb{Z}$ and $x\in\mathbb{R}^n$,
$$
\left|\left(\varphi_j*\psi_i\right)(x)\right|
\leq C\|\varphi\|_{S_{M+1}}\|\psi\|_{S_{M+1}}2^{-|i-j|M}
\frac{2^{-(i\wedge j)M}}{[2^{-(i\wedge j)}+|x|]^{n+M}},
$$
where, for any $\phi\in\mathcal{S}$,
\begin{equation}\label{SM}
\|\phi\|_{S_M}
:=\sup_{\gamma\in\mathbb{Z}_+^n,\,|\gamma|\leq M}
\sup_{x\in\mathbb{R}^n}|\partial^\gamma\phi(x)|(1+|x|)^{n+M+|\gamma|}.
\end{equation}
\end{lemma}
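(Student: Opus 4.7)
The proof will exploit the infinitely many vanishing moments of $\varphi,\psi\in\mathcal S_\infty$ through a Taylor expansion argument. By the symmetry of convolution, one may assume without loss of generality that $j\ge i$, so that $i\wedge j=i$ and $|i-j|=j-i$. After the substitution $u=2^j y$ in $(\varphi_j*\psi_i)(x)=\int\varphi_j(y)\psi_i(x-y)\,dy$, setting $\eta:=2^i x$ and $\lambda:=2^{i-j}\in(0,1]$, one obtains
\begin{equation*}
(\varphi_j*\psi_i)(x) \;=\; 2^{in}\int_{\mathbb R^n}\varphi(u)\,\psi(\eta-\lambda u)\,du \;=:\; 2^{in}\,F(\eta,\lambda).
\end{equation*}
Since $\frac{2^{-iM}}{(2^{-i}+|x|)^{n+M}}=\frac{2^{in}}{(1+|\eta|)^{n+M}}$, the lemma reduces to proving
$|F(\eta,\lambda)|\lesssim\|\varphi\|_{S_{M+1}}\|\psi\|_{S_{M+1}}\,\lambda^M(1+|\eta|)^{-(n+M)}$.

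The next step is to extract the factor $\lambda^M$. Because $\int_{\mathbb R^n}u^\gamma\varphi(u)\,du=0$ for every multi-index $\gamma$, one may subtract from $\psi(\eta-\lambda u)$ its Taylor polynomial of degree $M-1$ in $\lambda u$ about $\eta$ without changing the value of $F(\eta,\lambda)$. The integral form of the remainder then gives
\begin{equation*}
F(\eta,\lambda) \;=\; M\,\lambda^M\sum_{|\gamma|=M}\frac{(-1)^M}{\gamma!}\int_{\mathbb R^n}\varphi(u)\,u^\gamma\int_0^1(1-t)^{M-1}(\partial^\gamma\psi)(\eta-t\lambda u)\,dt\,du,
\end{equation*}
which produces the asserted factor $\lambda^M=2^{-|i-j|M}$ outright. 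Using the decay estimate $|\partial^\gamma\psi(z)|\le\|\psi\|_{S_{M+1}}(1+|z|)^{-(n+2M+1)}$ for $|\gamma|=M$, the remaining task is to bound
$\int_{\mathbb R^n}|\varphi(u)||u|^M\sup_{t\in[0,1]}(1+|\eta-t\lambda u|)^{-(n+2M+1)}\,du$
by a constant multiple of $\|\varphi\|_{S_{M+1}}(1+|\eta|)^{-(n+M)}$.

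I would carry out this bound by partitioning $\mathbb R^n$ into the regions $R_1:=\{u:\lambda|u|\le\tfrac12(1+|\eta|)\}$ and $R_2:=\mathbb R^n\setminus R_1$. On $R_1$, the triangle inequality gives $1+|\eta-t\lambda u|\ge\tfrac12(1+|\eta|)$ uniformly in $t\in[0,1]$, so the $\psi$-decay directly contributes $(1+|\eta|)^{-(n+M)}$ (with slack to spare), while the Schwartz decay $|\varphi(u)|\le\|\varphi\|_{S_{M+1}}(1+|u|)^{-(n+M+1)}$ makes $\int|\varphi(u)||u|^M\,du\lesssim\|\varphi\|_{S_{M+1}}$ absolutely convergent. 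On $R_2$, one splits the $\psi$-decay factor as $(1+|\eta-t\lambda u|)^{-(n+M)}\cdot(1+|\eta-t\lambda u|)^{-(M+1)}$ and applies the Peetre-type inequality $(1+|\eta-t\lambda u|)^{-(n+M)}\le(1+|\eta|)^{-(n+M)}(1+|u|)^{n+M}$ to the first factor; the defining condition $\lambda|u|\ge\tfrac12(1+|\eta|)$ of $R_2$ then converts the residual $(1+|\eta-t\lambda u|)^{-(M+1)}$ together with the Schwartz decay of $\varphi$ into enough integrability in $u$ to close the estimate, again producing $(1+|\eta|)^{-(n+M)}$.

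The principal technical obstacle is the analysis on $R_2$: the naive application of Peetre's inequality shifts a factor of $(1+|u|)^{n+M}$ onto the $u$-integrand, which the decay $|\varphi(u)||u|^M\lesssim\|\varphi\|_{S_{M+1}}(1+|u|)^{-(n+1)}$ does not suffice to absorb on its own. The correct way to proceed is to exploit the excess $M+1$ powers of decay that $\partial^\gamma\psi$ carries beyond the $n+M$ used in Peetre's inequality; on $R_2$ the geometry $\lambda|u|\gtrsim 1+|\eta|$ enforces that these missing powers of $(1+|u|)$ be convertible into gains of $\lambda/(1+|\eta|)$, closing the bound without requiring a higher seminorm of $\varphi$. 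Combining the contributions of $R_1$ and $R_2$ and reinserting the prefactor $2^{in}$ along with the substitutions $\eta=2^i x$ and $\lambda=2^{i-j}$ produces the stated inequality with a constant $C=C(n,M)$.
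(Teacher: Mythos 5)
The paper itself does not prove this lemma; it is quoted verbatim from \cite[Lemma 2.2]{yy08}. Your overall architecture --- rescaling to $F(\eta,\lambda)=\int\varphi(u)\psi(\eta-\lambda u)\,du$ with $\lambda=2^{i-j}\le1$, using the vanishing moments of the finer--scale function to subtract a degree--$(M-1)$ Taylor polynomial and extract $\lambda^M$, then splitting into near and far regions --- is the standard one, and your treatment of $R_1$ is correct. The gap is in $R_2$: the intermediate inequality you reduce to is false. After inserting $|\varphi(u)|\,|u|^M\le\|\varphi\|_{S_{M+1}}(1+|u|)^{-(n+1)}$ and $|\partial^\gamma\psi(z)|\le\|\psi\|_{S_{M+1}}(1+|z|)^{-(n+2M+1)}$, you need
\begin{equation*}
\int_{R_2}(1+|u|)^{-(n+1)}\sup_{t\in[0,1]}\left(1+|\eta-t\lambda u|\right)^{-(n+2M+1)}du
\lesssim(1+|\eta|)^{-(n+M)}.
\end{equation*}
Take $\lambda=1$ (i.e.\ $i=j$) and $|\eta|$ large. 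On the cone $\{u=s\widehat\eta+w:\ s\ge|\eta|,\ |w|\le s/|\eta|\}\subset R_2$ the choice $t=|\eta|/s\in(0,1]$ gives $|\eta-tu|\le1$, so the supremum is $\gtrsim1$ there, and integrating $(1+|u|)^{-(n+1)}$ over this cone yields a lower bound $\gtrsim(1+|\eta|)^{-n}$ --- already incompatible with the target for every $M\ge1$. Replacing the supremum by the integral $\int_0^1(1-t)^{M-1}(\cdots)\,dt$ (which is what the remainder actually provides, and gains a factor $\sim(\lambda|u|)^{-1}$) only improves this to $\gtrsim(1+|\eta|)^{-(n+1)}$, still too large for every $M\ge2$. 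Your Peetre route hits the same wall: it leaves $\int_{R_2}(1+|u|)^{M-1}\sup_t(1+|\eta-t\lambda u|)^{-(M+1)}du$, which diverges. The concluding claim that the excess decay of $\partial^\gamma\psi$ converts the missing powers of $(1+|u|)$ into gains of $\lambda/(1+|\eta|)$ cannot be implemented, because on the resonant set $t\lambda u\approx\eta$ the factor $(1+|\eta-t\lambda u|)$ gives no decay at all, and after multiplying $\varphi$ by $|u|^M$ that set carries too much mass.

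The root cause is that taking absolute values inside the integral--remainder representation destroys the cancellation that keeps the far region small: trading $u$--decay of $\varphi$ for powers of $\lambda$ is only a good trade where $\lambda|u|\lesssim1+|\eta|$. The standard repair is to keep the subtracted form $\int\varphi(u)\bigl[\psi(\eta-\lambda u)-\sum_{|\gamma|\le M-1}\tfrac{(-\lambda u)^\gamma}{\gamma!}\partial^\gamma\psi(\eta)\bigr]du$, use the remainder bound only on $R_1$, and on $R_2$ estimate the pieces separately. There the constraint $|u|\ge\tfrac{1+|\eta|}{2\lambda}$ gives $\int_{R_2}(1+|u|)^{-(n+M+1)}|u|^{|\gamma|}du\lesssim\bigl(\tfrac{1+|\eta|}{\lambda}\bigr)^{|\gamma|-M-1}$, which together with $|\partial^\gamma\psi(\eta)|\lesssim\|\psi\|_{S_{M+1}}(1+|\eta|)^{-(n+M+1+|\gamma|)}$ disposes of the polynomial terms with a factor $\lambda^{M+1}(1+|\eta|)^{-(n+2M+2)}$; and for $\int_{R_2}|\varphi(u)||\psi(\eta-\lambda u)|\,du$ one extracts $(1+|u|)^{-M}\le\bigl(\tfrac{2\lambda}{1+|\eta|}\bigr)^{M}$ and then applies the standard convolution inequality $\int(1+|u|)^{-(n+1)}(1+|\eta-\lambda u|)^{-(n+1)}du\lesssim(1+|\eta|)^{-(n+1)}$. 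With that modification the argument closes with the stated seminorms $\|\cdot\|_{S_{M+1}}$; as written, your $R_2$ step does not.
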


As a corollary of Lemma \ref{59}, we obtain the following estimate.

\begin{corollary}\label{59x}
Let $\varphi,\psi\in\mathcal{S}_\infty$.
Then, for any $M\in\mathbb{N}$ and $Q,R\in\mathscr{Q}$,
\begin{align*}
\left|\left\langle\varphi_Q,\psi_R\right\rangle\right|
&\leq C\|\varphi\|_{S_{M+1}}\|\psi\|_{S_{M+1}}
\left[\min\left\{\frac{\ell(R)}{\ell(Q)},\frac{\ell(Q)}{\ell(R)}\right\}\right]^{M+\frac{n}{2}}
\left[1+\frac{|x_Q-x_R|}{\ell(Q)\vee\ell(R)}\right]^{-(n+M)},
\end{align*}
where $C$ is the same as in Lemma \ref{59}.
\end{corollary}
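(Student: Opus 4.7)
The plan is to reduce this statement directly to Lemma \ref{59} by expressing the inner product as a convolution. Setting $j:=j_Q$ and $i:=j_R$, I would use the relation $\varphi_Q=|Q|^{1/2}\varphi_j(\cdot-x_Q)$ (and similarly for $\psi_R$) to write
\begin{equation*}
\langle\varphi_Q,\psi_R\rangle
=|Q|^{\frac 12}|R|^{\frac 12}\int_{\mathbb R^n}\varphi_j(x-x_Q)\psi_i(x-x_R)\,dx
=|Q|^{\frac 12}|R|^{\frac 12}\left(\varphi_j*\widetilde\psi_i\right)(x_R-x_Q),
\end{equation*}
where $\widetilde\psi(x):=\psi(-x)$. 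Because the norm $\|\cdot\|_{S_{M+1}}$ in \eqref{SM} involves only $|\partial^\gamma\cdot|$, one has $\|\widetilde\psi\|_{S_{M+1}}=\|\psi\|_{S_{M+1}}$, so that Lemma \ref{59} bounds the convolution factor above by
\begin{equation*}
C\,\|\varphi\|_{S_{M+1}}\|\psi\|_{S_{M+1}}\,2^{-|i-j|M}\,
\frac{2^{-(i\wedge j)M}}{[2^{-(i\wedge j)}+|x_Q-x_R|]^{n+M}}.
\end{equation*}

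The remaining work is purely bookkeeping of powers. Substituting $\ell(Q)=2^{-j}$ and $\ell(R)=2^{-i}$ gives
\begin{equation*}
2^{-(i\wedge j)}=\ell(Q)\vee\ell(R),\qquad
2^{-|i-j|}=\min\left\{\frac{\ell(R)}{\ell(Q)},\frac{\ell(Q)}{\ell(R)}\right\},\qquad
|Q|^{\frac 12}|R|^{\frac 12}=\ell(Q)^{\frac n2}\ell(R)^{\frac n2},
\end{equation*}
and factoring out $[\ell(Q)\vee\ell(R)]^{n+M}$ from the denominator produces the factor $[1+|x_Q-x_R|/(\ell(Q)\vee\ell(R))]^{-(n+M)}$. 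It then suffices to check, by the symmetric case split $\ell(Q)\leq\ell(R)$ versus $\ell(R)\leq\ell(Q)$, that the combination
\begin{equation*}
\ell(Q)^{\frac n2}\ell(R)^{\frac n2}\cdot 2^{-|i-j|M}\cdot\frac{2^{-(i\wedge j)M}}{[\ell(Q)\vee\ell(R)]^{n+M}}
\end{equation*}
collapses exactly to $\min\{\ell(R)/\ell(Q),\ell(Q)/\ell(R)\}^{M+n/2}$, which is a direct computation (in the case $\ell(Q)\leq\ell(R)$, for instance, it equals $\ell(Q)^{n/2+M}\ell(R)^{-n/2-M}$).

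There is no serious obstacle here: the corollary is essentially a restatement of Lemma \ref{59} in the notation adapted to dyadic cubes, and the only care needed is the algebraic simplification in the final step, which I would split into the two cases according to which of $\ell(Q),\ell(R)$ is larger to keep the manipulation transparent.
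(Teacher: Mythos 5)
Your proposal is correct and follows essentially the same route as the paper: rewrite $\langle\varphi_Q,\psi_R\rangle$ as $|Q|^{\frac12}|R|^{\frac12}(\varphi_j*\widetilde\psi_i)(x_R-x_Q)$, invoke Lemma \ref{59} (using that $\|\widetilde\psi\|_{S_{M+1}}=\|\psi\|_{S_{M+1}}$), and then carry out the power bookkeeping with $\ell(Q)=2^{-j}$, $\ell(R)=2^{-i}$. The only cosmetic difference is that the paper takes $\widetilde\psi(x)=\overline{\psi(-x)}$ to account for the conjugate in the inner product, which does not affect the estimate.
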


\begin{proof}
Let $ M\in\mathbb{N}$. Then, by a change of variables, we conclude that,
for any $j,i\in\mathbb{Z}$, $Q\in\mathscr{Q}_j$, and $R\in\mathscr{Q}_i$,
\begin{align*}
\left\langle\varphi_Q,\psi_R\right\rangle
&=|Q|^{\frac12}|R|^{\frac12}\int_{\mathbb{R}^n}\varphi_j(x-x_Q)\overline{\psi_i(x-x_R)}\,dx\\
&=|Q|^{\frac12}|R|^{\frac12}\int_{\mathbb{R}^n}\varphi_j(x)\widetilde\psi_i(x_R-x_Q-x)\,dx
=|Q|^{\frac12}|R|^{\frac12}\left(\varphi_j*\widetilde\psi_i\right)(x_R-x_Q),
\end{align*}
which, combined with Lemma \ref{59}, further implies that
\begin{align*}
\left|\left\langle\varphi_Q, \psi_R \right\rangle\right|
&\lesssim|Q|^{\frac12}|R|^{\frac12}\|\varphi\|_{S_{M+1}}
\left\|\widetilde\psi\right\|_{S_{M+1}}2^{-|i-j|M}
\frac{2^{-(i\wedge j)M}}{[2^{-(i\wedge j)}+|x_Q-x_R|]^{n+M}}\\
&=|Q|^{\frac12}|R|^{\frac12}\|\varphi\|_{S_{M+1}}\|\psi\|_{S_{M+1}}
2^{-|i-j|M}2^{(i\wedge j)n}
\left[1+\frac{|x_Q-x_R|}{\ell(Q)\vee\ell(R)}\right]^{-(n+M)}\\
&=\|\varphi\|_{S_{M+1}}\|\psi\|_{S_{M+1}}
\left[\min\left\{\frac{\ell(R)}{\ell(Q)},\frac{\ell(Q)}{\ell(R)}\right\}\right]^{M+\frac{n}{2}}
\left[1+\frac{|x_Q-x_R|}{\ell(Q)\vee\ell(R)}\right]^{-(n+M)}.
\end{align*}
This finishes the proof of Corollary \ref{59x}.
\end{proof}

The following lemma shows that $T_\psi$ is well defined for any $\vec t\in\dot a^{s,\tau}_{p,q}(W)$.

\begin{lemma}\label{3}
Let $s\in\mathbb{R}$, $\tau\in[0,\infty)$, $p\in(0,\infty)$, $q\in(0,\infty]$,
and $W\in A_p$ have $A_p$-dimensions $(d,\widetilde d,\Delta)$.
Then, for any $\vec t:=\{\vec t_Q\}_{Q\in\mathscr{Q}}\in\dot a^{s,\tau}_{p,q}(W)$ and $\psi\in \mathcal{S}_\infty$,
$\sum_{Q\in\mathscr{Q}}\vec t_Q\psi_Q$ converges in $(\mathcal{S}_\infty')^m$.
Moreover, if $M\in\mathbb{Z}_+$ satisfies
\begin{equation}\label{239}
M>\max\left\{\frac{n}{p}+\frac{\widetilde d}{p'}-(s+n\tau),s+n\tau-\frac{n-d}{p},\Delta\right\},
\end{equation}
then there exists a positive constant $C$ such that,
for any $\vec t\in\dot a^{s,\tau}_{p,q}(W)$ and $\psi,\phi\in\mathcal{S}_\infty$,
\begin{equation*}
\sum_{Q\in\mathscr{Q}}\left|\vec{t}_Q\right||\langle\psi_Q,\phi\rangle|
\leq C\left\|\vec t\right\|_{\dot a^{s,\tau}_{p,q}(W)}\|\psi\|_{S_{M+1}}\|\phi\|_{S_{M+1}},
\end{equation*}
where $\|\cdot\|_{S_M}$ is the same as in \eqref{SM}.
\end{lemma}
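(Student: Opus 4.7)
The plan is to reduce the estimate to a sum over dyadic scales by combining: (a) a pointwise bound on $|\langle \psi_Q,\phi\rangle|$ coming from Schwartz decay; (b) a pointwise bound on $|\vec t_Q|$ obtained by selecting $Q$ itself as the "test cube" in the supremum defining the $\tau$-norm, then passing $|A_Q\vec t_Q|$ to $|\vec t_Q|$ via the reducing operator geometry from Corollary \ref{237}. The three conditions on $M$ in \eqref{239} will correspond precisely to geometric convergence in $j\ge 0$, in $j<0$, and to the spatial integrability of $(1+\cdot)^{\Delta-n-M}$.

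First I would apply Lemma \ref{59} with $i=0$ to the pair $\psi,\widetilde\phi$. Writing $\langle \psi_Q,\phi\rangle=|Q|^{1/2}(\psi_j*\widetilde\phi)(-x_Q)$ for $Q\in\mathscr Q_j$, this gives
\[
|\langle\psi_Q,\phi\rangle|
\lesssim \|\psi\|_{S_{M+1}}\|\phi\|_{S_{M+1}}
\cdot 2^{-jn/2} \cdot 2^{-|j|M}
\frac{2^{-(j\wedge 0)M}}{[2^{-(j\wedge 0)}+|x_Q|]^{n+M}}.
\]
Evaluated separately in the cases $j\ge 0$ and $j<0$, the right-hand side becomes $2^{-j(n/2+M)}(1+|x_Q|)^{-(n+M)}$ and $2^{j(n/2+M)}(1+2^{j}|x_Q|)^{-(n+M)}$, up to constants.

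Next I would invoke Theorem \ref{37} to replace $\|\vec t\|_{\dot a^{s,\tau}_{p,q}(W)}$ by $\|\vec t\|_{\dot a^{s,\tau}_{p,q}(\mathbb A)}$. For any single cube $Q\in\mathscr Q_j$, choosing $P=Q$ in the supremum defining the $\tau$-norm and retaining only the summand $i=j$ in the inner $\ell^q$ norm (in the $\dot f$ case) or only the summand $i=j$ in the outer $\ell^q$ sum (in the $\dot b$ case) yields in both cases the pointwise extraction
\[
|A_Q\vec t_Q|\le C\,2^{-j(s+n\tau+n/2-n/p)}\|\vec t\|_{\dot a^{s,\tau}_{p,q}(W)}.
\]
Applying Corollary \ref{237} with the reference cube $Q_{0,\mathbf{0}}$, together with $\|A_Q^{-1}\|\le \|A_{Q_{0,\mathbf{0}}}^{-1}\|\,\|A_{Q_{0,\mathbf{0}}}A_Q^{-1}\|$, gives $\|A_Q^{-1}\|\lesssim 2^{j\widetilde d/p'}(1+|x_Q|)^\Delta$ when $j\ge 0$ and $\|A_Q^{-1}\|\lesssim 2^{-jd/p}(1+2^{j}|x_Q|)^\Delta$ when $j<0$. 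Multiplying through produces the key pointwise bound on $|\vec t_Q|$.

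Multiplying the two pointwise estimates and summing over $Q\in\mathscr Q_j$ (where $\sum_{Q\in\mathscr Q_j}(1+|x_Q|)^{\Delta-n-M}\lesssim 2^{jn}$ for $j\ge 0$ and $\sum_{Q\in\mathscr Q_j}(1+2^{j}|x_Q|)^{\Delta-n-M}\lesssim 1$ for $j<0$, both needing $M>\Delta$) reduces the whole sum to
\[
\sum_{j\ge 0}2^{-j(s+n\tau-n/p-\widetilde d/p'+M)}+\sum_{j<0}2^{-j(s+n\tau-(n-d)/p-M)},
\]
multiplied by $\|\vec t\|_{\dot a^{s,\tau}_{p,q}(W)}\|\psi\|_{S_{M+1}}\|\phi\|_{S_{M+1}}$. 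The first geometric series converges precisely when $M>n/p+\widetilde d/p'-(s+n\tau)$, the second precisely when $M>s+n\tau-(n-d)/p$; together with $M>\Delta$, these give exactly the hypothesis \eqref{239}. The claimed inequality follows. Absolute convergence in $\mathbb C^m$ of $\sum_Q \vec t_Q\langle\psi_Q,\phi\rangle$ for every $\phi\in\mathcal S_\infty$, together with the resulting uniform continuity estimate in $\phi$, gives convergence of $\sum_Q\vec t_Q\psi_Q$ in $(\mathcal S_\infty')^m$.

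The main obstacle will be the careful bookkeeping of exponents: assembling the $2^{-jn/2}$ factor from $|Q|^{1/2}$, the scale exponent $s+n\tau+n/2-n/p$ from the norm extraction, the dimension exponents $\widetilde d/p'$ or $d/p$ from $\|A_Q^{-1}\|$, and the Schwartz decay $n/2+M$ must combine cleanly so that the three resulting conditions match \eqref{239}; this requires handling the cases $j\ge 0$ and $j<0$ asymmetrically because Corollary \ref{237} switches between its two maxima exactly at the transition $\ell(Q)=1$.
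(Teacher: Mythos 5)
Your proposal is correct and follows essentially the same route as the paper's proof: the pointwise extraction $|\vec t_Q|\le\|A_Q^{-1}\|\,|A_Q\vec t_Q|\lesssim\|A_Q^{-1}\|\,|Q|^{\frac sn+\frac12-\frac1p+\tau}\|\vec t\|_{\dot a^{s,\tau}_{p,q}(W)}$ via Theorem \ref{37}, the bound on $\|A_Q^{-1}\|$ through $Q_{0,\mathbf 0}$ and Corollary \ref{237}, the decay of $|\langle\psi_Q,\phi\rangle|$ from Lemma \ref{59} (packaged in the paper as Corollary \ref{59x}), and the same split into $j\ge0$ and $j<0$ producing exactly the three conditions in \eqref{239}. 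The exponent bookkeeping in both regimes matches the paper's computation.
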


\begin{proof}
From Theorem \ref{37}, we deduce that, for each $Q\in\mathscr{Q}$,
\begin{align*}
\left|\vec{t}_Q \right|
&\leq\left\|A_Q^{-1}\right\|\left|A_Q\vec{t}_Q\right|
\leq\left\|A_Q^{-1}\right\| |Q|^{\frac{s}{n}+\frac12-\frac{1}{p}+\tau}
\left\|\vec t\right\|_{\dot{a}^{s,\tau}_{p,q}(\mathbb{A})}
\sim\left\|A_Q^{-1}\right\| |Q|^{\frac{s}{n}+\frac12-\frac{1}{p}+\tau}
\left\|\vec t\right\|_{\dot a^{s,\tau}_{p,q}(W)}
\end{align*}
and hence, for any $\phi\in\mathcal{S}_\infty$,
\begin{equation}\label{1}
\sum_{Q\in\mathscr{Q}}\left|\vec{t}_Q\right||\langle\psi_Q,\phi\rangle|
\lesssim\left\|\vec t\right\|_{\dot a^{s,\tau}_{p,q}(W)}
\sum_{Q\in\mathscr{Q}}|Q|^{\frac{s}{n}+\frac12-\frac{1}{p}+\tau}
\left\|A_Q^{-1}\right\||\langle\psi_Q,\phi\rangle|.
\end{equation}
Next, we estimate $\|A_Q^{-1}\|$ and $|\langle\psi_Q,\phi\rangle|$, respectively.
By Corollary \ref{237}, we conclude that, for any $Q\in\mathscr{Q}$,
\begin{align}\label{2}
\left\|A_Q^{-1}\right\|
\leq\left\|A_{Q_{0,\mathbf{0}}}^{-1}\right\|
\left\|A_{Q_{0,\mathbf{0}}}A_Q^{-1}\right\|
\lesssim\max\left\{[\ell(Q)]^{\frac dp},[\ell(Q)]^{-\frac{\widetilde d}{p'}}\right\}
\left[1+\frac{|x_Q|}{1\vee\ell(Q)}\right]^\Delta.
\end{align}
Let $M\in\mathbb{N}$ satisfy \eqref{239}.
From Corollary \ref{59x}, we infer that,
for any $\varphi,\psi\in\mathcal{S}_\infty$ and $Q\in\mathscr{Q}$,
\begin{align}\label{24}
|\langle\psi_Q,\phi\rangle|
&=|\langle\psi_Q,\phi_{Q_{0,\mathbf{0}}}\rangle|\\
&\lesssim\|\psi\|_{S_{M+1}}\|\phi\|_{S_{M+1}}
\left[\min\left\{[\ell(Q)]^{-1},\ell(Q)\right\}\right]^{M+\frac{n}{2}}
\left[1+\frac{|x_Q|}{\ell(Q)\vee 1}\right]^{-(n+M)},\notag
\end{align}
where the implicit positive constant depends only on $M$ and $n$.
This, together with \eqref{1}, \eqref{2}, \eqref{239}, and Lemma \ref{253x}, further implies that
\begin{align*}
&\sum_{Q\in\mathscr{Q}}\left|\vec{t}_Q\right||\langle\psi_Q,\phi\rangle|\\
&\quad\lesssim\left\|\vec t\right\|_{\dot a^{s,\tau}_{p,q}(W)}\|\psi\|_{S_{M+1}}\|\phi\|_{S_{M+1}}
\sum_{Q\in\mathscr{Q}} |Q|^{\frac{s}{n}+\frac12-\frac{1}{p}+\tau}\\
&\qquad\times\min\left\{[\ell(Q)]^{-(M-\frac{d}{p}+\frac{n}{2})},[\ell(Q)]^{M+\frac{n}{2}-\frac{\widetilde d}{p'}}\right\}
\left[1+\frac{|x_Q|}{\ell(Q)\vee 1}\right]^{-(n+M-\Delta)}\\
&\quad=\left\|\vec t\right\|_{\dot a^{s,\tau}_{p,q}(W)}\|\phi\|_{S_{M+1}}\|\phi\|_{S_{M+1}}\\
&\qquad\times\Bigg[\sum_{j=0}^\infty2^{-j(s+n-\frac{n}{p}+n\tau+M-\frac{\widetilde d}{p'})}
\sum_{k\in\mathbb{Z}^n}\left(1+2^{-j}|k|\right)^{-(n+M-\Delta)}\\
&\qquad+\sum_{j=-\infty}^{-1}2^{-j(s-\frac{n}{p}+n\tau-M+\frac{d}{p})}
\sum_{k\in\mathbb{Z}^n} (1+|k|)^{-(n+M-\Delta)}\Bigg]\\
&\quad\sim\left\|\vec t\right\|_{\dot a^{s,\tau}_{p,q}(W)}\|\psi\|_{S_{M+1}}\|\phi\|_{S_{M+1}}
\left[\sum_{j=0}^\infty2^{-j(s-\frac{n}{p}+n\tau+M-\frac{\widetilde{d}}{p'})}
+\sum_{j=-\infty}^{-1}2^{-j(s-\frac{n}{p}+n\tau-M+\frac{d}{p})}\right]\\
&\quad\sim\left\|\vec t\right\|_{\dot a^{s,\tau}_{p,q}(W)}\|\psi\|_{S_{M+1}}\|\phi\|_{S_{M+1}}.
\end{align*}
This finishes the proof of Lemma \ref{3}.
\end{proof}

For any sequence $t:=\{t_Q\}_{Q\in\mathscr{Q}}\subset\mathbb{C}$,
$r\in(0,\infty]$, and $\lambda\in(0,\infty)$,
let $t_{r,\lambda}^*:=\{(t_{r,\lambda}^*)_Q\}_{Q\in\mathscr{Q}}$, where, for any $Q\in\mathscr{Q}$,
$$
\left(t_{r,\lambda}^*\right)_Q
:=\left[\sum_{R\in\mathscr{Q},\,\ell(R)=\ell(Q)}
\frac{|t_R|^r}{\{1+[\ell(R)]^{-1}|x_R-x_Q|\}^\lambda}\right]^{\frac{1}{r}}.
$$
Then we have the following conclusion.

\begin{lemma}\label{4}
Let $s\in\mathbb{R}$, $\tau\in[0,\infty)$,
$p\in(0,\infty)$, $q\in(0,\infty]$,
$\lambda\in(n,\infty)$, $W\in A_p$,
and $\{A_Q\}_{Q\in\mathscr{Q}}$ be a sequence of
reducing operators of order $p$ for $W$.
Then, for any $\vec{t}\in\dot a^{s,\tau}_{p,q}(W)$,
$$
\left\|\vec{t}\right\|_{\dot a^{s,\tau}_{p,q}(W)}
\sim\left\|\left(\left\{\left|A_Q\vec t_Q\right|\right\}_{Q\in\mathscr{Q}}
\right)_{p\wedge q,\lambda}^*\right\|_{\dot a^{s,\tau}_{p,q}},
$$
where the positive equivalence constants are independent of $\vec t$.
\end{lemma}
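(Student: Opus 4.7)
The plan is to reduce the matrix-weighted statement to a purely scalar Fefferman--Stein type comparison and then feed it into Lemma \ref{summary F}. Setting $b_Q:=|A_Q\vec t_Q|$ for each $Q\in\mathscr Q$ and appealing to Theorem \ref{37}, we get $\|\vec t\|_{\dot a^{s,\tau}_{p,q}(W)}\sim\|\vec t\|_{\dot{a}^{s,\tau}_{p,q}(\mathbb A)}=\|\{b_Q\}\|_{\dot a^{s,\tau}_{p,q}}$ (the unweighted scalar sequence norm), so the whole claim reduces to the equivalence $\|b\|_{\dot a^{s,\tau}_{p,q}}\sim\|(b)^*_{p\wedge q,\lambda}\|_{\dot a^{s,\tau}_{p,q}}$ for the nonnegative scalar sequence $b=\{b_Q\}_{Q\in\mathscr Q}$. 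The matrix structure plays no further role and is completely encoded by the passage $\vec t\mapsto b$.

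The easy direction is $\|b\|_{\dot a^{s,\tau}_{p,q}}\leq\|(b)^*_{p\wedge q,\lambda}\|_{\dot a^{s,\tau}_{p,q}}$: keeping only the $R=Q$ term in the definition of $(b)^*_{p\wedge q,\lambda,Q}$ gives pointwise $(b^*_{p\wedge q,\lambda})_Q\geq b_Q$, and monotonicity of the $\dot a^{s,\tau}_{p,q}$-quasinorm in the sequence moduli does the rest.

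For the reverse direction, since $\lambda>n$ we may fix $r\in(n(p\wedge q)/\lambda,p\wedge q)$ and set $\lambda':=\lambda r/(p\wedge q)\in(n,\lambda)$. The elementary embedding $\ell^r\hookrightarrow\ell^{p\wedge q}$ applied to the sequence $\{b_R/(1+2^j|x_R-x_Q|)^{\lambda/(p\wedge q)}\}_{R\in\mathscr Q_j}$ yields
$$
(b^*_{p\wedge q,\lambda})_Q\leq\left[\sum_{R\in\mathscr Q_j}\frac{b_R^r}{(1+2^j|x_R-x_Q|)^{\lambda'}}\right]^{\frac1r}
$$
for $Q\in\mathscr Q_j$. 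By Lemma \ref{33y} the denominators are comparable to $(1+2^j|x-y|)^{\lambda'}$ for any $x\in Q$ and $y\in R$, so the bracketed sum equals a constant times $2^{jn}\int_{\mathbb R^n}|B_j(y)|^r(1+2^j|x-y|)^{-\lambda'}\,dy$, where $B_j:=\sum_{R\in\mathscr Q_j}b_R\mathbf 1_R$.

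Set $g_j:=\sum_{Q\in\mathscr Q_j}(b^*_{p\wedge q,\lambda})_Q\widetilde{\mathbf 1}_Q$ so that $\|(b)^*_{p\wedge q,\lambda}\|_{\dot a^{s,\tau}_{p,q}}=\|\{2^{js}g_j\}\|_{L\dot A^\tau_{p,q}}$, and note that $\widetilde{\mathbf 1}_Q=2^{jn/2}\mathbf 1_Q$ on $Q$, while $B_j=2^{-jn/2}b_j$. Raising the previous bound to the $r$-th power and substituting, the two normalization factors cancel and we obtain the pointwise estimate
$$
|g_j(x)|^r\leq C\,2^{jn}\int_{\mathbb R^n}\frac{|b_j(y)|^r}{(1+2^j|x-y|)^{\lambda'}}\,dy\qquad(x\in\mathbb R^n,\ j\in\mathbb Z).
$$
Since $r<\min\{p,q\}$ and $\lambda'>n$, this is precisely the hypothesis of Lemma \ref{summary F}, which delivers $\|\{2^{js}g_j\}\|_{L\dot A^\tau_{p,q}}\lesssim\|\{2^{js}b_j\}\|_{L\dot A^\tau_{p,q}}=\|b\|_{\dot a^{s,\tau}_{p,q}}$, as needed. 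The only delicate point is the simultaneous choice of $r$ and $\lambda'$: one must verify that the admissible window $r\in(n(p\wedge q)/\lambda,p\wedge q)$ is nonempty, which holds exactly because $\lambda>n$; everything else is routine bookkeeping of the $2^{jn/2}$ normalization factors and an application of Lemmas \ref{33y}, \ref{summary F}, and Theorem \ref{37}.
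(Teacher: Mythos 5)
Your proof is correct. The first step --- reducing to the scalar sequence $b_Q=|A_Q\vec t_Q|$ via Theorem \ref{37} so that the matrix weight disappears entirely --- is exactly the paper's reduction. Where you diverge is in handling the resulting unweighted equivalence $\|b\|_{\dot a^{s,\tau}_{p,q}}\sim\|b^*_{p\wedge q,\lambda}\|_{\dot a^{s,\tau}_{p,q}}$: the paper simply cites \cite[Lemma 3.3]{yy10} for this, whereas you prove it from scratch by choosing $r\in(n(p\wedge q)/\lambda,\,p\wedge q)$, using $\ell^r\hookrightarrow\ell^{p\wedge q}$ together with Lemma \ref{33y} to convert the lattice sum into the convolution-type majorant $2^{jn}\int|b_j(y)|^r(1+2^j|x-y|)^{-\lambda'}\,dy$ with $\lambda'=\lambda r/(p\wedge q)>n$, and then invoking Lemma \ref{summary F}. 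Your bookkeeping of the $2^{jn/2}$ normalizations is right, the window for $r$ is nonempty precisely because $\lambda>n$, and $r<\min\{p,q\}$ as Lemma \ref{summary F} requires, so the argument closes. What your route buys is self-containedness: the scalar Peetre-type equivalence is derived from machinery already built in the paper rather than imported, at the modest cost of a page of computation that the citation avoids; the underlying mechanism (majorize the discrete Peetre maximal sequence by a decaying convolution and apply Fefferman--Stein) is the same one operating inside the quoted lemma of \cite{yy10}.
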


\begin{proof}
The unweighted version of this result, i.e., the case $W\equiv A_Q\equiv 1$, is contained in \cite[Lemma 3.3]{yy10}. We use this to obtain the matrix-weighted extension as follows.

Let $u:=\{u_Q\}_{Q\in\mathscr{Q}}$,
where, for any $Q\in\mathscr{Q}$, $u_Q:=|A_Q\vec t_Q|$.
Then, by Theorem \ref{37} and the mentioned unweighted version of the
assertion from \cite[Lemma 3.3]{yy10}, we find that
$$
\left\|\vec{t}\right\|_{\dot a^{s,\tau}_{p,q}(W)}
\sim\left\|\vec{t}\right\|_{\dot{a}^{s,\tau}_{p,q}(\mathbb{A})}
=\|u\|_{\dot a^{s,\tau}_{p,q}}
\sim\left\|u_{p\wedge q,\lambda}^*\right\|_{\dot a^{s,\tau}_{p,q}}.
$$
This finishes the proof of Lemma \ref{4}.
\end{proof}

Applying some ideas similar to those used in the proof of \cite[Theorem 2.2]{fj90},
we can prove Theorem \ref{phi}.

\begin{proof}[Proof of Theorem \ref{phi}]
We first show the boundedness of
$S_{\varphi}:\ \dot A^{s,\tau}_{p,q}(W,\widetilde{\varphi})\to\dot a^{s,\tau}_{p,q}(W)$.
For any $\vec f\in\dot A^{s,\tau}_{p,q}(W,\widetilde{\varphi})$, let
$$
\sup_{\mathbb{A},\widetilde{\varphi}}\left(\vec f\right)
:=\left\{\sup_{\mathbb{A},\widetilde{\varphi},Q}\left(\vec f\right)\right\}_{Q\in\mathscr{Q}}
$$
be the same as in \eqref{sup}.
Obviously, by the definition of $\sup_{\mathbb{A},\widetilde{\varphi},Q}(\vec f)$, we obtain,
for any $\vec f\in\dot A^{s,\tau}_{p,q}(W,\widetilde{\varphi})$ and $Q\in\mathscr{Q}$,
\begin{align*}
\left|A_Q\left(S_{\varphi}\vec f\right)_Q\right|
\leq\left|A_Q\left\langle\vec f,\varphi_Q\right\rangle\right|
=|Q|^{\frac12}\left|A_Q\left(\widetilde{\varphi}_{j_Q}*\vec f\right)(x_Q)\right|
\leq\sup_{\mathbb{A},\widetilde{\varphi},Q}\left(\vec f\right),
\end{align*}
which, combined with Theorems \ref{37} and \ref{11}, further implies that
$$
\left\|S_{\varphi}\vec f\right\|_{\dot a^{s,\tau}_{p,q}(W)}
\sim\left\|S_{\varphi}\vec f\right\|_{\dot{a}^{s,\tau}_{p,q}(\mathbb{A})}
\leq\left\|\sup_{\mathbb{A},\widetilde{\varphi}}\left(\vec f\right)\right\|_{\dot a^{s,\tau}_{p,q}}
\sim\left\|\vec f\right\|_{\dot A^{s,\tau}_{p,q}(W, \widetilde{\varphi})}.
$$
This finishes the proof of the boundedness of $ S_{\varphi}$.

Now, we prove the boundedness of
$T_\psi:\ {\dot a^{s,\tau}_{p,q}(W)}\to\dot A^{s,\tau}_{p,q}(W,\varphi)$.
Let $W$ have $A_p$-dimensions $(d,\widetilde d,\Delta)$ and
let $\vec t:=\{\vec t_Q\}_{Q\in\mathscr{Q}}\in\dot a^{s,\tau}_{p,q}(W)$.
Using Lemma \ref{3} and the fact that $\psi\in\mathcal{S}_\infty$, we conclude that $T_\psi$ is well defined.
Thus, by \eqref{19}, we find that,
for any $j\in\mathbb{Z}$, $Q\in\mathscr{Q}_j$, and $x\in Q$,
\begin{align}\label{71}
\left|A_Q\left[\varphi_j*\left(T_\psi\vec t\right)\right](x)\right|
&=\left|\sum_{i=j-1}^{j+1}\sum_{R\in\mathscr{Q}_i}
A_Q\vec t_R\left(\varphi_j*\psi_R\right)(x)\right| \\
&\leq\sum_{i=j-1}^{j+1}\sum_{R\in\mathscr{Q}_i}
\left\|A_QA_R^{-1}\right\|\left|A_R\vec t_R\right|
\left|\left(\varphi_j*\psi_R\right)(x)\right|.\notag
\end{align}
From Corollary \ref{237}, we deduce that,
for any $j\in\mathbb{Z}$, $i\in\{j-1,j,j+1\}$,
$Q\in\mathscr{Q}_j$, and $R\in\mathscr{Q}_i$,
\begin{align}\label{73}
\left\|A_QA_R^{-1}\right\|
&\lesssim\max\left\{\left[\frac{\ell(R)}{\ell(Q)}\right]^{\frac dp},
\left[\frac{\ell(R)}{\ell(Q)}\right]^{\frac{\widetilde d}{p'}}\right\}
\left[1+\frac{|x_Q-x_R|}{\max\{\ell(R),\ell(Q)\}}\right]^\Delta \\
&\sim\left\{1+[\ell(R)]^{-1}|x_Q-x_R|\right\}^\Delta.\notag
\end{align}
Let $M\in\mathbb{N}$ satisfy $M>n(\frac{1}{p\wedge q}-1)_+ +\Delta$.
Using Lemma \ref{59}, we obtain,
for any $j\in\mathbb{Z}$, $i\in\{j-1,j,j+1\}$,
$R\in\mathscr{Q}_i$, and $x\in\mathbb{R}^n$,
\begin{align}\label{72}
\left|\left(\varphi_j*\psi_R\right)(x)\right|
&=\left|\int_{\mathbb{R}^n}\varphi_j(x-y)\psi_R(y)\,dy\right|
=|R|^{\frac12}\left|\left(\varphi_j*\psi_i \right)(x-x_R)\right|\\
&\lesssim|R|^{\frac12}2^{-|i-j|M}\frac{2^{-(i\wedge j)M}}{[2^{-(i\wedge j)}+|x-x_R|]^{n+M}}\notag\\
&\sim|R|^{-\frac12}\frac{1}{\{1+[\ell(R)]^{-1}|x-x_R|\}^{n+M}}.\notag
\end{align}
Let $u:=\{u_Q\}_{Q\in\mathscr{Q}}$,
where, for any $Q\in\mathscr{Q}$, $u_Q:=|A_Q\vec t_Q|$.
Applying \eqref{71}, \eqref{73}, \eqref{72}, and Lemma \ref{33y}, we conclude that,
for any $j\in\mathbb{Z}$, $Q\in\mathscr{Q}_j$, and $x\in Q$,
\begin{align}\label{152}
\left|A_Q\left[\varphi_j*\left(T_\psi\vec t\right)\right](x)\right|
&\lesssim\sum_{i=j-1}^{j+1}\sum_{R\in\mathscr{Q}_i}
|R|^{-\frac12}\frac{u_R\{1+[\ell(R)]^{-1}|x_Q-x_R|\}^{\Delta}}
{\{1+[\ell(R)]^{-1}|x-x_R|\}^{n+M}}
\sim|Q|^{-\frac12}\sum_{i=j-1}^{j+1}I_i (x),
\end{align}
where, for any $i\in\mathbb Z$,
$$
I_i(x):=\sum_{R\in\mathscr{Q}_i}
\frac{u_R}{\{1+[\ell(R)]^{-1}|x-x_R|\}^{n+M-\Delta}}.
$$
By the geometrical properties of dyadic cubes, we easily find that,
for any $j\in\mathbb{Z}$ and $x\in\mathbb{R}^n$,
there exist unique $Q^*\in\mathscr{Q}_{j+1}$, $Q\in\mathscr{Q}_j$,
and $Q^{**}\in\mathscr{Q}_{j-1}$ such that
$x\in Q^{*}\subset Q\subset Q^{**}$. Next, we claim that
\begin{equation}\label{153}
\sum_{i=j-1}^{j+1}I_i(x)
\lesssim\left(u_{p\wedge q,\widetilde{\lambda}}^*\right)_{Q^*}
+\left(u_{p\wedge q,\widetilde{\lambda}}^*\right)_Q
+\left(u_{p\wedge q,\widetilde{\lambda}}^*\right)_{Q^{**}},
\end{equation}
where $\widetilde{\lambda}:=(n+M-\Delta)(p\wedge q\wedge 1)$.
Due to similarity, to show \eqref{153},
we only need to prove that,
for any $j\in\mathbb{Z}$, $Q\in\mathscr{Q}_j$, and $x\in Q$,
\begin{equation}\label{154}
I_j(x)\lesssim\left(u_{p\wedge q,\widetilde{\lambda}}^*\right)_Q.
\end{equation}
To show this, we consider the following two cases on $p\wedge q$.

\emph{Case 1)} $p\wedge q\in(0,1]$.
In this case, $M>\frac{n}{p\wedge q}-n+\Delta$
and $\widetilde{\lambda}=(n+M-\Delta)(p\wedge q)>n$.
By Lemmas \ref{33y} and \ref{famous 2}, we conclude that,
for any $j\in\mathbb{Z}$, $Q\in\mathscr{Q}_j$, and $x\in Q$,
\begin{align*}
I_j(x)
&\sim\sum_{R\in\mathscr{Q}_j}
\frac{u_R}{\{1+[\ell(R)]^{-1}|x_Q-x_R|\}^{n+M-\Delta}}\\
&\leq\left[\sum_{R\in\mathscr{Q}_j}
\frac{(u_R)^{p\wedge q}}
{\{1+[\ell(R)]^{-1}|x_Q-x_R|\}^{\widetilde\lambda}}
\right]^{\frac{1}{p\wedge q}}
=\left(u_{p\wedge q,\widetilde\lambda}^*\right)_Q.
\end{align*}
This finishes the proof of \eqref{154} in this case.

\emph{Case 2)} $p\wedge q\in(1,\infty)$.
In this case, $M>\Delta$ and $\widetilde\lambda=n+M-\Delta>n$.
From Lemma \ref{33y}, H\"older's inequality, and \eqref{33z}, we infer that,
for any $j\in\mathbb{Z}$, $Q\in\mathscr{Q}_j$, and $x\in Q$,
\begin{align*}
I_j(x)
&\sim\sum_{R\in\mathscr{Q}_j}
\frac{u_R}{\{1+[\ell(R)]^{-1}|x_Q-x_R|\}^{\widetilde\lambda}}\\
&\leq\left(\sum_{R\in\mathscr{Q}_j}
\frac{1}{\{1+[\ell(R)]^{-1}|x_Q-x_R|\}^{\widetilde\lambda}}
\right)^{\frac{1}{(p\wedge q)'}}
\left(\sum_{R\in\mathscr{Q}_j}
\frac{(u_R)^{p\wedge q}}{\{1+[\ell(R)]^{-1}|x_Q-x_R|\}^{\widetilde\lambda}}
\right)^{\frac{1}{p\wedge q}}\\
&=\left[\sum_{k\in\mathbb{Z}^n}
\frac{1}{(1+|2^jx_Q-k|)^{\widetilde\lambda}}
\right]^{\frac{1}{(p\wedge q)'}}
\left(u_{p\wedge q,\widetilde\lambda}^*\right)_Q
\sim\left(u_{p\wedge q,\widetilde\lambda}^*\right)_Q.
\end{align*}
This finishes the proof of \eqref{154} in this case and hence \eqref{153}.

Using \eqref{152} and \eqref{153}, we obtain,
for any $j\in\mathbb{Z}$ and $x\in\mathbb{R}^n$,
\begin{align}\label{242}
&\left|A_j(x)\left[\varphi_j*\left(T_\psi\vec t\right)\right](x)\right|
\lesssim\left(u_{p\wedge q,\widetilde{\lambda}}^*\right)_{j+1}(x)
+\left(u_{p\wedge q,\widetilde{\lambda}}^*\right)_{j}(x)
+\left(u_{p\wedge q,\widetilde{\lambda}}^*\right)_{j-1}(x),
\end{align}
where $(u_{p\wedge q,\widetilde{\lambda}}^*)_j$ is the same as in \eqref{tj}.
Therefore, we have
$$
\left\|T_\psi\vec t\right\|_{\dot A^{s,\tau}_{p,q}(\mathbb{A},\varphi)}
\lesssim\left\|u_{p\wedge q,\widetilde{\lambda}}^*\right\|_{\dot a^{s,\tau}_{p,q}}.
$$
From this, Theorem \ref{11}, and Lemma \ref{4}, we deduce that
$$
\left\|T_\psi\vec t\right\|_{\dot A^{s,\tau}_{p,q}(W,\varphi)}
\sim\left\|T_\psi\vec t\right\|_{\dot A^{s,\tau}_{p,q}(\mathbb{A},\varphi)}
\lesssim\left\|u_{p\wedge q,\widetilde{\lambda}}^*\right\|_{\dot a^{s,\tau}_{p,q}}
\sim\left\|\vec t\right\|_{\dot a^{s,\tau}_{p,q}(W)}.
$$
This finishes the proof of the boundedness of $T_\psi$.

Finally, if $\varphi$ and $\psi$ satisfy \eqref{21}, then, by Lemma \ref{7},
we find that $T_\psi\circ S_\varphi$
is the identity on $\dot A^{s,\tau}_{p,q}(W,\widetilde{\varphi})$.
This finishes the proof of Theorem \ref{phi}.
\end{proof}

Applying Theorem \ref{phi}, we can obtain the following proposition
which proves that $\dot A^{s,\tau}_{p,q}(W,\varphi)$
is independent of the choice of $\varphi$.

\begin{proposition}\label{38}
Let $s\in\mathbb{R}$, $\tau\in[0,\infty)$, $p\in(0,\infty)$, and $q\in(0,\infty]$.
Let $\varphi\in\mathcal{S}$ satisfy \eqref{19} and \eqref{20}, and let $W\in A_p$.
Then $\dot A^{s,\tau}_{p,q}(W,\varphi)$ is independent of the choice of $\varphi$.
\end{proposition}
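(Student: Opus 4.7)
The plan is to deduce Proposition~\ref{38} directly from the $\varphi$-transform characterization in Theorem~\ref{phi} via a standard factorization argument. Let $\varphi_1,\varphi_2\in\mathcal S$ both satisfy \eqref{19} and \eqref{20}; by symmetry, it suffices to show
\begin{equation*}
\left\|\vec f\right\|_{\dot A^{s,\tau}_{p,q}(W,\varphi_2)}
\lesssim\left\|\vec f\right\|_{\dot A^{s,\tau}_{p,q}(W,\varphi_1)}
\end{equation*}
for any $\vec f\in(\mathcal S_\infty')^m$. Since $\widehat{\widetilde{\varphi_1}}(\xi)=\overline{\widehat{\varphi_1}(\xi)}$, the function $\widetilde{\varphi_1}$ also satisfies \eqref{19} and \eqref{20}, and the involution $\widetilde{\widetilde{\varphi}}=\varphi$ yields $\dot A^{s,\tau}_{p,q}(W,\widetilde{\widetilde{\varphi_1}})=\dot A^{s,\tau}_{p,q}(W,\varphi_1)$.

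The first step is to build a Calder\'on partner $\psi\in\mathcal S$ for $\widetilde{\varphi_1}$: $\psi$ satisfies \eqref{19} and \eqref{20}, and the pair $(\widetilde{\varphi_1},\psi)$ satisfies the reproducing identity \eqref{21}. This is a purely unweighted, classical construction from Littlewood--Paley theory --- for instance, one may define $\widehat\psi$ via division by $\sum_{j\in\mathbb Z}|\widehat{\widetilde{\varphi_1}}(2^j\xi)|^2$ on an annulus where this quantity is bounded below, localizing with a suitable nonnegative bump, as in the references \cite{fj85,fj88} already cited in the paper. No matrix-weighted ingredient enters at this stage.

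Given such $\psi$, the plan is to apply Theorem~\ref{phi} twice. The first application, with the pair $(\widetilde{\varphi_1},\psi)$ satisfying \eqref{19}, \eqref{20}, and \eqref{21}, furnishes the boundedness of $S_{\widetilde{\varphi_1}}\colon\dot A^{s,\tau}_{p,q}(W,\varphi_1)\to\dot a^{s,\tau}_{p,q}(W)$ together with the reconstruction identity $T_\psi\circ S_{\widetilde{\varphi_1}}=\mathrm{id}$ on $\dot A^{s,\tau}_{p,q}(W,\varphi_1)$. The second application, with the pair $(\varphi_2,\psi)$ satisfying \eqref{19} and \eqref{20} (but not necessarily \eqref{21}, which is not needed for the boundedness of $T_\psi$), furnishes $T_\psi\colon\dot a^{s,\tau}_{p,q}(W)\to\dot A^{s,\tau}_{p,q}(W,\varphi_2)$ bounded.

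Chaining these facts, for any $\vec f\in\dot A^{s,\tau}_{p,q}(W,\varphi_1)$ one obtains
\begin{equation*}
\left\|\vec f\right\|_{\dot A^{s,\tau}_{p,q}(W,\varphi_2)}
=\left\|T_\psi\left(S_{\widetilde{\varphi_1}}\vec f\right)\right\|_{\dot A^{s,\tau}_{p,q}(W,\varphi_2)}
\lesssim\left\|S_{\widetilde{\varphi_1}}\vec f\right\|_{\dot a^{s,\tau}_{p,q}(W)}
\lesssim\left\|\vec f\right\|_{\dot A^{s,\tau}_{p,q}(W,\varphi_1)},
\end{equation*}
which is the desired estimate. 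There is no real obstacle beyond Theorem~\ref{phi} itself: all matrix-weighted analysis has already been absorbed into that theorem (via the $A_p$-dimension estimates of Section~\ref{Ap dimension}), and the passage to Proposition~\ref{38} reduces to the classical unweighted existence of a Calder\'on partner $\psi$ together with the symmetry of the argument in $\varphi_1,\varphi_2$.
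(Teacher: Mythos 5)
Your proposal is correct and follows essentially the same route as the paper: both proofs fix a Calder\'on partner $\psi$ satisfying \eqref{21} for one of the two given functions and then chain the boundedness of $S$ and $T$ from Theorem \ref{phi} with the reproducing identity $T_\psi\circ S_{\widetilde{\varphi_1}}=\mathrm{id}$. The only (immaterial) differences are that you construct the partner explicitly for $\widetilde{\varphi_1}$ rather than assuming it for the source function, and that you spell out the standard unweighted existence of such a partner.
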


\begin{proof}
Let $\varphi^{(1)}, \varphi^{(2)}, \psi^{(2)}\in\mathcal{S}$
satisfy both \eqref{19} and \eqref{20}
and assume both $\varphi^{(2)}$ and $\psi^{(2)}$ satisfy \eqref{21}.
Then, from both Lemma \ref{7} and Theorem \ref{phi}, we infer that,
for any $\vec{f}\in\dot A^{s,\tau}_{p,q}(W,\varphi^{(2)})$,
\begin{align*}
\left\|\vec f\right\|_{\dot A^{s,\tau}_{p,q}(W, \varphi^{(1)})}
&=\left\|\left(T_{\widetilde{\psi^{(2)}}}\circ S_{\widetilde{\varphi^{(2)}}}\right)
\left(\vec f\right)\right\|_{\dot A^{s,\tau}_{p,q}(W, \varphi^{(1)})}
\lesssim\left\|S_{\widetilde{\varphi^{(2)}}}\vec f\right\|_{\dot a^{s,\tau}_{p,q}(W)}
\lesssim\left\|\vec f\right\|_{\dot A^{s,\tau}_{p,q}(W, \varphi^{(2)})}.
\end{align*}
By symmetry, we also obtain the reverse inequality.
This finishes the proof of Proposition \ref{38}.
\end{proof}

Based on Proposition \ref{38}, in what follows, we denote $\dot A^{s,\tau}_{p,q}(W,\varphi)$ simply by $\dot A^{s,\tau}_{p,q}(W)$.
Moreover, using Proposition \ref{38} and Theorem \ref{11},
we easily obtain the following conclusion; we omit the details.

\begin{corollary}
Let $s\in\mathbb{R}$, $\tau\in[0,\infty)$, $p\in(0,\infty)$, $q\in(0,\infty]$,
$\varphi\in\mathcal{S}$ satisfy both \eqref{19} and \eqref{20}, $W\in A_p$,
and $\mathbb{A}:=\{A_Q\}_{Q\in\mathscr{Q}}$ be
a sequence of reducing operators of order $p$ for $W$.
Then $\dot A^{s,\tau}_{p,q}(\mathbb{A},\varphi)$
is independent of the choice of $\varphi$.
\end{corollary}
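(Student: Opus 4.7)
The plan is to deduce this corollary directly by combining Theorem \ref{11} with Proposition \ref{38}. The key observation is that Theorem \ref{11} provides the norm equivalence
$$
\left\|\vec f\right\|_{\dot A^{s,\tau}_{p,q}(\mathbb{A},\varphi)}
\sim\left\|\vec f\right\|_{\dot A^{s,\tau}_{p,q}(W,\varphi)}
$$
for every $\varphi\in\mathcal{S}$ satisfying \eqref{19} and \eqref{20}, with positive equivalence constants that depend only on the parameters $s,\tau,p,q$, on $W$ (through $[W]_{A_p}$ and its $A_p$-dimensions), and on the fixed $\varphi$, but in particular do not depend on $\vec f\in(\mathcal{S}_\infty')^m$. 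Thus the finiteness of $\|\vec f\|_{\dot A^{s,\tau}_{p,q}(\mathbb{A},\varphi)}$ is equivalent to the finiteness of $\|\vec f\|_{\dot A^{s,\tau}_{p,q}(W,\varphi)}$ for any admissible $\varphi$.

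The proof then proceeds as follows. Let $\varphi^{(1)},\varphi^{(2)}\in\mathcal{S}$ both satisfy \eqref{19} and \eqref{20}. By Theorem \ref{11} applied to each of them and by Proposition \ref{38}, for every $\vec f\in(\mathcal{S}_\infty')^m$, one has the chain
\begin{align*}
\left\|\vec f\right\|_{\dot A^{s,\tau}_{p,q}(\mathbb{A},\varphi^{(1)})}
&\sim\left\|\vec f\right\|_{\dot A^{s,\tau}_{p,q}(W,\varphi^{(1)})}
\sim\left\|\vec f\right\|_{\dot A^{s,\tau}_{p,q}(W,\varphi^{(2)})}
\sim\left\|\vec f\right\|_{\dot A^{s,\tau}_{p,q}(\mathbb{A},\varphi^{(2)})},
\end{align*}
where the first and third equivalences use Theorem \ref{11} and the middle one is Proposition \ref{38}. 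Hence $\vec f\in\dot A^{s,\tau}_{p,q}(\mathbb{A},\varphi^{(1)})$ if and only if $\vec f\in\dot A^{s,\tau}_{p,q}(\mathbb{A},\varphi^{(2)})$, and the two quasi-norms are equivalent, which is exactly the independence of $\dot A^{s,\tau}_{p,q}(\mathbb{A},\varphi)$ from the choice of $\varphi$.

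Since both ingredients are already established, no real obstacle remains; the only point to be careful about is verifying that the equivalence constants in Theorem \ref{11} are uniform in $\vec f$ (this is explicit in its statement) so that the chain of $\sim$ genuinely transfers membership between the two averaging spaces. No additional technical tools, such as almost-diagonal estimates or further Calder\'on-type reproducing formulas, are needed at this stage.
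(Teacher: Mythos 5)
Your argument is exactly the one the paper intends: the authors state that the corollary follows from Proposition \ref{38} and Theorem \ref{11} and omit the details, and your chain of equivalences $\|\vec f\|_{\dot A^{s,\tau}_{p,q}(\mathbb{A},\varphi^{(1)})}\sim\|\vec f\|_{\dot A^{s,\tau}_{p,q}(W,\varphi^{(1)})}\sim\|\vec f\|_{\dot A^{s,\tau}_{p,q}(W,\varphi^{(2)})}\sim\|\vec f\|_{\dot A^{s,\tau}_{p,q}(\mathbb{A},\varphi^{(2)})}$ is precisely the intended filling-in of those details. The proof is correct and complete.
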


Again, in what follows, we denote $\dot A^{s,\tau}_{p,q}(\mathbb{A},\varphi) $ simply by $\dot A^{s,\tau}_{p,q}(\mathbb{A})$.
By an argument similar to that used in the proof of \cite[Corollary 3.14]{b07},
we obtain the following proposition.
For the convenience of the reader, we give the details of its proof.

\begin{proposition}\label{174}
Let $s\in\mathbb{R}$, $\tau\in[0,\infty)$, $p\in(0,\infty)$, $q\in(0,\infty]$,
and $W\in A_p$ have $A_p$-dimensions $(d,\widetilde d,\Delta)$.
Then $\dot A^{s,\tau}_{p,q}(W)\subset(\mathcal{S}_\infty')^m$.
Moreover, if $M\in\mathbb{Z}_+$ satisfies \eqref{239},
then there exists a positive constant $C$ such that,
for any $\vec f\in\dot A^{s,\tau}_{p,q}(W)$ and $\phi\in\mathcal{S}_\infty$,
$$
\left|\left\langle\vec f,\phi\right\rangle\right|
\leq C\left\|\vec f\right\|_{\dot A^{s,\tau}_{p,q}(W)}\|\phi\|_{S_{M+1}}.
$$
\end{proposition}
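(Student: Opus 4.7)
The inclusion $\dot A^{s,\tau}_{p,q}(W)\subset(\mathcal{S}_\infty')^m$ is built into Definition \ref{def 3.4}, so the substance of the statement is the quantitative continuity estimate; the plan is to reduce it to a sequence-space bound via the $\varphi$-transform.

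First, choose $\varphi,\psi\in\mathcal{S}$ satisfying \eqref{19}, \eqref{20}, and \eqref{21} (such a pair is standard). For any $\vec f\in\dot A^{s,\tau}_{p,q}(W)$ and $\phi\in\mathcal{S}_\infty$, Lemma \ref{7} (applied componentwise) gives, in $(\mathcal{S}_\infty')^m$,
$$
\vec f=\sum_{Q\in\mathscr{Q}}\left\langle\vec f,\varphi_Q\right\rangle\psi_Q
=\sum_{Q\in\mathscr{Q}}\left(S_\varphi\vec f\right)_Q\psi_Q,
$$
so that, formally,
$$
\left\langle\vec f,\phi\right\rangle
=\sum_{Q\in\mathscr{Q}}\left(S_\varphi\vec f\right)_Q\left\langle\psi_Q,\phi\right\rangle.
$$

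The next step is to justify this formula and estimate it. By Theorem \ref{phi}, the $\varphi$-transform is bounded from $\dot A^{s,\tau}_{p,q}(W)$ (with the analysis window $\widetilde\varphi$; after relabeling $\varphi\leftrightarrow\widetilde\varphi$, or invoking Proposition \ref{38} to see that the choice of window is immaterial) into $\dot a^{s,\tau}_{p,q}(W)$, giving
$$
\left\|S_\varphi\vec f\right\|_{\dot a^{s,\tau}_{p,q}(W)}
\lesssim\left\|\vec f\right\|_{\dot A^{s,\tau}_{p,q}(W)}.
$$
Hence $S_\varphi\vec f\in\dot a^{s,\tau}_{p,q}(W)$, and, for any $M\in\mathbb Z_+$ satisfying \eqref{239}, Lemma \ref{3} (applied with $\vec t:=S_\varphi\vec f$) yields the absolute convergence
$$
\sum_{Q\in\mathscr{Q}}\left|\left(S_\varphi\vec f\right)_Q\right|\left|\left\langle\psi_Q,\phi\right\rangle\right|
\lesssim\left\|S_\varphi\vec f\right\|_{\dot a^{s,\tau}_{p,q}(W)}\|\psi\|_{S_{M+1}}\|\phi\|_{S_{M+1}}
\lesssim\left\|\vec f\right\|_{\dot A^{s,\tau}_{p,q}(W)}\|\phi\|_{S_{M+1}},
$$
where the $\|\psi\|_{S_{M+1}}$ factor is absorbed into the implicit constant. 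This absolute convergence both legitimizes the interchange of $\left\langle\cdot,\phi\right\rangle$ with the sum (since the reproducing series converges in $\mathcal{S}_\infty'$) and delivers the claimed bound
$$
\left|\left\langle\vec f,\phi\right\rangle\right|
\leq C\left\|\vec f\right\|_{\dot A^{s,\tau}_{p,q}(W)}\|\phi\|_{S_{M+1}}.
$$

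The main technical obstacle is not the overall argument, which is essentially a bookkeeping combination of Theorem \ref{phi} and Lemma \ref{3}, but rather the verification that the constraint \eqref{239} on $M$ suffices for the pointwise pairing $\sum_Q|(S_\varphi\vec f)_Q||\langle\psi_Q,\phi\rangle|$ to converge, which in turn justifies the termwise evaluation of the Calder\'on representation against $\phi$; this is exactly what Lemma \ref{3} provides, once we match the reducing-operator estimate \eqref{2} (controlled via $d$, $\widetilde d$, $\Delta$) with the $\varphi$-$\psi$ almost-orthogonality decay in Corollary \ref{59x}, and the dimension thresholds in \eqref{239} are precisely what is needed to make the resulting double sum in $j\in\mathbb Z$ and $k\in\mathbb Z^n$ finite.
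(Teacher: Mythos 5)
Your proposal is correct and follows essentially the same route as the paper's own proof: both write $\vec f=(T_\psi\circ S_\varphi)\vec f$ via Lemma \ref{7}, bound $|\langle\vec f,\phi\rangle|$ by $\sum_{Q}|(S_\varphi\vec f)_Q||\langle\psi_Q,\phi\rangle|$, and then combine Lemma \ref{3} (applied to $\vec t=S_\varphi\vec f$) with the boundedness of $S_\varphi$ from Theorem \ref{phi}. Your additional remarks on the $\varphi$-versus-$\widetilde\varphi$ window (handled by Proposition \ref{38}) and on the absolute convergence justifying the termwise pairing are details the paper leaves implicit, and they are handled correctly.
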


\begin{proof}
Let $\varphi,\psi\in\mathcal{S}$ satisfy \eqref{19}, \eqref{20}, and \eqref{21}.
By both Lemmas \ref{7} and \ref{3} and Theorem \ref{phi}, we find that,
for any $\vec f\in\dot A^{s,\tau}_{p,q}(W)$ and $\phi\in\mathcal{S}_\infty$,
\begin{align*}
\left|\left\langle\vec f,\phi\right\rangle\right|
&=\left|\left\langle\left(T_\psi\circ S_\varphi\right)\vec f,\phi\right\rangle\right|
\leq\sum_{Q\in\mathscr{Q}}\left|\left(S_\varphi\vec f\right)_Q\right||\langle\psi_Q,\phi\rangle|\\
&\lesssim\left\|S_\varphi\vec f\right\|_{\dot a^{s,\tau}_{p,q}(W)}\|\phi\|_{S_{M+1}}
\lesssim\left\|\vec f\right\|_{\dot A^{s,\tau}_{p,q}(W)}\|\phi\|_{S_{M+1}}.
\end{align*}
This finishes the proof of Proposition \ref{174}.
\end{proof}

Applying Proposition \ref{174} and
an argument similar to that used in the proof of \cite[Proposition 2.3.1]{g14},
we obtain the following conclusion; we omit the details.

\begin{proposition}\label{175}
Let $s\in\mathbb{R}$, $\tau\in[0,\infty)$,
$p\in(0,\infty)$, $q\in(0,\infty]$, and $W\in A_p$.
Then $\dot A^{s,\tau}_{p,q}(W)$ is a complete quasi-normed space.
\end{proposition}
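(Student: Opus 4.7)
The plan is to verify the quasi-norm axioms directly from the definition and then prove completeness by a Fatou-type argument, using Proposition \ref{174} as the bridge between convergence in $\dot A^{s,\tau}_{p,q}(W)$ and convergence in $(\mathcal S_\infty')^m$.

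First, I would establish that $\|\cdot\|_{\dot A^{s,\tau}_{p,q}(W)}$ is a quasi-norm. Positive homogeneity is immediate. The quasi-triangle inequality reduces to that of $L\dot A_{p,q}^\tau$: for any $\vec f,\vec g$ and $j\in\mathbb Z$ one has the pointwise bound $|W^{\frac1p}(\varphi_j*(\vec f+\vec g))|\leq|W^{\frac1p}(\varphi_j*\vec f)|+|W^{\frac1p}(\varphi_j*\vec g)|$, and the (quasi-)triangle inequalities of $L^p$ and $\ell^q$ then transfer through to the $L\dot A^\tau_{p,q}$ norm and hence to the full norm. Point separation is where Proposition \ref{174} intervenes: if $\|\vec f\|_{\dot A^{s,\tau}_{p,q}(W)}=0$, then $|\langle\vec f,\phi\rangle|\leq C\|\vec f\|_{\dot A^{s,\tau}_{p,q}(W)}\|\phi\|_{S_{M+1}}=0$ for every $\phi\in\mathcal S_\infty$, whence $\vec f=\vec{\mathbf 0}$ in $(\mathcal S_\infty')^m$.

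For completeness, let $\{\vec f_k\}_{k\in\mathbb N}\subset\dot A^{s,\tau}_{p,q}(W)$ be Cauchy. A second application of Proposition \ref{174} gives, for every $\phi\in\mathcal S_\infty$, the uniform-in-$\phi$ estimate $|\langle\vec f_k-\vec f_l,\phi\rangle|\leq C\|\vec f_k-\vec f_l\|_{\dot A^{s,\tau}_{p,q}(W)}\|\phi\|_{S_{M+1}}$, so $\{\langle\vec f_k,\phi\rangle\}_k$ is Cauchy in $\mathbb C^m$. Since the Cauchy property forces $\sup_k\|\vec f_k\|_{\dot A^{s,\tau}_{p,q}(W)}<\infty$, the pointwise limit $\vec f$ defined by $\langle\vec f,\phi\rangle:=\lim_{k\to\infty}\langle\vec f_k,\phi\rangle$ inherits the seminorm bound $|\langle\vec f,\phi\rangle|\lesssim\|\phi\|_{S_{M+1}}$ and therefore belongs to $(\mathcal S_\infty')^m$. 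Moreover, because $\widehat\varphi$ vanishes near the origin, one has $\varphi_j(x-\cdot)\in\mathcal S_\infty$ for every $j\in\mathbb Z$ and $x\in\mathbb R^n$, so $(\varphi_j*\vec f_k)(x)\to(\varphi_j*\vec f)(x)$ pointwise.

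To conclude, I would run a Fatou-type argument. Combining Fatou's lemma in $L^p$, in $\ell^q$ (and hence in the mixed-norm spaces $L\dot A_{pq}(\widehat P)$) with the elementary inequality $\sup_P\liminf_l a_{P,l}\leq\liminf_l\sup_P a_{P,l}$ applied to the supremum over dyadic cubes in the definition of $\|\cdot\|_{L\dot A^\tau_{p,q}}$, one obtains the lower semicontinuity
\begin{equation*}
\|\vec g\|_{\dot A^{s,\tau}_{p,q}(W)}\leq\liminf_{l\to\infty}\|\vec g_l\|_{\dot A^{s,\tau}_{p,q}(W)}
\end{equation*}
whenever $(\varphi_j*\vec g_l)(x)\to(\varphi_j*\vec g)(x)$ pointwise for each $j$ and $x$. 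Applied to $\vec g_l:=\vec f_l-\vec f_k$, this yields
\begin{equation*}
\|\vec f-\vec f_k\|_{\dot A^{s,\tau}_{p,q}(W)}\leq\liminf_{l\to\infty}\|\vec f_l-\vec f_k\|_{\dot A^{s,\tau}_{p,q}(W)},
\end{equation*}
and the right-hand side tends to $0$ as $k\to\infty$ by the Cauchy property, simultaneously giving $\vec f\in\dot A^{s,\tau}_{p,q}(W)$ and norm convergence $\vec f_k\to\vec f$. The main technical point deserving care is the interchange of the $\sup$ over cubes with $\liminf_l$ in the Fatou step; this is the only place where the $\tau$-structure (as opposed to the classical $\tau=0$ case) requires a separate argument, and it is handled by the order-of-limits inequality recalled above.
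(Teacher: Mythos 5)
Your proof is correct and follows exactly the route the paper intends: it invokes Proposition \ref{174} to pass from Cauchyness in the quasi-norm to convergence in $(\mathcal S_\infty')^m$ and then closes with the standard Fatou-type lower semicontinuity argument, which is precisely the argument of \cite[Proposition 2.3.1]{g14} that the paper cites while omitting the details. The only point worth flagging is that your Fatou step (including the interchange of $\sup_P$ with $\liminf_l$ and the $q=\infty$ modification) is exactly the part the cited reference handles, so nothing is missing.
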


Finally, we have the following lifting property.
Recall that, for any $\sigma\in\mathbb R$, the \emph{lifting operator} $I_\sigma$
(see, for instance, \cite[Section 5.2.3]{t83}) is defined by setting,
for any $f\in\mathcal{S}_\infty'$,
$$
\dot I_\sigma(f):=\left(|\cdot|^\sigma\widehat{f}\right)^\vee,
$$
where the symbol $\vee$ denotes the \emph{inverse Fourier transform}.
It is well known that $\dot I_\sigma$ maps $\mathcal{S}_\infty'$ onto itself.

\begin{proposition}\label{257}
Let $s,\sigma\in\mathbb{R}$, $\tau\in[0,\infty)$, $p\in(0,\infty)$,
$q\in(0,\infty]$, and $W\in A_p$.
Then $\dot I_\sigma$ maps $\dot A_{p,q}^{s,\tau}(W)$ isomorphically onto $\dot A_{p,q}^{s-\sigma,\tau}(W)$.
Moreover, for any $\vec f\in(\mathcal{S}_\infty')^m$,
$$
\left\|\vec f\right\|_{\dot A_{p,q}^{s,\tau}(W)}
\sim\left\|\dot I_\sigma\vec f\right\|_{\dot A_{p,q}^{s-\sigma,\tau}(W)},
$$
where the positive equivalence constants are independent of $\vec f$.
\end{proposition}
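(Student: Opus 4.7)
The plan is to reduce the assertion to the freedom of choice in the defining function $\varphi$, exploiting the $\varphi$-transform theory (Theorem \ref{phi}) and the independence statement (Proposition \ref{38}). Fix $\varphi\in\mathcal S$ satisfying \eqref{19}, \eqref{20}, and introduce a companion function $\varphi^\sigma\in\mathcal S$ by requiring
\begin{equation*}
\widehat{\varphi^\sigma}(\xi):=|\xi|^{\sigma}\,\widehat{\varphi}(\xi),\qquad \xi\in\mathbb R^n.
\end{equation*}
Because $\widehat{\varphi}$ is smooth and supported in the annulus $\{\frac12\le|\xi|\le 2\}$ bounded away from the origin, the multiplier $|\xi|^{\sigma}$ is smooth on $\operatorname{supp}\widehat{\varphi}$; hence $\widehat{\varphi^\sigma}$ is smooth and compactly supported, so $\varphi^\sigma\in\mathcal S$ and it automatically satisfies \eqref{19}. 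For \eqref{20}, on the region $\{\tfrac35\le|\xi|\le\tfrac53\}$ the factor $|\xi|^\sigma$ is bounded from below by a positive constant, so the lower bound for $|\widehat\varphi|$ transfers to $|\widehat{\varphi^\sigma}|$.

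Next I would compute the Littlewood--Paley pieces of the lifted distribution. Since $\widehat{\varphi_j}(\xi)=\widehat{\varphi}(2^{-j}\xi)$, a direct Fourier-side computation gives
\begin{equation*}
\bigl(\varphi_j\ast \dot I_\sigma\vec f\bigr)^{\wedge}(\xi)
=\widehat{\varphi}(2^{-j}\xi)\,|\xi|^{\sigma}\,\widehat{\vec f}(\xi)
=2^{j\sigma}\,\widehat{\varphi^\sigma}(2^{-j}\xi)\,\widehat{\vec f}(\xi),
\end{equation*}
so that $\varphi_j\ast\dot I_\sigma\vec f=2^{j\sigma}\,(\varphi^\sigma_j\ast\vec f)$ in $(\mathcal S'_\infty)^m$. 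Multiplying by $W^{1/p}$ and inserting the weight $2^{j(s-\sigma)}$ yields
\begin{equation*}
2^{j(s-\sigma)}\bigl|W^{\frac1p}(\varphi_j\ast\dot I_\sigma\vec f)\bigr|
=2^{js}\bigl|W^{\frac1p}(\varphi^\sigma_j\ast\vec f)\bigr|.
\end{equation*}
Taking the $\|\cdot\|_{L\dot A^\tau_{p,q}}$ norm of both sides produces the identity
$\|\dot I_\sigma\vec f\|_{\dot A^{s-\sigma,\tau}_{p,q}(W,\varphi)}
=\|\vec f\|_{\dot A^{s,\tau}_{p,q}(W,\varphi^\sigma)}$.

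Finally, since both $\varphi$ and $\varphi^\sigma$ satisfy \eqref{19} and \eqref{20}, Proposition \ref{38} yields
\begin{equation*}
\bigl\|\vec f\bigr\|_{\dot A^{s,\tau}_{p,q}(W,\varphi^\sigma)}
\sim\bigl\|\vec f\bigr\|_{\dot A^{s,\tau}_{p,q}(W)}
\quad\text{and}\quad
\bigl\|\dot I_\sigma\vec f\bigr\|_{\dot A^{s-\sigma,\tau}_{p,q}(W,\varphi)}
\sim\bigl\|\dot I_\sigma\vec f\bigr\|_{\dot A^{s-\sigma,\tau}_{p,q}(W)},
\end{equation*}
so combining these equivalences gives the desired norm equivalence. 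The inverse map is $\dot I_{-\sigma}$, which is well defined on $(\mathcal S'_\infty)^m$ (recall $\dot I_\sigma$ maps $\mathcal S'_\infty$ onto itself) and, by the same argument, provides the inverse isomorphism. The only genuinely delicate point is the verification that $\varphi^\sigma$ inherits \eqref{19} and \eqref{20}, which I do not expect to present any serious obstacle given the annular support of $\widehat\varphi$; no deeper input about $W$ or its $A_p$-dimensions is needed beyond what is already built into the definition of $\dot A^{s,\tau}_{p,q}(W)$.
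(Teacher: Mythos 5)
Your proposal is correct and follows essentially the same route as the paper: both introduce the auxiliary function with Fourier transform $|\xi|^\sigma\widehat{\varphi}(\xi)$, verify it satisfies \eqref{19} and \eqref{20}, transfer the factor $|\xi|^\sigma$ onto the Littlewood--Paley pieces via the scaling identity, and invoke Proposition \ref{38} to remove the dependence on the generating function. Your version is marginally cleaner in that you record the exact norm identity $\|\dot I_\sigma\vec f\|_{\dot A^{s-\sigma,\tau}_{p,q}(W,\varphi)}=\|\vec f\|_{\dot A^{s,\tau}_{p,q}(W,\varphi^\sigma)}$ and so get both inequalities at once, whereas the paper states only $\lesssim$ and obtains the reverse by rerunning the argument with $-\sigma$; this is a cosmetic rather than substantive difference.
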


\begin{proof}
Let $\{\varphi_j\}_{j\in\mathbb Z}$ be the same as in Definition \ref{def 3.4}.
Observe that, by both the definition of $\dot I_\sigma$
and the property of the inverse Fourier transform, we have,
for any $j\in\mathbb Z$ and $\vec f\in(\mathcal{S}_\infty')^m$,
\begin{align}\label{258}
2^{j(s-\sigma)}W^{\frac{1}{p}}\left[\varphi_j*\left(\dot I_\sigma\vec f\right)\right]
&=2^{j(s-\sigma)}W^{\frac{1}{p}}\left[
\varphi_j*\left(|\cdot|^\sigma\widehat{\vec f}\right)^\vee\right]
=2^{j(s-\sigma)}W^{\frac{1}{p}}\left(\widehat{\varphi_j}
|\cdot|^\sigma\widehat{\vec f}\right)^\vee.
\end{align}
Let $\psi:=(|\cdot|^\sigma\widehat{\varphi})^\vee$.
Notice that $\varphi\in\mathcal{S}$ satisfy both \eqref{19} and \eqref{20},
which further implies that $\psi\in\mathcal{S}$ also satisfies both \eqref{19} and \eqref{20}.
Moreover, notice that
\begin{align*}
\psi_j*\vec f
&=\left(\widehat{\psi_j}\widehat{\vec f}\right)^\vee
=\left[ 2^{-jn}\widehat{\psi}\left( 2^{-j}\cdot\right)\widehat{\vec f}\right]^\vee \\
&=2^{-j\sigma}\left[ 2^{-jn} |\cdot|^\sigma
\widehat{\varphi}\left( 2^{-j}\cdot\right)\widehat{\vec f}\right]^\vee
=2^{-j\sigma}\left( |\cdot|^\sigma
\widehat{\varphi_j}\widehat{\vec f}\right)^\vee.
\end{align*}
From this and equation \eqref{258}, we deduce that,
for any $\vec f\in(\mathcal{S}_\infty')^m$,
\begin{equation}\label{259}
\left\|\dot I_\sigma \vec f\right\|_{\dot A_{p,q}^{s-\sigma,\tau}(W,\varphi)}
\lesssim\left\|\vec f\right\|_{\dot A_{p,q}^{s,\tau}(W,\psi)}.
\end{equation}
By Proposition \ref{38}, both norms above are
independent of the particular $\varphi,\psi\in\mathcal{S}$
with the properties \eqref{19} and \eqref{20}, and hence we drop them from the notation.

On the other hand, by \eqref{259} with $s$ and $\sigma$ replaced,
respectively, by $s-\sigma$ and $-\sigma$,
we conclude that, for any $\vec f\in(\mathcal{S}_\infty')^m$,
$$
\left\|\vec f\right\|_{\dot A_{p,q}^{s,\tau}(W)}
=\left\|\dot I_{-\sigma}\left(\dot I_\sigma\vec f\right)\right\|_{\dot A_{p,q}^{s,\tau}(W)}
\lesssim
\left\|\dot I_\sigma\vec f\right\|_{\dot A_{p,q}^{s-\sigma,\tau}(W)}.
$$
This finishes the proof of Proposition \ref{257}.
\end{proof}

\section{Averaging Matrix-Weighted Triebel--Lizorkin Spaces for $p=\infty$}
\label{F_infty}

In this section, we introduce matrix-weighted Triebel--Lizorkin spaces
$\dot F_{\infty,q}^s(\mathbb A)$ for the end-point exponent $p=\infty$
and obtain some results corresponding to Section \ref{BF type spaces}.
One of the many reasons for the relevance of these spaces, as we will see,
is that the four-parameter Besov-type and Triebel--Lizorkin-type spaces
$\dot A^{s,\tau}_{p,q}(W)$ can be identified with spaces from the
$\dot F_{\infty,q}^s(\mathbb A)$ scale as soon as
we exit the so called subcritical regime of the parameters.

Let us begin with some concepts.
For any $q\in(0,\infty]$ and any sequence $\{f_j\}_{j\in\mathbb{Z}}$
of measurable functions on $\mathbb{R}^n$, let
\begin{equation}\label{LF infty}
\|\{f_j\}_{j\in\mathbb Z}\|_{L\dot F_{\infty,q}}
:=\sup_{P\in\mathscr{Q}}\left[\fint_P\sum_{j=j_P}^\infty
\left|f_j(x)\right|^q\,dx\right]^{\frac{1}{q}}
\end{equation}
with the usual modification made when $q=\infty$. Directly from the definition, we find that
$L\dot F_{\infty,q}=L\dot F_{q,q}^{\frac{1}{q}}.$

We first recall the concepts of both Triebel--Lizorkin spaces for $p=\infty$
and corresponding sequence spaces; see \cite[(5.1) and (5.4)]{fj90}.

\begin{definition}
Let $s\in\mathbb{R}$, $q\in(0,\infty]$, and
$\varphi\in\mathcal{S}$ satisfy both \eqref{19} and \eqref{20}.
The \emph{homogeneous Triebel--Lizorkin space}
$\dot F_{\infty,q}^s$ is defined by setting
$$
\dot F_{\infty,q}^s:=\left\{f\in\mathcal{S}_\infty':\
\|f\|_{\dot F_{\infty,q}^s}<\infty\right\},
$$
where, for any $f\in\mathcal{S}_\infty'$,
$$
\|f\|_{\dot F_{\infty,q}^s}:=
\left\|\left\{2^{js}\varphi_j*f\right\}_{j\in\mathbb Z}\right\|_{L\dot F_{\infty,q}}
$$
with $\|\cdot\|_{L\dot F_{\infty,q}}$ the same as in \eqref{LF infty}.
\end{definition}

\begin{definition}
Let $s\in\mathbb{R}$ and $q\in(0,\infty]$.
The \emph{homogeneous Triebel--Lizorkin sequence space}
$\dot f_{\infty,q}^s$ is defined to be the set of all sequences
$t:=\{t_Q\}_{Q\in\mathscr{Q}}\subset\mathbb{C}$ such that
$$
\|t\|_{\dot f_{\infty,q}^s}
:=\left\|\left\{2^{js}t_j\right\}_{j\in\mathbb Z}\right\|_{L\dot F_{\infty,q}}<\infty,
$$
where $t_j$ for any $j\in\mathbb Z$ and $\|\cdot\|_{L\dot F_{\infty,q}}$
are the same as, respectively, in \eqref{tj} and \eqref{LF infty}.
\end{definition}

Notice that, directly by the definitions,
we obtain, for any $q\in(0,\infty)$,
$\dot F^s_{\infty,q}=\dot F^{s,\frac{1}{q}}_{q,q}$ and
$\dot f^s_{\infty,q}=\dot f^{s,\frac{1}{q}}_{q,q}.$
Now, we introduce the averaging matrix-weighted Triebel--Lizorkin space for $p=\infty$.

\begin{definition}
Let $s\in\mathbb{R}$, $q\in(0,\infty]$,
$\varphi\in\mathcal{S}$ satisfy both \eqref{19} and \eqref{20},
$p\in(0,\infty)$, $W\in A_p$, and
$\mathbb{A}:=\{A_Q\}_{Q\in\mathscr{Q}}$ be a sequence of
reducing operators of order $p$ for $W$.
The \emph{homogeneous averaging matrix-weighted Triebel--Lizorkin space}
$\dot F_{\infty,q}^s(\mathbb{A},\varphi)$ is defined by setting
$$
\dot F_{\infty,q}^s(\mathbb{A},\varphi):=\left\{\vec f\in(\mathcal{S}_\infty')^m:\
\left\|\vec f\right\|_{\dot F_{\infty,q}^s(\mathbb{A},\varphi)}<\infty\right\},
$$
where, for any $\vec f\in(\mathcal{S}_\infty')^m$,
$$
\left\|\vec f\right\|_{\dot F_{\infty,q}^s(\mathbb{A},\varphi)}
:=\left\|\left\{2^{js}\left|A_j \left(\varphi_j*\vec f\right)\right|
\right\}_{j\in\mathbb Z}\right\|_{L\dot F_{\infty,q}}
$$
with $A_j$ for any $j\in\mathbb Z$ and $\|\cdot\|_{L\dot F_{\infty,q}}$
the same as, respectively, in \eqref{Aj} and \eqref{LF infty}.
\end{definition}

By \eqref{equ_reduce}, we find that
$\dot F_{\infty,q}^s(\mathbb{A},\varphi)$ is independent of the choice of $\mathbb{A}$.

\begin{definition}
Let $s\in\mathbb{R}$, $q\in(0,\infty]$,
$p\in(0,\infty)$, $W\in A_p$, and
$\mathbb{A}:=\{A_Q\}_{Q\in\mathscr{Q}}$ be a sequence of
reducing operators of order $p$ for $W$.
The \emph{homogeneous averaging matrix-weighted Triebel--Lizorkin sequence space}
$\dot f_{\infty,q}^s(\mathbb{A})$ is defined to be the set of all sequences
$\vec t:=\{\vec t_Q\}_{Q\in\mathscr{Q}}\subset\mathbb{C}^m$ such that
$$
\left\|\vec t\right\|_{\dot f_{\infty,q}^s(\mathbb{A})}
:=\left\|\left\{2^{js}\left|A_j\vec t_j\right|
\right\}_{j\in\mathbb Z}\right\|_{L\dot F_{\infty,q}}<\infty,
$$
where, for any $j\in\mathbb Z$, $A_j$, $\vec t_j$, and $\|\cdot\|_{L\dot F_{\infty,q}}$
are the same as, respectively, in \eqref{Aj}, \eqref{vec tj}, and \eqref{LF infty}.
\end{definition}

Again, directly by definitions, when $p=q$, we have both
$$
\dot F^s_{\infty,q}(\mathbb{A},\varphi)=\dot F^{s,\frac{1}{q}}_{q,q}(\mathbb{A})
\text{ and }
\dot f^s_{\infty,q}(\mathbb{A},\varphi)=\dot f^{s,\frac{1}{q}}_{q,q}(\mathbb{A}).
$$
However, when $p\neq q$, the identification of
$\dot F^s_{\infty,q}(\mathbb{A},\varphi)$
and $\dot f^s_{\infty,q}(\mathbb{A})$
with those that we have studied earlier is not so obvious,
but we have the following conclusion
which is the main result of this section.

\begin{theorem}\label{97}
Let $s\in\mathbb{R}$, $q\in(0,\infty]$,
$p\in(0,\infty)$, $W\in A_p$, and
$\mathbb{A}:=\{A_Q\}_{Q\in\mathscr{Q}}$ be a sequence of
reducing operators of order $p$ for $W$.
Then
$\dot F_{\infty,q}^s(\mathbb{A})=\dot F_{p,q}^{s,\frac{1}{p}}(\mathbb{A})$
and
$\dot f_{\infty,q}^s(\mathbb{A})=\dot f_{p,q}^{s,\frac{1}{p}}(\mathbb{A})$
with equivalent quasi-norms.
\end{theorem}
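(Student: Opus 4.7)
The plan is to reduce both the function space and sequence space identifications to a scalar Frazier--Jawerth identity, going through a $\varphi$-transform characterization for the function-space part. I would proceed in three stages.

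First, I would handle the sequence space identity $\dot f_{\infty,q}^s(\mathbb A)=\dot f_{p,q}^{s,\frac{1}{p}}(\mathbb A)$ by noticing that the matrix reducing operators enter both sides through the single scalar sequence $u:=\{u_Q\}_{Q\in\mathscr{Q}}$ with $u_Q:=|A_Q\vec t_Q|$. Since $A_j\vec t_j=\sum_{Q\in\mathscr{Q}_j}A_Q\vec t_Q\widetilde{\mathbf 1}_Q$ and the supports are disjoint, we have $|A_j\vec t_j|=\sum_{Q\in\mathscr{Q}_j}u_Q\widetilde{\mathbf 1}_Q=u_j$ in the notation of \eqref{tj}. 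Hence $\|\vec t\|_{\dot f_{\infty,q}^s(\mathbb A)}=\|u\|_{\dot f_{\infty,q}^s}$ and $\|\vec t\|_{\dot f_{p,q}^{s,1/p}(\mathbb A)}=\|u\|_{\dot f_{p,q}^{s,1/p}}$. The equivalence of these two scalar quasi-norms is precisely the content of \cite[Corollary 5.7]{fj90}, which yields the sequence space identity at once.

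Second, I would establish a $\varphi$-transform characterization for $\dot F_{\infty,q}^s(\mathbb A)$ following the template of Theorem \ref{phi}. For the analysis direction $S_\varphi:\dot F_{\infty,q}^s(\mathbb A,\widetilde\varphi)\to\dot f_{\infty,q}^s(\mathbb A)$, the pointwise comparison $|A_Q(S_\varphi\vec f)_Q|\leq\sup_{\mathbb A,\widetilde\varphi,Q}(\vec f)$ already gives the bound as in Theorem \ref{11}, provided we verify the analogue of Lemma \ref{56} for the $L\dot F_{\infty,q}$-norm; this in turn follows from a version of Lemma \ref{summary F} for the $L\dot F_{\infty,q}$-quasi-norm, whose proof is the same (splitting into local and non-local pieces on each dyadic cube $P$, controlling the local piece by the Hardy--Littlewood maximal function via Lemma \ref{Fefferman Stein}, and using Lemma \ref{33} for the tail). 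For the synthesis direction $T_\psi:\dot f_{\infty,q}^s(\mathbb A)\to\dot F_{\infty,q}^s(\mathbb A,\varphi)$, I would reuse the pointwise estimate \eqref{242}, namely
\begin{equation*}
\left|A_j(x)\bigl[\varphi_j*(T_\psi\vec t)\bigr](x)\right|
\lesssim\sum_{i=j-1}^{j+1}\bigl(u_{p\wedge q,\widetilde\lambda}^{*}\bigr)_i(x),
\end{equation*}
together with the $L\dot F_{\infty,q}$-analogue of Lemma \ref{4}, which again is the scalar Frazier--Jawerth maximal inequality for $\dot f_{\infty,q}^{s}$. Closure under $T_\psi\circ S_\varphi$ via Lemma \ref{7} yields the characterization.

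Finally, the function space identity follows by chaining three equivalences: $\vec f\in\dot F_{\infty,q}^s(\mathbb A)$ iff $S_\varphi\vec f\in\dot f_{\infty,q}^s(\mathbb A)$ (by step two), iff $S_\varphi\vec f\in\dot f_{p,q}^{s,1/p}(\mathbb A)$ (by step one), iff $\vec f\in\dot F_{p,q}^{s,1/p}(\mathbb A)$ (by Theorem \ref{phi}). The main obstacle I anticipate is verifying the $L\dot F_{\infty,q}$-variants of the Peetre-type maximal inequalities underlying the $\varphi$-transform proof; in the $L\dot F_{p,q}^\tau$-setting these relied on Fefferman--Stein bounds and dyadic averaging arguments that are most naturally phrased for $p<\infty$, but because $L\dot F_{\infty,q}=L\dot F_{q,q}^{1/q}$ identically, one can reduce to the already-proven case with parameters $(p,q,\tau)=(q,q,1/q)$, which avoids any new analytic difficulty.
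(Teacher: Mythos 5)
Your proposal matches the paper's proof essentially step for step: the sequence-space identity is obtained by the same scalar reduction $u_Q:=|A_Q\vec t_Q|$ together with \cite[Corollary 5.7]{fj90}, the $\varphi$-transform characterization of $\dot F^s_{\infty,q}(\mathbb A)$ that you outline in step two is precisely the paper's Theorem \ref{phi infty} (proved via the same Lemmas \ref{summary F 2} and \ref{107}), and your final chaining of $S_\varphi$, the sequence-space identity, and $T_\psi$ is the paper's concluding estimate \eqref{new add 11}. The only caveat is that your reduction $L\dot F_{\infty,q}=L\dot F^{1/q}_{q,q}$ back to Lemma \ref{summary F} works only for $q<\infty$ (that lemma requires a finite integrability index); for $q=\infty$ the needed maximal inequality must be, and in the paper is, verified by a separate but trivial direct $L^\infty$ estimate.
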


In Theorem \ref{97}, we use the notation $\dot F_{\infty,q}^s(\mathbb{A})$
instead of $\dot F_{\infty,q}^s(\mathbb{A},\varphi)$
because we will show that $\dot F_{\infty,q}^s(\mathbb{A},\varphi)$
is independent of the choice of $\varphi$ (see Proposition \ref{independent 2} below).
To prove Theorem \ref{97}, we need several technical lemmas.
The following lemma is a simple corollary of \cite[Lemma 5.1]{fj90}.

\begin{lemma}\label{107}
Let $s\in\mathbb{R}$, $q\in(0,\infty]$, $\lambda\in(n,\infty)$,
$p\in(0,\infty)$, $W\in A_p$,
and $\{A_Q\}_{Q\in\mathscr{Q}}$ be a sequence of
reducing operators of order $p$ for $W$.
Then, for any $\vec{t}\in\dot f_{\infty,q}^s(\mathbb{A})$,
$$
\left\|\vec{t}\right\|_{\dot f_{\infty,q}^s(\mathbb{A})}
\sim\left\|\left(\left\{\left|A_Q\vec t_Q\right|\right\}_{Q\in\mathscr{Q}}
\right)_{q,\lambda}^*\right\|_{\dot f_{\infty,q}^s},
$$
where the positive equivalence constants are independent of $\vec t$.
\end{lemma}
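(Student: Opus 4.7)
The plan is to reduce this to the corresponding unweighted result, namely \cite[Lemma 5.1]{fj90}, in essentially the same way that Lemma \ref{4} was proved for the $\dot a^{s,\tau}_{p,q}(W)$-scale.

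First I would introduce the auxiliary scalar sequence $u:=\{u_Q\}_{Q\in\mathscr{Q}}$ defined by $u_Q:=|A_Q\vec t_Q|$ for every $Q\in\mathscr{Q}$. Note that the reducing operators $A_Q$ act on $\mathbb C^m$, so the vectors $A_Q\vec t_Q$ live in $\mathbb C^m$ and $u_Q$ is a nonnegative real number. The definition of the quasi-norm $\|\cdot\|_{\dot f_{\infty,q}^s(\mathbb A)}$ is tailor-made so that, directly unpacking the definitions in \eqref{vec tj} and \eqref{Aj}, one obtains
$$
\left\|\vec{t}\right\|_{\dot f_{\infty,q}^s(\mathbb{A})}
=\left\|\left\{2^{js}\sum_{Q\in\mathscr{Q}_j}u_Q\widetilde{\mathbf 1}_Q\right\}_{j\in\mathbb Z}\right\|_{L\dot F_{\infty,q}}
=\|u\|_{\dot f_{\infty,q}^s}.
$$

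Next I would invoke the unweighted identity \cite[Lemma 5.1]{fj90}, which for any $\lambda\in(n,\infty)$ yields
$$
\|u\|_{\dot f_{\infty,q}^s}\sim\left\|u_{q,\lambda}^*\right\|_{\dot f_{\infty,q}^s}
$$
with positive equivalence constants depending only on $n$, $s$, $q$, and $\lambda$. Since $u_Q=|A_Q\vec t_Q|$, the sequence $u_{q,\lambda}^*$ is exactly $(\{|A_Q\vec t_Q|\}_{Q\in\mathscr{Q}})_{q,\lambda}^*$ appearing on the right-hand side of the target equivalence. Concatenating the two displays above then gives the claim.

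The only thing to check is that the hypothesis $\lambda>n$ is the same hypothesis needed in \cite[Lemma 5.1]{fj90}, which indeed is the case; no further smallness or largeness restriction is imposed by the matrix-weighted layer because all of the matrix-weighted information has already been absorbed into the scalar sequence $u$. There is no real obstacle: the point is that the quasi-norm $\|\cdot\|_{\dot f_{\infty,q}^s(\mathbb A)}$ is by construction an unweighted $\dot f_{\infty,q}^s$ quasi-norm of the scalarisation $u$, so the matrix-weighted Peetre-type maximal estimate is automatic from its scalar counterpart.
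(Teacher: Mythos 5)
Your proposal is correct and coincides with the paper's own proof: both define the scalarisation $u_Q:=|A_Q\vec t_Q|$, observe that $\|\vec t\|_{\dot f_{\infty,q}^s(\mathbb A)}=\|u\|_{\dot f_{\infty,q}^s}$ by definition, and then apply the unweighted result \cite[Lemma 5.1]{fj90}. No gaps.
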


\begin{proof}
Let $u:=\{u_Q\}_{Q\in\mathscr{Q}}$, where $u_Q:=|A_Q\vec t_Q|$ for any $Q\in\mathscr{Q}$.
Then, by \cite[Lemma 5.1]{fj90}, we find that
$\|\vec{t}\|_{\dot f_{\infty,q}^s(\mathbb{A})}
=\|u\|_{\dot f_{\infty,q}^s}
\sim\|u_{q,\lambda}^*\|_{\dot f_{\infty,q}^s}.$
This finishes the proof of Lemma \ref{107}.
\end{proof}

The following lemma is analogous to Lemma \ref{summary F}.

\begin{lemma}\label{summary F 2}
Let $q\in(0,\infty]$ and $M\in(n,\infty)$.
Suppose two sequences $\{g_j\}_{j\in\mathbb{Z}}$ and $\{h_j\}_{j\in\mathbb{Z}}$
of measurable functions on $\mathbb{R}^n$
satisfy that there exist $r\in(0,\min\{q,1\})$
and a positive constant $C$ such that,
for any $j\in\mathbb{Z}$ and $x\in\mathbb{R}^n$, \eqref{143} holds.
Then there exist a positive constant $\widetilde{C}$,
depending only on $C$, $n$, $p$, $q$, and $M$, such that
\begin{equation}\label{summary F 2 equ}
\left\|\left\{2^{js}g_j\right\}_{j\in\mathbb Z}\right\|_{L\dot F_{\infty,q}}
\leq\widetilde{C}\left\|\left\{2^{js}h_j\right\}_{j\in\mathbb Z}\right\|_{L\dot F_{\infty,q}},
\end{equation}
where $\|\cdot\|_{L\dot F_{\infty,q}}$ is the same as in \eqref{LF infty}.
\end{lemma}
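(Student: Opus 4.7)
The plan is to adapt the proof of Lemma \ref{summary F} to the endpoint $p=\infty$. I will treat the case $q<\infty$, the case $q=\infty$ being analogous and simpler. Fix a dyadic cube $P\in\mathscr{Q}$. As in Lemma \ref{summary F}, for each $j\geq j_P$ and $x\in P$ I would split the convolution integral in \eqref{143} into a local part $I_j(x)$ over $3P$ and a nonlocal part $J_j(x)$ over the translates $P+k\ell(P)$ with $k\in\mathbb{Z}^n$ and $\|k\|_\infty\geq 2$, obtaining $|g_j(x)|^r\lesssim I_j(x)+J_j(x)$. Since $r<q$, the exponent $q/r>1$ is legitimate and the elementary inequality $(a+b)^{q/r}\lesssim a^{q/r}+b^{q/r}$ gives $2^{jsq}|g_j(x)|^q\lesssim [2^{jsr}I_j(x)]^{q/r}+[2^{jsr}J_j(x)]^{q/r}$.

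For the local part, I would reuse the pointwise bound $I_j(x)\lesssim\mathcal{M}(|h_j|^r\mathbf{1}_{3P})(x)$ and invoke the Fefferman--Stein inequality (Lemma \ref{Fefferman Stein}) in $L^{q/r}\ell^{q/r}$, which is permissible since $q/r>1$:
\begin{align*}
\int_P\sum_{j\geq j_P}\left[\mathcal{M}\left(|2^{js}h_j|^r\mathbf{1}_{3P}\right)\right]^{q/r}dx
\lesssim\int_{3P}\sum_{j\geq j_P}|2^{js}h_j(x)|^q\,dx.
\end{align*}
To pass to the $L\dot F_{\infty,q}$ norm, I would select a dyadic cube $\widetilde{P}\in\mathscr{Q}$ with $3P\subset\widetilde{P}$ and $|\widetilde{P}|\sim|P|$; then $j_{\widetilde{P}}\leq j_P$, so the right-hand side above is at most $|\widetilde{P}|\,\|\{2^{js}h_j\}\|_{L\dot F_{\infty,q}}^q\lesssim|P|\,\|\{2^{js}h_j\}\|_{L\dot F_{\infty,q}}^q$.

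For the nonlocal part, Lemma \ref{33} gives $J_j(x)\sim\sum_{\|k\|_\infty\geq 2}|k|^{-M}2^{(j-j_P)(n-M)}\fint_{P+k\ell(P)}|h_j(z)|^r\,dz$, and the decaying factor $2^{(j-j_P)(n-M)}\leq 1$ since $M>n$ and $j\geq j_P$. I would then apply Jensen's inequality twice: once with the probability measure proportional to $|k|^{-M}$ on $\{k\in\mathbb{Z}^n:\|k\|_\infty\geq 2\}$ (summable because $M>n$) to move $(\cdot)^{q/r}$ inside the $k$-sum, and once on the averaging integral over $P+k\ell(P)$ to move it inside the integral, giving
\begin{align*}
\sum_{j\geq j_P}[2^{jsr}J_j(x)]^{q/r}
\lesssim\sum_{\|k\|_\infty\geq 2}|k|^{-M}\fint_{P+k\ell(P)}\sum_{j\geq j_P}|2^{js}h_j(z)|^q\,dz.
\end{align*}
Since $P+k\ell(P)\in\mathscr{Q}_{j_P}$, each inner average is directly dominated by $\|\{2^{js}h_j\}\|_{L\dot F_{\infty,q}}^q$, and $\sum_k|k|^{-M}<\infty$. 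Averaging over $P$, adding the two estimates, taking $\sup_{P\in\mathscr{Q}}$ and the $q$-th root will then yield \eqref{summary F 2 equ}. The main obstacle will be the nonlocal term: since no Fefferman--Stein-type maximal inequality is available at $p=\infty$, the translation-invariance of $L^p$-norms used in the analogous step of Lemma \ref{summary F} cannot be invoked; instead, the argument hinges on the observation that each far cube $P+k\ell(P)$ is itself a dyadic cube in $\mathscr{Q}_{j_P}$, which is exactly what enables a direct bound by the $L\dot F_{\infty,q}$-norm on each piece with geometric decay summable in $k$.
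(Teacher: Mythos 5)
Your argument does go through in substance, but it takes a much longer road than the paper. For $q\in(0,\infty)$ the paper simply observes that, directly from the definitions, $L\dot F_{\infty,q}=L\dot F_{q,q}^{\frac1q}$, so \eqref{summary F 2 equ} is the case $p=q$, $\tau=\frac1q$ of Lemma \ref{summary F} (the hypothesis $r\in(0,\min\{q,1\})\subset(0,\min\{p,q\})$ is satisfied with $p=q$); only the case $q=\infty$ needs a separate (and very short) pointwise argument via \eqref{33x}. In particular, your worry that ``no Fefferman--Stein-type maximal inequality is available at $p=\infty$'' is moot in the paper's approach: the $L\dot F_{\infty,q}$ quasi-norm is literally an $L\dot F_{p,q}^{\tau}$ quasi-norm with the finite exponent $p=q$, so the machinery of Lemma \ref{summary F} applies verbatim. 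Your direct re-derivation is a legitimate alternative, and the key observation you isolate for the nonlocal term --- that each far cube $P+k\ell(P)$ lies in $\mathscr{Q}_{j_P}$ and can itself serve as a test cube in the supremum defining $\|\cdot\|_{L\dot F_{\infty,q}}$ --- is exactly the right substitute for translation invariance of $L^p$.

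One step of your local-term estimate is, however, genuinely wrong as written: there is in general \emph{no} dyadic cube $\widetilde{P}\in\mathscr{Q}$ with $3P\subset\widetilde{P}$ and $|\widetilde{P}|\sim|P|$. For $P=[0,1)^n$ the cube $3P=[-1,2)^n$ meets both half-spaces $\{x_1<0\}$ and $\{x_1>0\}$, while every dyadic cube lies entirely in one of them, so no dyadic cube of any size contains $3P$. The repair is the same device you already use for the far cubes: cover $3P$ by the $3^n$ cubes $P+k\ell(P)$ with $\|k\|_\infty\le1$, each of which belongs to $\mathscr{Q}_{j_P}$, and bound
$\int_{3P}\sum_{j\ge j_P}|2^{js}h_j|^q\le\sum_{\|k\|_\infty\le1}\int_{P+k\ell(P)}\sum_{j\ge j_P}|2^{js}h_j|^q\le 3^n|P|\,\|\{2^{js}h_j\}_{j\in\mathbb Z}\|_{L\dot F_{\infty,q}}^q$.
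With that correction (and an actual verification of the asserted ``analogous and simpler'' case $q=\infty$, where the sum over $j$ is replaced by a supremum and a direct bound $|g_j(x)|\lesssim\|h_j\|_{L^\infty}$ via \eqref{33x} suffices), your proof is complete.
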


\begin{proof}
To show \eqref{summary F 2 equ}, we need consider the following two cases on $q$.

\emph{Case 1)} $q\in(0,\infty)$. In this case,
by $L\dot F_{\infty,q}=L\dot F_{q,q}^{\frac 1q}$ and Lemma \ref{summary F} with $p=q$,
we obtain \eqref{summary F 2 equ}.

\emph{Case 2)} $q=\infty$. In this case, from \eqref{33x}, we infer that,
for any $j\in\mathbb Z$ and $x\in\mathbb R^n$,
\begin{align*}
\left|2^{js}g_j(x)\right|
&\lesssim \left[2^{jn}\int_{\mathbb{R}^n}\frac{1}{(1+2^j|x-z|)^M}
\left|2^{js}h_j(z)\right|^r\,dz\right]^{\frac 1r}\\
&\leq\left\|2^{js}h_j\right\|_{L^\infty}
\left[2^{jn}\int_{\mathbb{R}^n}\frac{1}{(1+2^j|x-z|)^M}\,dz\right]^{\frac 1r}
\sim\left\|2^{js}h_j\right\|_{L^\infty},
\end{align*}
which further implies that \eqref{summary F 2 equ} holds in this case.
This finishes the proof of Lemma \ref{summary F 2}.
\end{proof}

\begin{lemma}\label{108}
Let $s\in\mathbb{R}$, $q\in(0,\infty]$,
$\varphi\in\mathcal{S}$ satisfy both \eqref{19} and \eqref{20},
$p\in(0,\infty)$, $W\in A_p$,
and $\mathbb{A}:=\{A_Q\}_{Q\in\mathscr{Q}}$ be a sequence of
reducing operators of order $p$ for $W$.
Then $\vec f\in\dot F_{\infty,q}^s(\mathbb{A},\varphi)$ if and only if
$\vec f\in(\mathcal{S}_\infty')^m$ and $\sup_{\mathbb{A},\varphi}(\vec f)\in\dot f_{\infty,q}^s$,
where $\sup_{\mathbb{A},\varphi}$ is the same as in \eqref{sup}.
Moreover, there exists a constant $C\in[1,\infty)$ such that,
for any $\vec f\in(\mathcal{S}_\infty')^m$,
$$
\left\|\vec f\right\|_{\dot F_{\infty,q}^s(\mathbb{A},\varphi)}
\leq\left\|\sup_{\mathbb{A},\varphi}\left(\vec f\right)\right\|_{\dot f_{\infty,q}^s}
\leq C\left\|\vec f\right\|_{\dot F_{\infty,q}^s(\mathbb{A},\varphi)}.
$$
\end{lemma}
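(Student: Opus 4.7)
The plan is to mirror the argument of Lemma \ref{56}, which handled the $L\dot A_{p,q}^\tau$ version of the same claim, but using the end-point maximal inequality encoded in Lemma \ref{summary F 2} in place of Lemma \ref{summary F}. The first inequality in the conclusion is immediate: for any $j\in\mathbb Z$, $Q\in\mathscr{Q}_j$, and $x\in Q$, the definition of $\sup_{\mathbb{A},\varphi,Q}(\vec f)$ gives
\begin{equation*}
\left|A_j(x)(\varphi_j*\vec f)(x)\right|
=\left|A_Q(\varphi_j*\vec f)(x)\right|
\leq|Q|^{-\frac12}\sup_{\mathbb{A},\varphi,Q}\left(\vec f\right)
=\sum_{R\in\mathscr{Q}_j}\sup_{\mathbb{A},\varphi,R}\left(\vec f\right)\widetilde{\mathbf 1}_R(x),
\end{equation*}
which yields $\|\vec f\|_{\dot F_{\infty,q}^s(\mathbb{A},\varphi)}\leq\|\sup_{\mathbb{A},\varphi}(\vec f)\|_{\dot f_{\infty,q}^s}$ after taking $\|\cdot\|_{L\dot F_{\infty,q}}$-norms.

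For the reverse inequality, I plan to copy the derivation of \eqref{145} verbatim: apply the Calder\'on-type reproducing identity \eqref{10} to $\varphi_j*\vec f$, multiply by $A_Q$, and use Lemma \ref{famous 2} together with the rapid decay $|\gamma_j(\cdot)|\lesssim 2^{jn}(1+2^j|\cdot|)^{-M}$ to bound $|A_Q(\varphi_j*\vec f)(x)|^r$ by a sum over $R\in\mathscr{Q}_j$. Here I fix $r\in(0,\min\{q,1\})$ (the natural end-point choice) and $M\in(\Delta+\tfrac n r,\infty)$, where $\Delta$ is such that $W$ has $A_p$-dimensions $(d,\widetilde d,\Delta)$.

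Next, I would average in $y\in R$, invoke \eqref{23y} to absorb the factor $\|A_QA_R^{-1}\|^r$ into the kernel at the cost of reducing its exponent from $M$ to $M-\Delta$, and reassemble the sum over $R$ into an integral against $|A_j(z)(\varphi_j*\vec f)(z)|^r$. This produces exactly the pointwise bound
\begin{equation*}
\left|g_j(x)\right|^r
\lesssim 2^{jn}\int_{\mathbb R^n}\frac{1}{(1+2^j|x-z|)^{(M-\Delta)r}}\left|h_j(z)\right|^r\,dz,
\end{equation*}
where $g_j$ and $h_j$ are defined as in \eqref{gj}. Since $(M-\Delta)r>n$ and $r\in(0,\min\{q,1\})$, Lemma \ref{summary F 2} (applied with $M$ replaced by $(M-\Delta)r$) is applicable and yields
\begin{equation*}
\left\|\sup_{\mathbb{A},\varphi}\left(\vec f\right)\right\|_{\dot f_{\infty,q}^s}
=\left\|\{2^{js}g_j\}_{j\in\mathbb Z}\right\|_{L\dot F_{\infty,q}}
\lesssim\left\|\{2^{js}h_j\}_{j\in\mathbb Z}\right\|_{L\dot F_{\infty,q}}
=\left\|\vec f\right\|_{\dot F_{\infty,q}^s(\mathbb A,\varphi)},
\end{equation*}
which completes the proof. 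The only delicate point is the choice of parameters $r$ and $M$: we need $r<1$ (so Lemma \ref{famous 2} applies), $r<q$ (so Lemma \ref{summary F 2} applies), and $(M-\Delta)r>n$ (for the resulting convolution kernel to be integrable after applying \eqref{33x}); the chosen ranges accommodate all three requirements simultaneously, so no genuine obstacle arises beyond adapting the book-keeping from Lemma \ref{56}.
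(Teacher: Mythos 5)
Your proposal is correct and follows essentially the same route as the paper: the first inequality is read off from the definition of $\sup_{\mathbb{A},\varphi,Q}(\vec f)$, and the second is obtained by re-deriving the pointwise bound \eqref{155} (via the reproducing identity \eqref{10}, Lemma \ref{famous 2}, and \eqref{23y}) and then invoking Lemma \ref{summary F 2} with exponent $(M-\Delta)r>n$. The only cosmetic difference is your choice $r\in(0,\min\{q,1\})$ versus the paper's $r\in(0,\min\{p,q,1\})$; both satisfy the hypotheses of Lemma \ref{summary F 2} and suffice for the derivation of \eqref{155}, so this is immaterial.
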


\begin{proof}
The first inequality is immediate from the definition of $\sup_{\mathbb{A},\varphi}(\vec f)$.
To prove the second inequality,
we fix $r\in(0,\min\{p,q,1\})$ and $M\in(\Delta+\frac{n}{r},\infty)$,
where $\Delta$ is such that $W$ has $A_p$-dimensions $(d,\widetilde d,\Delta)$.
Using \eqref{155}, we obtain, for any $j\in\mathbb{Z}$ and $x\in Q$,
$$
\left|g_j(x)\right|^r
\lesssim2^{jn}\int_{\mathbb{R}^n}
\frac{1}{(1+2^j|x-z|)^{(M-\Delta)r}}\left|h_j(x)\right|^r\,dz,
$$
where both $g_j$ and $h_j$ are the same as in \eqref{gj}.
From this and Lemma \ref{summary F 2} with $M$ replaced by $(M-\Delta)r$, we deduce that
\begin{align*}
\left\|\sup_{\mathbb{A},\varphi}\left(\vec f\right)\right\|_{\dot f_{\infty,q}^s}
=\left\|\left\{2^{js}g_j\right\}_{j\in\mathbb Z}\right\|_{L\dot F_{\infty,q}^{\tau}}
\lesssim\left\|\left\{2^{js}h_j\right\}_{j\in\mathbb Z}\right\|_{L\dot F_{\infty,q}^{\tau}}
=\left\|\vec f\right\|_{\dot F_{\infty,q}^s(\mathbb{A},\varphi)}.
\end{align*}
This finishes the proof of Lemma \ref{108}.
\end{proof}

To show that $T_\psi$ is well defined for any
$\vec t\in\dot f_{\infty,q}^s(\mathbb{A})$,
we have the following conclusion.

\begin{lemma}\label{103}
Let $s\in\mathbb{R}$, $\tau\in[0,\infty)$, $q\in(0,\infty]$, and $p\in(0,\infty)$.
Let $W\in A_p$ have $A_p$-dimensions $(d,\widetilde d,\Delta)$,
and $\mathbb{A}:=\{A_Q\}_{Q\in\mathscr{Q}}$ be a sequence of
reducing operators of order $p$ for $W$.
Then, for any $\vec t\in\dot f_{\infty,q}^s(\mathbb{A})$ and $\psi\in\mathcal{S}_\infty$,
$\sum_{Q\in\mathscr{Q}}\vec t_Q\psi_Q$ converges in $(\mathcal{S}_\infty')^m$.
Moreover, if $M\in\mathbb{Z}_+$ satisfies
\begin{equation}\label{255}
M>\max\left\{\frac{d}{p}+s,\frac{\widetilde{d}}{p'}-s,\Delta\right\},
\end{equation}
then there exists a positive constant $C$ such that,
for any $\vec t\in\dot f_{\infty,q}^s(\mathbb{A})$ and $\psi,\phi\in\mathcal{S}_\infty$,
$$
\sum_{Q\in\mathscr{Q}}\left|\vec{t}_Q\right||\langle\psi_Q,\phi\rangle|
\leq C\left\|\vec t\right\|_{\dot f_{\infty,q}^s(\mathbb{A})}
\|\psi\|_{S_{M+1}}\|\phi\|_{S_{M+1}}.
$$
\end{lemma}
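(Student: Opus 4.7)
The plan is to follow the same strategy as in the proof of Lemma \ref{3}, with two adjustments that reflect the shift from the space $\dot a^{s,\tau}_{p,q}(W)$ to the end-point space $\dot f^s_{\infty,q}(\mathbb A)$: the a priori pointwise bound on $|A_Q\vec t_Q|$ must now be derived from the $L\dot F_{\infty,q}$-type supremum, and the balance of exponents in the geometric series at the end will consequently produce the condition \eqref{255} in place of \eqref{239}. As with Lemma \ref{3}, the quantitative inequality will imply the convergence of $\sum_{Q\in\mathscr Q}\vec t_Q\psi_Q$ in $(\mathcal S_\infty')^m$, since absolute convergence against any test function $\phi\in\mathcal S_\infty$ is exactly what the quantitative estimate provides.

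First I would isolate the key pointwise estimate. For each $Q\in\mathscr Q$, take $P:=Q$ in the supremum defining $\|\vec t\|_{\dot f^s_{\infty,q}(\mathbb A)}$. Since $A_j\vec t_j$ is constant on $Q$ for $j=j_Q$, equal to $|Q|^{-1/2}A_Q\vec t_Q$, only the $j=j_Q$ term already gives
\begin{equation*}
\left\|\vec t\right\|_{\dot f^s_{\infty,q}(\mathbb A)}
\geq 2^{j_Q s}|Q|^{-\frac12}\left|A_Q\vec t_Q\right|,
\end{equation*}
which rearranges to $|A_Q\vec t_Q|\leq|Q|^{\frac12+\frac{s}{n}}\|\vec t\|_{\dot f^s_{\infty,q}(\mathbb A)}$ (the same estimate holds trivially for $q=\infty$). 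Multiplying by $\|A_Q^{-1}\|$ gives the bound
\begin{equation*}
\left|\vec t_Q\right|\leq\left\|A_Q^{-1}\right\||Q|^{\frac12+\frac{s}{n}}
\left\|\vec t\right\|_{\dot f^s_{\infty,q}(\mathbb A)},
\end{equation*}
which replaces the first display in the proof of Lemma \ref{3}; note the absence of the $|Q|^{-\frac1p+\tau}$ factor, consistent with the $\tau=\frac1p$ identification from Theorem \ref{97}.

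Next I would combine this with the two geometric estimates already used in Lemma \ref{3}, namely Corollary \ref{237} (with $Q_{0,\mathbf 0}$ as reference cube) for $\|A_Q^{-1}\|$ and Corollary \ref{59x} (with $M\in\mathbb N$ to be fixed) for $|\langle\psi_Q,\phi\rangle|$. After these substitutions and a dyadic split of the sum into $\ell(Q)\leq1$ (so $\max=[\ell(Q)]^{-\widetilde d/p'}$ and $\min=\ell(Q)$) and $\ell(Q)\geq1$ (so $\max=[\ell(Q)]^{d/p}$ and $\min=[\ell(Q)]^{-1}$), the summand becomes, up to the factor $\|\psi\|_{S_{M+1}}\|\phi\|_{S_{M+1}}\|\vec t\|_{\dot f^s_{\infty,q}(\mathbb A)}$,
\begin{equation*}
\begin{cases}
[\ell(Q)]^{M+s+n-\widetilde d/p'}(1+|x_Q|)^{\Delta-n-M}&\text{if }\ell(Q)\leq1,\\
[\ell(Q)]^{d/p+s-M}(1+|x_Q|/\ell(Q))^{\Delta-n-M}&\text{if }\ell(Q)\geq1.
\end{cases}
\end{equation*}
Summing over $k\in\mathbb Z^n$ via Lemma \ref{253x} yields a factor $2^{jn}$ in the $j\geq0$ regime and a bounded factor in the $j\leq -1$ regime (both needing $n+M-\Delta>n$, i.e.\ $M>\Delta$), leaving two geometric series whose convergence requires $M>\widetilde d/p'-s$ and $M>d/p+s$, respectively. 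The three resulting lower bounds on $M$ coincide exactly with \eqref{255}, which is the advertised hypothesis.

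I do not anticipate a genuine obstacle here: the bulk of the argument is a direct adaptation of Lemma \ref{3} once the correct a priori estimate for $|A_Q\vec t_Q|$ is in place. The only point requiring care is tracking how the exponents combine after the dyadic split, since the loss of the $|Q|^{-1/p+\tau}$ factor present in the subcritical case shifts the required decay of $M$ by exactly $n/p - n\tau$ in each regime, producing the cleaner thresholds $d/p+s$ and $\widetilde d/p'-s$ of \eqref{255}.
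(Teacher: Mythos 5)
Your proposal is correct and follows essentially the same route as the paper: the a priori bound $|A_Q\vec t_Q|\leq|Q|^{\frac12+\frac{s}{n}}\|\vec t\|_{\dot f^s_{\infty,q}(\mathbb A)}$ obtained by testing the defining supremum at $P=Q$, combined with $\|A_Q^{-1}\|\leq\|A_{Q_{0,\mathbf 0}}^{-1}\|\,\|A_{Q_{0,\mathbf 0}}A_Q^{-1}\|$, Corollary \ref{237}, Corollary \ref{59x}, and Lemma \ref{253x}, is exactly the paper's argument, and your bookkeeping of the exponents in the two dyadic regimes reproduces the three thresholds in \eqref{255} correctly. No gaps.
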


\begin{proof}
Let $\vec t:=\{\vec t_Q\}_{Q\in\mathscr{Q}}\in\dot f_{\infty,q}^s(\mathbb{A})$.
By the definition of $\|\cdot\|_{\dot f_{\infty,q}^s(\mathbb{A})}$,
we conclude that, for any $Q\in\mathscr{Q}$,
$$
\left|\vec{t}_Q\right|
\leq\left\|A_Q^{-1}\right\|\left|A_Q\vec{t}_Q\right|
\lesssim\left\|A_{Q_{0,\mathbf{0}}}^{-1}\right\|
\left\|A_{Q_{0,\mathbf{0}}}A_Q^{-1}\right\||Q|^{\frac{s}{n}+\frac12}
\left\|\vec t\right\|_{\dot f_{\infty,q}^s(\mathbb{A})}
$$
and hence, for any $\phi\in\mathcal{S}_\infty$,
$$
\sum_{Q\in\mathscr{Q}}\left|\vec{t}_Q\right||\langle\psi_Q,\phi\rangle|
\lesssim\left\|\vec t\right\|_{\dot f_{\infty,q}^s(\mathbb{A})}
\sum_{Q\in\mathscr{Q}}|Q|^{\frac{s}{n}+\frac12}
\left\|A_{Q_{0,\mathbf{0}}}A_Q^{-1}\right\||\langle\psi_Q,\phi\rangle|.
$$
From this, \eqref{2}, \eqref{24}, \eqref{255}, and Lemma \ref{253x}, we infer that
\begin{align*}
&\sum_{Q\in\mathscr{Q}}\left|\vec{t}_Q\right||\langle\psi_Q,\phi\rangle|\\
&\quad\lesssim\left\|\vec t\right\|_{\dot f_{\infty,q}^s(\mathbb{A})}
\|\psi\|_{S_{M+1}}\|\phi\|_{S_{M+1}}\sum_{Q\in\mathscr{Q}}|Q|^{\frac{s}{n}+\frac12}\\
&\qquad\times\min\left\{[\ell(Q)]^{-(M-\frac{d}{p}+\frac{n}{2})},[\ell(Q)]^{M+\frac{n}{2}-\frac{\widetilde d}{p'}}\right\}
\left[1+\frac{|x_Q|}{\ell(Q)\vee 1}\right]^{-(n+M-\Delta)}\\
&\quad=\left\|\vec t\right\|_{\dot f_{\infty,q}^s(\mathbb{A})}
\|\psi\|_{S_{M+1}}\|\phi\|_{S_{M+1}}\\
&\qquad\times\left[\sum_{j=0}^\infty2^{-j(s+n+M-\frac{\widetilde d}{p'})}
\sum_{k\in\mathbb{Z}^n}\left(1+2^{-j}|k|\right)^{-(n+M-\Delta)}\right.\\
&\qquad\left.+\sum_{j=-\infty}^{-1}2^{-j(s-M+\frac{d}{p})}
\sum_{k\in\mathbb{Z}^n}(1+|k|)^{-(n+M-\Delta)}\right]\\
&\quad\sim\left\|\vec t\right\|_{\dot f_{\infty,q}^s(\mathbb{A})}
\|\psi\|_{S_{M+1}}\|\phi\|_{S_{M+1}}\left[\sum_{j=0}^\infty2^{-j(s+M-\frac{\widetilde d}{p'})}
+\sum_{j=-\infty}^{-1}2^{-j(s-M+\frac{d}{p})}\right]\\
&\quad\sim\left\|\vec t\right\|_{\dot f_{\infty,q}^s(\mathbb{A})}
\|\psi\|_{S_{M+1}}\|\phi\|_{S_{M+1}}.
\end{align*}
This finishes the proof of Lemma \ref{103}.
\end{proof}

Now, we establish the $\varphi$-transform characterization of $F_{\infty,q}^s(\mathbb{A})$.

\begin{theorem}\label{phi infty}
Let $s\in\mathbb{R}$, $q\in(0,\infty]$,
$\varphi,\psi\in\mathcal{S}$ satisfy both \eqref{19} and \eqref{20},
$p\in(0,\infty)$, $W\in A_p$,
and $\mathbb{A}:=\{A_Q\}_{Q\in\mathscr{Q}}$ be a sequence of
reducing operators of order $p$ for $W$. Then both
$$
S_\varphi:\ \dot F_{\infty,q}^s(\mathbb{A},\widetilde{\varphi})\to\dot f_{\infty,q}^s(\mathbb{A})
\text{ and }
T_\psi:\ \dot f_{\infty,q}^s(\mathbb{A})\to\dot F_{\infty,q}^s(\mathbb{A},\varphi)
$$
are bounded. Furthermore, if $\varphi$ and $\psi$ satisfy \eqref{21}, then
$T_\psi\circ S_\varphi$ is the identity on
$\dot F_{\infty,q}^s(\mathbb{A},\widetilde{\varphi})$.
\end{theorem}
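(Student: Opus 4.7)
The plan is to mimic the proof of Theorem~\ref{phi}, replacing each tool adapted to $\dot A^{s,\tau}_{p,q}$ by its $p=\infty$ analogue that has already been established above. The three ingredients I will use are: Lemma~\ref{108} (the analogue of Lemma~\ref{56}), Lemma~\ref{107} (the analogue of Lemma~\ref{4}), and Lemma~\ref{103} (the analogue of Lemma~\ref{3} giving that $T_\psi\vec t$ is well defined in $(\mathcal S_\infty')^m$). The almost-diagonal estimates in Corollaries~\ref{59x} and~\ref{237} are unchanged.

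First, I would handle $S_\varphi$. For any $Q\in\mathscr Q$ and $\vec f\in\dot F^{s}_{\infty,q}(\mathbb A,\widetilde\varphi)$, one has
\[
\left|A_Q\left(S_\varphi\vec f\right)_Q\right|
=|Q|^{\frac12}\left|A_Q\left(\widetilde\varphi_{j_Q}*\vec f\right)(x_Q)\right|
\leq\sup_{\mathbb A,\widetilde\varphi,Q}(\vec f),
\]
so summing this pointwise bound into the definition of the sequence quasi-norm and then applying Lemma~\ref{108} yields
$\|S_\varphi\vec f\|_{\dot f^{s}_{\infty,q}(\mathbb A)}
\leq\|\sup_{\mathbb A,\widetilde\varphi}(\vec f)\|_{\dot f^{s}_{\infty,q}}
\lesssim\|\vec f\|_{\dot F^{s}_{\infty,q}(\mathbb A,\widetilde\varphi)}$.

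Next, for $T_\psi$, let $\vec t\in\dot f^{s}_{\infty,q}(\mathbb A)$. Lemma~\ref{103} shows that $T_\psi\vec t=\sum_{Q\in\mathscr Q}\vec t_Q\psi_Q$ converges in $(\mathcal S_\infty')^m$, so $\varphi_j\ast(T_\psi\vec t)$ is meaningful. Fix $W$ with $A_p$-dimensions $(d,\widetilde d,\Delta)$, pick $M\in\mathbb N$ with $M>n(\frac{1}{q\wedge 1}-1)_++\Delta$, and set $\widetilde\lambda:=(n+M-\Delta)(q\wedge 1)>n$. The Fourier support condition \eqref{19} collapses the convolution to neighbours $i\in\{j-1,j,j+1\}$, and exactly the same chain of inequalities as in \eqref{71}--\eqref{154}, using Corollary~\ref{237} to dominate $\|A_QA_R^{-1}\|$ and Lemma~\ref{59} to control $|\varphi_j\ast\psi_R|$, gives, for every $j\in\mathbb Z$, $Q\in\mathscr Q_j$, and $x\in Q$,
\[
\left|A_Q\left[\varphi_j\ast(T_\psi\vec t)\right](x)\right|
\lesssim|Q|^{-\frac12}\sum_{i=j-1}^{j+1}\left(u_{q\wedge 1,\widetilde\lambda}^{\ast}\right)_{R_i(x)},
\]
where $u_Q:=|A_Q\vec t_Q|$ and $R_i(x)$ is the unique cube in $\mathscr Q_i$ containing $x$. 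Writing this at the level of $A_j(x)[\varphi_j\ast(T_\psi\vec t)](x)$ and inserting the result into the quasi-norm $\|\cdot\|_{L\dot F_{\infty,q}}$, a shift of at most one scale costs only a fixed constant, so Lemma~\ref{107} yields
\[
\left\|T_\psi\vec t\right\|_{\dot F^{s}_{\infty,q}(\mathbb A,\varphi)}
\lesssim\left\|u_{q\wedge 1,\widetilde\lambda}^{\ast}\right\|_{\dot f^{s}_{\infty,q}}
\sim\left\|\vec t\right\|_{\dot f^{s}_{\infty,q}(\mathbb A)}.
\]

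Finally, when $\varphi,\psi$ also satisfy \eqref{21}, Lemma~\ref{7} gives $T_\psi\circ S_\varphi=\mathrm{id}$ on $\dot F^{s}_{\infty,q}(\mathbb A,\widetilde\varphi)$ by applying the Calder\'on reproducing formula componentwise and pairing against test functions in $\mathcal S_\infty$. The main technical obstacle is verifying that the pointwise bound on $A_Q[\varphi_j\ast(T_\psi\vec t)]$ really controls the $L\dot F_{\infty,q}$-norm; the potential worry is that the supremum defining $\|\cdot\|_{L\dot F_{\infty,q}}$ quantifies over \emph{all} dyadic cubes $P$, so one must make sure that the index-shift $j\mapsto j\pm 1$ in $(u^\ast_{q\wedge 1,\widetilde\lambda})_{R_i(x)}$ does not damage the outer supremum, but this is harmless because shifting the lower summation index by one is absorbed into the constant, exactly as in the proof of Theorem~\ref{phi}.
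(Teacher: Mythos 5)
Your proposal is correct and follows essentially the same route as the paper: the pointwise bound $|A_Q(S_\varphi\vec f)_Q|\leq\sup_{\mathbb A,\widetilde\varphi,Q}(\vec f)$ combined with Lemma~\ref{108} for $S_\varphi$; the specialization of \eqref{242} to $p=q$ combined with Lemma~\ref{107} for $T_\psi$ (with Lemma~\ref{103} supplying well-definedness); and Lemma~\ref{7} for $T_\psi\circ S_\varphi=\mathrm{id}$. The only discrepancy is cosmetic: the chain \eqref{71}--\eqref{154} with $p$ replaced by $q$ produces the majorant $u^*_{q,\widetilde\lambda}$ (not $u^*_{q\wedge1,\widetilde\lambda}$; for $q>1$ Case~2 of \eqref{154} applies H\"older with exponent $q$), which is exactly the object to which Lemma~\ref{107} applies, so your conclusion stands as written.
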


\begin{proof}
We first prove the boundedness of
$S_\varphi:\ \dot F_{\infty,q}^s(\mathbb{A},\widetilde{\varphi})\to\dot f_{\infty,q}^s(\mathbb{A})$.
Let $\vec f\in\dot F_{\infty,q}^s(\mathbb{A},\widetilde{\varphi})$ and
$$
\sup_{\mathbb{A},\widetilde{\varphi}}\left(\vec f\right)
:=\left\{\sup_{\mathbb{A},\widetilde{\varphi},Q}\left(\vec f\right)\right\}_{Q\in\mathscr{Q}}
$$
be the same as in \eqref{sup}.
By the definition of $\sup_{\mathbb{A},\widetilde{\varphi},Q}(\vec f)$,
we find that, for any $Q\in\mathscr{Q}$,
\begin{align*}
\left|A_Q\left(S_{\varphi}\vec f\right)_Q\right|
=\left|A_Q\left\langle\vec f,\varphi_Q\right\rangle\right|
=|Q|^{\frac12}\left|A_Q\left(\widetilde{\varphi}_{j_Q}*\vec f\right)(x_Q)\right|
\leq\sup_{\mathbb{A},\widetilde{\varphi},Q}\left(\vec f\right).
\end{align*}
This, together with Lemma \ref{108}, further implies that
$$
\left\| S_{\varphi}\vec f\right\|_{\dot f_{\infty,q}^s(\mathbb{A})}
\leq\left\|\sup_{\mathbb{A},\widetilde{\varphi}}\left(\vec f\right)\right\|_{\dot f_{\infty,q}^s}
\sim\left\|\vec f\right\|_{\dot F_{\infty,q}^s(\mathbb{A},\widetilde{\varphi})},
$$
which completes the proof of the boundedness of $S_{\varphi}$.

Next, we show the boundedness of
$T_\psi:\ \dot f_{\infty,q}^s(\mathbb{A})\to\dot F_{\infty,q}^s(\mathbb{A},\varphi)$.
Let $\vec t:=\{\vec t_Q\}_{Q\in\mathscr{Q}}\in\dot f_{\infty,q}^s(\mathbb{A})$.
Let $M\in\mathbb{N}$ satisfy $M>n(\frac{1}{q}-1)_+ +\Delta$
and $\widetilde{\lambda}:=(n+M-\Delta)(q\wedge1)$.
Let $u:=\{u_Q\}_{Q\in\mathscr{Q}}$,
where $u_Q:=|A_Q\vec t_Q|$ for any $Q\in\mathscr{Q}$.
By \eqref{242} with $p$ replaced by $q$, we conclude that,
for any $j\in\mathbb{Z}$ and $x\in\mathbb{R}^n$,
$$
\left|A_j(x)\left[\varphi_j*\left(T_\psi\vec t\right)\right](x)\right|
\lesssim\left(u_{q,\widetilde{\lambda}}^*\right)_{j+1}(x)
+\left(u_{q,\widetilde{\lambda}}^*\right)_{j}(x)
+\left(u_{q,\widetilde{\lambda}}^*\right)_{j-1}(x).
$$
From this and Lemma \ref{107}, we deduce that
$$
\left\|T_\psi\vec t\right\|_{\dot F_{\infty,q}^s(\mathbb{A},\varphi)}
\lesssim\left\|u_{q,\widetilde{\lambda}}^*\right\|_{\dot f_{\infty,q}^s}
\sim\left\|\vec t\right\|_{\dot f_{\infty,q}^s(\mathbb{A})}.
$$
This finishes the proof of the boundedness of $T_\psi$.

Finally, if both $\varphi$ and $\psi$ satisfy \eqref{21}, then, by Lemma \ref{7},
we find that $T_\psi\circ S_\varphi$
is the identity on $\dot A^{s,\tau}_{p,q}(W,\widetilde{\varphi})$.
This finishes the proof of Theorem \ref{phi}.
\end{proof}

By an argument similar to that used in the proof of Proposition \ref{38},
we obtain the following conclusion; we omit the details.

\begin{proposition}\label{independent 2}
Let $s\in\mathbb{R}$, $q\in(0,\infty]$,
$\varphi\in\mathcal{S}$ satisfy both \eqref{19} and \eqref{20},
$p\in(0,\infty)$, $W\in A_p$, and
$\mathbb{A}:=\{A_Q\}_{Q\in\mathscr{Q}}$ be a sequence of
reducing operators of order $p$ for $W$.
Then $\dot F_{\infty,q}^s(\mathbb{A},\varphi)$
is independent of the choice of $\varphi$.
\end{proposition}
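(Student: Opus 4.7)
The plan is to mimic the proof of Proposition \ref{38}, replacing the role of Theorem \ref{phi} by that of Theorem \ref{phi infty}, which is precisely the averaged Triebel--Lizorkin analogue at $p=\infty$ that we have just established. The key point is that, once we have a two-sided bound between the function space norm and the sequence space norm of the $\varphi$-transform coefficients, the sequence space side no longer depends on $\varphi$, so the function space norms for two admissible choices of $\varphi$ must be comparable.

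More precisely, fix two functions $\varphi^{(1)},\varphi^{(2)}\in\mathcal{S}$ that both satisfy \eqref{19} and \eqref{20}. First I would pick an auxiliary $\psi^{(2)}\in\mathcal{S}$ that satisfies \eqref{19}, \eqref{20}, and the partition-of-unity identity \eqref{21} together with $\varphi^{(2)}$; such a $\psi^{(2)}$ exists by the standard construction used in \cite{fj90}. Then, by the Calder\'on reproducing formula (Lemma \ref{7}), for any $\vec f\in\dot F_{\infty,q}^s(\mathbb{A},\varphi^{(2)})$ one has
$$
\vec f=\bigl(T_{\widetilde{\psi^{(2)}}}\circ S_{\widetilde{\varphi^{(2)}}}\bigr)\vec f
$$
in $(\mathcal{S}_\infty')^m$, where $\widetilde\phi(x):=\overline{\phi(-x)}$ as usual.

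Next I would apply Theorem \ref{phi infty} twice: the boundedness
$$
S_{\widetilde{\varphi^{(2)}}}:\ \dot F_{\infty,q}^s\bigl(\mathbb{A},\varphi^{(2)}\bigr)\to\dot f_{\infty,q}^s(\mathbb{A})
$$
(using that $\widetilde{\widetilde{\varphi^{(2)}}}=\varphi^{(2)}$) and the boundedness
$$
T_{\widetilde{\psi^{(2)}}}:\ \dot f_{\infty,q}^s(\mathbb{A})\to\dot F_{\infty,q}^s\bigl(\mathbb{A},\varphi^{(1)}\bigr).
$$
Composing these gives
$$
\bigl\|\vec f\bigr\|_{\dot F_{\infty,q}^s(\mathbb{A},\varphi^{(1)})}
\lesssim\bigl\|S_{\widetilde{\varphi^{(2)}}}\vec f\bigr\|_{\dot f_{\infty,q}^s(\mathbb{A})}
\lesssim\bigl\|\vec f\bigr\|_{\dot F_{\infty,q}^s(\mathbb{A},\varphi^{(2)})}.
$$
Swapping the roles of $\varphi^{(1)}$ and $\varphi^{(2)}$ yields the reverse inequality, hence the equivalence of the two quasi-norms and the identity of the underlying spaces.

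Since every ingredient is already in place, there is no genuine obstacle; the only subtle point is notational bookkeeping, namely keeping track of the tildes so that the domain spaces in the two halves of Theorem \ref{phi infty} match the spaces associated with the chosen $\varphi^{(j)}$. This is entirely parallel to the tilde-bookkeeping already performed in the proof of Proposition \ref{38}, which is why the authors can safely omit the details.
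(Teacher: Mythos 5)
Your proposal is correct and is exactly the argument the paper intends: the authors state that Proposition \ref{independent 2} follows "by an argument similar to that used in the proof of Proposition \ref{38}" and omit the details, and your write-up is precisely that argument with Theorem \ref{phi} replaced by Theorem \ref{phi infty} and Lemma \ref{7} supplying the reproducing identity. The tilde-bookkeeping you highlight (that $\widetilde{\widetilde{\varphi^{(2)}}}=\varphi^{(2)}$ and that $\widetilde{\varphi^{(2)}},\widetilde{\psi^{(2)}}$ again satisfy \eqref{19}--\eqref{21}) is handled correctly.
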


Based on Proposition \ref{independent 2}, in what follows,
we denote $\dot F_{\infty,q}^s(\mathbb{A},\varphi)$
simply by $\dot F_{\infty,q}^s(\mathbb{A})$.
Now, we can prove Theorem \ref{97}.

\begin{proof}[Proof of Theorem \ref{97}]
We first show that $\dot f_{\infty,q}^s(\mathbb{A})=\dot f_{p,q}^{s,\frac{1}{p}}(\mathbb{A})$.
Let $\vec t:=\{\vec t_Q\}_{Q\in\mathscr{Q}}\subset\mathbb C^m$
and define $u:=\{u_Q\}_{Q\in\mathscr{Q}}$
by setting $u_Q:=|A_Q\vec t_Q|$ for any $Q\in\mathscr{Q}$.
Then, by \cite[Corollary 5.7]{fj90}, we obtain
\begin{equation}\label{159}
\left\|\vec{t}\right\|_{\dot f_{\infty,q}^s(\mathbb{A})}
=\|u\|_{\dot f_{\infty,q}^s}
\sim\|u\|_{\dot f_{p,q}^{s,\frac{1}{p}}}
=\left\|\vec{t}\right\|_{\dot f_{p,q}^{s,\frac{1}{p}}(\mathbb{A})}
\end{equation}
and hence $\dot f_{\infty,q}^s(\mathbb{A})
=\dot f_{p,q}^{s,\frac{1}{p}}(\mathbb{A}) $.

Next, we prove that $\dot F_{\infty,q}^s(\mathbb{A})
=\dot F_{p,q}^{s,\frac{1}{p}}(\mathbb{A}) $.
Let $\psi\in\mathcal{S}$ satisfy both \eqref{19} and \eqref{20},
and let both $\varphi$ and $\psi$ satisfy \eqref{21}.
From Theorem \ref{phi infty}, \eqref{159}, and Theorem \ref{phi},
we infer that, for any $\vec f\in\dot F_{p,q}^{s,\frac{1}{p}}(\mathbb{A})$,
\begin{align}\label{new add 11}
\left\|\vec f\right\|_{\dot F_{\infty,q}^s(\mathbb{A})}
&=\left\|\left(T_\psi\circ S_\varphi\right)\left(\vec f\right)\right\|_{\dot F_{\infty,q}^s(\mathbb{A})}
\lesssim\left\|S_\varphi\left(\vec f\right)\right\|_{\dot f_{\infty,q}^s(\mathbb{A})}\\
&\sim\left\|S_\varphi\left(\vec f\right)\right\|_{\dot f_{p,q}^{s,\frac{1}{p}}(\mathbb{A})}
\lesssim\left\|\vec f\right\|_{\dot F_{p,q}^{s,\frac{1}{p}}(\mathbb{A})}.\notag
\end{align}
Applying an argument similar to that used in the estimation of \eqref{new add 11},
we also obtain the reverse inequality.
Thus, $\dot F_{\infty,q}^s(\mathbb{A})=\dot F_{p,q}^{s,\frac{1}{p}}(\mathbb{A})$
with equivalent quasi-norms.
This finishes the proof of Theorem \ref{97}.
\end{proof}

Applying Theorem \ref{97} and Propositions \ref{174}, \ref{175}, and \ref{257},
we obtain the following three propositions; we omit the details.

\begin{proposition}
Let $s\in\mathbb{R}$, $q\in(0,\infty]$, $p\in(0,\infty)$,
$W\in A_p$ have the $A_p$-dimension $d\in[0,n)$, and
$\mathbb{A}:=\{A_Q\}_{Q\in\mathscr{Q}}$ be a sequence of
reducing operators of order $p$ for $W$.
Then $\dot F_{\infty,q}^s(\mathbb{A})\subset(\mathcal{S}_\infty')^m$.
Moreover, if $M\in\mathbb{Z}_+$ satisfies \eqref{239},
then there exists a positive constant $C$ such that,
for any $\vec f\in\dot F_{\infty,q}^s(\mathbb{A})$ and $\phi\in\mathcal{S}_\infty$,
$$
\left|\left\langle\vec f,\phi\right\rangle\right|
\leq C\left\|\vec f\right\|_{\dot F_{\infty,q}^s(\mathbb{A})}\|\phi\|_{S_{M+1}}.
$$
\end{proposition}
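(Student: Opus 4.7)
The plan is to reduce this statement to the already-established embedding for $\dot F_{p,q}^{s,\tau}(W)$ (Proposition \ref{174}) by invoking the identification theorems obtained earlier in the paper. The key observation is that Theorem \ref{97} identifies $\dot F_{\infty,q}^s(\mathbb A)$ with $\dot F_{p,q}^{s,1/p}(\mathbb A)$ as quasi-normed spaces, and Theorem \ref{11} (combined with Proposition \ref{38} to drop the dependence on $\varphi$) further identifies $\dot F_{p,q}^{s,1/p}(\mathbb A)$ with $\dot F_{p,q}^{s,1/p}(W)$. Chaining these equivalences yields a constant $C_0\in[1,\infty)$ such that
\begin{equation*}
C_0^{-1}\left\|\vec f\right\|_{\dot F_{\infty,q}^s(\mathbb A)}
\leq\left\|\vec f\right\|_{\dot F_{p,q}^{s,1/p}(W)}
\leq C_0\left\|\vec f\right\|_{\dot F_{\infty,q}^s(\mathbb A)}
\end{equation*}
for every $\vec f\in(\mathcal S_\infty')^m$.

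Having established this, I would apply Proposition \ref{174} with $\tau=1/p$. That proposition gives $\dot F_{p,q}^{s,1/p}(W)\subset(\mathcal S_\infty')^m$ together with the testing estimate
\begin{equation*}
\left|\left\langle\vec f,\phi\right\rangle\right|
\leq C_1\left\|\vec f\right\|_{\dot F_{p,q}^{s,1/p}(W)}\|\phi\|_{S_{M+1}}
\end{equation*}
for any $\phi\in\mathcal S_\infty$, provided $M$ satisfies \eqref{239} with $\tau=1/p$. Combining this with the norm comparison above yields the desired bound with constant $C:=C_0 C_1$. The inclusion $\dot F_{\infty,q}^s(\mathbb A)\subset(\mathcal S_\infty')^m$ is immediate from the same chain of embeddings.

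One small consistency check worth noting: when $\tau=1/p$, the condition \eqref{239} reduces to
\begin{equation*}
M>\max\left\{\frac{\widetilde d}{p'}-s,\ s+\frac{d}{p},\ \Delta\right\},
\end{equation*}
which is exactly the condition appearing in Lemma \ref{103} (the $F_{\infty,q}$ analogue of Lemma \ref{3}). This coherence is reassuring and confirms that no loss is incurred in the reduction. There is no real obstacle here since all the heavy lifting has been done in Theorems \ref{97} and \ref{11} and in Proposition \ref{174}; the present statement is a formal consequence, which is why the paper says ``we omit the details.''
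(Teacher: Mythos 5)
Your proposal is correct and follows exactly the route the paper intends (it states that this proposition is obtained by "applying Theorem \ref{97} and Proposition \ref{174}", omitting the details): identify $\dot F^s_{\infty,q}(\mathbb A)$ with $\dot F^{s,\frac1p}_{p,q}(\mathbb A)=\dot F^{s,\frac1p}_{p,q}(W)$ via Theorems \ref{97} and \ref{11} (with Proposition \ref{38}), and then invoke Proposition \ref{174} with $\tau=\frac1p$. Your check that \eqref{239} with $\tau=\frac1p$ reduces to \eqref{255} is also the right way to interpret the hypothesis on $M$ here.
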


\begin{proposition}
Let $s\in\mathbb{R}$, $q\in(0,\infty]$, $p\in(0,\infty)$, $W\in A_p$,
and $\mathbb{A}:=\{A_Q\}_{Q\in\mathscr{Q}}$ be a sequence of
reducing operators of order $p$ for $W$.
Then $\dot F_{\infty,q}^s(\mathbb{A})$ is a complete quasi-normed space.
\end{proposition}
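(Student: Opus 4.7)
The plan is to deduce this proposition as a direct consequence of the chain of identifications established earlier in this section, combined with the completeness result already proved in Proposition \ref{175}. Specifically, Proposition \ref{175} asserts that $\dot A^{s,\tau}_{p,q}(W)$ is a complete quasi-normed space for every admissible choice of the parameters $(s,\tau,p,q)$ and every $W\in A_p$; in particular, specializing to $\tau = 1/p$, we obtain that $\dot F_{p,q}^{s,1/p}(W)$ is complete.

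The first step is to apply Theorem \ref{97} to identify $\dot F_{\infty,q}^s(\mathbb{A}) = \dot F_{p,q}^{s,1/p}(\mathbb{A})$ with equivalent quasi-norms. The second step is to invoke Theorem \ref{11} (with $\tau = 1/p$), which furnishes the identification $\dot F_{p,q}^{s,1/p}(\mathbb{A}) = \dot F_{p,q}^{s,1/p}(W)$, again with equivalent quasi-norms. Chaining these two equivalences together yields $\dot F_{\infty,q}^s(\mathbb{A}) = \dot F_{p,q}^{s,1/p}(W)$ with equivalent quasi-norms. The final step is to note that completeness of a quasi-normed space is preserved under passage to an equivalent quasi-norm: any Cauchy sequence with respect to one norm is Cauchy with respect to the other, and limits coincide. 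Therefore $\dot F_{\infty,q}^s(\mathbb{A})$ inherits completeness from $\dot F_{p,q}^{s,1/p}(W)$.

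The fact that $\|\cdot\|_{\dot F_{\infty,q}^s(\mathbb{A})}$ is a genuine quasi-norm (rather than merely a quasi-seminorm) follows from the immediately preceding proposition, which provides a continuous embedding $\dot F_{\infty,q}^s(\mathbb{A}) \subset (\mathcal{S}_\infty')^m$ and hence guarantees positive definiteness; the quasi-triangle inequality and homogeneity are immediate from the corresponding properties of $\|\cdot\|_{L\dot F_{\infty,q}}$. Since the argument reduces entirely to chaining previously established equalities and invoking Proposition \ref{175}, there is no substantive obstacle, which is consistent with the authors' omission of the details.
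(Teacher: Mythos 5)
Your proposal is correct and follows essentially the same route the paper intends: the authors state that these propositions follow by "applying Theorem \ref{97} and Propositions \ref{174}, \ref{175}, and \ref{257}" and omit the details, which is exactly the chain $\dot F_{\infty,q}^s(\mathbb{A})=\dot F_{p,q}^{s,1/p}(\mathbb{A})=\dot F_{p,q}^{s,1/p}(W)$ (via Theorems \ref{97} and \ref{11}) followed by the completeness of the latter from Proposition \ref{175}. No gaps.
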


\begin{proposition}
Let $ s, \sigma\in\mathbb{R}$, $q\in(0,\infty]$, $p\in(0,\infty)$, $W\in A_p$,
and $\mathbb{A}:=\{A_Q\}_{Q\in\mathscr{Q}}$ be a sequence of
reducing operators of order $p$ for $W$.
Then $\dot I_\sigma$ maps $\dot F_{\infty,q}^s(\mathbb{A})$ isomorphically onto $\dot F_{\infty,q}^{s-\sigma}(\mathbb{A})$.
Moreover, for any $\vec f\in(\mathcal{S}_\infty')^m$,
$$
\left\|\vec f\right\|_{\dot F_{\infty,q}^s(\mathbb{A})}
\sim\left\|\dot I_\sigma\vec f\right\|_{\dot F_{\infty,q}^{s-\sigma}(\mathbb{A})},
$$
where the positive equivalence constants are independent of $\vec f$.
\end{proposition}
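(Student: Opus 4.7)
The plan is to reduce this proposition to the already established lifting property for $\dot A_{p,q}^{s,\tau}(W)$ (Proposition \ref{257}) by means of the identifications between the averaging spaces provided by Theorems \ref{11} and \ref{97}. Concretely, I would proceed as follows.

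First, I would apply Theorem \ref{97} to obtain the identifications $\dot F_{\infty,q}^s(\mathbb{A}) = \dot F_{p,q}^{s,\frac{1}{p}}(\mathbb{A})$ and $\dot F_{\infty,q}^{s-\sigma}(\mathbb{A}) = \dot F_{p,q}^{s-\sigma,\frac{1}{p}}(\mathbb{A})$ with equivalent quasi-norms. Next, by Theorem \ref{11} (and the analogous sequence-space identity), these averaging spaces coincide with their matrix-weighted counterparts $\dot F_{p,q}^{s,\frac{1}{p}}(W)$ and $\dot F_{p,q}^{s-\sigma,\frac{1}{p}}(W)$, again with equivalent quasi-norms. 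Hence, for any $\vec f\in(\mathcal{S}_\infty')^m$,
$$
\left\|\vec f\right\|_{\dot F_{\infty,q}^s(\mathbb{A})}
\sim\left\|\vec f\right\|_{\dot F_{p,q}^{s,\frac{1}{p}}(W)}
\quad\text{and}\quad
\left\|\dot I_\sigma\vec f\right\|_{\dot F_{\infty,q}^{s-\sigma}(\mathbb{A})}
\sim\left\|\dot I_\sigma\vec f\right\|_{\dot F_{p,q}^{s-\sigma,\frac{1}{p}}(W)}.
$$

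Then I would invoke Proposition \ref{257}, applied with the specific parameters $(p,q,\tau)=(p,q,\frac{1}{p})$ and with the given $s$ and $\sigma$. That proposition yields
$$
\left\|\vec f\right\|_{\dot F_{p,q}^{s,\frac{1}{p}}(W)}
\sim\left\|\dot I_\sigma\vec f\right\|_{\dot F_{p,q}^{s-\sigma,\frac{1}{p}}(W)}
$$
for any $\vec f\in(\mathcal{S}_\infty')^m$, and also guarantees that $\dot I_\sigma$ is an isomorphism between these two spaces (since $\dot I_\sigma$ and $\dot I_{-\sigma}$ are mutually inverse on $\mathcal{S}_\infty'$). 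Chaining the three equivalences above gives the desired norm equivalence, and the fact that $\dot I_\sigma$ maps $\dot F_{\infty,q}^s(\mathbb{A})$ isomorphically onto $\dot F_{\infty,q}^{s-\sigma}(\mathbb{A})$ follows from the corresponding property for $\dot F_{p,q}^{s,\frac{1}{p}}(W)$ combined with the identifications just described.

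There is essentially no real obstacle here: the entire content of the statement has been absorbed into Theorem \ref{97}, Theorem \ref{11}, and Proposition \ref{257}. The only point that requires a moment's attention is that $\dot I_\sigma$ acts on $\mathcal{S}_\infty'$, hence commutes with all the identifications used above (which are identities of sets with equivalent quasi-norms, not merely isometric embeddings), so the composition is well defined and remains bijective on each stage. Hence the proof is just a routine chaining of three equivalences and can be written in a few lines, with the details omitted in the same spirit as similar corollaries (such as the two preceding propositions in this section).
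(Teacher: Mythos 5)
Your proposal is correct and follows exactly the route the paper intends: the paper derives this proposition by "applying Theorem \ref{97} and Proposition \ref{257}" (with Theorem \ref{11} implicitly bridging the $\mathbb{A}$- and $W$-versions of $\dot F_{p,q}^{s,\frac{1}{p}}$), omitting the details. Your chaining of the three norm equivalences, specialized to $\tau=\frac{1}{p}$, is precisely that argument.
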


Finally, we give an embedding between $\dot A^{s,\tau}_{p,q}(\mathbb{A})$
and $\dot F_{\infty,\infty}^{s+n\tau-\frac{n}{p}}(\mathbb{A})$
by the following lemma.

\begin{lemma}\label{summary B 2}
Let $p\in(0,\infty)$, $q\in(0,\infty]$, and $M\in(n,\infty)$.
Suppose two sequences $\{g_j\}_{j\in\mathbb{Z}}$ and $\{h_j\}_{j\in\mathbb{Z}}$
of measurable functions on $\mathbb{R}^n$ satisfy:
there exists a positive constant $C$ such that,
for any $j\in\mathbb{Z}$ and $x\in\mathbb{R}^n$,
\begin{equation}\label{142}
\left|g_j(x)\right|^p
\leq C2^{jn}\int_{\mathbb{R}^n}\frac{1}{(1+2^j|x-z|)^M}\left|h_j(z)\right|^p\,dz.
\end{equation}
Then there exists a positive constant $\widetilde{C}$,
depending only on $C$, $n$, and $M$, such that
$$
\left\|\left\{2^{j(s+n\tau-\frac{n}{p})}g_j\right\}_{j\in\mathbb Z}\right\|_{L\dot F_{\infty,\infty}}
\leq\widetilde{C}\left\|\left\{2^{js}h_j\right\}_{j\in\mathbb Z}\right\|_{L\dot A_{p,q}^{\tau}},
$$
where $L\dot F_{\infty,\infty}$ is the same as in \eqref{LF infty}.
\end{lemma}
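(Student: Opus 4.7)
The plan is to establish the pointwise estimate
$$
2^{j(s+n\tau-\frac{n}{p})}|g_j(x)|\lesssim \left\|\left\{2^{ks}h_k\right\}_{k\in\mathbb Z}\right\|_{L\dot A_{p,q}^{\tau}}
$$
for every $j\in\mathbb Z$ and every $x\in\mathbb R^n$, with an implicit constant depending only on $C$, $n$, $p$, and $M$. Once this pointwise bound is in hand, the conclusion is immediate: the right-hand side depends on neither $x$ nor $j$, and by \eqref{LF infty} with $q=\infty$ the $L\dot F_{\infty,\infty}$ norm reduces to $\sup_{P\in\mathscr Q}\operatorname{ess\,sup}_{x\in P}\sup_{j\geq j_P}|\cdot|$.

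First, I would decompose the integral on the right side of \eqref{142} using the disjoint partition $\mathbb R^n=\bigcup_{R\in\mathscr Q_j}R$ at the \emph{same} dyadic scale $j$ that indexes $g_j$ and $h_j$. For $z\in R\in\mathscr Q_j$, a routine variant of Lemma \ref{33y} gives $1+2^j|x-z|\sim 1+2^j|x-x_R|$, which allows the kernel factor to be pulled outside the integral on each $R$, yielding
$$
|g_j(x)|^p\lesssim\sum_{R\in\mathscr Q_j}\frac{2^{jn}}{(1+2^j|x-x_R|)^M}\|h_j\|_{L^p(R)}^p.
$$
Next, I would control each $L^p(R)$ norm by the target quasi-norm. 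Since $R\in\mathscr Q_j$ has $j_R=j$ and $|R|=2^{-jn}$, the definition of $\|\cdot\|_{L\dot A_{p,q}^{\tau}}$ together with the observation that $\|2^{js}h_j\|_{L^p(R)}$ is dominated by both the $\ell^q$-sum (Besov case) and the $L^p\ell^q$-norm (Triebel--Lizorkin case) over indices $k\geq j_R=j$ gives the uniform bound
$$
\|h_j\|_{L^p(R)}\leq 2^{-js}|R|^{\tau}\|\{2^{ks}h_k\}\|_{L\dot A_{p,q}^{\tau}}=2^{-js-jn\tau}\|\{2^{ks}h_k\}\|_{L\dot A_{p,q}^{\tau}}.
$$

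Inserting this bound and pulling the uniform factors out of the sum gives
$$
|g_j(x)|^p\lesssim 2^{jn-jpn\tau-jps}\|\{2^{ks}h_k\}\|_{L\dot A_{p,q}^{\tau}}^p\sum_{R\in\mathscr Q_j}\frac{1}{(1+2^j|x-x_R|)^M}.
$$
Writing $x_R=2^{-j}k$ and substituting $y=2^jx$, the remaining sum becomes $\sum_{k\in\mathbb Z^n}(1+|y-k|)^{-M}$, which is bounded by a constant depending only on $n$ and $M$ because $M>n$; this is the same elementary summability already used in Lemma \ref{253x}. Taking $p$-th roots and multiplying by $2^{j(s+n\tau-\frac{n}{p})}$ produces the claimed pointwise estimate.

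I do not anticipate any substantive obstacle: the key design choice is to decompose at the dyadic scale $j_R=j$ matching the scale of $g_j$ and $h_j$, which makes the exponents coming from $|R|^{\tau}$, from the prefactor $2^{jn}$, and from the counting of cubes in $\mathscr Q_j$ cancel exactly, producing the correct scaling $2^{-j(s+n\tau-\frac{n}{p})}$. It is precisely this matching of scales that explains why the conclusion holds under the single mild condition $M>n$, rather than under a $\tau$- or $p$-dependent condition that a less careful decomposition would impose.
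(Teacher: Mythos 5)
Your proof is correct and follows essentially the same route as the paper's: a pointwise bound on $2^{j(s+n\tau-\frac{n}{p})}|g_j(x)|$ obtained by splitting the convolution integral over congruent cubes, freezing the kernel on each cube, bounding each local norm $\|h_j\|_{L^p(R)}$ by $2^{-js}|R|^{\tau}$ times the $L\dot A^{\tau}_{p,q}$ quasi-norm, and summing the lattice series using $M>n$. The only (cosmetic) difference is that you partition uniformly into all of $\mathscr{Q}_j$ and invoke Lemma \ref{33y}, whereas the paper splits into $3P$ plus the translates $P+k\ell(P)$ and invokes Lemma \ref{33}; your observation that the scales must be matched for the exponents to cancel is exactly the point.
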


\begin{proof}
By \eqref{142}, Lemma \ref{33}, and $M\in(n,\infty)$,
we conclude that, for any $P\in\mathscr{Q}$,
$ j\in\{j_P,j_P+1,\ldots\}$, and $x\in P$,
\begin{align*}
\left|g_j(x)\right|^p
&\lesssim2^{jn}\int_{3P}\frac{1}{(1+2^j|x-z|)^M}\left|h_j(z)\right|^p\,dz
+2^{jn} \sum_{k\in\mathbb{Z}^n,\,\|k\|_{\infty}\geq2}\int_{P+k\ell(P)}\cdots\\
&\lesssim2^{jn}\left[\left\|h_j\right\|_{L^p(3P)}^p
+2^{-(j-j_P)M}\sum_{k\in\mathbb{Z}^n,\,\|k\|_{\infty}\geq2}|k|^{-M}
\left\|h_j\right\|_{L^p(P+k\ell(P))}^p\right]\\
&\lesssim2^{j(\frac{n}{p}-s-n\tau)p}
\left\|\left\{2^{is}h_i\right\}_{i\in\mathbb Z}\right\|_{L\dot A_{p,q}^{\tau}}^p
\end{align*}
and hence
$$
\left\|\left\{2^{j(s+n\tau-\frac{n}{p})}g_j\right\}_{j\in\mathbb Z}\right\|_{L\dot F_{\infty,\infty}}
\lesssim\left\|\left\{2^{js}h_j\right\}_{j\in\mathbb Z}\right\|_{L\dot A_{p,q}^{\tau}}.
$$
This finishes the proof of Lemma \ref{summary B 2}.
\end{proof}

\begin{proposition}\label{embedding}
Let $s\in\mathbb R$, $\tau\in[0,\infty)$, $p\in(0,\infty)$, and $q\in(0,\infty]$.
Let $W\in A_p$ and let $\mathbb{A}:=\{A_Q\}_{Q\in\mathscr{Q}}$ be a sequence of
reducing operators of order $p$ for $W$.
Then $\dot A^{s,\tau}_{p,q}(\mathbb{A})\subset\dot F_{\infty,\infty}^{s+n\tau-\frac{n}{p}}(\mathbb{A})$.
Moreover, there exists a positive constant $C$ such that,
for any $\vec f\in(\mathcal{S}_\infty')^m$,
\begin{equation}\label{4.16x}
\left\|\vec f\right\|_{\dot F_{\infty,\infty}^{s+n\tau-\frac{n}{p}}(\mathbb{A})}
\leq C\left\|\vec f\right\|_{\dot A^{s,\tau}_{p,q}(\mathbb{A})}.
\end{equation}
\end{proposition}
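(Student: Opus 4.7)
The plan is to establish a pointwise estimate of the form \eqref{142} for $g_j = h_j := |A_j(\varphi_j*\vec f)|$ at exponent $p$, so that Lemma \ref{summary B 2} applies and immediately yields \eqref{4.16x}. Fix $r \in (0, \min\{p,1\}]$ and $M \in \mathbb N$ large enough that $(M-\Delta)r > n$, where $\Delta$ is such that $W\in A_p$ has $A_p$-dimensions $(d,\widetilde d, \Delta)$.

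For the first step, fix $x \in Q \in \mathscr{Q}_j$. Apply the Calder\'on-type reproducing identity \eqref{10} component-wise to $\varphi_j*\vec f$, then use Lemma \ref{famous 2} (valid since $r\leq 1$) together with the Schwartz decay of $\gamma$ to obtain, for any $y\in\mathbb{R}^n$,
\[
|A_Q(\varphi_j*\vec f)(x)|^r \lesssim \sum_{R\in\mathscr{Q}_j} \frac{|A_Q(\varphi_j*\vec f)(x_R+y)|^r}{(1+2^j|x-x_R-y|)^{Mr}}.
\]
Averaging this inequality over $y \in [0,2^{-j})^n$ and performing the change of variables $z := x_R + y$ in each summand (which covers $R$ as $y$ covers the reference cube) converts the sum of point-values into a single integral. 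Then, for $z \in R \in \mathscr{Q}_j$, the bound $|A_Q(\varphi_j*\vec f)(z)| \leq \|A_Q A_R^{-1}\| \cdot |A_R(\varphi_j*\vec f)(z)|$ combined with \eqref{23y}, which gives $\|A_Q A_R^{-1}\|^r \lesssim (1+2^j|x-z|)^{\Delta r}$, allows me to absorb the reducing-operator factor into the kernel, yielding
\[
|A_j(x)(\varphi_j*\vec f)(x)|^r \lesssim 2^{jn}\int_{\mathbb{R}^n} \frac{|A_j(z)(\varphi_j*\vec f)(z)|^r}{(1+2^j|x-z|)^{(M-\Delta)r}}\,dz. \quad (\star)
\]

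To upgrade $(\star)$ from exponent $r$ to $p$, observe that by \eqref{33x} and $(M-\Delta)r>n$, the quantity $2^{jn}\int_{\mathbb{R}^n}(1+2^j|x-z|)^{-(M-\Delta)r}\,dz$ is comparable to a dimensional constant independent of $j$ and $x$; after normalizing one obtains a probability kernel. Applying Jensen's inequality with the convex function $t\mapsto t^{p/r}$ (valid since $p/r\geq 1$) to $(\star)$ yields
\[
|A_j(x)(\varphi_j*\vec f)(x)|^p \lesssim 2^{jn}\int_{\mathbb{R}^n} \frac{|A_j(z)(\varphi_j*\vec f)(z)|^p}{(1+2^j|x-z|)^{(M-\Delta)r}}\,dz,
\]
which is precisely hypothesis \eqref{142} of Lemma \ref{summary B 2} with $g_j=h_j=|A_j(\varphi_j*\vec f)|$ and kernel exponent $(M-\Delta)r>n$. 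That lemma then directly gives \eqref{4.16x}.

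The main obstacle is the derivation of $(\star)$. The argument closely parallels the one following \eqref{145} in the proof of Lemma \ref{56}; the key difference is to average $y$ over a single reference cube rather than to take a local supremum over $y\in Q$, which directly produces the pointwise convolution bound suitable for feeding into the $L\dot F_{\infty,\infty}$ framework of Lemma \ref{summary B 2}. The Jensen upgrade and the final invocation of Lemma \ref{summary B 2} are then routine.
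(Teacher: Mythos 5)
Your proof is correct and follows essentially the same route as the paper: your pointwise estimate $(\star)$ is the paper's \eqref{155} (with the local supremum on the left replaced by the function value it dominates), and the conclusion is drawn from the same Lemma \ref{summary B 2}. Your explicit Jensen upgrade from exponent $r$ to exponent $p$ is a welcome addition, since the hypothesis \eqref{142} of Lemma \ref{summary B 2} is stated at exponent $p$ while \eqref{155} holds only at exponent $r$, and the paper leaves this reconciliation implicit.
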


\begin{proof}
Let $W$ have $A_p$-dimensions $(d,\widetilde d,\Delta)$. For any $j\in\mathbb{Z}$, let
$$
g_j:=\sum_{Q\in\mathscr{Q}_j}
\sup_{\mathbb{A},\varphi,Q}\left(\vec f\right)\widetilde{\mathbf{1}}_Q
\text{ and }
h_j:=\left|A_j\left(\varphi_j*\vec{f}\right)\right|.
$$
Let $r\in(0,\min\{1,p,q\})$ and $M>\frac{n}{r}+\Delta$.
It is shown in \eqref{155} that,
for any $j\in\mathbb Z$ and $x\in\mathbb R^n$,
\begin{equation*}
\left|g_j(x)\right|^r
\lesssim2^{jn}\int_{\mathbb{R}^n}
\frac{1}{(1+2^j|x-z|)^{(M-\Delta)r}}\left|h_j(z)\right|^r\,dz,
\end{equation*}
Using this combined with Lemmas \ref{108} and \ref{summary B 2}, with $M$ replaced by $(M-\Delta)r\in(n,\infty)$,
we find that
\begin{align*}
\left\|\vec f\right\|_{\dot F_{\infty,\infty}^{s+n\tau-\frac{n}{p}}(\mathbb{A})}
&\leq\left\|\sup_{\mathbb A,\varphi}\left(\vec f\right)\right\|_{\dot f_{\infty,\infty}^{s+n\tau-\frac{n}{p}}(\mathbb{A})}
=\left\|\left\{2^{j(s+n\tau-\frac{n}{p})}g_j\right\}_{j\in\mathbb Z}\right\|_{L\dot F_{\infty,\infty}} \\
&\lesssim\left\|\left\{2^{js}h_j\right\}_{j\in\mathbb Z}\right\|_{L\dot A_{p,q}^{\tau}}
=\left\|\vec f\right\|_{\dot A^{s,\tau}_{p,q}(\mathbb{A})}.
\end{align*}
This finishes the proof of \eqref{4.16x} and hence Proposition \ref{embedding}.
\end{proof}

While  the embedding of Proposition \ref{embedding} is valid for the full range of function space parameters, for a certain restricted range, this embedding can be improved to an isomorphism.
Motivated by the proof of \cite[Theorem 1]{yy13}, we first establish the following conclusion which gives the relation between the sequence spaces $\dot{a}^{s,\tau}_{p,q}(\mathbb{A})$ and $\dot{f}_{\infty,\infty}^s(\mathbb{A})$.

\begin{theorem}\label{92}
Let $s\in\mathbb{R}$, $p\in(0,\infty)$, and $q\in(0,\infty]$. Let $\mathbb A=\{A_Q\}_{Q\in\mathscr Q}$ be any family of positive definite matrices.
If $\tau>\frac1p$ or $(\tau,q)=(\frac1p,\infty)$, then
$\dot{a}^{s,\tau}_{p,q}(\mathbb{A})=\dot{f}_{\infty,\infty}^{s+n(\tau-\frac{1}{p})}(\mathbb{A})$
with equivalent quasi-norms.
\end{theorem}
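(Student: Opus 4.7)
The plan is to reduce Theorem~\ref{92} to its scalar unweighted counterpart, exactly as was done for the related Theorem~\ref{97}. Given $\vec t := \{\vec t_Q\}_{Q\in\mathscr{Q}}\subset\mathbb{C}^m$, introduce the auxiliary nonnegative scalar sequence $u := \{u_Q\}_{Q\in\mathscr{Q}}$ defined by $u_Q := |A_Q\vec t_Q|$ for each $Q\in\mathscr{Q}$. Since the matrices $A_Q$ are only assumed to be positive definite (no $A_p$ hypothesis is invoked here), this definition makes sense regardless.

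The key observation is that, because the dyadic cubes of a fixed generation $j$ are pairwise disjoint, one has $|A_j \vec t_j| = u_j$ pointwise, where $u_j := \sum_{Q\in\mathscr{Q}_j} u_Q \widetilde{\mathbf{1}}_Q$ as in \eqref{tj}. Reading off the definitions of $\dot{a}^{s,\tau}_{p,q}(\mathbb{A})$ and $\dot f_{\infty,\infty}^{s+n(\tau-\frac{1}{p})}(\mathbb{A})$ and comparing with the corresponding unweighted scalar sequence spaces yields the two identities
$$
\left\|\vec t\right\|_{\dot{a}^{s,\tau}_{p,q}(\mathbb{A})} = \|u\|_{\dot{a}^{s,\tau}_{p,q}}
\quad\text{and}\quad
\left\|\vec t\right\|_{\dot f_{\infty,\infty}^{s+n(\tau-\frac{1}{p})}(\mathbb{A})} = \|u\|_{\dot f_{\infty,\infty}^{s+n(\tau-\frac{1}{p})}},
$$
where the right-hand sides are the scalar (unweighted) quasi-norms. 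The matter therefore reduces to the scalar equivalence
$$
\|u\|_{\dot{a}^{s,\tau}_{p,q}} \sim \|u\|_{\dot f_{\infty,\infty}^{s+n(\tau-\frac{1}{p})}},
$$
valid whenever $\tau > 1/p$, or $\tau = 1/p$ and $q = \infty$, which is precisely the content of \cite[Theorem~1]{yy13} translated to the sequence-space level (and which applies without modification to the nonnegative sequence $u$).

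In effect, there is no genuine obstacle here: the matrix structure enters the quasi-norms on both sides of the claimed identity only through the scalar quantities $|A_Q\vec t_Q|$, and this transparency is exactly what makes the statement valid for an arbitrary family of positive definite matrices $\mathbb{A}$, without any $A_p$ or reducing-operator hypothesis.
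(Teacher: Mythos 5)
Your reduction to the scalar case is correct and is in fact the same observation that underlies the paper's proof: since the cubes in $\mathscr{Q}_j$ are pairwise disjoint, $|A_j\vec t_j|=u_j$ pointwise with $u_Q:=|A_Q\vec t_Q|$, so both quasi-norms of $\vec t$ coincide with the corresponding unweighted quasi-norms of the scalar sequence $u$, and this is exactly why the theorem holds for an arbitrary family of positive definite matrices. The gap is in the last step. The equivalence $\|u\|_{\dot a^{s,\tau}_{p,q}}\sim\|u\|_{\dot f^{s+n(\tau-\frac1p)}_{\infty,\infty}}$ is \emph{not} available as a citable statement in \cite[Theorem 1]{yy13}: that theorem identifies the \emph{function} spaces $\dot A^{s,\tau}_{p,q}$ and $\dot B^{s+n(\tau-\frac1p)}_{\infty,\infty}$ (spaces of distributions), and passing from that to the sequence-space identity is not a formality — it is precisely the content one has to prove, which is why the paper says it is ``motivated by the proof of'' \cite[Theorem 1]{yy13} rather than quoting it. Deferring the entire substantive step to a citation that does not state it leaves the proof incomplete.

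The missing argument is short, and you should supply it. For the lower bound, restrict the sum/supremum over $j\ge j_P$ in $\|u\|_{\dot a^{s,\tau}_{p,q}}$ to the single index $j=j_P$; since $u_{j_P}$ is constant on $P$ and $|P|=2^{-j_Pn}$, one has $|P|^{-\tau}\|2^{j_Ps}u_{j_P}\|_{L^p(P)}=2^{j_P[s+n(\tau-\frac1p)]}\|u_{j_P}\|_{L^\infty(P)}$, and taking the supremum over $P\in\mathscr{Q}$ gives $\|u\|_{\dot a^{s,\tau}_{p,q}}\ge\|u\|_{\dot f^{s+n(\tau-\frac1p)}_{\infty,\infty}}$ with no restriction on the parameters. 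For the upper bound, estimate $2^{js}u_j\le 2^{-jn(\tau-\frac1p)}\,2^{j[s+n(\tau-\frac1p)]}\|u_j\|_{L^\infty}\le 2^{-jn(\tau-\frac1p)}\|u\|_{\dot f^{s+n(\tau-\frac1p)}_{\infty,\infty}}$ on $P$, so that
\begin{equation*}
|P|^{-\tau}\bigl\|\{2^{js}u_j\mathbf{1}_P\}_{j\ge j_P}\bigr\|_{L\dot A_{p,q}}
\le|P|^{-\tau}\|\mathbf{1}_P\|_{L^p}\bigl\|\{2^{-jn(\tau-\frac1p)}\}_{j\ge j_P}\bigr\|_{\ell^q}
\|u\|_{\dot f^{s+n(\tau-\frac1p)}_{\infty,\infty}},
\end{equation*}
and the $\ell^q$ factor equals $\sim 2^{-j_Pn(\tau-\frac1p)}=|P|^{\tau-\frac1p}$ exactly because $\tau>\frac1p$ with $q<\infty$ makes the geometric series converge, or because $q=\infty$ turns it into a supremum (which also tolerates $\tau=\frac1p$). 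This is the only place where the hypothesis on $(\tau,q)$ enters, and your write-up should make that visible rather than burying it in the citation.
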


\begin{proof}
Let $\vec t_j$ and $A_j$ for any $j\in\mathbb Z$ be the same as,
respectively, in \eqref{vec tj} and \eqref{Aj}. Recall that
\begin{equation*}
L\dot A_{p,q}:=\begin{cases}
\ell^qL^p&\text{if }\dot{a}^{s,\tau}_{p,q}(\mathbb{A})=\dot b_{p,q}^{s,\tau}(\mathbb{A}),\\
L^p\ell^q&\text{if }\dot{a}^{s,\tau}_{p,q}(\mathbb{A})=\dot f_{p,q}^{s,\tau}(\mathbb{A}).
\end{cases}
\end{equation*}
Then, by the definitions of both $\|\cdot\|_{\dot a^{s,\tau}_{p,q}(\mathbb A)}$
and $\|\cdot\|_{\dot f^{s+n(\tau-\frac{1}{p})}_{\infty,\infty}(\mathbb A)}$, we have
$$
\left\|\vec{t}\right\|_{\dot a^{s,\tau}_{p,q}(\mathbb A)}
=\sup_{P\in\mathscr Q}|P|^{-\tau}
\left\|\left\{2^{js}\left|\mathbf{1}_PA_j\vec{t}_j\right|
\right\}_{j\geq j_P}\right\|_{L\dot A_{p,q}}
$$
and
\begin{align*}
\left\|\vec{t}\right\|_{\dot f^{s+n(\tau-\frac{1}{p})}_{\infty,\infty}(\mathbb A)}
=\left\|\left\{2^{j[s+n(\tau-\frac{1}{p})]}\left|A_j\vec{t}_j\right|
\right\}_{j\in\mathbb Z}\right\|_{L\dot A_{\infty,\infty}}
=\sup_{j\in\mathbb Z}2^{j[s+n(\tau-\frac{1}{p})]}
\left\|\,\left|A_j\vec{t}_j \right|\,\right\|_{L^\infty}.
\end{align*}
Since $A_{j_P}\vec{t}_{j_P}$ for any $j_P\in\mathbb Z$ is constant on $P$
and $|P|=2^{-j_P n}$, it follows that
\begin{align*}
\left\|\vec{t}\right\|_{\dot a^{s,\tau}_{p,q}(\mathbb A)}
&\geq\sup_{P\in\mathscr Q}|P|^{-\tau}
\left\| 2^{j_Ps}\left|\mathbf{1}_P A_{j_P}\vec{t}_{j_P}\right|\,\right\|_{L^p}\\
&=\sup_{P\in\mathscr Q}|P|^{-\tau}|P|^{\frac{1}{p}}
\left\|\,\left|\mathbf{1}_P2^{j_Ps}A_{j_P}\vec{t}_{j_P}\right|\,\right\|_{L^\infty}\\
&=\sup_{j\in\mathbb Z}2^{j[s+n(\tau-\frac{1}{p})]}
\left\|\,\left|A_j\vec{t}_j\right|\,\right\|_{L^\infty}
=\left\|\vec{t}\right\|_{\dot f^{s+n(\tau-\frac{1}{p})}_{\infty,\infty}(\mathbb A)},
\end{align*}
where the assumption about the relative size of
the different parameters was not needed.

In the other direction, notice that
\begin{align}\label{5.35x}
\left\|\left\{\left|\mathbf{1}_P2^{js}A_j\vec{t}_j\right|
\right\}_{j\geq j_P}\right\|_{L\dot A_{p,q}}
&\leq\left\|\left\{\mathbf{1}_P 2^{-jn(\tau-\frac{1}{p})}
2^{j[s+n(\tau-\frac{1}{p})]}\left\|\,\left|A_j\vec{t}_j\right|\,\right\|_{L^\infty}
\right\}_{j\geq j_P}\right\|_{L\dot A_{p,q}}\\
&\leq\left\|\left\{\mathbf{1}_P2^{-jn(\tau-\frac{1}{p})}\right\}_{j\geq j_P}\right\|_{L\dot A_{p,q}}
\left\|\vec{t}\right\|_{\dot f^{s+n(\tau-\frac{1}{p})}_{\infty,\infty}(\mathbb A)},\notag
\end{align}
where
\begin{align}\label{5.35y}
\left\|\left\{\mathbf{1}_P2^{-jn(\tau-\frac{1}{p})}\right\}_{j\geq j_P}\right\|_{L\dot A_{p,q}}
&=\left\|\mathbf{1}_P\right\|_{L^p}
\left\|\left\{2^{-jn(\tau-\frac{1}{p})}\right\}_{j\geq j_P}\right\|_{\ell^q}\\
&\sim|P|^{\frac{1}{p}}2^{-j_Pn(\tau-\frac{1}{p})}
=|P|^{\frac{1}{p}}|P|^{\tau-\frac{1}{p}}
=|P|^{\tau}\notag
\end{align}
and the assumption that $q\in(0,\infty)$ and $\tau\in(\frac{1}{p},\infty)$
or $q=\infty$ and $\tau\in[\frac{1}{p},\infty)$
was used in estimating the $\ell^q$ norm.
Thus, by both \eqref{5.35x} and \eqref{5.35y}, we obtain
\begin{equation*}
\left\|\vec{t}\right\|_{\dot a^{s,\tau}_{p,q}(\mathbb A)}
=\sup_{P\in\mathscr Q}|P|^{-\tau}
\left\|\left\{2^{js}\left|\mathbf{1}_PA_j\vec{t}_j\right|
\right\}_{j\geq j_P}\right\|_{L\dot A_{p,q}}
\lesssim\left\|\vec{t}\right\|_{\dot f^{s+n(\tau-\frac{1}{p})}_{\infty,\infty}(\mathbb A)},
\end{equation*}
which then completes the proof of Theorem \ref{92}.
\end{proof}

We can now identify a range of Triebel--Lizorkin-type sequence spaces with ``plain'' Triebel--Lizorkin space for $p=\infty$.

\begin{corollary}\label{92 cor}
Let $p\in(0,\infty)$, $q\in(0,\infty]$, $s\in\mathbb R$, $W\in A_p$,
and $\mathbb A=\{A_Q\}_{Q\in\mathscr Q}$ be
a sequence of reducing operators of order $p$ for $W$.
Then we have the following identifications of spaces with equivalent quasi-norms:
\begin{enumerate}[\rm(i)]
\item for Triebel--Lizorkin-type spaces with $\tau=\frac1p$,
$\dot f^{s,\frac{1}{p}}_{p,q}(W)=\dot f^{s}_{\infty,q}(\mathbb{A});$
\item whenever $\tau>\frac{1}{p}$ or $(\tau,q)=(\frac1p,\infty)$,
$\dot a^{s,\tau}_{p,q}(W)
=\dot{f}_{\infty,\infty}^{s+n(\tau-\frac{1}{p})}(\mathbb{A}).$
\end{enumerate}
\end{corollary}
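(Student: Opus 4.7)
The plan is to observe that Corollary \ref{92 cor} follows by chaining together three results that have already been established in this section: Theorem \ref{37} (the equivalence of the matrix-weighted and averaging-matrix-weighted sequence space norms), Theorem \ref{97} (identifying $\dot f^{s}_{\infty,q}(\mathbb{A})$ with $\dot f^{s,\frac{1}{p}}_{p,q}(\mathbb{A})$), and Theorem \ref{92} (the collapse of $\dot a^{s,\tau}_{p,q}(\mathbb{A})$ onto a single $\dot f^{s+n(\tau-\frac{1}{p})}_{\infty,\infty}(\mathbb{A})$ space in the supercritical range). Because these three results are already in hand, no new estimate is required; the entire argument reduces to recording the right chain of equalities.

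For part (i), I would start from a sequence $\vec t=\{\vec t_Q\}_{Q\in\mathscr{Q}}\subset\mathbb C^m$ and apply Theorem \ref{37} with $\tau=\tfrac{1}{p}$ to conclude
\[
\left\|\vec t\right\|_{\dot f^{s,\frac{1}{p}}_{p,q}(W)}
\sim\left\|\vec t\right\|_{\dot f^{s,\frac{1}{p}}_{p,q}(\mathbb{A})},
\]
where the equivalence constants depend on $[W]_{A_p}$ but not on $\vec t$. Then Theorem \ref{97} gives
\[
\left\|\vec t\right\|_{\dot f^{s,\frac{1}{p}}_{p,q}(\mathbb{A})}
\sim\left\|\vec t\right\|_{\dot f^{s}_{\infty,q}(\mathbb{A})}.
\]
Composing these two equivalences yields the desired identification $\dot f^{s,\frac{1}{p}}_{p,q}(W)=\dot f^{s}_{\infty,q}(\mathbb{A})$ with equivalent quasi-norms.

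For part (ii), the same strategy works, but now I invoke Theorem \ref{92} instead of Theorem \ref{97}. Concretely, I first use Theorem \ref{37} to pass from $W$ to $\mathbb{A}$,
\[
\left\|\vec t\right\|_{\dot a^{s,\tau}_{p,q}(W)}
\sim\left\|\vec t\right\|_{\dot a^{s,\tau}_{p,q}(\mathbb{A})},
\]
valid for both choices $\dot a\in\{\dot b,\dot f\}$. Then, since the parameters satisfy either $\tau>\frac{1}{p}$ or $(\tau,q)=(\frac{1}{p},\infty)$, Theorem \ref{92} (whose statement does not even require $W\in A_p$) applies to the family $\mathbb{A}$ of reducing operators and gives
\[
\left\|\vec t\right\|_{\dot a^{s,\tau}_{p,q}(\mathbb{A})}
\sim\left\|\vec t\right\|_{\dot f^{s+n(\tau-\frac{1}{p})}_{\infty,\infty}(\mathbb{A})}.
\]
Combining the two equivalences finishes part (ii). The only point worth flagging is that Theorem \ref{92} is stated uniformly for $\dot a\in\{\dot b,\dot f\}$ and thus absorbs both the Besov-type and the Triebel--Lizorkin-type cases simultaneously; there is no genuine obstacle in the argument, since the substantive work has already been done in Theorems \ref{37}, \ref{97}, and \ref{92}.
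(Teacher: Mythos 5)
Your proposal is correct and is essentially identical to the paper's own proof: both reduce the corollary to chaining Theorem \ref{37} (to pass from $W$ to $\mathbb{A}$) with Theorem \ref{97} for the critical case and Theorem \ref{92} for the supercritical case. Your added observation that Theorem \ref{92} holds for an arbitrary family of positive definite matrices is accurate and consistent with its statement in the paper.
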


\begin{proof}
By Theorem \ref{37} we find that $\dot a^{s,\tau}_{p,q}(W)=\dot a^{s,\tau}_{p,q}(\mathbb A)$ in both cases under consideration. The critical case then follows from Theorem \ref{97}, which contains the statement that $\dot f^{s,\frac{1}{p}}_{p,q}(\mathbb A)=\dot f^{s}_{\infty,q}(\mathbb{A})$, and the supercritical case from Theorem \ref{92}.
This finishes the proof of Corollary \ref{92 cor}.
\end{proof}

Finally, we obtain the following corresponding result for function spaces.

\begin{corollary}\label{921}
Let $p\in(0,\infty)$, $q\in(0,\infty]$, $s\in\mathbb R$, $W\in A_p$,
and $\mathbb A=\{A_Q\}_{Q\in\mathscr Q}$ be
a sequence of reducing operators of order $p$ for $W$.
Then we have the following identifications of spaces with equivalent quasi-norms:
\begin{enumerate}[\rm(i)]
\item for Triebel--Lizorkin-type spaces with $\tau=\frac1p$,
$\dot F^{s,\frac{1}{p}}_{p,q}(W)=\dot F^{s}_{\infty,q}(\mathbb{A});$
\item whenever $\tau>\frac{1}{p}$ or $(\tau,q)=(\frac1p,\infty)$,
$\dot A^{s,\tau}_{p,q}(W)
=\dot{F}_{\infty,\infty}^{s+n(\tau-\frac{1}{p})}(\mathbb{A}).$
\end{enumerate}
\end{corollary}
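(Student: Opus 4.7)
The strategy is to transfer the already-proved sequence-space identifications of Corollary \ref{92 cor} to the function-space level by means of the $\varphi$-transform, taking full advantage of the isomorphism afforded by $S_\varphi$ and $T_\psi$.

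For part (i), the argument is a direct concatenation of two already established equalities: Theorem \ref{11} gives $\dot F^{s,\frac{1}{p}}_{p,q}(W)=\dot F^{s,\frac{1}{p}}_{p,q}(\mathbb A)$ with equivalent quasi-norms, and Theorem \ref{97} supplies $\dot F^{s,\frac{1}{p}}_{p,q}(\mathbb A)=\dot F^{s}_{\infty,q}(\mathbb A)$. Chaining these two identities yields the claim.

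For part (ii), we fix $\varphi,\psi\in\mathcal S$ satisfying \eqref{19}, \eqref{20}, and \eqref{21}. Given $\vec f\in\dot A^{s,\tau}_{p,q}(W)$, Theorem \ref{phi} shows that $S_\varphi\vec f\in\dot a^{s,\tau}_{p,q}(W)$ with a norm bound. Under the assumption $\tau>\frac{1}{p}$ or $(\tau,q)=(\frac{1}{p},\infty)$, Corollary \ref{92 cor}(ii) identifies $\dot a^{s,\tau}_{p,q}(W)$ with $\dot f^{s+n(\tau-\frac{1}{p})}_{\infty,\infty}(\mathbb A)$, so $S_\varphi\vec f$ also lies in the latter space. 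Applying $T_\psi$, which is bounded from $\dot f^{s+n(\tau-\frac{1}{p})}_{\infty,\infty}(\mathbb A)$ into $\dot F^{s+n(\tau-\frac{1}{p})}_{\infty,\infty}(\mathbb A)$ by Theorem \ref{phi infty}, and using $T_\psi\circ S_\varphi=\mathrm{id}$ on $\dot A^{s,\tau}_{p,q}(W)$ (via Lemma \ref{7}, as in the closing step of the proof of Theorem \ref{phi}), we conclude that $\vec f\in\dot F^{s+n(\tau-\frac{1}{p})}_{\infty,\infty}(\mathbb A)$ with the quasi-norm controlled by $\|\vec f\|_{\dot A^{s,\tau}_{p,q}(W)}$. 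The reverse inclusion is obtained symmetrically: for $\vec f\in\dot F^{s+n(\tau-\frac{1}{p})}_{\infty,\infty}(\mathbb A)$, Theorem \ref{phi infty} gives $S_\varphi\vec f\in\dot f^{s+n(\tau-\frac{1}{p})}_{\infty,\infty}(\mathbb A)=\dot a^{s,\tau}_{p,q}(W)$ (again by Corollary \ref{92 cor}(ii)), and then Theorem \ref{phi} produces $T_\psi S_\varphi\vec f=\vec f\in\dot A^{s,\tau}_{p,q}(W)$ with the matching bound.

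I do not anticipate any serious obstacle: the entire argument is a transfer principle in which the $\varphi$-transform acts as an isomorphism between function and sequence spaces, and all boundedness statements needed are already in place. The only point requiring some care is to ensure that the identification $\dot a^{s,\tau}_{p,q}(W)=\dot f^{s+n(\tau-\frac{1}{p})}_{\infty,\infty}(\mathbb A)$ from Corollary \ref{92 cor}(ii) is applied with the correct reducing-operator family $\mathbb A$, which is consistent throughout because Theorem \ref{37} guarantees that the averaging and matrix-weighted sequence spaces coincide and are independent of the particular choice of reducing operators via \eqref{equ_reduce}.
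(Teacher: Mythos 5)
Your proposal is correct and follows essentially the same route as the paper: the paper also proves the corollary by transferring the sequence-space identifications of Corollary \ref{92 cor} to the function-space level via the bounded maps $S_\varphi$ and $T_\psi$ together with $T_\psi\circ S_\varphi=\mathrm{id}$, merely phrasing the argument abstractly for all pairs of spaces at once. Your treatment of part (i) by chaining Theorem \ref{11} with Theorem \ref{97} is a legitimate minor shortcut (the paper instead runs the same transfer argument for (i) as for (ii), using Corollary \ref{92 cor}(i)), but since Theorem \ref{97} is itself proved by this transfer, the underlying mechanism is identical.
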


\begin{proof}
Let $\varphi,\psi\in\mathcal{S}$ satisfy \eqref{19}, \eqref{20} and \eqref{21}. By Theorem \ref{phi} and Proposition \ref{38}, we conclude that
\begin{equation*}
S_\varphi:\ \dot A^{s,\tau}_{p,q}(W)\to\dot a^{s,\tau}_{p,q}(W)
\text{ and }
T_\psi:\ \dot a^{s,\tau}_{p,q}(W)\to\dot A^{s,\tau}_{p,q}(W)
\end{equation*}
are bounded and $T_\psi\circ S_\varphi$ is the identity on $\dot A^{s,\tau}_{p,q}(W)$. Similarly, from Theorem \ref{phi infty} and Proposition \ref{independent 2}, it follows that
\begin{equation*}
S_\varphi:\ \dot F_{\infty,q}^s(\mathbb{A})\to\dot f_{\infty,q}^s(\mathbb{A})
\text{ and }
T_\psi:\ \dot f_{\infty,q}^s(\mathbb{A})\to\dot F_{\infty,q}^s(\mathbb{A})
\end{equation*}
are bounded and $T_\psi\circ S_\varphi$ is the identity on $\dot F_{\infty,q}^s(\mathbb{A})$. By these results and Corollary \ref{92 cor}, we find that
\begin{equation*}
T_\psi\circ S_\varphi: \dot X\overset{S_\varphi}{\longrightarrow}  \dot x
= \dot y\overset{T_\psi}{\longrightarrow} \dot Y
\end{equation*}
is bounded whenever
\begin{equation*}
\{(\dot X,\dot x),(\dot Y,\dot y)\}
\subset\left\{\left(\dot A^{s,\tau}_{p,q}(W),\dot a^{s,\tau}_{p,q}(W)\right),
\left(\dot{F}_{\infty,\infty}^{s+n(\tau-\frac{1}{p})}(\mathbb{A}),\dot{f}_{\infty,\infty}^{s+n(\tau-\frac{1}{p})}(\mathbb{A})\right)\right\},
\end{equation*}
where $\tau>\frac1p$ or $(\tau,q)=(\frac1p,\infty)$, or
\begin{equation*}
\{(\dot X,\dot x),(\dot Y,\dot y)\}
\subset\left\{\left(\dot F_{p,q}^{s,\frac1p}(W),\dot f_{p,q}^{s,\frac1p}(W)\right),
\left(\dot F_{\infty,q}^s(\mathbb{A}),\dot f_{\infty,q}^s(\mathbb{A})\right)\right\}.
\end{equation*}
On the other hand, $T_\psi\circ S_\varphi$ is the identity on each such $\dot X$. It follows that the identity is bounded from $\dot X$ to $\dot Y$ for each pair $(\dot X,\dot Y)$ as above. Since the roles of $\dot X$ and $\dot Y$ are exchangeable, it follows that each $\dot X\subset\dot Y\subset\dot X$, and hence $\dot X=\dot Y$.
This finishes the proof of Corollary \ref{921}.
\end{proof}

As is evident from the last two corollaries, the value $\tau=\frac1p$ represents a qualitative turning point in the nature of the spaces $\dot A^{s,\tau}_{p,q}(W)$ and $\dot a^{s,\tau}_{p,q}(W)$. Accordingly, we introduce the following terminology that also plays a role
in the analysis of operators acting on these spaces, undertaken in the subsequent articles \cite{bhyy2,bhyy3}.

\begin{definition}\label{critical}
We say that a function or sequence space of Besov-type or Triebel--Lizorkin-type, with parameters $(p,q,s,\tau)$, is
\begin{enumerate}[\rm(i)]
\item \emph{supercritical} if $\tau>\frac{1}{p}$ or $(\tau,q)=(\frac{1}{p},\infty)$,
\item \emph{critical} if $\tau=\frac1p$ and $q<\infty$ and the space is of Triebel--Lizorkin-type,
\item \emph{subcritical} if $\tau<\frac1p$, or if $\tau=\frac1p$ and $q<\infty$ and the space is of Besov-type.
\end{enumerate}
\end{definition}

Thus, while all spaces with $\tau>\frac1p$ (resp. $\tau<\frac1p$) are supercritical (resp. subcritical), spaces with $\tau=\frac1p$ may be of any of the three types, depending on the finer details of the other parameters. This particular classification is motivated by the previous two Corollaries \ref{92 cor} and \ref{921}, where the two cases deal with critical and supercritical spaces in the sense of Definition \ref{critical}. This classification will also play a role in \cite{bhyy2}.

\begin{remark}
\begin{enumerate}[\rm(i)]
\item Except for the Triebel--Lizorkin spaces with $p=\infty$
(which include the Besov spaces $\dot b^s_{\infty,\infty}=\dot f^s_{\infty,\infty}$),
all other usual Besov spaces $\dot b^s_{p,q}=\dot b^{s,0}_{p,q}$
and Triebel--Lizorkin spaces $\dot f^s_{p,q}=\dot f^{s,0}_{p,q}$ are subcritical:
for $p\in(0,\infty)$, these have $\tau=0<\frac1p$, while the Besov spaces with
$\tau=0=\frac{1}{\infty}=\frac{1}{p}$ and $q<\infty$ are subcritical by definition.
\item The concept of the criticality is consistent with the identities of spaces
established in Corollaries \ref{92 cor} and \ref{921}. That is, the spaces
$\dot f^{s,\frac 1p}_{p,q}(W)=\dot f^{s,0}_{\infty,q}(\mathbb A),$
where $q<\infty,$
have $\tau_{\textup{left}}=\frac1p$, but also
$\tau_{\textup{right}}=0=\frac{1}{\infty}=\frac{1}{p_{\textup{right}}}$,
while $q<\infty$ is the same on both sides, and both spaces are of Triebel--Lizorkin-type;
hence one consistently classifies these spaces as critical,
whether one looks at the left-hand or the right-hand side of the equality.
Similarly, the spaces
$\dot a^{s,\tau}_{p,q}(W)=\dot f^{s+n(\tau-\frac1p),0}_{\infty,\infty}(\mathbb A),$
where $\tau>\frac1p$ or $(\tau,q)=(\frac1p,\infty),$
have also $(\tau_{\textup{right}},q_{\textup{right}})=(0,\infty)$,
where $0=\frac{1}{\infty}=\frac{1}{p_{\textup{right}}}$;
thus one consistently classifies these spaces as supercritical.
\end{enumerate}
\end{remark}

\noindent\textbf{Acknowledgements}.
The first author would like to thank Yiqun Chen for proposing Proposition \ref{equivalent}.

\bigskip

\noindent Fan Bu

\medskip

\noindent Laboratory of Mathematics and Complex Systems (Ministry of Education of China),
School of Mathematical Sciences, Beijing Normal University, Beijing 100875, The People's Republic of China

\smallskip

\noindent{\it E-mail:} \texttt{fanbu@mail.bnu.edu.cn}

\bigskip

\noindent Tuomas Hyt\"onen

\medskip

\noindent Department of Mathematics and Statistics,
University of Helsinki, (Pietari Kalmin katu 5), P.O.
Box 68, 00014 Helsinki, Finland

\smallskip

\noindent{\it E-mail:} \texttt{tuomas.hytonen@helsinki.fi}

\smallskip

\noindent (Address as of 1 Jan 2024:) Department of Mathematics and Systems Analysis, Aalto University, P.O. Box 11100, FI-00076 Aalto, Finland

\noindent{\it E-mail:} \texttt{tuomas.p.hytonen@aalto.fi}

\bigskip

\noindent Dachun Yang and Wen Yuan (Corresponding author)

\medskip

\noindent Laboratory of Mathematics and Complex Systems (Ministry of Education of China),
School of Mathematical Sciences, Beijing Normal University, Beijing 100875, The People's Republic of China

\smallskip

\noindent{\it E-mails:} \texttt{dcyang@bnu.edu.cn} (D. Yang)

\noindent\phantom{{\it E-mails:} }\texttt{wenyuan@bnu.edu.cn} (W. Yuan)

\end{document}